\documentclass[a4paper,11pt]{article}

\usepackage{amsmath,amsthm}
\usepackage{aligned-overset}
\usepackage{newtxtext}
\usepackage{newtxmath}
\usepackage{cases}

\usepackage{adjustbox}
\usepackage{booktabs}
\usepackage{enumitem}

\usepackage{tabularx}
\usepackage{multirow}

\usepackage{graphicx}
\usepackage{subcaption}
\usepackage[percent]{overpic}
\usepackage{epstopdf}

\usepackage[hmargin={26mm,26mm}, vmargin={30mm,35mm}]{geometry}

\usepackage[style=numeric-comp,
  maxbibnames=99,
  bibencoding=utf8,
  date=year,
giveninits=true]{biblatex}
\usepackage{xcolor}
\usepackage{hyperref}

\usepackage{nicefrac}
\usepackage{comment}

\usepackage{authblk}

\newcommand{\domain}{\Omega}
\newcommand{\domainBoundary}{\partial \domain}
\newcommand{\dimDomain}{2}

\newcommand{\scalarSol}{u}
\newcommand{\gradScalarSol}{\grad \scalarSol}
\newcommand{\loadTerm}{f}

\newcommand{\vectSol}{\boldsymbol{\tau}}

\newcommand{\grad}{\boldsymbol{\nabla}}
\newcommand{\divergence}{\grad \cdot}
\newcommand{\rot}{\operatorname{rot}}
\newcommand{\projL}[1]{\pi_{0,h}^{#1}}
\newcommand{\locProjL}[1]{\pi_{0,\element}^{#1}}
\newcommand{\vectProjL}[1]{\boldsymbol{\pi}_{0,h}^{#1}}
\newcommand{\locVectProjL}[1]{\boldsymbol{\pi}_{0,\element}^{#1}}
\newcommand{\vectProjF}[1]{\boldsymbol{\pi}_{\mathrm{F},h}^{#1}}
\newcommand{\locDofVect}{\boldsymbol{\mathrm{dof}}_{\element}}
\newcommand{\locInternalDofVect}{\boldsymbol{\mathrm{dof}}_{\mathring{\element}}}
\newcommand{\locBoundaryDofVect}{\boldsymbol{\mathrm{dof}}_{\partial{\element}}}
\newcommand{\restr}[2]{#1|_{#2}}
\newcommand{\fluxFunction}{\boldsymbol{\sigma}}

\newcommand{\sobIndex}{p}
\newcommand{\sobIndexConj}{{\sobIndex}'}
\newcommand{\sobIndexConjMin}{\underline{\sobIndexConj}}
\newcommand{\sobIndexConjMax}{\overline{\sobIndexConj}}

\newcommand{\polyDegree}{k}

\newcommand{\euNorm}[1]{| #1 |}
\newcommand{\norm}[2]{\|#2\|_{#1}}
\newcommand{\seminorm}[2]{|#2|_{#1}}
\newcommand{\tnorm}[2]{\lvert\!\lvert\!\lvert #2 \rvert\!\rvert\!\rvert_{#1}}

\newcommand{\discScalarNorm}[1]{\norm{\discScalarSpace}{#1}}

\newcommand{\discFullVectNorm}[1]{\tnorm{\discVectSpace}{#1}}
\newcommand{\locDiscFullVectNorm}[1]{\tnorm{\discVectSpace (\element)}{#1}}
\newcommand{\discDivFreeNorm}[1]{\tnorm{a_{h}}{#1}}
\newcommand{\locDiscDivFreeNorm}[1]{\tnorm{a_{h},\element}{#1}}

\newcommand{\Natural}{\mathbb{N}}
\newcommand{\Real}{\mathbb{R}}
\newcommand{\sobSpace}[3]{W^{#1,#2}(#3)}
\newcommand{\HSpace}[2]{H^{#1} (#2)}
\newcommand{\lebSpace}[2]{L^{#1} (#2)}
\newcommand{\vectLebSpace}[2]{\boldsymbol{L}^{#1} (#2)}

\newcommand{\vectSobSpace}[3]{\boldsymbol{W}^{#1,#2}(#3)}
\newcommand{\Hdiv}[1]{\boldsymbol{H}(\operatorname{div};#1)}
\newcommand{\Hrot}[1]{\boldsymbol{H}(\operatorname{rot};#1)}
\newcommand{\sobDivSpace}[2]{\boldsymbol{W}^{#1}(\mathrm{div},#2)}
\newcommand{\scalarSolSpace}{Q}
\newcommand{\vectSolSpace}{\boldsymbol{V}}
\newcommand{\discVectSpace}{\boldsymbol{V}_h^{\polyDegree}}
\newcommand{\discDivFreeSpace}{\boldsymbol{Z}_h^{\polyDegree}}
\newcommand{\discScalarSpace}{Q_h^{\polyDegree}}

\newcommand{\polySpace}[1]{\mathcal{P}^{#1}}
\newcommand{\vectPolySpace}[1]{\boldsymbol{\mathcal{P}}^{#1}}

\newcommand{\vectTestFun}{\boldsymbol{v}}
\newcommand{\vectFunOne}{\boldsymbol{w}}
\newcommand{\scalarTestFun}{q}
\newcommand{\discVectTestFun}{\boldsymbol{v}_h}
\newcommand{\discVectFunOne}{\boldsymbol{w}_h}
\newcommand{\discVectFunTwo}{\boldsymbol{z}_h}
\newcommand{\discVectTestFunTilda}{\tilde{\boldsymbol{v}}_h}
\newcommand{\discVectFunOneTilda}{\tilde{\boldsymbol{w}}_h}
\newcommand{\discVectFunTwoTilda}{\tilde{\boldsymbol{z}}_h}
\newcommand{\discScalarTestFun}{q_h}
\newcommand{\discScalarSol}{{\scalarSol}_h}
\newcommand{\discVectSol}{{\vectSol}_h}

\newcommand{\scalarPolyFun}[1]{p^{#1}}

\newcommand{\element}{E}
\newcommand{\edge}{e}
\newcommand{\edges}{\mathcal{E}_h}
\newcommand{\normalDomain}{\boldsymbol{n}_{\domainBoundary}}
\newcommand{\normalEdge}{\boldsymbol{n}_{\edge}}
\newcommand{\normalElement}{\boldsymbol{n}_{\partial \element}}
\newcommand{\edgeElement}{\mathcal{E}_{\element}}

\newcommand{\email}[1]{\href{mailto:#1}{#1}}

\usepackage[normalem]{ulem}
\newcounter{corr}
\definecolor{violet}{rgb}{0.580,0.,0.827}
\newcommand{\corr}[3]{\typeout{Warning: a correction remains in page \thepage}
  \stepcounter{corr}
  {\color{blue}\ifmmode\text{\,\sout{\ensuremath{#1}}\,}\else\sout{#1}\fi}
  {\color{red}#2}
  {\color{violet} #3}
}

\newtheorem{theorem}{Theorem}

\newtheorem{lemma}[theorem]{Lemma}
\newtheorem{corollary}[theorem]{Corollary}
\theoremstyle{remark}
\newtheorem{remark}[theorem]{Remark}
\theoremstyle{definition}
\newtheorem{assumption}[theorem]{Assumption}

\begin{document}
\title{A Mixed Virtual Element Method for the p-Laplace equation}
\author[1]{Kirubell B. Haile}
\affil[1]{Dipartimento di Matematica e Applicazioni, Universit\`{a} degli Studi di Milano Bicocca, Piazza dell’Ateneo Nuovo 1, 20126
Milano, Italy, \email{k.haile@campus.unimib.it}}
\author[2]{Giuseppe Vacca}
\affil[2]{Dipartimento di Matematica, Universit\`{a} degli Studi di Bari, Via Edoardo Orabona 4, 70125 Bari, Italy, \email{giuseppe.vacca@uniba.it}}
\maketitle
\begin{abstract}
  We introduce and analyze a mixed Virtual Element Method for the $p$-Laplace equation in a non-Hilbertian setting, covering the full range $p \in (1, \infty)$. 
  The discrete framework combines standard mixed Virtual Element spaces with a novel non-linear stabilization term designed to mimic the power-law structure of the continuous operator. 
  We establish discrete inf-sup stability under non-Hilbertian norms and rigorously prove the continuity and coercivity of the discrete form.
  This guarantees the well-posedness of the problem and allows us to derive a priori error estimates for the primal variable and the flux.
  A set of numerical tests supports the theoretical derivations.
  \smallskip\\
  \textbf{Keywords:} virtual element method; mixed formulation; $p$-Laplacian.
  \smallskip\\
  \textbf{MSC2010 classification:} 65N30, 65N12, 35J92, 65N15.
\end{abstract}
%

\section{Introduction}\label{sec:introduction}
The $p$-Laplace equation arises in the modeling of a wide variety of advanced physical and engineering processes. Unlike the classical linear Laplacian, the non-linear nature of the $p$-Laplacian, governed by the parameter $\sobIndex \in (1, \infty)$, allows for the accurate modeling of phenomena where the material properties exhibit a power-law behavior.
Notable applications include the fluid mechanics of non-Newtonian media \cite{ATKINSON.JONES:1974}, such as ice sheet dynamics in glaciology \cite{GLOWINSKI.RAPPAZ:2003}. In this context, the pure $p$-Laplacian operator acts as the fundamental mathematical building block for more complex rheological models, including the $p$-Stokes and $p$-Navier-Stokes equations; see, e.g.,~\cite{BARRETT.LIU:1994, HIRN:2012, KALTENBACH.RUZICKA:23,BEIRAODAVEIGA.DIPIETRO.ETAL:2025a}. Other relevant applications encompass non-linear filtration and diffusion processes in porous media \cite{PHILIP:1961,VAZQUEZ:2006}, as well as solid mechanics problems, including non-linear elasticity and the macroscopic growth of sandpiles \cite{ANDREU.MAZON.ETAL:2009}. 
Recently, it has also found applications in image processing, data clustering, and game theory on graphs \cite{ELMOATAZ.DESQUESNES.ETAL:2017}.

The numerical approximation of the $p$-Laplace equation and the derivation of corresponding error bounds have a well-established history. 
Early works focusing on lowest-order conforming finite element methods can be traced back to \cite{GLOWINSKI.MARROCO:1975, BARRETT.LIU:1993}, while high-order formulations within the non-conforming framework were later explored using interior penalty discontinuous Galerkin methods \cite{BURMAN.ERN:2008}, a framework subsequently extended to more complex $p$-power-law transport phenomena \cite{BEIRAODAVEIGA.DIPIETRO.ETAL:2024}. However, in many physical contexts, the flux constitutes a key variable of interest; to achieve its independent approximation, several mixed formulations have been developed.
In \cite{FARHLOUL.MANOUZI:2000}, the first mixed finite element scheme for the $p$-Laplacian was analyzed using lowest-order Raviart–Thomas elements (see, e.g.,~\cite{BOFFI.BREZZI.ETAL:2013}). This approach was subsequently extended to other advanced frameworks, including Hybridizable Discontinuous Galerkin methods \cite{CREUSE.FARHLOUL.ETAL:2007, COCKBURN.SHEN:2016} and Local Discontinuous Galerkin formulations \cite{DIENING.KRONER.ETAL:2013}.
Parallel to these advancements, polytopal methods have gained significant traction due to their ability to naturally handle complex geometries, hanging nodes, and general polygonal meshes.
Early developments for non-linear operators included Discrete Duality Finite Volume schemes \cite{ANDREIANOV.BOYER.ETAL:2007} and Mimetic Finite Difference methods \cite{ANTONIETTI.BIGONI.ETAL:2015}. 
Later, advanced arbitrary-order frameworks were established; among these, Hybrid High-Order methods have been extended to the general class of Leray--Lions equations on polygonal grids \cite{DIPIETRO.DRONIOU:2017}.

Within this framework, the Virtual Element Method (VEM) has targeted a wide variety of problems since its introduction in \cite{BEIRAODAVEIGA.BREZZI.ETAL:2013} and subsequent extension to mixed formulations in \cite{BREZZI.FALK.ETAL:2014}.
Non-linear problems have been addressed from both primal and dual-mixed perspectives. 
Primal formulations include applications to quasilinear diffusion \cite{CANGIANI.CHATZIPANTELIDIS.ETAL:2020} and Cahn–Hilliard equations \cite{ANTONIETTI.BEIRAODAVEIGA.ETAL:2016}. 
On the dual-mixed side, recent works have focused on non-linear fluid mechanics models, such as Newtonian and quasi-Newtonian Stokes and Brinkman flows \cite{CACERES.GATICA:2017, CACERES.GATICA.ETAL:2018, CACERES.GATICA.ETAL:2017}.
To the best of our knowledge, a mixed virtual element formulation for the classical $p$-Laplace equation is still missing from the literature. This paper aims to introduce and analyze a mixed Virtual Element Method for this problem within a non-Hilbertian setting.

Relying on the mixed Virtual Element spaces introduced in \cite{BREZZI.FALK.ETAL:2014}, we design a specific discrete framework for the $p$-Laplace equation.
In particular, we introduce a non-linear stabilization term, inspired by \cite{ANTONIETTI.BEIRAODAVEIGA.ETAL:2024}, that mimics the non-linear behavior of the volume term.
From a theoretical perspective, validating this approach requires several key steps: we establish discrete inf-sup stability under non-Hilbertian norms and prove the continuity and coercivity of the new stabilization form.
Consequently, the well-posedness of the discrete problem is guaranteed, allowing us to derive a priori error estimates for both the primal variable and the flux, covering the entire range $\sobIndex>1$. Specifically, our convergence rates match the theoretical bounds established in \cite{FARHLOUL.MANOUZI:2000} for $\sobIndex>2$, while a comparison with conforming and non-conforming methods for $1<\sobIndex \leq 2$ is discussed in Section~\ref{sec:global.summary}.

The rest of the paper is organized as follows.
In Section~\ref{sec:setting}, we establish the notation, recall key preliminary definitions and results, and review the weak mixed formulation of the continuous problem.
We introduce the mixed VEM scheme in Section~\ref{sec:scheme} and analyze its well-posedness in Section~\ref{sec:stability}.
In Section~\ref{sec:a-priori}, we derive a priori error estimates for both the $1 < \sobIndex \leq 2$ and $\sobIndex > 2$ regimes.
Finally, in Section~\ref{sec:numerical.tests}, we present several numerical experiments to validate the theoretical results.

\section{Setting and continuous problem} \label{sec:setting}
%
\subsection{Notation and functional spaces} \label{sec:functional.spaces}
Let $Y$ be a generic bounded domain in $\Real^d$ with $d \in \{1, \dimDomain\}$.
For an integer $m \geq 0$ and a real number $1 \leq r \leq \infty$, we denote by $\sobSpace{m}{r}{Y}$ the usual Sobolev space on $Y$ equipped with the standard norm
$\norm{\sobSpace{m}{r}{Y}}{\cdot}$ and seminorm $\seminorm{\sobSpace{m}{r}{Y}}{\cdot}$.
For $m=0$, we obtain the Lebesgue space $\lebSpace{r}{Y} \coloneqq \sobSpace{0}{r}{Y}$ while, for $r=2$, we have the Hilbert space $\HSpace{m}{Y} \coloneqq \sobSpace{m}{2}{Y}$.
Spaces of vector-valued functions, as well as their elements, are denoted in boldface font. For example, $\vectTestFun \in \vectSobSpace{m}{r}{Y}$ denotes a vector-valued function.
We denote by $\sobDivSpace{r}{Y}$ the space of functions in $\vectLebSpace{r}{Y}$ with divergence in $\lebSpace{r}{Y}$, equipped with the norm $\norm{\sobDivSpace{r}{Y}}{\vectTestFun}^r
\coloneqq \norm{\vectLebSpace{r}{Y}}{\vectTestFun}^r + \norm{\lebSpace{r}{Y}}{\divergence \vectTestFun}^r$.
For $r=2$, we obtain $\Hdiv{Y} \coloneqq \sobDivSpace{2}{Y}$. Similarly, $\Hrot{Y}$ is the space of functions $\vectTestFun \in \vectLebSpace{2}{Y}$ with scalar rotor $\rot \vectTestFun
\coloneqq \partial_1 \vectTestFun_2 - \partial_2 \vectTestFun_1$ in $\lebSpace{2}{Y}$.
%
\subsection{Notation for inequalities} \label{sec:notation.inequalities}
To improve readability in the statements of theorems and lemmas, we write ``$A \lesssim B$ with a hidden constant depending only on $X, Y, \dots$'' for $A \leq C B$, where $C$ is a positive
constant depending only on $X, Y, \dots$.
Similarly, $A \gtrsim B$ stands for $B \lesssim A$, and $A \simeq B$ means that both $A \lesssim B$ and $B \lesssim A$ hold.

When necessary, to carefully track dependencies without naming every constant, we use generic constants $C(\delta, \varepsilon, \dots)$ depending only on the parameters in the parentheses.
The exact value of such constants may change at each occurrence. 
Similarly, given an arbitrary $\varepsilon > 0$, we may implicitly redefine it to absorb positive fixed constants without changing the notation.
%
\subsection{The quasilinear Laplace equation} \label{sec:continuous.problem}
Let $\domain \subset \Real^{\dimDomain}$ denote an open, bounded, and connected polygonal domain with Lipschitz boundary $\domainBoundary$.
For a fixed real parameter $ \sobIndex \in (1,\infty)$ and a given real-valued function $\loadTerm: \domain \to \Real$, we consider the quasilinear Laplace equation: Find $\scalarSol: \domain \to \Real $ such that
\begin{subequations}\label{eq:continuous.problem.primal}
  \begin{alignat}{2}
    - \divergence \fluxFunction_{\dimDomain,\sobIndex} (\gradScalarSol) &= \loadTerm
    \qquad
    && \text{in } \domain, \label{eq:continuous.problem.primal.momentum} \\
    \scalarSol &= 0
    \qquad
    && \text{on } \domainBoundary, \label{eq:continuous.problem.primal.bc}
  \end{alignat}
\end{subequations}
where $\fluxFunction_{\dimDomain,\sobIndex}$ denotes the diffusive flux function described below.
%
\subsection{Diffusive flux function} \label{sec:diffusive.flux.function}
For any real number $r > 1$ and any integer $n \in \Natural$, we set
\begin{equation}\label{eq:sobolev.indexes}
  r'\coloneqq \frac{r}{r-1} \in (1,\infty), \quad \underline{r} \coloneqq \min\{r,2\} \in (1,2], \quad \text{and} \quad \overline{r} \coloneqq \max\{r,2\} \in [2,\infty).
\end{equation}
Notice that $r'$ is the H{\"o}lder-conjugate exponent of $r$ and satisfies $1/r+1/r'=1$.
Furthermore, we define the function
\begin{equation}\label{eq:def.sigma}
  \fluxFunction_{n,r}: \Real^n \ni \boldsymbol{A} \longmapsto \euNorm{\boldsymbol{A}}^{r-2} \boldsymbol{A} \in \Real^n,
\end{equation}
where $\euNorm{\cdot}$ denotes the Euclidean norm on $\Real^n$.
The following properties of the flux function $\fluxFunction_{n,r}$ are well-known (see, e.g., \cite[Section~2]{DIENING.ETTWEIN:2008}).
\begin{lemma}\label{lem:diffusive.flux.properties.original}
  For any real number $r \in (1,\infty)$, any integer $n \in \Natural$, and any $\boldsymbol{A}, \boldsymbol{B} \in \Real^n$, it holds
  \begin{subequations}\label{eq:diffusive.flux.properties.original}
    \begin{alignat}{2}
      \euNorm{\fluxFunction_{n,r}(\boldsymbol{A}) - \fluxFunction_{n,r}(\boldsymbol{B})} &\lesssim (\euNorm{\boldsymbol{A}} + \euNorm{\boldsymbol{B}})^{r-2}
      \euNorm{\boldsymbol{A}-\boldsymbol{B}}, \label{eq:diffusive.flux.properties.original.hc}\\
      \left( \fluxFunction_{n,r}(\boldsymbol{A}) - \fluxFunction_{n,r}(\boldsymbol{B}) \right) \cdot (\boldsymbol{A}-\boldsymbol{B})&\gtrsim (\euNorm{\boldsymbol{A}} +
      \euNorm{\boldsymbol{B}})^{r-2} \euNorm{\boldsymbol{A}-\boldsymbol{B}}^{2}, \label{eq:diffusive.flux.properties.original.mono}
    \end{alignat}
  \end{subequations}
  with a hidden constant depending only on $r$ in both the inequalities.
\end{lemma}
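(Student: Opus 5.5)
We prove both inequalities from a single integral representation of the difference of fluxes along the segment joining $\boldsymbol{B}$ to $\boldsymbol{A}$. When $\boldsymbol{A}=\boldsymbol{B}$ both statements are trivial. We may also assume that the segment $\boldsymbol{A}_t \coloneqq \boldsymbol{B}+t(\boldsymbol{A}-\boldsymbol{B})$, $t\in[0,1]$, avoids the origin. If it does not, both sides of each inequality are continuous in $(\boldsymbol{A},\boldsymbol{B})$ away from $\boldsymbol{A}=\boldsymbol{B}$, and $\fluxFunction_{n,r}$ is continuous at $0$ since $r>1$. The general case then follows by perturbing $\boldsymbol{B}$ slightly; when $n=1$ we handle the case of opposite signs directly by the same computation.

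Away from the origin, $\fluxFunction_{n,r}$ is smooth with Jacobian
\[
D\fluxFunction_{n,r}(\boldsymbol{C}) = \euNorm{\boldsymbol{C}}^{r-2}\Bigl(\boldsymbol{I} + (r-2)\frac{\boldsymbol{C}\otimes\boldsymbol{C}}{\euNorm{\boldsymbol{C}}^2}\Bigr).
\]
This matrix is symmetric, with eigenvalues $\euNorm{\boldsymbol{C}}^{r-2}$ in directions orthogonal to $\boldsymbol{C}$ and $(r-1)\euNorm{\boldsymbol{C}}^{r-2}$ along $\boldsymbol{C}$. Hence for every $\boldsymbol{\xi}$,
\[
\min\{1,r-1\}\euNorm{\boldsymbol{C}}^{r-2}\euNorm{\boldsymbol{\xi}}^2 \le D\fluxFunction_{n,r}(\boldsymbol{C})\boldsymbol{\xi}\cdot\boldsymbol{\xi}, \qquad \euNorm{D\fluxFunction_{n,r}(\boldsymbol{C})\boldsymbol{\xi}} \le \max\{1,r-1\}\euNorm{\boldsymbol{C}}^{r-2}\euNorm{\boldsymbol{\xi}}.
\]
Writing $\fluxFunction_{n,r}(\boldsymbol{A})-\fluxFunction_{n,r}(\boldsymbol{B}) = \int_0^1 D\fluxFunction_{n,r}(\boldsymbol{A}_t)(\boldsymbol{A}-\boldsymbol{B})\,dt$ gives
\[
\euNorm{\fluxFunction_{n,r}(\boldsymbol{A})-\fluxFunction_{n,r}(\boldsymbol{B})} \lesssim \Bigl(\int_0^1\euNorm{\boldsymbol{A}_t}^{r-2}dt\Bigr)\euNorm{\boldsymbol{A}-\boldsymbol{B}},
\]
and, taking the dot product with $\boldsymbol{A}-\boldsymbol{B}$, the lower bound
\[
\bigl(\fluxFunction_{n,r}(\boldsymbol{A})-\fluxFunction_{n,r}(\boldsymbol{B})\bigr)\cdot(\boldsymbol{A}-\boldsymbol{B}) \gtrsim \Bigl(\int_0^1\euNorm{\boldsymbol{A}_t}^{r-2}dt\Bigr)\euNorm{\boldsymbol{A}-\boldsymbol{B}}^2.
\]
Both inequalities therefore reduce to the key estimate
\[
\int_0^1 \euNorm{\boldsymbol{A}_t}^{r-2}\,dt \simeq (\euNorm{\boldsymbol{A}}+\euNorm{\boldsymbol{B}})^{r-2},
\]
with constants depending only on $r$.

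This equivalence is the main obstacle, and we prove it separately in the two regimes, using $\euNorm{\boldsymbol{A}_t}\le \euNorm{\boldsymbol{A}}+\euNorm{\boldsymbol{B}} =: M$ throughout.

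\textbf{Case $r\ge2$.} The upper bound is immediate. For the lower bound, assume without loss of generality $\euNorm{\boldsymbol{A}}\ge\euNorm{\boldsymbol{B}}$, so $\euNorm{\boldsymbol{A}}\ge M/2$. For $t\in[3/4,1]$,
\[
\euNorm{\boldsymbol{A}_t}\ge \euNorm{\boldsymbol{A}}-(1-t)\euNorm{\boldsymbol{A}-\boldsymbol{B}}\ge \euNorm{\boldsymbol{A}}-\tfrac14 M\ge M/4,
\]
so integrating over $[3/4,1]$ gives the lower bound $\tfrac14(M/4)^{r-2}$.

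\textbf{Case $1<r<2$.} Now the exponent $r-2$ is negative, so the lower bound is the immediate one: $\euNorm{\boldsymbol{A}_t}^{r-2}\ge M^{r-2}$. The upper bound is the delicate part. The point of minimum distance from the origin on the line, say $\boldsymbol{A}_{t_0}$, satisfies $\euNorm{\boldsymbol{A}_t}\ge \euNorm{t-t_0}\,\euNorm{\boldsymbol{A}-\boldsymbol{B}}$ by Pythagoras. Hence
\[
\int_0^1\euNorm{\boldsymbol{A}_t}^{r-2}dt \le \euNorm{\boldsymbol{A}-\boldsymbol{B}}^{r-2}\int_0^1\euNorm{t-t_0}^{r-2}dt \le C(r)\,\euNorm{\boldsymbol{A}-\boldsymbol{B}}^{r-2},
\]
where the last integral is finite because $r-2>-1$.

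If $\euNorm{\boldsymbol{A}-\boldsymbol{B}}\ge M/2$, this already gives $C(r)(M/2)^{r-2}$. Otherwise, with $\euNorm{\boldsymbol{A}}\ge\euNorm{\boldsymbol{B}}$ again, we have $\euNorm{\boldsymbol{A}_t}\ge \euNorm{\boldsymbol{A}}-\euNorm{\boldsymbol{A}-\boldsymbol{B}}$. Using the sharper bound $\euNorm{\boldsymbol{A}-\boldsymbol{B}}\le M/4$ in the splitting, every point satisfies $\euNorm{\boldsymbol{A}_t}\ge M/4$, and the integrand is at most $(M/4)^{r-2}$.

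This completes both cases of the key estimate, and with it both inequalities of the lemma.
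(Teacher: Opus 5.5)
Your proof is correct, and the paper gives no proof to compare against: it quotes the lemma from Diening--Ettwein (2008, Section~2). Your route is the standard one behind that reference. You represent $\boldsymbol{\sigma}_{n,r}(\boldsymbol{A})-\boldsymbol{\sigma}_{n,r}(\boldsymbol{B})=\int_0^1 D\boldsymbol{\sigma}_{n,r}(\boldsymbol{A}_t)(\boldsymbol{A}-\boldsymbol{B})\,dt$, bound the symmetric Jacobian spectrally, and reduce both inequalities to $\int_0^1|\boldsymbol{A}_t|^{r-2}\,dt\simeq(|\boldsymbol{A}|+|\boldsymbol{B}|)^{r-2}$. Writing it out yourself yields explicit constants ($\max\{1,r-1\}$, $\min\{1,r-1\}$, $2/(r-1)$, powers of $4$) that depend only on $r$ and not on $n$. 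The paper needs exactly this, because it applies the lemma with $n=N_E$, which varies from element to element.

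Two small repairs. First, in the case $1<r<2$ your split is inconsistent as written. You branch on $|\boldsymbol{A}-\boldsymbol{B}|\ge M/2$, but the complementary branch uses $|\boldsymbol{A}-\boldsymbol{B}|\le M/4$, so the range $M/4<|\boldsymbol{A}-\boldsymbol{B}|<M/2$ is formally uncovered. Split at $M/4$ in both branches: $|\boldsymbol{A}-\boldsymbol{B}|\ge M/4$ already gives $C(r)|\boldsymbol{A}-\boldsymbol{B}|^{r-2}\le C(r)(M/4)^{r-2}$.

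Second, the perturbation step and the separate $n=1$ discussion are unnecessary and slightly imprecise. For instance, if $\boldsymbol{A}=\boldsymbol{0}$, moving $\boldsymbol{B}$ alone never takes the segment off the origin. Suppose instead that the segment meets the origin at $\boldsymbol{A}_{t_0}=\boldsymbol{0}$. Then $\boldsymbol{A}_t=(t-t_0)(\boldsymbol{A}-\boldsymbol{B})$, so $t\mapsto\boldsymbol{\sigma}_{n,r}(\boldsymbol{A}_t)$ is continuous with derivative bounded by $\max\{1,r-1\}|t-t_0|^{r-2}|\boldsymbol{A}-\boldsymbol{B}|^{r-1}$. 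This is integrable because $r-2>-1$. Hence the integral representation and your Pythagoras bound hold directly for every $\boldsymbol{A}\neq\boldsymbol{B}$.
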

\begin{remark}\label{rem:diffusive.flux.properties}
  By using the notation introduced in \eqref{eq:sobolev.indexes}, the following estimates can be derived from \eqref{eq:diffusive.flux.properties.original} and will be useful in the
  subsequent analysis.
  \begin{subequations}\label{eq:diffusive.flux.properties}
    \begin{alignat}{2}
      \euNorm{\fluxFunction_{n,r}(\boldsymbol{A}) - \fluxFunction_{n,r}(\boldsymbol{B})} &\lesssim (\euNorm{\boldsymbol{A}}^r + \euNorm{\boldsymbol{B}}^r)^{\frac{r-\underline{r}}{r}}
      \euNorm{\boldsymbol{A}-\boldsymbol{B}}^{\underline{r}-1}, \label{eq:diffusive.flux.properties.hc} \\
      \left( \fluxFunction_{n,r}(\boldsymbol{A}) - \fluxFunction_{n,r}(\boldsymbol{B}) \right) \cdot (\boldsymbol{A}-\boldsymbol{B}) &\gtrsim (\euNorm{\boldsymbol{A}}^r +
      \euNorm{\boldsymbol{B}}^r)^{\frac{\underline{r}-2}{r}} \euNorm{\boldsymbol{A}-\boldsymbol{B}}^{\overline{r}}, \label{eq:diffusive.flux.properties.mono}
    \end{alignat}
  \end{subequations}
  with a hidden constant depending only on $r$ in both the inequalities.
\end{remark}
%
\subsection{Mixed weak formulation} \label{sec:mixed.formulation}
From the definition of the diffusive flux function in \eqref{eq:def.sigma}, it is easy to check that the inverse of $\fluxFunction_{n,r}$ is given by $\fluxFunction_{n,r'}$.
We introduce the auxiliary variable $\vectSol \coloneqq \fluxFunction_{\dimDomain,\sobIndex} (\gradScalarSol)$ and, setting $\fluxFunction \coloneqq
\fluxFunction_{\dimDomain,\sobIndexConj}$ to simplify the notation, the mixed form of Problem~\eqref{eq:continuous.problem.primal} reads:

\medskip
\noindent Find $\scalarSol: \domain \to \Real $ and $\vectSol: \domain \to \Real^{\dimDomain}$ such that
\begin{subequations}\label{eq:continuous.problem.dual}
  \begin{alignat}{2}
    \fluxFunction(\vectSol) - \gradScalarSol &= \boldsymbol{0} \qquad && \text{in } \domain, \label{eq:continuous.problem.dual.constitutive} \\
    - \divergence \vectSol &= \loadTerm \qquad && \text{in } \domain, \label{eq:continuous.problem.dual.momentum} \\
    \scalarSol &= 0 \qquad && \text{on } \domainBoundary. \label{eq:continuous.problem.dual.bc}
  \end{alignat}
\end{subequations}
We introduce the auxiliary variable $\vectSol \coloneqq \fluxFunction_{\dimDomain,\sobIndex} (\gradScalarSol)$ and, we set $\fluxFunction \coloneqq
\fluxFunction_{\dimDomain,\sobIndexConj}$ to simplify the notation.
To state the mixed weak formulation of the problem, we introduce the spaces
\begin{equation}\label{eq:continuous.spaces}
  \vectSolSpace \coloneqq \sobDivSpace{\sobIndexConj}{\domain} \quad \text{and} \quad \scalarSolSpace \coloneqq \lebSpace{\sobIndex}{\domain},
\end{equation}
and we define, for any $\vectFunOne, \vectTestFun \in \vectSolSpace$ and any $\scalarTestFun \in \scalarSolSpace$, the forms
\begin{equation}\label{eq:def.a}
  a(\boldsymbol{w},\vectTestFun) \coloneqq \int_{\domain} \fluxFunction(\boldsymbol{w}) \cdot \vectTestFun = \int_{\domain} \euNorm{\boldsymbol{w}}^{\sobIndexConj-2} \boldsymbol{w} \cdot
  \vectTestFun \quad \text{and} \quad b(\vectTestFun,\scalarTestFun) \coloneqq \int_{\domain} (\divergence \vectTestFun) \scalarTestFun. 
\end{equation}
The mixed weak formulation of Problem~\eqref{eq:continuous.problem.primal} reads: 
Given $\loadTerm \in \lebSpace{\sobIndexConj}{\domain}$, find $\vectSol \in \vectSolSpace$ and $\scalarSol \in \scalarSolSpace$ such that
\begin{subequations}\label{eq:continuous.problem.dual.weak}
  \begin{alignat}{2}
    a(\vectSol,\vectTestFun) + b(\vectTestFun, \scalarSol) &= 0
    \qquad && \text{for all } \vectTestFun \in \vectSolSpace, \label{eq:continuous.problem.dual.weak.1} \\
    -b(\vectSol,\scalarTestFun) &= \int_{\domain} \loadTerm \scalarTestFun
    \qquad && \text{for all } \scalarTestFun \in \scalarSolSpace. \label{eq:continuous.problem.dual.weak.2}
  \end{alignat}
\end{subequations}
Existence and uniqueness of a solution to Problem~\eqref{eq:continuous.problem.dual.weak} are established in \cite[Theorem 2.1]{FARHLOUL.MANOUZI:2000}:
\begin{theorem}
  Problem~\eqref{eq:continuous.problem.dual.weak} has a unique solution $(\vectSol, \scalarSol) \in \vectSolSpace \times \scalarSolSpace$ which satisfies
  \begin{equation}\label{eq:cont.sols.stability}
    \norm{\vectSolSpace}{\vectSol} \lesssim \norm{\lebSpace{\sobIndexConj}{\domain}}{\loadTerm}
    \quad \text{and} \quad
    \norm{\scalarSolSpace}{\scalarSol} \lesssim \norm{\lebSpace{\sobIndexConj}{\domain}}{\loadTerm}^{\sobIndexConj-1},
  \end{equation}
  with a hidden constant depending only on $\sobIndex$ in both the inequalities.
\end{theorem}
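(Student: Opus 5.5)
Existence and uniqueness follow from the primal problem, and the two bounds come from testing the primal equation with $\scalarSol$ itself.

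\textbf{Solving the primal problem.} Since $\loadTerm \in \lebSpace{\sobIndexConj}{\domain} \subset W^{-1,\sobIndexConj}(\domain)$, we minimize the strictly convex, coercive, weakly lower semicontinuous functional
\begin{equation*}
J(\scalarTestFun) = \frac1\sobIndex \int_\domain \euNorm{\grad \scalarTestFun}^\sobIndex - \int_\domain \loadTerm \scalarTestFun
\end{equation*}
over $W^{1,\sobIndex}_0(\domain)$. Coercivity follows from Poincar\'e's inequality. Equivalently, we apply Minty--Browder, using the monotonicity \eqref{eq:diffusive.flux.properties.original.mono} and the continuity \eqref{eq:diffusive.flux.properties.original.hc} of $\fluxFunction_{\dimDomain,\sobIndex}$. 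This yields a unique $\scalarSol \in W^{1,\sobIndex}_0(\domain)$ with
\begin{equation*}
\int_\domain \fluxFunction_{\dimDomain,\sobIndex}(\gradScalarSol)\cdot\grad \scalarTestFun = \int_\domain \loadTerm \scalarTestFun \quad \text{for all } \scalarTestFun \in W^{1,\sobIndex}_0(\domain).
\end{equation*}

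\textbf{Recovering a solution of Problem~\eqref{eq:continuous.problem.dual.weak}.} Set $\vectSol = \fluxFunction_{\dimDomain,\sobIndex}(\gradScalarSol)$. Then $\euNorm{\vectSol}^{\sobIndexConj} = \euNorm{\gradScalarSol}^{\sobIndex}$, so $\vectSol \in \vectLebSpace{\sobIndexConj}{\domain}$. The primal equation states, in the distributional sense, that $-\divergence\vectSol = \loadTerm \in \lebSpace{\sobIndexConj}{\domain}$. Hence $\vectSol \in \vectSolSpace$ and \eqref{eq:continuous.problem.dual.weak.2} holds. Since $\fluxFunction(\vectSol) = \gradScalarSol$, integrating by parts with $\scalarSol = 0$ on $\domainBoundary$ gives, for $\vectTestFun \in \vectSolSpace$,
\begin{equation*}
\int_\domain \fluxFunction(\vectSol)\cdot\vectTestFun = -\int_\domain \scalarSol \, \divergence\vectTestFun,
\end{equation*}
which is \eqref{eq:continuous.problem.dual.weak.1}. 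This step needs the generalized Green formula pairing $\sobDivSpace{\sobIndexConj}{\domain}$ with $W^{1,\sobIndex}_0(\domain)$, obtained by density of smooth fields.

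\textbf{Uniqueness.} Take two solutions and subtract them. Equation \eqref{eq:continuous.problem.dual.weak.2} gives $\divergence(\vectSol_1-\vectSol_2)=0$. Testing \eqref{eq:continuous.problem.dual.weak.1} with $\vectTestFun = \vectSol_1-\vectSol_2$ then yields
\begin{equation*}
\int_\domain \bigl(\fluxFunction(\vectSol_1)-\fluxFunction(\vectSol_2)\bigr)\cdot(\vectSol_1-\vectSol_2) = 0.
\end{equation*}
Strict monotonicity \eqref{eq:diffusive.flux.properties.original.mono} with $r=\sobIndexConj$ forces $\vectSol_1=\vectSol_2$. Then $b(\vectTestFun,\scalarSol_1-\scalarSol_2)=0$ for all $\vectTestFun \in \vectSolSpace$. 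The continuous inf-sup condition for $b$ on $\vectSolSpace\times\scalarSolSpace$ gives $\scalarSol_1=\scalarSol_2$. To prove that inf-sup condition, given $\scalarTestFun\in\lebSpace{\sobIndex}{\domain}$, take $\vectTestFun=\grad\phi$ with $\phi$ solving a Poisson-type problem with datum $\euNorm{\scalarTestFun}^{\sobIndex-2}\scalarTestFun\in\lebSpace{\sobIndexConj}{\domain}$. Then $\norm{\vectLebSpace{\sobIndexConj}{\domain}}{\grad\phi}$ must be controlled by the datum. This is a $W^{1,\sobIndexConj}$ regularity statement, which is delicate on nonconvex polygons for extreme exponents. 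I expect this to be the main technical obstacle. The safer route is to avoid it: show that any solution has $\scalarSol\in W^{1,\sobIndex}_0(\domain)$ with $\gradScalarSol=\fluxFunction(\vectSol)$, by choosing $\vectTestFun$ smooth compactly supported. Uniqueness then reduces to uniqueness for the primal problem.

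\textbf{Stability bounds.} Test the primal equation with $\scalarSol$ and combine H\"older and Poincar\'e:
\begin{equation*}
\norm{\lebSpace{\sobIndex}{\domain}}{\gradScalarSol}^\sobIndex \le \norm{\lebSpace{\sobIndexConj}{\domain}}{\loadTerm}\,\norm{\lebSpace{\sobIndex}{\domain}}{\scalarSol} \lesssim \norm{\lebSpace{\sobIndexConj}{\domain}}{\loadTerm}\,\norm{\lebSpace{\sobIndex}{\domain}}{\gradScalarSol}.
\end{equation*}
Hence $\norm{\lebSpace{\sobIndex}{\domain}}{\gradScalarSol}\lesssim\norm{\lebSpace{\sobIndexConj}{\domain}}{\loadTerm}^{1/(\sobIndex-1)}=\norm{\lebSpace{\sobIndexConj}{\domain}}{\loadTerm}^{\sobIndexConj-1}$. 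Poincar\'e again gives the bound on $\norm{\scalarSolSpace}{\scalarSol}$.

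For the flux, $\norm{\vectLebSpace{\sobIndexConj}{\domain}}{\vectSol}^{\sobIndexConj}=\norm{\lebSpace{\sobIndex}{\domain}}{\gradScalarSol}^{\sobIndex}\lesssim\norm{\lebSpace{\sobIndexConj}{\domain}}{\loadTerm}^{\sobIndexConj}$. Combined with $\norm{\lebSpace{\sobIndexConj}{\domain}}{\divergence\vectSol}=\norm{\lebSpace{\sobIndexConj}{\domain}}{\loadTerm}$, this gives $\norm{\vectSolSpace}{\vectSol}\lesssim\norm{\lebSpace{\sobIndexConj}{\domain}}{\loadTerm}$. All constants depend only on $\sobIndex$ and, through Poincar\'e, on $\domain$.
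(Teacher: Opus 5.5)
The paper does not prove this statement. It simply invokes \cite[Theorem~2.1]{FARHLOUL.MANOUZI:2000}. For the discrete analogue (Section~\ref{sec:stability}) it uses the intrinsic saddle-point route, via \cite{SCHEURER:1977}:
\begin{itemize}
\item an inf-sup condition for $b$;
\item continuity and coercivity of $a$ on the kernel;
\item bounds from $\norm{\vectLebSpace{\sobIndexConj}{\domain}}{\vectSol}^{\sobIndexConj}=a(\vectSol,\vectSol)=\int_\domain \loadTerm\scalarSol$, combined with $\norm{\scalarSolSpace}{\scalarSol}\lesssim\norm{\vectLebSpace{\sobIndexConj}{\domain}}{\vectSol}^{\sobIndexConj-1}$ from the inf-sup.
\end{itemize}

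Your proof instead reduces everything to the primal problem. You get existence by convex minimization, recover the mixed equations through the constitutive relation and the Green formula, and obtain the bounds by testing with $\scalarSol$ and using Poincar\'e. This is correct and more elementary. With the fix below, no inf-sup condition is needed at all. It also shows that the multiplier is the primal solution: $\scalarSol\in W^{1,\sobIndex}_0(\domain)$ with $\gradScalarSol=\fluxFunction(\vectSol)$.

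The saddle-point route never leaves $\vectSolSpace\times\scalarSolSpace$. That is why the paper uses it for the discrete problem, where no conforming primal reformulation is available.

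Your constants depend on $\domain$ through the Poincar\'e constant. This is unavoidable, since rescaling $\domain$ rescales them, so the statement's \emph{only on $p$} must be read with $\domain$ fixed. For the Green formula, density of $C^\infty_c(\domain)$ in $W^{1,\sobIndex}_0(\domain)$ plus the definition of the distributional divergence suffice; you do not need density of smooth fields in $\vectSolSpace$.

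Two points in the uniqueness step need tightening.

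\textbf{The inf-sup condition is not the obstacle.} $\vectSolSpace$ carries no boundary condition, and its norm only involves $\vectTestFun$ and $\divergence\vectTestFun$ in $L^{\sobIndexConj}$, so you never need $\vectTestFun=\grad\phi$. You can use Bogovskii's right inverse of the divergence, as in the proof of Lemma~\ref{lem:discrete.infsup}. Or, more elementarily:
\begin{itemize}
\item extend $G=\euNorm{\scalarTestFun}^{\sobIndex-2}\scalarTestFun$ by zero to $\tilde G$ on a box $(a,b)\times(c,d)\supset\domain$;
\item set $\vectTestFun(x_1,x_2)=\bigl(\int_a^{x_1}\tilde G(s,x_2)\,\mathrm{d}s,\,0\bigr)$;
\item then $\divergence\vectTestFun=G$ in $\domain$, and H\"older in $s$ gives $\norm{\vectLebSpace{\sobIndexConj}{\domain}}{\vectTestFun}\le(b-a)\norm{\lebSpace{\sobIndexConj}{\domain}}{G}$.
\end{itemize}

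\textbf{Your \emph{safer route} does not give the boundary condition as written.} Fields in $C^\infty_c(\domain)^2$ only yield $\gradScalarSol=\fluxFunction(\vectSol)$, i.e.\ $\scalarSol\in W^{1,\sobIndex}(\domain)$. To get zero trace you must test with fields smooth up to $\domainBoundary$, which gives $\int_{\domainBoundary}\scalarSol\,\vectTestFun\cdot\normalDomain=0$. The quickest fix uses what you already proved. Since $\vectSol_1=\vectSol_2$, compactly supported fields give $\grad(\scalarSol_1-\scalarSol_2)=\boldsymbol{0}$. By connectedness, $\scalarSol_1-\scalarSol_2=c$ is constant. Testing with $\vectTestFun=\boldsymbol{x}$ then gives $2c\euNorm{\domain}=0$, so $c=0$.
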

%
\subsection{Mesh and geometric assumptions}\label{sec:mesh}
Let $\{ \domain_h \}_h$ be a family of decompositions of the domain $\domain \subset \Real^{\dimDomain}$ into nonoverlapping polygonal elements.
We denote the set of edges by $\edges$.
For each element $\element \in \domain_h$, let $h_{\element}$ be its diameter, $\normalElement$ its unit outer normal to the boundary $\partial \element$, $\euNorm{\element}$ its area, and
$\edgeElement$ its set of edges.
For any $\edge \in \edges$, we denote its diameter by $h_{\edge}$.
and denote by $\normalEdge$ one of the its two unit normal vectors, which is fixed once and
for all, with the convection that $\normalEdge$ points outward $\domain$ if $\edge \subset \partial \domain$.
We set $h \coloneqq \max_{\element \in \domain_h} h_{\element}$.
\begin{assumption}\label{ass:mesh.assumptions}
  There exists a uniform constant $\rho > 0$ such that, for any element $\element \in \{ \domain_h \}_h$,
  \begin{enumerate}
    \item $\element$ is star-shaped with respect to a disk $\boldsymbol{B}_{\element}$ with radius $ \geq \rho h_{\element}$;
    \item any edge $\edge \in \edgeElement$ satisfies $h_{\edge} \geq \rho h_{\element}$.
  \end{enumerate}
\end{assumption}
\noindent
We refer to $\rho$ as the mesh regularity parameter.
\begin{remark}\label{lem.element.triangulation}
  As a consequence of Assumption~\ref{ass:mesh.assumptions}, any element $\element \in \domain_h$ admits a conforming simplicial mesh $\mathcal{T}_h(\element)$ in the sense of \cite[Chapter 2]{CIARLET:2002}. Such a decomposition is obtained by
  connecting each edge with the center of the disk $\boldsymbol{B}_{\element}$ in Assumption~\ref{ass:mesh.assumptions} (see \cite[Remark 2.1]{BEIRAODAVEIGA.MASCOTTO.ETAL:2022} for further details).
\end{remark}
Finally, for any integer $m \geq 0$ and any $r \in [1, \infty]$, we introduce the broken Sobolev spaces defined over the mesh $\domain_h$ as
\begin{equation*}
  \sobSpace{m}{r}{\domain_h} \coloneqq \left\{ v \in \lebSpace{r}{\domain} \colon v_{|\element} \in \sobSpace{m}{r}{\element} \quad \forall \element \in \domain_h \right\}.
\end{equation*}
These spaces are naturally equipped with the broken norm and broken seminorm, defined respectively as
\begin{equation*}
  \norm{\sobSpace{m}{r}{\domain_h}}{v}^r \coloneqq \sum_{\element \in \domain_h} \norm{\sobSpace{m}{r}{\element}}{v}^r, \qquad
  \seminorm{\sobSpace{m}{r}{\domain_h}}{v}^r \coloneqq \sum_{\element \in \domain_h} \seminorm{\sobSpace{m}{q}{\element}}{v}^r,
\end{equation*}
with the usual modifications for $r = \infty$.
Similarly, we denote by $\vectSobSpace{m}{r}{\domain_h}$ the corresponding vector-valued broken Sobolev space.
%
\subsection{Polynomial projections and properties}\label{sec:poly.proj}
Let $Y \in \domain_h \cup \edges$. For an integer $\ell \geq 0$, we denote the space of polynomials of total degree up to $\ell$ over $Y$ by $\polySpace{\ell}(Y)$.
We define the broken polynomial space over the mesh $\domain_h$ as
\begin{equation*}
  \polySpace{\ell} (\domain_h) \coloneqq \{ \scalarPolyFun{\ell} \in \lebSpace{1}{\domain}: \restr{\scalarPolyFun{\ell}}{\element} \in \polySpace{\ell}(\element) \text{ for all } \element
  \in \domain_h \}.
\end{equation*}
We also introduce the set of scaled scalar monomial basis on $\edge \in \edges$ with midpoint $x_{\edge}$, and on $\element \in \domain_h$ with barycenter $\boldsymbol{x}_{\element}$, as
\begin{equation*}
  \mathcal{M}_h^{\ell}(\edge) \coloneqq \left\{ \left(\frac{x-x_{\edge}}{h_{\edge}}\right)^{\alpha} : \alpha = {0,\ldots,\ell} \right\} \quad \text{and} \quad 
  \mathcal{M}_h^{\ell}(\element) \coloneqq \left\{ \left(\frac{\boldsymbol{x}-\boldsymbol{x}_{\element}}{h_{\element}}\right)^{\boldsymbol{\alpha}} : \euNorm{\boldsymbol{\alpha}} = {0,\ldots,\ell} \right\},
\end{equation*}
where, for a non-negative multi-index $\boldsymbol{\alpha}=(\alpha_1,\alpha_2)$, we set $\euNorm{\boldsymbol{\alpha}} \coloneqq \alpha_1 + \alpha_2$ and $ \boldsymbol{x}^{\boldsymbol{\alpha}} \coloneqq x_1^{\alpha_1} x_2^{\alpha_2}$.
The local $L^2$-orthogonal projector $\locProjL{\ell} \colon \lebSpace{1}{\element} \to \polySpace{\ell}(\element)$ is defined, for any $v \in \lebSpace{1}{\element}$, by
\begin{equation} \label{eq:def.loc.projL}
  \int_{\element} (v - \locProjL{\ell} v) \scalarPolyFun{\ell} = 0 \quad \text{for all } \scalarPolyFun{\ell} \in \polySpace{\ell}(\element).
\end{equation}
The global $L^2$-orthogonal projector $\projL{\ell} \colon \lebSpace{1}{\domain} \to \polySpace{\ell}(\domain_h)$ is defined by setting, for all $ v \in \lebSpace{1}{\domain}$,
$\restr{(\projL{\ell} v)}{\element} \coloneqq \locProjL{\ell} \restr{v}{\element}$ for all $\element \in \domain_h$.
Local and global $L^2$-orthogonal projectors for vector-valued functions, denoted by $\locVectProjL{\ell}$ and $\vectProjL{\ell}$ respectively, are defined component-wise.
We collect here some well-known results concerning polynomial approximation and inverse inequalities on general polygonal domains (see, e.g., \cite[Section~1.3]{DIPIETRO.DRONIOU:2020}) that
will be used in the following analysis.
\begin{lemma}[$L^r$-boundedness and approximation property of $\locVectProjL{\ell}$]\label{lem:projL.approx}
  Let $\element \in \domain_h$. For any integers $\ell \geq 0$, $n\in \{0,1,\ldots,\ell+1\}$ and $m \in \{0,1,\ldots, n\}$, and any real numbers $r\in [1,\infty]$, it holds
  \begin{subequations}\label{eq:vectProjL.prop}
    \begin{alignat}{2}
      \norm{\vectLebSpace{r}{\element}}{\locVectProjL{\ell}\vectTestFun} &\lesssim \norm{\vectLebSpace{r}{\element}}{\vectTestFun}
      \qquad &&\text{for all} \quad \vectTestFun \in \vectLebSpace{r}{\element}, \label{eq:vectProjL.stab.element} \\
      \seminorm{\vectSobSpace{m}{r}{\element}}{\vectTestFun - \locVectProjL{\ell}\vectTestFun} &\lesssim h_{\element}^{n-m} \seminorm{\vectSobSpace{n}{r}{\element}}{\vectTestFun}
      \qquad &&\text{for all} \quad \vectTestFun \in \vectSobSpace{n}{r}{\element}, \label{eq:vectProjL.approx.element}
    \end{alignat}
  \end{subequations}
  with the hidden constants in \eqref{eq:vectProjL.stab.element} and \eqref{eq:vectProjL.approx.element} depending only on $\{ \ell, r,\rho \}$ and $\{\ell, r, m, n, \rho \}$, respectively.
\end{lemma}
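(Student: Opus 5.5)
We work componentwise, since $\locVectProjL{\ell}$ acts on each scalar component through $\locProjL{\ell}$ and the $\vectLebSpace{r}{\element}$ and $\vectSobSpace{m}{r}{\element}$ (semi)norms are equivalent to sums of componentwise ones with constants depending only on $r$. We therefore prove both estimates for scalar $v$. The main tools are Assumption~\ref{ass:mesh.assumptions}, which provides a disk $\boldsymbol{B}_{\element}$ of radius $\geq \rho h_{\element}$, and a scaling argument.

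\textbf{Stability \eqref{eq:vectProjL.stab.element}.} Expand $\locProjL{\ell} v$ in the scaled monomial basis $\mathcal{M}_h^{\ell}(\element)$. The Gram matrix $G_{\element} = \bigl(\int_{\element} m_\alpha m_\beta\bigr)$ equals $\euNorm{\element}$ times a matrix whose smallest eigenvalue is bounded below. This holds because the integral over $\element$ dominates the integral over $\boldsymbol{B}_{\element}$, and after rescaling by $h_{\element}$ the ball has radius $\geq\rho$ inside the unit-size region. A compactness argument on the finite-dimensional space $\polySpace{\ell}$ then gives $\norm{\lebSpace{2}{\boldsymbol{B}}}{p}\gtrsim \norm{\lebSpace{\infty}{\hat \element}}{p}$ on the rescaled element. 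This yields the key polynomial norm equivalence
\[
\norm{\lebSpace{\infty}{\element}}{p} \lesssim \euNorm{\element}^{-1/s}\norm{\lebSpace{s}{\element}}{p} \quad \text{for all } p\in\polySpace{\ell}(\element),\ s\in[1,\infty].
\]
Then $\norm{\lebSpace{r}{\element}}{\locProjL{\ell}v}\le \euNorm{\element}^{1/r}\norm{\lebSpace{\infty}{\element}}{\locProjL{\ell}v}\lesssim \euNorm{\element}^{1/r-1/2}\norm{\lebSpace{2}{\element}}{\locProjL{\ell}v}$. Writing $\norm{\lebSpace{2}{\element}}{\locProjL{\ell}v}^2=\int_{\element} v\,\locProjL{\ell}v\le \norm{\lebSpace{r}{\element}}{v}\norm{\lebSpace{r'}{\element}}{\locProjL{\ell}v}$ and applying the inverse estimate once more to $\norm{\lebSpace{r'}{\element}}{\locProjL{\ell}v}\lesssim \euNorm{\element}^{1/r'-1/2}\norm{\lebSpace{2}{\element}}{\locProjL{\ell}v}$, the powers of $\euNorm{\element}$ cancel exactly. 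This gives $\norm{\lebSpace{r}{\element}}{\locProjL{\ell}v}\lesssim\norm{\lebSpace{r}{\element}}{v}$ with $r=1,\infty$ included.

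\textbf{Approximation \eqref{eq:vectProjL.approx.element}.} Take $q\in\polySpace{\ell}(\element)$ to be the averaged Taylor polynomial of degree $n-1\le\ell$ over $\boldsymbol{B}_{\element}$. The Bramble--Hilbert/Dupont--Scott theory for domains star-shaped with respect to a ball, whose chunkiness is bounded by $1/\rho$, gives $\seminorm{\sobSpace{j}{r}{\element}}{v-q}\lesssim h_{\element}^{n-j}\seminorm{\sobSpace{n}{r}{\element}}{v}$ for $0\le j\le n$. Since $\locProjL{\ell}q=q$, we write
\[
v-\locProjL{\ell}v=(v-q)-\locProjL{\ell}(v-q).
\]
The first term is handled by the estimate above. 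For the second, the polynomial inverse inequality $\seminorm{\sobSpace{m}{r}{\element}}{p}\lesssim h_{\element}^{-m}\norm{\lebSpace{r}{\element}}{p}$ is obtained by the same scaling and compactness argument on the rescaled element. Combining it with stability gives $\seminorm{\sobSpace{m}{r}{\element}}{\locProjL{\ell}(v-q)}\lesssim h_{\element}^{-m}\norm{\lebSpace{r}{\element}}{v-q}\lesssim h_{\element}^{n-m}\seminorm{\sobSpace{n}{r}{\element}}{v}$.

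\textbf{Main obstacle.} The delicate step is making the constants depend only on $\rho$ and not on the particular polygon. The compactness argument must therefore be carried out over the family of rescaled elements of unit diameter containing a disk of radius $\rho$. This is done by comparing every norm against the fixed disk $\boldsymbol{B}_{\element}$, which is contained in a disk of radius $1$, so the comparison reduces to finitely many norm equivalences on $\polySpace{\ell}$ over disks with radii in $[\rho,1]$. Alternatively, the simplicial submesh of Remark~\ref{lem.element.triangulation} can be used to transfer standard finite-element inverse estimates element by element.
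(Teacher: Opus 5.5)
Your proof is correct. The paper does not prove this lemma; it only points to Di Pietro--Droniou (Section~1.3), where the argument has the same structure as yours:
\begin{itemize}
\item \emph{Stability.} Combine the duality identity $\|\locProjL{\ell}v\|_{L^2(E)}^2=\int_E v\,\locProjL{\ell}v$ with the polynomial embedding $\|p\|_{L^\infty(E)}\lesssim |E|^{-1/2}\|p\|_{L^2(E)}$, so that the powers of $|E|$ cancel.
\item \emph{Approximation.} Use that the projector preserves $\polySpace{\ell}(E)$, take a Bramble--Hilbert-type approximant $q$ of degree $n-1\le\ell$ (with $q=0$ when $n=0$, which reduces to stability), and conclude with stability and the polynomial inverse inequality.
\end{itemize}
The main difference lies in how the geometric constants are obtained. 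The reference works with regular meshes defined through a matching simplicial submesh, a setting that also covers elements that are not star-shaped. You instead work directly with the single disk of Assumption~\ref{ass:mesh.assumptions}, which is simpler and fully sufficient here. Your Gram-matrix remark is not needed once you compare against that disk.
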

\begin{lemma}[Polynomial Lebesgue embeddings]\label{lem:poly.sob.embedding}
  Let $Y \in \domain_h \cup \edges$. For any integer $\ell \geq 0$, any real numbers $r,s\in [1,\infty]$, and any $\scalarPolyFun{\ell} \in \polySpace{\ell}(Y)$, it holds
  \begin{equation}\label{eq:poly.sob.embedding}
    \norm{\lebSpace{r}{Y}}{\scalarPolyFun{\ell}} \simeq \euNorm{Y}^{\frac{1}{r} - \frac{1}{s}} \norm{\lebSpace{s}{Y}}{\scalarPolyFun{\ell}},
  \end{equation}
  with a hidden constant depending only on $\ell, r, s$ and $\rho$.
\end{lemma}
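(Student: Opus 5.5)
The approach is the classical one for polynomial norm equivalences on shape-regular domains: transport the polynomial to a fixed reference configuration, invoke equivalence of norms on the finite-dimensional space $\polySpace{\ell}$, and track the Jacobian scaling.

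\textbf{Edges.} For $Y=\edge\in\edges$, map $\edge$ affinely onto the reference interval $\hat{\edge}=(0,1)$ by $F(\hat x)=x_{\edge}+h_{\edge}(\hat x-1/2)$. Then $\hat p\coloneqq \scalarPolyFun{\ell}\circ F\in\polySpace{\ell}(\hat{\edge})$ and a change of variables gives $\norm{\lebSpace{r}{\edge}}{\scalarPolyFun{\ell}}=h_{\edge}^{1/r}\norm{\lebSpace{r}{\hat{\edge}}}{\hat p}$, with the convention $1/\infty=0$. Since $\polySpace{\ell}(\hat{\edge})$ is finite dimensional, $\norm{\lebSpace{r}{\hat{\edge}}}{\hat p}\simeq\norm{\lebSpace{s}{\hat{\edge}}}{\hat p}$ with constants depending only on $\ell,r,s$. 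Combining the two scalings yields \eqref{eq:poly.sob.embedding} because $\euNorm{\edge}=h_{\edge}$.

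\textbf{Elements.} For $Y=\element\in\domain_h$, use the simplicial submesh $\mathcal{T}_h(\element)$ of Remark~\ref{lem.element.triangulation}. By Assumption~\ref{ass:mesh.assumptions}, each triangle $T$ is shape regular with regularity depending only on $\rho$, satisfies $\euNorm{T}\simeq h_{\element}^2\simeq\euNorm{\element}$, and the number of triangles is bounded in terms of $\rho$. On each $T$, the affine map to the reference triangle and finite dimensionality give, as for edges,
\[
\norm{\lebSpace{r}{T}}{\scalarPolyFun{\ell}}\simeq\euNorm{T}^{\frac1r-\frac1s}\norm{\lebSpace{s}{T}}{\scalarPolyFun{\ell}},
\]
with constants depending only on $\ell,r,s,\rho$. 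The affine-map constants depend only on the shape regularity of $T$, which is controlled by $\rho$.

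To pass from triangles to $\element$, take $r,s<\infty$ and write
\[
\norm{\lebSpace{r}{\element}}{\scalarPolyFun{\ell}}^r=\sum_T\norm{\lebSpace{r}{T}}{\scalarPolyFun{\ell}}^r\simeq \euNorm{\element}^{1-\frac rs}\sum_T\norm{\lebSpace{s}{T}}{\scalarPolyFun{\ell}}^r.
\]
Since the number of triangles is bounded, $\sum_T a_T^r\simeq(\sum_T a_T^s)^{r/s}$ with constants depending only on $\rho,r,s$, which gives the claim. The case $r$ or $s=\infty$ follows the same way, using the maximum over $T$ in place of the sum.

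\textbf{Main obstacle.} The delicate step is making every constant uniform in $\element$. This requires checking that the submesh triangles are uniformly shape regular, which holds because the disk radius is at least $\rho h_{\element}$ and each edge has length at least $\rho h_{\element}$, and that their number is uniformly bounded, which holds because the perimeter is $\lesssim h_{\element}$ while each edge has length $\gtrsim\rho h_{\element}$. If a more direct route is preferred, one can instead use the equivalence $\norm{\lebSpace{r}{\element}}{\cdot}\simeq\norm{\lebSpace{r}{\boldsymbol{B}_{\element}}}{\cdot}$ on $\polySpace{\ell}$, since $\boldsymbol{B}_{\element}\subset\element\subset$ a disk of radius $h_{\element}$, and then scale the disk.
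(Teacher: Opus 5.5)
Your proof is correct and is essentially the standard argument behind this result. The paper gives no proof of its own and defers to the Di Pietro--Droniou reference cited just before the lemma, where the estimate is obtained in the same way: affine transport to a reference simplex, finite-dimensional norm equivalence, and summation over a uniformly shape-regular simplicial submesh whose boundedly many simplices have area comparable to $\euNorm{\element}$.

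The one step you leave loose is the bound on the number of sub-triangles. Your claim that the perimeter is $\lesssim h_{\element}$ is true but not self-evident. The count follows more directly from an area argument: star-shapedness puts $\boldsymbol{B}_{\element}$ on the inner side of every edge's supporting line, so each sub-triangle has area at least $\frac12\rho^2 h_{\element}^2$. Since the sub-triangles are disjoint and $\euNorm{\element}\le \pi h_{\element}^2$, there are at most $2\pi/\rho^2$ of them.
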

\begin{lemma}[Polynomial inverse inequality]\label{lem:grad.poly.inv.est}
  Let $Y \subset \Real^{\dimDomain}$ be a bounded polygonal domain. For any integer $\ell \geq 0$, any real number $r\in [1,\infty]$, and any $\scalarPolyFun{\ell} \in \polySpace{\ell}(Y)$, it holds
  \begin{equation}\label{eq:grad.poly.inv.est}
    \norm{\lebSpace{r}{Y}}{\grad \scalarPolyFun{\ell}} \lesssim h_{Y}^{-1} \norm{\lebSpace{r}{Y}}{ \scalarPolyFun{\ell}},
  \end{equation}
  with a hidden constant depending only on $\ell, r$ and the shape regularity of $Y$.
\end{lemma}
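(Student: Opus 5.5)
The inequality in Lemma~\ref{lem:grad.poly.inv.est} is scale invariant, so I would prove it on a reference configuration and then transport it back by scaling. Let $\boldsymbol{x}_Y$ be a point of $Y$ and set $\hat Y \coloneqq h_Y^{-1}(Y-\boldsymbol{x}_Y)$, so that $\hat Y$ has unit diameter. Define $\hat{\scalarPolyFun{\ell}}(\hat{\boldsymbol{x}}) \coloneqq \scalarPolyFun{\ell}(\boldsymbol{x}_Y + h_Y \hat{\boldsymbol{x}})$, which is again in $\polySpace{\ell}(\hat Y)$. A change of variables gives, for $r<\infty$,
\[
\norm{\lebSpace{r}{Y}}{\grad \scalarPolyFun{\ell}} = h_Y^{-1} h_Y^{2/r}\norm{\lebSpace{r}{\hat Y}}{\grad \hat{\scalarPolyFun{\ell}}}, \qquad \norm{\lebSpace{r}{Y}}{\scalarPolyFun{\ell}} = h_Y^{2/r}\norm{\lebSpace{r}{\hat Y}}{\hat{\scalarPolyFun{\ell}}}.
\]
The same identities hold for $r=\infty$ without the factor $h_Y^{2/r}$. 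It therefore suffices to show $\norm{\lebSpace{r}{\hat Y}}{\grad \hat q} \le C \norm{\lebSpace{r}{\hat Y}}{\hat q}$ for every $\hat q \in \polySpace{\ell}(\hat Y)$, with $C$ depending only on $\ell$, $r$ and the shape regularity.

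On a fixed domain $\hat Y$ both sides are norms or seminorms on the finite-dimensional space $\polySpace{\ell}$. The left side is a seminorm, and the right side is a norm because a polynomial vanishing a.e. on an open set is zero. Finite-dimensional norm equivalence then gives the bound, but with a constant depending on $\hat Y$. The main obstacle is making this constant uniform over all admissible shapes, and I would use the disk from Assumption~\ref{ass:mesh.assumptions}.

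Shape regularity means $\hat Y$ contains a disk $\hat B$ of radius $\ge \rho$ and is contained in the disk $\hat D$ of radius $1$ centred at the same point. On the fixed pair $(B_\rho, B_1)$, norm equivalence on $\polySpace{\ell}$ gives
\[
\norm{\lebSpace{\infty}{B_1}}{\grad \hat q} \lesssim \norm{\lebSpace{\infty}{B_\rho}}{\hat q} \lesssim \norm{\lebSpace{r}{B_\rho}}{\hat q},
\]
with constants depending only on $\ell$, $r$ and $\rho$. Since the problem is invariant under translations, these constants do not depend on the centre. Combining with $B_\rho \subset \hat Y \subset B_1$, we get
\[
\norm{\lebSpace{r}{\hat Y}}{\grad \hat q} \le |B_1|^{1/r}\norm{\lebSpace{\infty}{B_1}}{\grad \hat q} \lesssim \norm{\lebSpace{r}{B_\rho}}{\hat q} \le \norm{\lebSpace{r}{\hat Y}}{\hat q}.
\]
Scaling back to $Y$ yields \eqref{eq:grad.poly.inv.est}.

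If $Y$ is only assumed to have some other notion of shape regularity, for instance a simplicial submesh as in Remark~\ref{lem.element.triangulation}, I would instead apply the standard inverse inequality on each simplex and sum. This uses the comparability of the simplex diameters with $h_Y$.
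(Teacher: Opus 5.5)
Your argument is correct and complete. The paper gives no proof of its own: it quotes the lemma from the monograph of Di~Pietro and Droniou. There, regularity means a matching simplicial submesh of shape-regular simplices whose diameters are comparable to $h_Y$. The standard proof in that framework is essentially your fallback paragraph:
\begin{itemize}
\item map each subsimplex $S$ affinely to a reference simplex;
\item use norm equivalence there to get $\|\nabla q\|_{L^r(S)} \lesssim h_S^{-1}\|q\|_{L^r(S)}$;
\item sum over $S$ using $h_S \simeq h_Y$.
\end{itemize}

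Your main route makes a single comparison instead, on the fixed concentric pair $B_\rho \subset \hat Y \subset B_1$. There you control $\|\nabla \hat q\|_{L^\infty(B_1)}$ by $\|\hat q\|_{L^r(B_\rho)}$ on the translation-invariant finite-dimensional space $\mathcal{P}^\ell$. You implicitly take $B_\rho$ concentric with the disk of radius $\ge\rho$, so it lies inside that disk and hence inside $\hat Y$.

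What this buys is economy of hypotheses. You only use that $Y$ contains a disk of radius $\ge \rho h_Y$. You need neither star-shapedness, nor the edge-length condition in Assumption~\ref{ass:mesh.assumptions}, nor the triangulation of Remark~\ref{lem.element.triangulation}. The same two-disk argument also yields Lemma~\ref{lem:poly.sob.embedding} on elements, using $\pi\rho^2h_Y^2 \le |Y| \le \pi h_Y^2$. It also applies verbatim to the shape-regular subtriangles $T_{e}$, where the paper invokes the lemma in the appendix.

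The submesh route, in exchange, is local and based on a reference element. It therefore extends to piecewise polynomials on the submesh and to discrete trace inequalities, which is why it is the default in the polytopal literature.
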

\begin{remark}
  The scalar version of Lemma \ref{lem:projL.approx} and the vectorial version of Lemma \ref{lem:poly.sob.embedding} hold true under the same assumptions on the parameters.
\end{remark}
%
\section{A mixed virtual element scheme} \label{sec:scheme}
Throughout the remainder of this paper, we fix the polynomial degree $\polyDegree \geq 0$.
\subsection{Discrete spaces} \label{sec:discrete.spaces}
We define the local discrete spaces for each element $\element \in \domain_h$ as (see \cite{BREZZI.FALK.ETAL:2014})
\begin{subequations}\label{eq:def.local.discrete.spaces}
  \begin{equation}\label{eq:def.local.discrete.vect.space}
    \discVectSpace (\element) \coloneqq \left\{ \discVectTestFun \in \Hdiv{\element} \cap \Hrot{\element}:
      \begin{alignedat}{2}
        &(i) && \discVectTestFun \cdot \normalEdge \in \polySpace{\polyDegree}(\edge) \quad \text{for each edge } \edge \in \edgeElement,\\
        &(ii) && \divergence \discVectTestFun \in \polySpace{\polyDegree}(\element), \\
        &(iii) && \rot \discVectTestFun \in \polySpace{\polyDegree-1}(\element)
      \end{alignedat}
    \right\},
  \end{equation}
  and
  \begin{equation}\label{eq:def.local.discrete.scalar.space}
    \discScalarSpace (\element) \coloneqq \polySpace{\polyDegree}(\element).
  \end{equation}
\end{subequations}
It is straightforward to check that $\vectPolySpace{\polyDegree}(\element) \subset \discVectSpace (\element)$. 
The local degrees of freedom for the space $\discVectSpace(\element)$ are chosen as
\begin{subequations}\label{eq:DoFs}
  \begin{alignat}{2}
    &\frac{1}{h_{\edge}}\int_{\edge} (\discVectTestFun \cdot \normalEdge) \scalarPolyFun{\polyDegree} \quad && \text{for each edge } \edge \in \edgeElement \text{ and all } \scalarPolyFun{\polyDegree} \in
    \mathcal{M}_h^{\polyDegree}(\edge), \label{eq:DoF.normal.on.edge} \\
    &\frac{h_{\element}}{\euNorm{\element}}\int_{\element} \discVectTestFun \cdot \grad \scalarPolyFun{\polyDegree} \quad && \text{ for all } \scalarPolyFun{\polyDegree} \in
    \mathcal{M}_h^{\polyDegree}(\element) \setminus \mathcal{M}_h^{0}(\element), \label{eq:DoF.grad.on.element} \\
    &\frac{1}{\euNorm{\element}}\int_{\element} \discVectTestFun \cdot\left(\frac{\boldsymbol{x} - \boldsymbol{x}_{\element}}{h_{\element}}\right)^{\perp} \scalarPolyFun{\polyDegree-1} \quad && \text{ for all }
    \scalarPolyFun{\polyDegree-1} \in \mathcal{M}_h^{\polyDegree-1}(\element), \label{eq:DoF.perp.grad.on.element}
  \end{alignat}
\end{subequations}
where, for any $\boldsymbol{a}=(a_1,a_2) \in \Real^2$, we use the notation $(\boldsymbol{a})^{\perp} \coloneqq (-a_2,a_1) \in \Real^2$.
We denote by $\locDofVect(\discVectTestFun) \in \Real^{N_{\element}}$ the vector of degrees of freedom \eqref{eq:DoFs} of $\discVectTestFun$, with $N_{\element} \coloneqq \dim(\discVectSpace(\element))$.
\begin{remark}
  A crucial property of the degrees of freedom \eqref{eq:DoFs} is that they allow for the explicit computation of the local $L^2$-orthogonal projector. In particular, for any
  $\discVectTestFun \in \discVectSpace(\element)$, the polynomial $\locVectProjL{\polyDegree} \discVectTestFun$ is uniquely determined by the values of $\locDofVect(\discVectTestFun)$; we
  refer to \cite[Section 3]{BEIRAODAVEIGA.BREZZI.ETAL:2015} for further details.
\end{remark}
The global discrete spaces are defined as
\begin{equation} \label{eq:def.discrete.spaces}
  \discVectSpace \coloneqq \{ \discVectTestFun \in \Hdiv{\domain}: \restr{\discVectTestFun}{\element} \in \discVectSpace (\element) \text{ for all } \element \in \domain_h\} \quad \text{and} \quad
  \discScalarSpace \coloneqq \polySpace{\polyDegree} (\domain_h),
\end{equation}
with global degrees of freedom given by
\begin{subequations}\label{eq:DoFs.global}
  \begin{alignat}{2}
    &\frac{1}{h_{\edge}}\int_{\edge} (\discVectTestFun \cdot \normalEdge) \scalarPolyFun{\polyDegree} \quad && \text{for each edge } \edge \in \edges \text{ and all } \scalarPolyFun{\polyDegree} \in
    \mathcal{M}_h^{\polyDegree}(\edge), \label{eq:DoF.normal.on.edge.global} \\
    &\frac{h_{\element}}{\euNorm{\element}}\int_{\element} \discVectTestFun \cdot \grad \scalarPolyFun{\polyDegree} \quad && \text{for each element } \element \in \domain_h \text{ and all } \scalarPolyFun{\polyDegree} \in
    \mathcal{M}_h^{\polyDegree}(\element) \setminus \mathcal{M}_h^{0}(\element), \label{eq:DoF.grad.on.element.global} \\
    &\frac{1}{\euNorm{\element}}\int_{\element} \discVectTestFun \cdot\left(\frac{\boldsymbol{x} - \boldsymbol{x}_{\element}}{h_{\element}}\right)^{\perp} \scalarPolyFun{\polyDegree-1} \quad && \text{for each element } \element \in \domain_h \text{ and all }
    \scalarPolyFun{\polyDegree-1} \in \mathcal{M}_h^{\polyDegree-1}(\element), \label{eq:DoF.perp.grad.on.element.global}
  \end{alignat}
\end{subequations}
%
\subsection{The Fortin interpolation operator and commuting properties}
We define the Fortin operator $\vectProjF{\polyDegree}: \vectSobSpace{1}{r}{\domain} \to \discVectSpace$ (with $r > 1$) as the DoF interpolation operator. For any $\vectTestFun \in
\vectSobSpace{1}{r}{\domain}$, the function $\vectProjF{\polyDegree} \vectTestFun$ is the unique element in $\discVectSpace$ such that
\begin{subequations}\label{eq:FortinProj}
  \begin{alignat}{2}
    \int_{\edge} ((\vectTestFun - \vectProjF{\polyDegree} \vectTestFun) \cdot \normalEdge) \scalarPolyFun{\polyDegree} &= 0
    \qquad
    && \text{for each edge } \edge \in \edges \text{ and all } \scalarPolyFun{\polyDegree} \in \mathcal{M}_h^{\polyDegree}(\edge); \label{eq:FortinProj.normal.on.edge} \\
    \int_{\element} (\vectTestFun - \vectProjF{\polyDegree} \vectTestFun) \cdot \grad \scalarPolyFun{\polyDegree} &= 0
    \qquad
    &&\text{for each element } \element \in \domain_h \text{ and all } \scalarPolyFun{\polyDegree} \in \mathcal{M}_h^{\polyDegree}(\element) \setminus \mathcal{M}_h^{0}(\element); \label{eq:FortinProj.grad.on.element} \\
    \int_{\element} (\vectTestFun - \vectProjF{\polyDegree} \vectTestFun) \cdot \left(\frac{\boldsymbol{x} - \boldsymbol{x}_{\element}}{h_{\element}}\right)^{\perp} \scalarPolyFun{\polyDegree-1} &= 0
    \qquad
    &&\text{for each element } \element \in \domain_h \text{ and all } \scalarPolyFun{\polyDegree-1} \in \mathcal{M}_h^{\polyDegree-1}(\element). \label{eq:FortinProj.perp.grad.on.element}
  \end{alignat}
\end{subequations}
Note that the requirement $r > 1$ ensures that the normal traces on the edges in \eqref{eq:FortinProj.normal.on.edge} are well-defined.
Using the definition above, it is straightforward to check that the operator satisfies the commuting property
\begin{equation}\label{eq:projF.div.commute}
  \divergence \vectProjF{\polyDegree} \vectTestFun = \projL{\polyDegree} \divergence \vectTestFun \text{ for all } \vectTestFun \in \vectSobSpace{1}{r}{\domain}.
\end{equation}
The stability and approximation properties of the operator $\vectProjF{\polyDegree}$ in the $L^r$-framework will be discussed in Section~\ref{sec:DoF.interpolation.prop}. Having established the properties of the discrete functional spaces and the interpolation operator, we proceed to define the discrete bilinear and non-linear forms that compose the mixed VEM scheme.
\subsection{Discrete operators and problem}\label{sub:disc-operators}
For all $\element \in \domain_h$, and all $\discVectFunOne,\discVectFunTwo \in \discVectSpace (\element)$, we define
\begin{equation} \label{eq:def.ah.local}
  a_{h,\element} (\discVectFunOne,\discVectFunTwo) \coloneqq a_{\element} (\locVectProjL{\polyDegree}\discVectFunOne,\locVectProjL{\polyDegree}\discVectFunTwo) +
  s_{\element}(\discVectFunOne-\locVectProjL{\polyDegree}\discVectFunOne,\discVectFunTwo-\locVectProjL{\polyDegree}\discVectFunTwo)
\end{equation}
where the local operator $a_{\element}$ is given by $a_{\element} (\discVectFunOne,\discVectFunTwo) \coloneqq \int_{\element} \fluxFunction (\discVectFunOne) \cdot \discVectFunTwo$, while the stabilization term $s_{\element}$ is designed to match the non-linear structure of the problem (see, e.g., \cite{ANTONIETTI.BEIRAODAVEIGA.ETAL:2024,ANTONIETTI.BEIRAODAVEIGA.ETAL:2026} for the case of non-Newtonian
Stokes flow) as
\begin{equation}\label{eq:def.sE}
  s_{\element} (\discVectFunOne,\discVectFunTwo)
  \coloneqq h_{\element}^{2} \fluxFunction_{N_{\element}} (\locDofVect (\discVectFunOne)) \cdot \locDofVect (\discVectFunTwo)
  = h_{\element}^{2} \euNorm{\locDofVect (\discVectFunOne)}^{\sobIndexConj-2} \locDofVect (\discVectFunOne) \cdot \locDofVect (\discVectFunTwo),
\end{equation}
with $\fluxFunction_{N_{\element}} \coloneqq \fluxFunction_{N_{\element},\sobIndexConj}$ representing the discrete flux function acting on the vectors of local degrees of freedom.
The global discrete operator is defined naturally, for all $\discVectFunOne,\discVectFunTwo \in \discVectSpace$, as
\begin{equation}\label{eq:def.ah.bh}
  a_h (\discVectFunOne,\discVectFunTwo) \coloneqq \sum_{\element \in \domain_h} a_{h,\element}(\discVectFunOne,\discVectFunTwo).
\end{equation}
Note that, owing to the choice of the discrete spaces, the divergence of any discrete vector field is explicitly a polynomial. Consequently, the continuous form $b(\cdot,\cdot)$ is used directly in the discrete setting without requiring any approximation or stabilization, as it can be evaluated exactly using the degrees of freedom.

The discrete problem reads: Find $\discVectSol \in \discVectSpace$ and $\discScalarSol \in \discScalarSpace$ such that
\begin{subequations}\label{eq:discrete.problem}
  \begin{alignat}{2}
    a_h(\discVectSol,\discVectTestFun) + b(\discVectTestFun, \discScalarSol) &= 0
    \qquad
    &&\text{for all } \discVectTestFun \in \discVectSpace, \label{eq:discrete.problem.line1} \\
    -b(\discVectSol,\discScalarTestFun) &= \int_{\domain} \loadTerm \discScalarTestFun
    \qquad
    &&\text{for all } \discScalarTestFun \in \discScalarSpace. \label{eq:discrete.problem.line2}
  \end{alignat}
\end{subequations}
%

\section{Well-posedness of the discrete scheme} \label{sec:stability}
\subsection{Abstract framework} \label{sec:framework}
To analyze the well-posedness of the discrete problem, we cast it into an abstract framework.
We introduce the non-linear operator $\mathcal{A}_h \colon \discVectSpace \to (\discVectSpace)'$ and the linear operator $\mathcal{B}_h \colon \discVectSpace \to (\discScalarSpace)'$, along
with its dual $\mathcal{B}_h' \colon \discScalarSpace \to (\discVectSpace)'$, defined by the following duality pairings
\begin{equation}\label{eq:def.abstract.operators}
  \langle \mathcal{A}_h \discVectFunOne, \discVectTestFun \rangle \coloneqq a_h(\discVectFunOne,\discVectTestFun), \quad
  \langle \mathcal{B}_h \discVectFunOne, \discScalarTestFun \rangle \coloneqq b(\discVectFunOne,\discScalarTestFun),
  \text{ and} \quad
  \langle \mathcal{B}_h' \discScalarTestFun, \discVectTestFun \rangle \coloneqq b(\discVectTestFun,\discScalarTestFun).
\end{equation}
Furthermore, we define the functional $\mathcal{F}_h \in (\discScalarSpace)'$ as $\langle \mathcal{F}_h, \discScalarTestFun \rangle \coloneqq -\int_{\domain} \loadTerm \discScalarTestFun$.
We can then introduce the discrete kernel $\discDivFreeSpace$ of the operator $\mathcal{B}_h$ and the corresponding affine manifold $\boldsymbol{\mathcal{K}} (\mathcal{F}_h)$ as
\begin{equation*}
  \discDivFreeSpace \coloneqq \{ \discVectTestFun \in \discVectSpace: \mathcal{B}_h \discVectTestFun = 0 \}, \text{ and }
  \boldsymbol{\mathcal{K}} (\mathcal{F}_h) \coloneqq \{ \discVectTestFun \in \discVectSpace: \mathcal{B}_h \discVectTestFun = \mathcal{F}_h \}.
\end{equation*}
Using this notation, Problem~\eqref{eq:discrete.problem} can be equivalently rewritten as follows: Find $\discVectSol \in \boldsymbol{\mathcal{K}} (\mathcal{F}_h)$ and $\discScalarSol \in \discScalarSpace$ such that
\begin{equation}\label{eq:discrete.problem.functional}
  \mathcal{A}_h \discVectSol + \mathcal{B}_h' \discScalarSol = 0 \quad \text{in } (\discVectSpace)'.
\end{equation}
To analyze the behavior of $\mathcal{A}_h$ on the discrete kernel, for any fixed $\tilde{\vectSol}_h \in \boldsymbol{\mathcal{K}} (\mathcal{F}_h)$ we introduce the operator
$\tilde{\mathcal{A}}_{h,\tilde{\vectSol}_h} \colon \discDivFreeSpace \to (\discDivFreeSpace)'$ defined by
\begin{equation}\label{eq:shifted.operator}
  \langle \tilde{\mathcal{A}}_{h,\tilde{\vectSol}_h} \discVectFunOne, \discVectTestFun \rangle \coloneqq \langle \mathcal{A}_h(\discVectFunOne + \tilde{\vectSol}_h), \discVectTestFun
  \rangle \qquad \text{for all } \discVectFunOne, \discVectTestFun \in \discDivFreeSpace.
\end{equation}
According to the classical theory for non-linear saddle-point problems (we refer to, e.g., \cite[Proposition~2.3]{SCHEURER:1977}), the existence of a solution to
\eqref{eq:discrete.problem.functional} is guaranteed provided that the following three conditions hold
\begin{enumerate}[label=(\roman*), ref=(\roman*)]
  \item \textit{Discrete inf-sup condition:} there exists a constant $\tilde{\beta} > 0$, independent of $h$, such that
    \begin{equation}\label{eq:discrete.infsup.generic}
      \inf_{\discScalarTestFun \in \discScalarSpace} \sup_{\discVectTestFun \in \discVectSpace} \frac{ \langle \mathcal{B}_h \discVectTestFun ,\discScalarTestFun  \rangle}{\discFullVectNorm{\discVectTestFun}
      \discScalarNorm{\discScalarTestFun}} \geq \tilde{\beta};
    \end{equation}
  \item \textit{Continuity on the kernel:} for any given $\tilde{\vectSol}_h \in \boldsymbol{\mathcal{K}} (\mathcal{F}_h)$, the operator $\tilde{\mathcal{A}}_{h,\tilde{\vectSol}_h}$ is
    continuous, i.e., for all $\discVectFunOne, \discVectFunTwo \in \discDivFreeSpace$
    \begin{equation}\label{eq:Ah.tilde.continuity}
      \norm{(\discDivFreeSpace)'}{\tilde{\mathcal{A}}_{h,\tilde{\vectSol}_h} \discVectFunOne - \tilde{\mathcal{A}}_{h,\tilde{\vectSol}_h} \discVectFunTwo} \to 0 \quad \text{as }
      \norm{\discDivFreeSpace}{\discVectFunOne - \discVectFunTwo} \to 0;
    \end{equation}
  \item \textit{Coercivity on the kernel:} for any given $\tilde{\vectSol}_h \in \boldsymbol{\mathcal{K}} (\mathcal{F}_h)$, the operator $\tilde{\mathcal{A}}_{h,\tilde{\vectSol}_h}$ is
    coercive, meaning that for all $\discVectFunOne \in \discDivFreeSpace$
    \begin{equation}\label{eq:Ah.tilde.coercivity}
      \frac{\langle \tilde{\mathcal{A}}_{h,\tilde{\vectSol}_h} \discVectFunOne, \discVectFunOne \rangle}{\norm{\discDivFreeSpace}{\discVectFunOne}} \longrightarrow +\infty \quad \text{as }
      \norm{\discDivFreeSpace}{\discVectFunOne} \to +\infty,
    \end{equation}
\end{enumerate}
with the norms $\discFullVectNorm{\cdot}$, $\discScalarNorm{\cdot}$, and $\norm{\discDivFreeSpace}{\cdot}$ defined later.
\begin{remark}
  Observing that $\divergence \discVectSpace = \discScalarSpace$, it follows that, given $\discVectTestFun \in \discVectSpace$, the condition $\langle \mathcal{B}_h \discVectTestFun ,\discScalarTestFun  \rangle = 0$ for
  all $\discScalarTestFun \in \discScalarSpace$ is equivalent to $\divergence \discVectTestFun = 0$, so that we can write
  \begin{equation}\label{eq:def.Zhk}
    \discDivFreeSpace = \{ \discVectTestFun \in \discVectSpace: \divergence \discVectTestFun = 0 \}.
  \end{equation}
\end{remark}
%
\subsection{Properties of the discrete norm}
For any $\element \in \domain_h$ and any $\discVectFunOne \in \discVectSpace(\element)$, we define the local discrete norm as
\begin{equation}\label{eq:def.local.discrete.norm.NEW}
  \locDiscFullVectNorm{\discVectFunOne}^{\sobIndexConj}
  \coloneqq
  \locDiscDivFreeNorm{\discVectFunOne}^{\sobIndexConj} + h_{\element}^{\sobIndexConj} \norm{\lebSpace{\sobIndexConj}{\element}}{\divergence \discVectFunOne}^{\sobIndexConj},
\end{equation}
where 
$\locDiscDivFreeNorm{\discVectFunOne}^{\sobIndexConj} \coloneqq
  a_{h,\element}(\discVectFunOne,\discVectFunOne) =
  \norm{\vectLebSpace{\sobIndexConj}{\element}}{\locVectProjL{\polyDegree} \discVectFunOne}^{\sobIndexConj}
  + s_{\element}(\discVectFunOne - \locVectProjL{\polyDegree} \discVectFunOne, \discVectFunOne - \locVectProjL{\polyDegree} \discVectFunOne)$.

The global discrete norm is given, for any $\discVectFunOne \in \discVectSpace$, by
\begin{equation}\label{eq:def.discrete.norm.NEW}
  \begin{aligned}
    \discFullVectNorm{\discVectFunOne}^{\sobIndexConj}
    &\coloneqq
    \sum_{\element \in \domain_h} \locDiscFullVectNorm{\discVectFunOne}^{\sobIndexConj}
    =
    \discDivFreeNorm{\discVectFunOne}^{\sobIndexConj} + \sum_{\element \in \domain_h} h_{\element}^{\sobIndexConj} \norm{\lebSpace{\sobIndexConj}{\element}}{\divergence \discVectFunOne}^{\sobIndexConj},
  \end{aligned}
\end{equation}
where
\begin{equation}\label{eq:def.ah.norm}
  \discDivFreeNorm{\discVectFunOne}^{\sobIndexConj} \coloneqq \sum_{\element \in \domain_h} \locDiscDivFreeNorm{\discVectFunOne}^{\sobIndexConj}.
\end{equation}
It is straightforward to check that
\begin{equation}\label{eq:disc.full.norm.eq.ah.norm}
  \discFullVectNorm{\discVectFunOne} = \discDivFreeNorm{\discVectFunOne} \quad \text{for all } \discVectFunOne \in \discDivFreeSpace.
\end{equation}
Finally, we endow $\discScalarSpace$ with the norm $\discScalarNorm{\cdot} \coloneqq \norm{\lebSpace{\sobIndex}{\domain}}{\cdot}$.
\begin{remark}\label{rem:boundary.dof.eu.norm.NEW}
  Given an element $\element \in \domain_h$, denote by $\locInternalDofVect$ the vector of internal degrees of freedom defined in \eqref{eq:DoF.grad.on.element} and
  \eqref{eq:DoF.perp.grad.on.element}, and by $\locBoundaryDofVect$ the vector of boundary degrees of freedom defined in \eqref{eq:DoF.normal.on.edge}.
  Given $\discVectFunOneTilda \in \discVectSpace(\element)$ such that $\discVectFunOneTilda$ is $L^2$-orthogonal to $\vectPolySpace{\polyDegree}(\element)$, noticing that both $\grad \scalarPolyFun{\polyDegree}$ and $\left(\frac{\boldsymbol{x} - \boldsymbol{x}_{\element}}{h_{\element}}\right)^{\perp} \scalarPolyFun{\polyDegree-1}$ are polynomials up to degree $\polyDegree$, we deduce that $\locInternalDofVect
  (\discVectFunOneTilda) = \boldsymbol{0}$.
  In particular, since $\euNorm{\locDofVect (\discVectFunOneTilda)} = \euNorm{\locBoundaryDofVect (\discVectFunOneTilda)}$,
  \begin{equation}\label{eq:sE.just.boundary.DoF}
    s_{\element}(\discVectFunOneTilda,\discVectFunOneTilda) = h_{\element}^{2} \euNorm{\locBoundaryDofVect (\discVectFunOneTilda)}^{\sobIndexConj}.
  \end{equation}
\end{remark}
We begin by proving an equivalence between the Euclidean norm of the boundary degrees of freedom and the $L^r$-norm of the normal trace.
\begin{lemma}\label{lem:boundaryDof.equivalence}
  Let $\element \in \domain_h$. For any $\discVectFunOne \in \discVectSpace(\element)$ and any real number $r>1$, it holds that
  \begin{equation}\label{eq:boundaryDof.equivalence}
    \euNorm{\locBoundaryDofVect \discVectFunOne }^{r} \simeq \sum_{\edge \in \edgeElement} h_{\edge}^{-1} \norm{\lebSpace{r}{\edge}}{ \discVectFunOne \cdot \normalEdge}^r,
  \end{equation}
  with the hidden constants depending only on $\polyDegree, r$ and $\rho$.
\end{lemma}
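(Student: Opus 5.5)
The plan is to work edge by edge, using that the boundary degrees of freedom on each edge are moments of a polynomial against a scaled monomial basis. For each edge $\edge \in \edgeElement$, set $g_\edge \coloneqq \discVectFunOne\cdot\normalEdge \in \polySpace{\polyDegree}(\edge)$ by property (i) in \eqref{eq:def.local.discrete.vect.space}. Denote by $\boldsymbol{d}_\edge \in \Real^{\polyDegree+1}$ the subvector of $\locBoundaryDofVect\discVectFunOne$ collecting the moments \eqref{eq:DoF.normal.on.edge} on $\edge$. Since the number of edges of $\element$ is bounded in terms of $\rho$ alone (by Assumption~\ref{ass:mesh.assumptions}, $\euNorm{\partial\element}\lesssim h_\element$ and each edge has length at least $\rho h_\element$), and all norms on a fixed finite-dimensional Euclidean space are equivalent, we have $\euNorm{\locBoundaryDofVect\discVectFunOne}^r \simeq \sum_{\edge}\euNorm{\boldsymbol{d}_\edge}^r$ with constants depending only on $\polyDegree,r,\rho$. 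It therefore suffices to prove, for a single edge, that $\euNorm{\boldsymbol{d}_\edge}^r \simeq h_\edge^{-1}\norm{\lebSpace{r}{\edge}}{g_\edge}^r$.

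For the single edge estimate I will map to a reference interval. Let $\hat{\edge}=(-1/2,1/2)$ with the affine map $x = x_\edge + h_\edge \hat x$. Then the scaled monomials become $\hat x^\alpha$, and writing $\hat g = g_\edge\circ$ map,
\[
(\boldsymbol{d}_\edge)_\alpha = \frac{1}{h_\edge}\int_\edge g_\edge \Big(\frac{x-x_\edge}{h_\edge}\Big)^\alpha = \int_{\hat\edge}\hat g\,\hat x^\alpha .
\]
The map $\hat g \mapsto (\int_{\hat\edge}\hat g\,\hat x^\alpha)_{\alpha=0}^{\polyDegree}$ is a linear injection of $\polySpace{\polyDegree}(\hat\edge)$ into $\Real^{\polyDegree+1}$: it is the Gram map of the monomial basis, and if all moments vanish then $\int \hat g^2=0$. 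Hence it is an isomorphism, and by equivalence of norms on this fixed $(\polyDegree+1)$-dimensional space, $\euNorm{\boldsymbol{d}_\edge}\simeq \norm{\lebSpace{r}{\hat\edge}}{\hat g}$ with constants depending only on $\polyDegree$ and $r$. Changing variables back gives $\norm{\lebSpace{r}{\hat\edge}}{\hat g}^r = h_\edge^{-1}\norm{\lebSpace{r}{\edge}}{g_\edge}^r$, which is exactly the claimed scaling.

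Summing the single edge estimates over $\edge\in\edgeElement$ and combining with the first step yields \eqref{eq:boundaryDof.equivalence}. No step is genuinely hard. The only point needing care is the uniform bound on the number of edges, which enters when passing between $\euNorm{\cdot}^r$ of the concatenated vector and the sum of the per-edge powers when $r\neq 2$. This is where the dependence on $\rho$ enters.
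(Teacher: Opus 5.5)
Your proof is correct and follows the paper's route. The paper also splits the boundary DoF vector by edges and uses the $\rho$-dependent bound on the number of edges to pass between $\euNorm{\cdot}^r$ and a sum of $r$-th powers; it then proves the single-edge equivalence by a ``standard scaling argument'', which your pullback to $(-1/2,1/2)$ with the moment-map isomorphism on $\polySpace{\polyDegree}$ carries out explicitly (showing along the way that the per-edge constant depends only on $\polyDegree$ and $r$).
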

\begin{proof}
  Fix $\element \in \domain_h$, $\discVectFunOne \in \discVectSpace(\element)$ and $r>1$. We first observe that
  \begin{equation}\label{eq:boundaryDof.equivalence.initial}
    \euNorm{\locBoundaryDofVect \discVectFunOne }^{r}
    \simeq 
    \sum_{\edge \in \edgeElement} \frac{1}{h_{\edge}^r}\sum_{\scalarPolyFun{\polyDegree} \in \mathcal{M}_h^{\polyDegree}(\edge)} \left| \int_{\edge} (\discVectFunOne \cdot \normalEdge) \scalarPolyFun{\polyDegree} \right|^r,
  \end{equation}
  with the hidden constants depending only on $\polyDegree, r$ and $\rho$.
  For a fixed edge $\edge \in \edgeElement$, using a standard scaling argument, it can be shown that, for all $\scalarPolyFun{\star} \in \polySpace{\polyDegree}(\edge)$,
  \begin{equation}\label{eq:edge-r-norm.poly}
    \sum_{\scalarPolyFun{\polyDegree} \in \mathcal{M}_h^{\polyDegree}(\edge)} \euNorm{\int_{\edge} \scalarPolyFun{\star} \scalarPolyFun{\polyDegree} }^r \simeq h_{\edge}^{r-1}\norm{\lebSpace{r}{\edge}}{\scalarPolyFun{\star}}^r,
  \end{equation}
  with the hidden constants depending only on $\polyDegree, r$ and $\rho$.
  Recalling that $\discVectFunOne \cdot \normalEdge \in \polySpace{\polyDegree}(\edge)$, we substitute $\scalarPolyFun{\star}=\discVectFunOne \cdot \normalEdge$ into
  \eqref{eq:edge-r-norm.poly} and insert the result into \eqref{eq:boundaryDof.equivalence.initial} to get the desired equivalence.
\end{proof}
\begin{remark}
  Combining \eqref{eq:sE.just.boundary.DoF} and \eqref{eq:boundaryDof.equivalence} with $r = \sobIndexConj$ we obtain that, given $\element \in \domain_h$, for any $\discVectFunOneTilda \in
  \discVectSpace(\element)$ $L^2$-orthogonal to $\vectPolySpace{\polyDegree}(\element)$,
  \begin{equation}\label{eq:sE.simeq.r-norm.normal.component}
    s_{\element} (\discVectFunOneTilda,\discVectFunOneTilda) \simeq h_{\element} \norm{\lebSpace{\sobIndexConj}{\partial \element}}{\discVectFunOneTilda\cdot \normalElement}^{\sobIndexConj}.
  \end{equation}
  In particular, recalling the definition of $a_{h,\element}$ in \eqref{eq:def.ah.local} we have, for any $\discVectFunOne \in \discVectSpace(\element)$,
  \begin{equation}\label{eq:ahE.simeq.disc.div.free.norm}
    \locDiscDivFreeNorm{\discVectFunOne}^{\sobIndexConj} \simeq \norm{\vectLebSpace{\sobIndexConj}{\element}}{\vectProjL{\polyDegree} \discVectFunOne}^{\sobIndexConj} +
    h_{\element} \norm{\lebSpace{\sobIndexConj}{\partial \element}}{(\discVectFunOne-\locVectProjL{\polyDegree} \discVectFunOne) \cdot \normalElement}^{\sobIndexConj},
  \end{equation}
  with the hidden constants in \eqref{eq:sE.simeq.r-norm.normal.component} and \eqref{eq:ahE.simeq.disc.div.free.norm} depending only on $\polyDegree$, $\sobIndex$, and $\rho$.
\end{remark}
\begin{remark}\label{rem:conforming}
  If all the mesh elements $\element$ are convex, standard regularity results on Lipschitz domains ensure that the inclusion $\discVectSpace \subset \vectSolSpace$ holds for all
  $\sobIndexConj \in (1, \infty)$. Without the convexity assumption, we have the inclusion $\discVectSpace \subset \vectSolSpace$ if and only if $\sobIndexConj \in (1,2]$. This is the
  reason why in the following results we restrict ourselves to the $L^r$ setting with $r \in (1,2]$.
\end{remark}
%
\subsubsection{Preliminary results}
The following Lemma provides a discrete trace inequality for virtual functions in the $L^r$-setting. Its proof is reported in Appendix~\ref{app:proof.r-norm.normal.component} for the sake of completeness.
\begin{lemma}\label{lem:r-norm.normal.component}
  Let $\element \in \domain_h$. For any $\discVectFunOne \in \discVectSpace(\element)$ and any real number $1<r \leq 2$ it holds that
  \begin{equation}\label{eq:r-norm.normal.component}
    \norm{\lebSpace{r}{\partial \element}}{\discVectFunOne \cdot \normalElement}
    \lesssim
    h_{\element}^{-\frac{1}{r}} \norm{\vectLebSpace{r}{\element}}{\discVectFunOne} + h_{\element}^{1-\frac{1}{r}} \norm{\lebSpace{r}{\element}}{\divergence \discVectFunOne},
  \end{equation}
  with a hidden constant depending only on $\polyDegree$, $r$, and $\rho$.
\end{lemma}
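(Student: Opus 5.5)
Since $\discVectFunOne\cdot\normalElement$ is piecewise polynomial of degree $\polyDegree$ on $\partial\element$, I will avoid any trace theorem for the (non-polynomial) virtual function and instead characterise the normal trace by duality. For each edge $\edge\in\edgeElement$, Lemma~\ref{lem:poly.sob.embedding} and a scaling argument on $\polySpace{\polyDegree}(\edge)$ give
\[
\norm{\lebSpace{r}{\edge}}{\discVectFunOne\cdot\normalEdge}\lesssim \sup_{0\neq q\in\polySpace{\polyDegree}(\edge)}\frac{\int_{\edge}(\discVectFunOne\cdot\normalEdge)\,q}{\norm{\lebSpace{r'}{\edge}}{q}} .
\]
Summing over edges, I will pick $g\in\polySpace{\polyDegree}(\edgeElement)$ (piecewise polynomial on $\partial\element$) with $\norm{\lebSpace{r'}{\partial\element}}{g}=1$ realising, up to constants, $\norm{\lebSpace{r}{\partial\element}}{\discVectFunOne\cdot\normalElement}\lesssim\int_{\partial\element}(\discVectFunOne\cdot\normalElement)g$. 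The edge count is bounded by Assumption~\ref{ass:mesh.assumptions}.

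Next I will lift $g$ to $\element$ and integrate by parts. Using the sub-triangulation $\mathcal{T}_h(\element)$ of Remark~\ref{lem.element.triangulation}, I construct a continuous piecewise polynomial $\varphi\in \sobSpace{1}{\infty}{\element}$ of degree $\polyDegree+1$ (or a suitable fixed degree) with $\varphi|_{\partial\element}$ agreeing with $g$ in the sense that $\int_{\edge}(\discVectFunOne\cdot\normalEdge)(g-\varphi)=0$. The simplest choice is $\varphi=g$ on each edge, extended into each triangle by the standard polynomial extension, with vertex mismatches handled by the edge-moment choice. Scaling on the shape-regular triangles gives
\[
\norm{\lebSpace{r'}{\element}}{\varphi}\lesssim h_{\element}^{1/r'}, \qquad \norm{\lebSpace{r'}{\element}}{\grad\varphi}\lesssim h_{\element}^{1/r'-1}.
\]
Green's formula, valid since $\discVectFunOne\in\Hdiv{\element}$ and $\varphi$ is Lipschitz, then yields
\[
\int_{\partial\element}(\discVectFunOne\cdot\normalElement)\varphi=\int_{\element}\discVectFunOne\cdot\grad\varphi+\int_{\element}(\divergence\discVectFunOne)\varphi .
\]
Applying Hölder with exponents $r,r'$ and $1/r'=1-1/r$ gives exactly $h_{\element}^{-1/r}\norm{\vectLebSpace{r}{\element}}{\discVectFunOne}+h_{\element}^{1-1/r}\norm{\lebSpace{r}{\element}}{\divergence\discVectFunOne}$, which is \eqref{eq:r-norm.normal.component}.

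The main obstacle is the lifting step. $g$ may be discontinuous at the vertices of $\element$, so a continuous $\varphi$ with $\varphi|_{\partial\element}=g$ generally does not exist. The fix I would try first is to fit $\varphi$ only through moments. On each edge, choose $\varphi|_{\edge}\in\polySpace{\polyDegree+2}(\edge)$ vanishing at the endpoints with $\int_{\edge}(\varphi-g)q=0$ for all $q\in\polySpace{\polyDegree}(\edge)$. This is possible because $q\mapsto$ bubble times $\polySpace{\polyDegree}$ is an isomorphism, with norm bound $\norm{\lebSpace{r'}{\edge}}{\varphi}\lesssim\norm{\lebSpace{r'}{\edge}}{g}$ by scaling. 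Since $\discVectFunOne\cdot\normalEdge\in\polySpace{\polyDegree}(\edge)$, the boundary integral is unchanged. I would then extend $\varphi$ by zero at the vertices and polynomially into each triangle of $\mathcal{T}_h(\element)$, with the needed bounds from equivalence of norms on the reference triangle.

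The restriction $r\le 2$ does not seem to be needed in this argument. It only matters for ensuring $\discVectFunOne\in\vectLebSpace{r}{\element}$ on nonconvex elements (cf.\ Remark~\ref{rem:conforming}), so that the right-hand side is finite and Green's formula is justified. I would note this explicitly when invoking the integration by parts.
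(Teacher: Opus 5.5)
Your proposal is correct and follows essentially the paper's argument: on each edge you use a degree-$(\polyDegree+2)$ function vanishing at the endpoints (an edge bubble times a polynomial, chosen by moment matching), extend it into the triangles of $\mathcal{T}_h(\element)$, integrate by parts, and conclude with H\"older and scaling; continuity across the interior edges is most easily ensured, as the paper does, by making the extension vanish there. The remaining differences are cosmetic: the paper treats each edge separately on its triangle $T_{\edge}$ and goes through an $L^2$ duality on $\edge$, using $r\le 2$ in one H\"older step that polynomial norm equivalence would also cover, whereas you assemble a single test function normalised in $L^{r'}(\partial\element)$, and your remark that $r\le 2$ is not essential is consistent with this.
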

Next, we establish an inverse-type inequality bounding the $L^r$-norm of the divergence of a discrete function by the $L^r$-norm of the function itself. The $L^2$ case was already proven in
\cite{BEIRAODAVEIGA.MASCOTTO.ETAL:2022}, and the proof presented here relies on similar arguments.
\begin{lemma}\label{lem:r-norm.div.virtual.func}
  Let $\element \in \domain_h$. For any $\discVectFunOne \in \discVectSpace(\element)$ and any real number $1<r \leq 2$, it holds that
  \begin{equation}\label{eq:r-norm.div.virtual.func}
    \norm{\lebSpace{r}{\element}}{\divergence \discVectFunOne} \lesssim h_{\element}^{-1} \norm{\vectLebSpace{r}{\element}}{\discVectFunOne},
  \end{equation}
  where the hidden constant depends on $r$, $\polyDegree$, and $\rho$.
\end{lemma}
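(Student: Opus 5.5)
The plan is to exploit that $\divergence \discVectFunOne$ is a polynomial of degree $\polyDegree$. We test against a suitable polynomial and integrate by parts, then control the resulting boundary term with the trace-type bound from Lemma~\ref{lem:r-norm.normal.component}, or avoid it entirely.

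Set $q \coloneqq \divergence \discVectFunOne \in \polySpace{\polyDegree}(\element)$ and let $r' = r/(r-1)$. By duality, $\norm{\lebSpace{r}{\element}}{q} = \int_{\element} q\, \phi$ for $\phi \coloneqq \euNorm{q}^{r-2} q / \norm{\lebSpace{r}{\element}}{q}^{r-1}$, which satisfies $\norm{\lebSpace{r'}{\element}}{\phi}=1$. Since $\phi$ is not a polynomial, we replace it by $\locProjL{\polyDegree}\phi$. As $q \in \polySpace{\polyDegree}(\element)$, we have $\int_{\element} q\phi = \int_{\element} q\,\locProjL{\polyDegree}\phi$, and $\norm{\lebSpace{r'}{\element}}{\locProjL{\polyDegree}\phi} \lesssim 1$ by the scalar version of \eqref{eq:vectProjL.stab.element}. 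So it suffices to bound $\int_{\element} q\, p$ for $p \in \polySpace{\polyDegree}(\element)$ with $\norm{\lebSpace{r'}{\element}}{p}\lesssim 1$.

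Integrating by parts gives $\int_{\element} q p = -\int_{\element}\discVectFunOne\cdot\grad p + \int_{\partial\element}(\discVectFunOne\cdot\normalElement)p$. The volume term is bounded by H\"older and Lemma~\ref{lem:grad.poly.inv.est}: $\norm{\vectLebSpace{r}{\element}}{\discVectFunOne}\, h_{\element}^{-1}\norm{\lebSpace{r'}{\element}}{p}$. The boundary term is the crux. Using Lemma~\ref{lem:r-norm.normal.component} would reintroduce $h_{\element}^{1-1/r}\norm{\lebSpace{r}{\element}}{q}$ with an $O(1)$ constant, which cannot be absorbed. So I would instead follow \cite{BEIRAODAVEIGA.MASCOTTO.ETAL:2022} and choose a test function that vanishes on $\partial\element$. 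Using the subtriangulation $\mathcal{T}_h(\element)$ from Remark~\ref{lem.element.triangulation}, I take the bubble $b_{\element}$, piecewise cubic, positive inside each triangle and vanishing on $\partial\element$, scaled so that $0\le b_{\element}\le 1$ and $\norm{\lebSpace{\infty}{\element}}{\grad b_{\element}}\lesssim h_{\element}^{-1}$. A scaling argument on each shape-regular triangle gives the bubble norm equivalence $\norm{\lebSpace{r}{\element}}{q}^r \simeq \int_{\element} b_{\element}\euNorm{q}^{r}$ for polynomials $q$. Instead of that power form, I use the cleaner inequality $\norm{\lebSpace{r}{\element}}{q}\lesssim \sup_{p}\int_{\element} b_{\element} q p / \norm{\lebSpace{r'}{\element}}{p}$ over $p\in\polySpace{\polyDegree}(\element)$. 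This holds because all norms on the finite-dimensional space are equivalent on the reference configuration: taking $p$ equal to the $L^2(b_{\element})$-Riesz-type representative gives the bound, and scaling through the triangles with Lemma~\ref{lem:poly.sob.embedding} fixes the $h$-powers.

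With $b_{\element}p$ vanishing on $\partial\element$, we get $\int_{\element} b_{\element} q p = -\int_{\element}\discVectFunOne\cdot\grad(b_{\element}p)$. Since $\grad(b_{\element}p) = p\grad b_{\element} + b_{\element}\grad p$, its $L^{r'}$ norm is at most $h_{\element}^{-1}\norm{\lebSpace{r'}{\element}}{p} + \norm{\lebSpace{r'}{\element}}{\grad p}\lesssim h_{\element}^{-1}\norm{\lebSpace{r'}{\element}}{p}$, using Lemma~\ref{lem:grad.poly.inv.est}; here $b_{\element}p$ is only piecewise polynomial, but each piece is handled triangle by triangle with shape regularity. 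H\"older then yields \eqref{eq:r-norm.div.virtual.func}.

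The main obstacle is the uniform weighted duality/norm-equivalence step with the bubble on a general polygon. I expect to handle it by working on each subtriangle $T\in\mathcal{T}_h(\element)$, mapping to the reference triangle, using finite-dimensionality of $\polySpace{\polyDegree}$ there, and summing. The number of subtriangles is bounded by Assumption~\ref{ass:mesh.assumptions}, which makes all constants depend only on $\polyDegree$, $r$, and $\rho$. The restriction $r\le 2$ is not used essentially in this argument.
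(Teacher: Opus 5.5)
Your proposal is correct and follows essentially the paper's argument: you test against the piecewise cubic bubble $b_{\element}$ on $\mathcal{T}_h(\element)$ to remove the boundary term, integrate by parts, and obtain the factor $h_{\element}^{-1}$ from H\"older's inequality and \eqref{eq:grad.poly.inv.est}. The weighted duality step you flag as the main obstacle follows immediately by taking $p = \divergence \discVectFunOne$. Then $\int_{\element} b_{\element} (\divergence \discVectFunOne)^2 \gtrsim \norm{\lebSpace{2}{\element}}{\divergence \discVectFunOne}^2$, and \eqref{eq:poly.sob.embedding} converts between the $L^2$, $L^r$ and $L^{r'}$ norms with the powers of $\euNorm{\element}$ cancelling, which is exactly how the paper's proof proceeds.
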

\begin{proof}
  Let $\element \in \domain_h$, $\discVectFunOne \in \discVectSpace(\element)$ and $1<r \leq 2$.
  We first apply \eqref{eq:poly.sob.embedding} to write
  \begin{equation}\label{eq:r-norm.div.virtual.func.poly.inv.est}
    \norm{\lebSpace{r}{\element}}{\divergence \discVectFunOne}^2 \leq C(r,\polyDegree,\rho) \euNorm{\element}^{\frac{2}{r} - 1} \norm{\lebSpace{2}{\element}}{\divergence \discVectFunOne}^2.
  \end{equation}
  Let $b_{\element}$ be the usual cubic piecewise bubble function associated with the shape-regular triangulation $\mathcal{T}_h(\element)$ of $\element$ (cf. Remark
  \ref{lem.element.triangulation}), scaled so that it has a unitary $L^{\infty}$ norm over $\element$. 
  Applying a triangle-by-triangle argument together with integration by parts and \eqref{eq:grad.poly.inv.est} leads to
  \begin{align*}
    \norm{\lebSpace{2}{\element}}{\divergence \discVectFunOne}^2
    &\leq
    C(\polyDegree,\rho) \int_{\element} (\divergence \discVectFunOne)(b_{\element}\divergence \discVectFunOne)
    = - C(\polyDegree,\rho)
    \int_{\element} \discVectFunOne \cdot \grad (b_{\element}\divergence \discVectFunOne) \\
    &\leq C(\polyDegree,\rho) \norm{\vectLebSpace{r}{\element}}{\discVectFunOne} \norm{\vectLebSpace{r'}{\element}}{ \grad (b_{\element} \divergence \discVectFunOne)} \\
    &\leq C(r,\polyDegree,\rho) \euNorm{\element}^{\frac{1}{r'} - \frac{1}{r}} h_{\element}^{-1} \norm{\vectLebSpace{r}{\element}}{\discVectFunOne}
    \norm{\vectLebSpace{r}{\element}}{\divergence \discVectFunOne}.
  \end{align*}
  Combining the above inequality with \eqref{eq:r-norm.div.virtual.func.poly.inv.est}, observing that $\frac{2}{r} - 1 + \frac{1}{r'} - \frac{1}{r} = 0$, and simplifying
  $\norm{\lebSpace{r}{\element}}{\divergence \discVectFunOne}$ on both sides, we obtain the desired result.
\end{proof}
\begin{remark}
  From \eqref{eq:sE.simeq.r-norm.normal.component} and by taking $r=\sobIndexConj$ in \eqref{eq:r-norm.normal.component} and \eqref{eq:r-norm.div.virtual.func}, we obtain that, for any
  $\element \in \domain_h$ and any $\discVectFunOneTilda \in \discVectSpace(\element)$ $L^2$-orthogonal to $\vectPolySpace{\polyDegree}(\element)$
  \begin{equation}\label{eq:sE.less.cont.norm}
    s_{\element} (\discVectFunOneTilda,\discVectFunOneTilda) \lesssim \norm{\vectLebSpace{\sobIndexConj}{\element}}{\discVectFunOneTilda}^{\sobIndexConj}.
  \end{equation}
  In particular, recalling the definition of $a_{h,\element}$ in \eqref{eq:def.ah.local} we have, for any $\discVectFunOne \in \discVectSpace(\element)$,
  \begin{equation}\label{eq:ahE.less.cont.norm}
    a_{h,\element} (\discVectFunOne,\discVectFunOne) \lesssim
    \norm{\vectLebSpace{\sobIndexConj}{\element}}{\discVectFunOne}^{\sobIndexConj},
  \end{equation}
  with the hidden constants in \eqref{eq:sE.less.cont.norm} and \eqref{eq:ahE.less.cont.norm} depending only on $\polyDegree$, $\sobIndex$, and $\rho$.
\end{remark}
The following result extends the $L^2$-stability of discrete vector fields (see \cite[Lemma 3.1]{BEIRAODAVEIGA.MASCOTTO.ETAL:2022}) to the $L^r$ setting by means of duality and polynomial inverse inequalities. 
%
\begin{lemma}\label{lem:r-norm.virtual.fun}
  Let $\element \in \domain_h$. For any $\discVectFunOne \in \discVectSpace(\element)$ and any real number $1<r \leq 2$ it holds that
  \begin{equation}\label{eq:r-norm.virtual.fun}
    \norm{\lebSpace{r}{\element}}{\discVectFunOne}
    \lesssim
    h_{\element} \norm{\lebSpace{r}{\element}}{\divergence \discVectFunOne} +
    h_{\element}^{\frac{1}{r}} \norm{\lebSpace{r}{\partial \element}}{ \discVectFunOne \cdot \normalElement} +
    \sup_{\scalarPolyFun{\polyDegree-1} \in \polySpace{\polyDegree-1}(\element)} \frac{\int_{\element} \discVectFunOne \cdot (\boldsymbol{x}^{\perp}
    \scalarPolyFun{\polyDegree-1})}{\norm{\lebSpace{r'}{\element}}{\boldsymbol{x}^{\perp} \scalarPolyFun{\polyDegree-1}}},
  \end{equation}
  with a hidden constant depending only on $\polyDegree$, $r$, and $\rho$.
\end{lemma}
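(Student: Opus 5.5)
We argue by duality: $\norm{\vectLebSpace{r}{\element}}{\discVectFunOne} = \sup_{\boldsymbol{g}} \int_{\element}\discVectFunOne\cdot\boldsymbol{g}/\norm{\vectLebSpace{r'}{\element}}{\boldsymbol{g}}$. Since $\discVectFunOne$ is not a polynomial, we first reduce to polynomial test fields. We split $\norm{\vectLebSpace{r}{\element}}{\discVectFunOne} \le \norm{\vectLebSpace{r}{\element}}{\locVectProjL{\polyDegree}\discVectFunOne} + \norm{\vectLebSpace{r}{\element}}{\discVectFunOne - \locVectProjL{\polyDegree}\discVectFunOne}$. For the polynomial part we use \eqref{eq:poly.sob.embedding} and duality within $\vectPolySpace{\polyDegree}(\element)$, which gives $\norm{\vectLebSpace{r}{\element}}{\locVectProjL{\polyDegree}\discVectFunOne} \lesssim \sup_{\boldsymbol{p}\in\vectPolySpace{\polyDegree}(\element)} \int_{\element}\discVectFunOne\cdot\boldsymbol{p}/\norm{\vectLebSpace{r'}{\element}}{\boldsymbol{p}}$. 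This holds because the $L^2$ projection onto polynomials is $L^{r'}$-stable by \eqref{eq:vectProjL.stab.element}.

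The remainder $\discVectFunOne - \locVectProjL{\polyDegree}\discVectFunOne$ is the delicate term. The cleanest route is to reduce to the known $L^2$ stability \cite[Lemma 3.1]{BEIRAODAVEIGA.MASCOTTO.ETAL:2022}, i.e.\ the same estimate with $r=2$, and then transfer exponents by scaling. Because $\discVectSpace(\element)$ is finite dimensional on a reference-type configuration, one would like to assert $\norm{\vectLebSpace{r}{\element}}{\discVectFunOne}\simeq |\element|^{1/r-1/2}\norm{\vectLebSpace{2}{\element}}{\discVectFunOne}$. This is not automatic for virtual functions, since $\discVectFunOne$ is not polynomial. We therefore use Hölder in one direction only: for $r\le2$, $\norm{\vectLebSpace{r}{\element}}{\discVectFunOne}\le |\element|^{1/r-1/2}\norm{\vectLebSpace{2}{\element}}{\discVectFunOne}$. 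Then the $L^2$ estimate bounds this by $h\norm{\lebSpace{2}{\element}}{\divergence\discVectFunOne} + h^{1/2}\norm{\lebSpace{2}{\partial\element}}{\discVectFunOne\cdot\normalElement} + \sup(\cdot)_{L^2}$. Each of these three quantities involves only polynomial data: the divergence is in $\polySpace{\polyDegree}(\element)$, the normal traces are piecewise in $\polySpace{\polyDegree}(\edge)$, and the rotational moments are tested against $\boldsymbol{x}^\perp\polySpace{\polyDegree-1}$. We can therefore convert each back to $L^r$ (or $L^{r'}$ for the test functions in the supremum) using \eqref{eq:poly.sob.embedding} on $\element$ and on each edge, which is where Assumption~\ref{ass:mesh.assumptions} enters ($h_\edge\simeq h_\element$, $|\element|\simeq h_\element^2$).

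The main step is the exponent bookkeeping. For the divergence term, $|\element|^{1/r-1/2}\,h\,|\element|^{1/2-1/r}\norm{\lebSpace{r}{\element}}{\divergence\discVectFunOne}$, and the powers cancel. For the boundary term, $|\element|^{1/r-1/2}h^{1/2}h^{1/2-1/r}\norm{\lebSpace{r}{\partial\element}}{\cdot}\simeq h^{2/r-1+1-1/r}=h^{1/r}$, matching the statement. For the supremum, the $L^2$ norm of $\boldsymbol{x}^\perp p$ is comparable to $|\element|^{1/2-1/r'}$ times its $L^{r'}$ norm, and $1/r-1/2+1/r'-1/2=0$, so this term also matches. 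One must note that the monomials $\boldsymbol{x}^\perp$ versus the shifted $(\boldsymbol{x}-\boldsymbol{x}_\element)^\perp$ span the same space modulo $\vectPolySpace{\polyDegree}$ components. If the cited $L^2$ lemma is stated with the shifted version, the supremum is over the same finite-dimensional polynomial space, so this is harmless. I expect the only genuine obstacle to be confirming that the cited $L^2$ result has exactly this three-term form with uniform constants. If it does not, the fallback is the original Helmholtz-type argument: write $\discVectFunOne=\grad\phi+\operatorname{curl}\psi$ on $\element$ and estimate each part in $L^r$ by elliptic stability on star-shaped domains, which holds for $r\le2$ near $2$ but is harder in general.
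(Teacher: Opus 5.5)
Your argument is correct and is essentially the paper's proof: the one-sided H\"older bound $\norm{\vectLebSpace{r}{\element}}{\discVectFunOne}\le\euNorm{\element}^{\frac{1}{r}-\frac{1}{2}}\norm{\vectLebSpace{2}{\element}}{\discVectFunOne}$, the $L^2$ estimate of \cite[Lemma 3.1]{BEIRAODAVEIGA.MASCOTTO.ETAL:2022} applied to $\discVectFunOne$ itself, and \eqref{eq:poly.sob.embedding} on the divergence, edge-trace and rotational-moment terms, with the exponent cancellations you checked. The preliminary splitting into $\locVectProjL{\polyDegree}\discVectFunOne$ plus a remainder, and the Helmholtz fallback, are never used and can be dropped; as written, the split's supremum over all of $\vectPolySpace{\polyDegree}(\element)$ is not one of the right-hand-side terms anyway.
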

\begin{proof}
  Fix $\element \in \domain_h$, $\discVectFunOne \in \discVectSpace(\element)$ and $1<r \leq 2$.
  Applying a $(\frac{2}{r},\frac{2}{2-r})$-H{\"o}lder inequality, we get $\norm{\lebSpace{r}{\element}}{\discVectFunOne} \leq \euNorm{\element}^{\frac{1}{r}-\frac{1}{2}} \norm{\lebSpace{2}{\element}}{\discVectFunOne}$. Furthermore, from a simple adaptation of \cite[Lemma 3.1]{BEIRAODAVEIGA.MASCOTTO.ETAL:2022} to our local discrete virtual space $\discVectSpace (\element)$, it holds that
  \begin{equation*}
    \norm{\lebSpace{2}{\element}}{\discVectFunOne}
    \leq C(\polyDegree,\rho)
    \left[
      h_{\element} \norm{\lebSpace{2}{\element}}{\divergence \discVectFunOne} +
      h_{\element}^{\frac{1}{2}} \norm{\lebSpace{2}{\partial \element}}{ \discVectFunOne \cdot \normalElement} +
      \sup_{\Theta \in \polySpace{\polyDegree-1}(\element)} \frac{\int_{\element} \discVectFunOne \cdot (\boldsymbol{x}^{\perp}
      \Theta)}{\norm{\lebSpace{2}{\element}}{\boldsymbol{x}^{\perp} \Theta}}
    \right].
  \end{equation*}
  The conclusion follows by applying $\eqref{eq:poly.sob.embedding}$.
\end{proof}
%
\subsubsection{Equivalence between discrete and continuous norms}
Now we are in position to state and prove the following key result for the subsequent analysis.
\begin{lemma}
  If $ 1 < \sobIndexConj \leq 2$, then, for any $\element \in \domain_h$ and any $\discVectFunOne \in \discVectSpace (\element)$, it holds
  \begin{equation}\label{eq:disc.norm.simeq.cont.norm}
    \locDiscFullVectNorm{\discVectFunOne} \simeq \norm{\vectLebSpace{\sobIndexConj}{\element}}{\discVectFunOne},
  \end{equation}
  with the hidden constants depending only on $\polyDegree$, $\sobIndex$, and $\rho$.
\end{lemma}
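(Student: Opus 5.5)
We prove the two inequalities separately, working on a fixed element $\element$ with $r=\sobIndexConj\in(1,2]$ throughout. Recall that $\locDiscFullVectNorm{\discVectFunOne}^{\sobIndexConj} = a_{h,\element}(\discVectFunOne,\discVectFunOne) + h_{\element}^{\sobIndexConj}\norm{\lebSpace{\sobIndexConj}{\element}}{\divergence\discVectFunOne}^{\sobIndexConj}$.

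\emph{Upper bound $\lesssim$.} By \eqref{eq:ahE.less.cont.norm}, $a_{h,\element}(\discVectFunOne,\discVectFunOne)\lesssim\norm{\vectLebSpace{\sobIndexConj}{\element}}{\discVectFunOne}^{\sobIndexConj}$. Lemma~\ref{lem:r-norm.div.virtual.func} with $r=\sobIndexConj$ gives $h_{\element}^{\sobIndexConj}\norm{\lebSpace{\sobIndexConj}{\element}}{\divergence\discVectFunOne}^{\sobIndexConj}\lesssim\norm{\vectLebSpace{\sobIndexConj}{\element}}{\discVectFunOne}^{\sobIndexConj}$. Summing these two bounds and taking $\sobIndexConj$-th roots proves this direction.

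\emph{Lower bound $\gtrsim$.} Start from Lemma~\ref{lem:r-norm.virtual.fun} with $r=\sobIndexConj$ and bound each of its three terms by the discrete norm.

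The divergence term $h_{\element}\norm{\lebSpace{\sobIndexConj}{\element}}{\divergence\discVectFunOne}$ is already a piece of $\locDiscFullVectNorm{\discVectFunOne}$.

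For the $\rot$-type supremum, note that $\boldsymbol{x}^{\perp}\scalarPolyFun{\polyDegree-1}\in\vectPolySpace{\polyDegree}(\element)$. Hence $\int_{\element}\discVectFunOne\cdot\boldsymbol{x}^{\perp}\scalarPolyFun{\polyDegree-1}=\int_{\element}\locVectProjL{\polyDegree}\discVectFunOne\cdot\boldsymbol{x}^{\perp}\scalarPolyFun{\polyDegree-1}$, and Hölder's inequality bounds the supremum by $\norm{\vectLebSpace{\sobIndexConj}{\element}}{\locVectProjL{\polyDegree}\discVectFunOne}\le\locDiscDivFreeNorm{\discVectFunOne}$.

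For the boundary term, split $\discVectFunOne=\locVectProjL{\polyDegree}\discVectFunOne+\discVectFunOneTilda$, where $\discVectFunOneTilda\coloneqq\discVectFunOne-\locVectProjL{\polyDegree}\discVectFunOne$ is $L^2$-orthogonal to $\vectPolySpace{\polyDegree}(\element)$.
\begin{itemize}
\item The polynomial part is handled by a polynomial trace inverse estimate. For instance, apply Lemma~\ref{lem:r-norm.normal.component} to $\locVectProjL{\polyDegree}\discVectFunOne\in\discVectSpace(\element)$ and then Lemma~\ref{lem:r-norm.div.virtual.func} to remove its divergence. This gives $h_{\element}^{1/\sobIndexConj}\norm{\lebSpace{\sobIndexConj}{\partial\element}}{\locVectProjL{\polyDegree}\discVectFunOne\cdot\normalElement}\lesssim\norm{\vectLebSpace{\sobIndexConj}{\element}}{\locVectProjL{\polyDegree}\discVectFunOne}$.
\item The remainder part is controlled by \eqref{eq:sE.simeq.r-norm.normal.component}: $h_{\element}\norm{\lebSpace{\sobIndexConj}{\partial\element}}{\discVectFunOneTilda\cdot\normalElement}^{\sobIndexConj}\simeq s_{\element}(\discVectFunOneTilda,\discVectFunOneTilda)$. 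Both the exponent $h_{\element}^{1/\sobIndexConj}$ and this bound match after taking $\sobIndexConj$-th roots.
\end{itemize}
Combining everything gives $\norm{\vectLebSpace{\sobIndexConj}{\element}}{\discVectFunOne}\lesssim\locDiscDivFreeNorm{\discVectFunOne}+h_{\element}\norm{\lebSpace{\sobIndexConj}{\element}}{\divergence\discVectFunOne}\lesssim\locDiscFullVectNorm{\discVectFunOne}$.

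\emph{Main obstacle.} The delicate point is bookkeeping rather than any single estimate. First, check the powers of $h_{\element}$ in the boundary term against \eqref{eq:sE.simeq.r-norm.normal.component}. Second, make sure that Lemma~\ref{lem:r-norm.virtual.fun} is stated with $\boldsymbol{x}^{\perp}$ rather than the recentred $(\boldsymbol{x}-\boldsymbol{x}_{\element})^{\perp}$. This does not matter for our argument, since both lie in $\vectPolySpace{\polyDegree}$ and the projection identity holds for either. The restriction $\sobIndexConj\le2$ is needed precisely because all three auxiliary lemmas are available only for $r\le2$.
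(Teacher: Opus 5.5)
Your proof is correct and uses the same ingredients as the paper's: the identical upper bound via \eqref{eq:ahE.less.cont.norm} and \eqref{eq:r-norm.div.virtual.func}, and, for the lower bound, Lemma~\ref{lem:r-norm.virtual.fun} combined with the stabilization/trace equivalence \eqref{eq:sE.simeq.r-norm.normal.component}. The only difference is where you split. The paper first writes $\norm{\vectLebSpace{\sobIndexConj}{\element}}{\discVectFunOne} \le \norm{\vectLebSpace{\sobIndexConj}{\element}}{\locVectProjL{\polyDegree}\discVectFunOne} + \norm{\vectLebSpace{\sobIndexConj}{\element}}{\discVectFunOne - \locVectProjL{\polyDegree}\discVectFunOne}$ and applies Lemma~\ref{lem:r-norm.virtual.fun} only to the $L^2$-orthogonal remainder, so the $\rot$-type supremum vanishes, the boundary term is exactly the stabilization, and only $h_{\element}\norm{\lebSpace{\sobIndexConj}{\element}}{\divergence\locVectProjL{\polyDegree}\discVectFunOne}$ is left to absorb via \eqref{eq:r-norm.div.virtual.func}; applying the lemma to $\discVectFunOne$ itself, as you do, costs an extra H\"older step for the supremum and a polynomial trace estimate via Lemma~\ref{lem:r-norm.normal.component}, but is equally valid.
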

\begin{proof}
  Recalling the defintion of $ \locDiscFullVectNorm{\cdot} $ in \eqref{eq:def.local.discrete.norm.NEW}, the fact that $\locDiscFullVectNorm{\discVectFunOne} \lesssim
  \norm{\vectLebSpace{\sobIndexConj}{\element}}{\discVectFunOne}$ is a straightforward combination of \eqref{eq:ahE.less.cont.norm} and
  \eqref{eq:r-norm.div.virtual.func} with $r=\sobIndexConj$.

  Let us prove the converse inequality.
  Fix $\element \in \domain_h$ and $\discVectFunOne \in \discVectSpace (\element)$.
  We add and subtract $\locVectProjL{\polyDegree}\discVectFunOne$ to write
  \begin{equation}\label{eq:cont.norm.less.disc.norm.initial}
    \norm{\vectLebSpace{\sobIndexConj}{\element}}{\discVectFunOne}
    \leq
    \locDiscFullVectNorm{\discVectFunOne}
    + \norm{\vectLebSpace{\sobIndexConj}{\element}}{\discVectFunOne - \locVectProjL{\polyDegree}\discVectFunOne}
  \end{equation}
  For the second term in the right-hand side, we apply \eqref{eq:r-norm.virtual.fun}, and recalling the definition of $\locVectProjL{\polyDegree}\discVectFunOne$ in
  \eqref{eq:def.loc.projL}, which ensures that the term in the $\sup$ in \eqref{eq:r-norm.virtual.fun} vanishes, we obtain
  \begin{equation}\label{eq:cont.norm.less.disc.norm.term.2}
    \begin{aligned}
      \norm{\vectLebSpace{\sobIndexConj}{\element}}{\discVectFunOne - \locVectProjL{\polyDegree}\discVectFunOne}
      &\leq
      C(\polyDegree,\sobIndex,\rho)
      \left[
        h_{\element} \norm{\lebSpace{\sobIndexConj}{\element}}{\divergence( \discVectFunOne- \locVectProjL{\polyDegree}\discVectFunOne)} +
      h_{\element}^{\frac{1}{\sobIndexConj}} \norm{\lebSpace{\sobIndexConj}{\partial \element}}{ (\discVectFunOne - \locVectProjL{\polyDegree}\discVectFunOne) \cdot \normalElement} \right] \\
      &\leq
      C(\polyDegree,\sobIndex,\rho)
      \locDiscFullVectNorm{\discVectFunOne},
    \end{aligned}
  \end{equation}
  where the last inequality follows from the triangle inequality on $\norm{\lebSpace{\sobIndexConj}{\element}}{\divergence( \discVectFunOne- \locVectProjL{\polyDegree}\discVectFunOne)}$ together with \eqref{eq:r-norm.div.virtual.func}, and from the equivalence in \eqref{eq:ahE.simeq.disc.div.free.norm}.
  Plugging \eqref{eq:cont.norm.less.disc.norm.term.2} into \eqref{eq:cont.norm.less.disc.norm.initial} leads to the desired result.
\end{proof}
%
\subsection{Properties of the DoF interpolation operator}\label{sec:DoF.interpolation.prop}
We now state and prove the local approximation results for the DoF interpolation operator $\vectProjF{\polyDegree}$.
\begin{lemma}
  Let $\element \in \domain_h$. For all integers $s \in \{1,\ldots,\polyDegree +1\}$ and $j \in \{0,1,\ldots,\polyDegree +1 \}$, it holds,
  for any $\vectTestFun \in \vectSobSpace{s}{r}{\element}$ with $\divergence \vectTestFun \in \sobSpace{j}{r}{\element}$, that
  \begin{subequations}\label{eq:Fortin.approx}
    \begin{alignat}{2}
      \norm{\vectLebSpace{r}{\element}}{\vectTestFun - \vectProjF{\polyDegree} \vectTestFun } &\lesssim h_{\element}^s \seminorm{\vectSobSpace{s}{r}{\element}}{\vectTestFun} &\quad &\text{for all
      } 1< r \leq 2, \label{eq:Fortin.approx.r-norm} \\
      \norm{\lebSpace{r}{\element}}{\divergence (\vectTestFun - \vectProjF{\polyDegree} \vectTestFun)} &\lesssim h_{\element}^j \seminorm{\sobSpace{j}{r}{\element}}{\divergence
      \vectTestFun} &\quad &\text{for all } 1\leq r \leq \infty \label{eq:Fortin.approx.div.r-norm},
    \end{alignat}
  \end{subequations}
  with the hidden constants in \eqref{eq:Fortin.approx.r-norm} and \eqref{eq:Fortin.approx.div.r-norm} depending only on $\{ \polyDegree, r, s, \rho \}$ and $\{ \polyDegree, r, j, \rho\}$,
  respectively.
\end{lemma}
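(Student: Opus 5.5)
The divergence estimate \eqref{eq:Fortin.approx.div.r-norm} is the easy half and I will treat it first. By the commuting property \eqref{eq:projF.div.commute}, $\divergence(\vectTestFun - \vectProjF{\polyDegree}\vectTestFun) = \divergence\vectTestFun - \locProjL{\polyDegree}\divergence\vectTestFun$ on $\element$. The bound then follows from the scalar version of \eqref{eq:vectProjL.approx.element} with $m=0$, $n=j\le \polyDegree+1$ and $\ell=\polyDegree$. This holds for every $1\le r\le\infty$. For $j=0$ I will use the stability \eqref{eq:vectProjL.stab.element} together with the triangle inequality.

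For \eqref{eq:Fortin.approx.r-norm} I will use the classical ``projection-invariance plus stability'' argument. First, $\vectProjF{\polyDegree}$ reproduces $\vectPolySpace{\polyDegree}(\element)$, because $\vectPolySpace{\polyDegree}(\element)\subset\discVectSpace(\element)$ and the DoFs \eqref{eq:DoFs} are unisolvent. I write
\[
\vectTestFun-\vectProjF{\polyDegree}\vectTestFun = (\vectTestFun-\locVectProjL{\polyDegree}\vectTestFun) - \vectProjF{\polyDegree}(\vectTestFun-\locVectProjL{\polyDegree}\vectTestFun).
\]
The first term is bounded by $h_\element^s\seminorm{\vectSobSpace{s}{r}{\element}}{\vectTestFun}$ via \eqref{eq:vectProjL.approx.element}. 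It then suffices to prove the local stability estimate
\[
\norm{\vectLebSpace{r}{\element}}{\vectProjF{\polyDegree}\boldsymbol{w}} \lesssim \norm{\vectLebSpace{r}{\element}}{\boldsymbol{w}} + h_\element\seminorm{\vectSobSpace{1}{r}{\element}}{\boldsymbol{w}}
\]
for $\boldsymbol{w}\in\vectSobSpace{1}{r}{\element}$. I apply it to $\boldsymbol{w}=\vectTestFun-\locVectProjL{\polyDegree}\vectTestFun$ and use \eqref{eq:vectProjL.approx.element} with $m=0,1$ and $n=s\ge1$.

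To prove this stability bound I will invoke Lemma~\ref{lem:r-norm.virtual.fun}, which is valid since $1<r\le2$, with $\discVectFunOne=\vectProjF{\polyDegree}\boldsymbol{w}$. There are three terms to control.
\begin{enumerate}
\item \emph{Divergence term.} Since $\divergence\vectProjF{\polyDegree}\boldsymbol{w}=\locProjL{\polyDegree}\divergence\boldsymbol{w}$, the stability of the $L^2$ projector gives the bound $h_\element\norm{\lebSpace{r}{\element}}{\divergence\boldsymbol{w}}\lesssim h_\element\seminorm{\vectSobSpace{1}{r}{\element}}{\boldsymbol{w}}$.
\item \emph{Rotational term.} By \eqref{eq:FortinProj.perp.grad.on.element}, $\int_\element\vectProjF{\polyDegree}\boldsymbol{w}\cdot\boldsymbol{x}^\perp \scalarPolyFun{\polyDegree-1}=\int_\element\boldsymbol{w}\cdot\boldsymbol{x}^\perp\scalarPolyFun{\polyDegree-1}$. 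This holds after shifting $\boldsymbol{x}$ to $\boldsymbol{x}-\boldsymbol{x}_\element$, which is harmless because the difference $\boldsymbol{x}_\element^\perp \scalarPolyFun{\polyDegree-1}$ is a gradient-type-plus-rotation polynomial of degree $\le\polyDegree$ fixed by the remaining DoFs. Hölder's inequality then bounds the supremum by $\norm{\vectLebSpace{r}{\element}}{\boldsymbol{w}}$.
\item \emph{Boundary term.} On each edge $\vectProjF{\polyDegree}\boldsymbol{w}\cdot\normalEdge$ is the $L^2(\edge)$ projection of $\boldsymbol{w}\cdot\normalEdge$, and this projection is $L^r(\edge)$-stable. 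I therefore obtain $h_\element^{1/r}\norm{\lebSpace{r}{\partial\element}}{\boldsymbol{w}\cdot\normalElement}$, and the continuous scaled trace inequality $\norm{\lebSpace{r}{\partial\element}}{\boldsymbol{w}}^r\lesssim h_\element^{-1}\norm{\vectLebSpace{r}{\element}}{\boldsymbol{w}}^r+h_\element^{r-1}\seminorm{\vectSobSpace{1}{r}{\element}}{\boldsymbol{w}}^r$, valid under Assumption~\ref{ass:mesh.assumptions}, finishes this term.
\end{enumerate}

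I expect the main obstacle to be the rotational term. Lemma~\ref{lem:r-norm.virtual.fun} is stated with $\boldsymbol{x}^\perp$ rather than the centered and scaled monomials used in the DoFs, so I must check that the two test spaces coincide modulo functions whose moments are also preserved by $\vectProjF{\polyDegree}$. The cleanest route is to note that the lemma's proof is translation invariant, so it may be applied in coordinates centered at $\boldsymbol{x}_\element$, where the supremum involves exactly the moments preserved by \eqref{eq:FortinProj.perp.grad.on.element}.
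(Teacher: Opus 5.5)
Your proposal is correct and is essentially the paper's proof. Since $\vectProjF{\polyDegree}$ reproduces $\vectPolySpace{\polyDegree}(\element)$, your $-\vectProjF{\polyDegree}(\vectTestFun-\locVectProjL{\polyDegree}\vectTestFun)$ is exactly the paper's $\boldsymbol{\xi}_h=\locVectProjL{\polyDegree}\vectTestFun-\vectProjF{\polyDegree}\vectTestFun$, and the paper likewise applies Lemma~\ref{lem:r-norm.virtual.fun} to it: it bounds the divergence term via the commuting property, the edge term via the normal-moment DoFs, H{\"o}lder and a continuous trace inequality, and removes the rotational supremum via the preserved moments.

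On the point you flagged, the shift is harmless because $\boldsymbol{x}_{\element}^{\perp}\scalarPolyFun{\polyDegree-1}\in\vectPolySpace{\polyDegree-1}(\element)\subset\grad\polySpace{\polyDegree}(\element)\oplus(\boldsymbol{x}-\boldsymbol{x}_{\element})^{\perp}\polySpace{\polyDegree-1}(\element)$, all of whose moments are fixed by \eqref{eq:FortinProj.grad.on.element}--\eqref{eq:FortinProj.perp.grad.on.element}. Moments against general elements of $\vectPolySpace{\polyDegree}(\element)$ are not preserved, so your phrase ``degree $\le\polyDegree$'' is slightly too generous; the relevant degree is $\le\polyDegree-1$.
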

\begin{proof}
  Fix $\element \in \domain_h$, $s \in \{1,\ldots,\polyDegree +1\}$, $j \in \{0,1,\ldots,\polyDegree +1 \}$ and $\vectTestFun$ as in the statement of the Lemma.
  For $1\leq r \leq \infty$, inequality \eqref{eq:Fortin.approx.div.r-norm} follows from \eqref{eq:projF.div.commute} and the scalar version of \eqref{eq:vectProjL.approx.element}.
  For inequality \eqref{eq:Fortin.approx.r-norm}, assuming $1 < r \leq 2$, splitting the approximation error by adding and subtracting $\locVectProjL{\polyDegree} \vectTestFun$ and applying
  \eqref{eq:vectProjL.approx.element}, leads to
  \begin{equation}\label{eq:Fortin.approx.initial}
    \begin{aligned}
      \norm{\vectLebSpace{r}{\element}}{\vectTestFun - \vectProjF{\polyDegree} \vectTestFun }
      &\leq C(\polyDegree,r,s,\rho) h_{\element}^s\seminorm{\vectSobSpace{s}{r}{\element}}{\vectTestFun} + \norm{\vectLebSpace{r}{\element}}{\locVectProjL{\polyDegree} \vectTestFun -
      \vectProjF{\polyDegree} \vectTestFun}.
    \end{aligned}
  \end{equation}
  We define define $\boldsymbol{\xi}_h \coloneqq \locVectProjL{\polyDegree} \vectTestFun - \vectProjF{\polyDegree} \vectTestFun$.
  We apply \eqref{eq:r-norm.virtual.fun} and, after noticing that $\boldsymbol{\xi}_h$ is $L^2$-orthogonal to
  $\polySpace{\polyDegree}(\element)$ to make the term in the $\sup$ vanish, we obtain
  \begin{equation}\label{eq:Fortin.approx.xi.initial}
    \norm{\vectLebSpace{r}{\element}}{\boldsymbol{\xi}_h}
    \leq C(r,\polyDegree,\rho)
    \left[
      h_{\element} \norm{\lebSpace{r}{\element}}{\divergence \boldsymbol{\xi}_h} +
      h_{\element}^{\frac{1}{r}} \norm{\lebSpace{r}{\partial \element}}{ \boldsymbol{\xi}_h \cdot \normalElement}
    \right].
  \end{equation}
  We analyze each of the two terms in the square brackets.

  \medskip
  \noindent
  \textbf{First term.}
  By adding and subtracting $\divergence \vectTestFun$ inside the norm we get
  \begin{equation}\label{eq:Fortin.approx.xi.term1}
    \begin{aligned}
      h_{\element} \norm{\lebSpace{r}{\element}}{\divergence \boldsymbol{\xi}_h}
      & \leq h_{\element} \left(C(r) \seminorm{\vectSobSpace{1}{r}{\element}}{\vectTestFun - \locVectProjL{\polyDegree} \vectTestFun} + \norm{\lebSpace{r}{\element}}{\divergence (\vectTestFun -
      \vectProjF{\polyDegree} \vectTestFun)} \right) \\
      &\leq C(\polyDegree,r,s,\rho) h_{\element}^{s} \left( \seminorm{\vectSobSpace{s}{r}{\element}}{\vectTestFun} + \seminorm{\sobSpace{s-1}{r}{\element}}{\divergence \vectTestFun} \right) \\
      &\leq C(\polyDegree,r,s,\rho) h_{\element}^{s} \seminorm{\vectSobSpace{s}{r}{\element}}{\vectTestFun},
    \end{aligned}
  \end{equation}
  where the second inequality follows by applying \eqref{eq:vectProjL.approx.element} to the first term, and \eqref{eq:Fortin.approx.div.r-norm} to the second term.

  \medskip
  \noindent
  \textbf{Second term.}
  We fix $\edge \in \edgeElement$. Since $\boldsymbol{\xi}_h \cdot \normalEdge \in \polySpace{\polyDegree}(\edge)$ and $\vectProjF{\polyDegree} \vectTestFun$ satisfies
  \eqref{eq:FortinProj.normal.on.edge}, we have that
  \begin{equation}\label{eq:Fortin.approx.xi.term2}
    \begin{aligned}
      h_{\element}^{\frac{1}{r}} \norm{\lebSpace{r}{\edge}}{ \boldsymbol{\xi}_h \cdot \normalEdge}
      &\leq h_{\element}^{\frac{1}{r}} \sup_{\scalarPolyFun{\polyDegree} \in \polySpace{\polyDegree}(\edge)} \frac{\int_{\edge} (\locVectProjL{\polyDegree} \vectTestFun - \vectTestFun) \cdot \normalEdge
      \scalarPolyFun{\polyDegree} }{\norm{\lebSpace{r'}{\edge}}{\scalarPolyFun{\polyDegree}}} \leq
      h_{\element}^{\frac{1}{r}} \norm{\vectLebSpace{r}{\edge}}{\locVectProjL{\polyDegree} \vectTestFun - \vectTestFun} \\
      &\leq
      C(r,\rho)
      \left[ \norm{\vectLebSpace{r}{\element}}{\locVectProjL{\polyDegree} \vectTestFun - \vectTestFun} + h_{\element}
        \seminorm{\vectSobSpace{1}{r}{\element}}{\locVectProjL{\polyDegree} \vectTestFun - \vectTestFun}
      \right] \\
      &\leq
      C(\polyDegree,r,s,\rho) h_{\element}^s \seminorm{\vectSobSpace{s}{r}{\element}}{\vectTestFun}
    \end{aligned}
  \end{equation}
  where the second inequality follows from a $(r,\infty,r')$-H{\"o}lder inequality. For the third inequality we apply a continuous local trace inequality (see, e.g., \cite[Lemma
  1.31]{DIPIETRO.DRONIOU:2020}), while the last inequality follows by applying \eqref{eq:vectProjL.approx.element}.

  \medskip
  \noindent
  \textbf{Conclusion.}
  Plugging \eqref{eq:Fortin.approx.xi.term1} and \eqref{eq:Fortin.approx.xi.term2} into \eqref{eq:Fortin.approx.xi.initial}, and combining the resulting
  estimate with \eqref{eq:Fortin.approx.initial}, leads to the desired result.
\end{proof}
In the next result we establish the local approximation property of the operator $\vectProjL{\polyDegree} \circ \vectProjF{\polyDegree}$.
\begin{lemma}
  Let $\element \in \domain_h$ and $r > 1$. For all integers $s \in \{1,\ldots,\polyDegree +1\}$ and all $\vectTestFun \in \vectSobSpace{s}{r}{\element}$, it holds
  \begin{equation}\label{eq:projL.Fortin.approx}
    \norm{\vectLebSpace{r}{\element}}{\vectTestFun - \locVectProjL{\polyDegree} \vectProjF{\polyDegree} \vectTestFun } \lesssim h_{\element}^s \seminorm{\vectSobSpace{s}{r}{\element}}{\vectTestFun}
  \end{equation}
  with a hidden constant depending only on $\polyDegree, r, s$, and $\rho$.
\end{lemma}
\begin{proof}
  Fix $\element \in \domain_h$, $s \in \{1,\ldots,\polyDegree +1\}$, and $\vectTestFun \in \vectSobSpace{s}{r}{\element}$. We add and subtract $\locVectProjL{\polyDegree}
  \vectTestFun$, and apply \eqref{eq:vectProjL.approx.element} to write
  \begin{align*}
    \norm{\vectLebSpace{r}{\element}}{\vectTestFun - \locVectProjL{\polyDegree} \vectProjF{\polyDegree} \vectTestFun}
    &\leq
    C(\polyDegree,r,s,\rho) h_{\element}^{s}\seminorm{\vectSobSpace{s}{r}{\element}}{\vectTestFun}
    +
    \norm{\vectLebSpace{r}{\element}}{\locVectProjL{\polyDegree}(\vectTestFun - \vectProjF{\polyDegree} \vectTestFun)}.
  \end{align*}
  If $1<r\leq 2$, we apply \eqref{eq:vectProjL.stab.element} and \eqref{eq:Fortin.approx.r-norm} to write
  \begin{align*}
    \norm{\vectLebSpace{r}{\element}}{\locVectProjL{\polyDegree}(\vectTestFun - \vectProjF{\polyDegree} \vectTestFun)} \leq
    C(\polyDegree,r,s,\rho) h_{\element}^{s}\seminorm{\vectSobSpace{s}{r}{\element}}{\vectTestFun}.
  \end{align*}
  If $r>2$, we first apply \eqref{eq:poly.sob.embedding} and then \eqref{eq:vectProjL.stab.element} and \eqref{eq:Fortin.approx.r-norm} together with a $(\frac{r}{2},\frac{r}{r-2})$-H{\"o}lder inequality, thus obtaining
  \begin{align*}
    \norm{\vectLebSpace{r}{\element}}{\locVectProjL{\polyDegree}(\vectTestFun - \vectProjF{\polyDegree} \vectTestFun)} 
    \leq 
    C(\polyDegree,r,s,\rho) \euNorm{\element}^{\frac{1}{r} - \frac{1}{2}} h_{\element}^{s}\seminorm{\vectSobSpace{s}{2}{\element}}{\vectTestFun}
    \leq 
    C(\polyDegree,r,s,\rho) h_{\element}^{s}\seminorm{\vectSobSpace{s}{r}{\element}}{\vectTestFun}
  \end{align*}
\end{proof}
%

\subsection{Main stability result}

\subsubsection{Discrete inf-sup condition}
In this section we prove property \eqref{eq:discrete.infsup.generic}.
\begin{lemma}[Discrete inf-sup condition] \label{lem:discrete.infsup}
  For all $\discScalarTestFun \in \discScalarSpace$, it holds that
  \begin{equation}\label{eq:discrete.infsup}
    \sup_{\discVectTestFun \in \discVectSpace} \frac{b(\discVectTestFun,\discScalarTestFun)}{\discFullVectNorm{\discVectTestFun}}
    \gtrsim
    \discScalarNorm{\discScalarTestFun},
  \end{equation}
  with a hidden constant depending only on $\polyDegree$, $\sobIndex$, and $\rho$.
\end{lemma}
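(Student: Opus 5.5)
Given $\discScalarTestFun \in \discScalarSpace$, the plan is to build an explicit test function through a continuous auxiliary problem followed by the Fortin operator $\vectProjF{\polyDegree}$.

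\textbf{Construction.} Set $g \coloneqq \euNorm{\discScalarTestFun}^{\sobIndex-2}\discScalarTestFun \in \lebSpace{\sobIndexConj}{\domain}$. Then
\[
\norm{\lebSpace{\sobIndexConj}{\domain}}{g}^{\sobIndexConj} = \norm{\lebSpace{\sobIndex}{\domain}}{\discScalarTestFun}^{\sobIndex}
\quad\text{and}\quad
\int_\domain g\,\discScalarTestFun = \norm{\lebSpace{\sobIndex}{\domain}}{\discScalarTestFun}^{\sobIndex}.
\]
Next find $\vectTestFun \in \vectSobSpace{1}{\sobIndexConj}{\domain}$ with $\divergence \vectTestFun = g$ and $\norm{\vectSobSpace{1}{\sobIndexConj}{\domain}}{\vectTestFun} \lesssim \norm{\lebSpace{\sobIndexConj}{\domain}}{g}$. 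Here I would invoke the Bogovskii-type right inverse of the divergence on an extended ball or convex domain $B\supset\domain$: extend $g$ by a constant on $B\setminus\domain$ so that the mean vanishes, and solve there. This gives a $W^{1,\sobIndexConj}$ field with the bound, depending only on $\domain$ and $\sobIndex$. Then define $\discVectTestFun \coloneqq \vectProjF{\polyDegree}\vectTestFun$, which is well defined since $\sobIndexConj>1$. By \eqref{eq:projF.div.commute}, $\divergence\discVectTestFun = \projL{\polyDegree} g$, and since $\discScalarTestFun$ is piecewise polynomial of degree $\polyDegree$,
\[
b(\discVectTestFun,\discScalarTestFun) = \int_\domain g\,\discScalarTestFun = \norm{\lebSpace{\sobIndex}{\domain}}{\discScalarTestFun}^{\sobIndex}.
\]

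\textbf{Norm bound.} It remains to show $\discFullVectNorm{\discVectTestFun} \lesssim \norm{\lebSpace{\sobIndexConj}{\domain}}{g} = \norm{\lebSpace{\sobIndex}{\domain}}{\discScalarTestFun}^{\sobIndex-1}$. The quotient is then $\gtrsim \norm{\lebSpace{\sobIndex}{\domain}}{\discScalarTestFun}$, which is \eqref{eq:discrete.infsup}.

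The divergence part of $\discFullVectNorm{\cdot}$ is immediate, since $h_\element^{\sobIndexConj}\norm{\lebSpace{\sobIndexConj}{\element}}{\projL{\polyDegree} g}^{\sobIndexConj} \lesssim \norm{\lebSpace{\sobIndexConj}{\element}}{g}^{\sobIndexConj}$ by \eqref{eq:vectProjL.stab.element} and $h_\element\lesssim 1$. For $\locDiscDivFreeNorm{\discVectTestFun}$, the case $\sobIndexConj\le 2$ is covered by \eqref{eq:ahE.less.cont.norm}, which gives $\locDiscDivFreeNorm{\discVectTestFun}\lesssim\norm{\vectLebSpace{\sobIndexConj}{\element}}{\discVectTestFun}$. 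I then bound
\[
\norm{\vectLebSpace{\sobIndexConj}{\element}}{\vectProjF{\polyDegree}\vectTestFun} \le \norm{\vectLebSpace{\sobIndexConj}{\element}}{\vectTestFun} + C h_\element\seminorm{\vectSobSpace{1}{\sobIndexConj}{\element}}{\vectTestFun}
\]
using \eqref{eq:Fortin.approx.r-norm} with $s=1$. Summing over elements gives the claim.

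\textbf{Main obstacle: the case $\sobIndexConj>2$.} Here \eqref{eq:ahE.less.cont.norm} and \eqref{eq:Fortin.approx.r-norm} are not available, so I would work directly with the two pieces of $\locDiscDivFreeNorm{\cdot}$. The polynomial piece is handled by
\[
\norm{\vectLebSpace{\sobIndexConj}{\element}}{\locVectProjL{\polyDegree}\vectProjF{\polyDegree}\vectTestFun} \lesssim \norm{\vectLebSpace{\sobIndexConj}{\element}}{\vectTestFun} + h_\element\seminorm{\vectSobSpace{1}{\sobIndexConj}{\element}}{\vectTestFun},
\]
which follows from \eqref{eq:projL.Fortin.approx}, valid for all $r>1$. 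The stabilization piece reduces, via Remark~\ref{rem:boundary.dof.eu.norm.NEW} and Lemma~\ref{lem:boundaryDof.equivalence}, to $h_\element\norm{\lebSpace{\sobIndexConj}{\partial\element}}{(\vectProjF{\polyDegree}\vectTestFun-\locVectProjL{\polyDegree}\vectProjF{\polyDegree}\vectTestFun)\cdot\normalElement}^{\sobIndexConj}$. For this term I would split by the triangle inequality:
\begin{itemize}
\item The normal trace of $\vectProjF{\polyDegree}\vectTestFun$ is the edge $L^2$-projection of $\vectTestFun\cdot\normalEdge$, hence $L^{\sobIndexConj}(\edge)$-stable. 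The continuous trace inequality then bounds it by $h_\element^{-1/\sobIndexConj}\norm{\vectLebSpace{\sobIndexConj}{\element}}{\vectTestFun}+h_\element^{1-1/\sobIndexConj}\seminorm{\vectSobSpace{1}{\sobIndexConj}{\element}}{\vectTestFun}$.
\item The polynomial $\locVectProjL{\polyDegree}\vectProjF{\polyDegree}\vectTestFun$ is bounded by a polynomial discrete trace inequality combined with the previous estimate.
\end{itemize}
Multiplying by $h_\element$ yields the same local bound. Summing over elements completes the argument in both regimes.
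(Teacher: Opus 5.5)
Your proof is correct, and its skeleton matches the paper's. You use the same test function $\vectProjF{\polyDegree}\vectTestFun$, obtained from $\euNorm{\discScalarTestFun}^{\sobIndex-2}\discScalarTestFun$ through a right inverse of the divergence. Your extension to a ball also makes explicit the zero-mean issue that the paper passes over when quoting Bogovskii. You have the same identity $b(\vectProjF{\polyDegree}\vectTestFun,\discScalarTestFun)=\discScalarNorm{\discScalarTestFun}^{\sobIndex}$. For $1<\sobIndexConj\le 2$ the estimate is essentially the paper's: it uses the full equivalence \eqref{eq:disc.norm.simeq.cont.norm}, while you use its upper half \eqref{eq:ahE.less.cont.norm} and handle the divergence directly via \eqref{eq:projF.div.commute}.

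The real difference is the case $\sobIndexConj>2$. The paper converts each piece of $\locDiscFullVectNorm{\vectProjF{\polyDegree}\vectTestFun}$ to $L^2$ with the polynomial embeddings \eqref{eq:poly.sob.embedding}. It then applies the Hilbertian ($r=2$) bounds \eqref{eq:r-norm.normal.component}, \eqref{eq:r-norm.div.virtual.func} and \eqref{eq:Fortin.approx.r-norm}, and returns to $W^{1,\sobIndexConj}(\element)$ by H\"older. You instead stay in $L^{\sobIndexConj}$. The polynomial part goes through \eqref{eq:projL.Fortin.approx}. The stabilization part rests on the observation that $\vectProjF{\polyDegree}\vectTestFun\cdot\normalEdge$ is the $L^2(\edge)$-projection of $\vectTestFun\cdot\normalEdge$. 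After that you only need the $L^{\sobIndexConj}$-stability of a one-dimensional projection, the continuous trace inequality, and the polynomial discrete trace inequality.

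Nothing in your argument uses $\sobIndexConj>2$, so it gives a single proof valid for every $\sobIndexConj>1$. It also needs no inverse-type or trace bound on the virtual function $\vectProjF{\polyDegree}\vectTestFun$ itself in the stabilization term. Virtual-element stability enters only through \eqref{eq:projL.Fortin.approx}, whose proof for $r>2$ in turn passes through $L^2$. The paper's route, in exchange, recycles the $r=2$ machinery already established and follows the same $L^2$-reduction pattern it uses later in the interpolation estimates.
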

\begin{proof}
  Let $\discScalarTestFun \in \discScalarSpace \subset \scalarSolSpace$. We define $G \coloneqq \euNorm{\discScalarTestFun}^{\sobIndex-2}\discScalarTestFun$. Since
  $\euNorm{G}^{\sobIndexConj} = \euNorm{\discScalarTestFun}^{\sobIndex} \in \lebSpace{1}{\domain}$, it follows that
  \begin{equation}\label{eq:identity.norm.G.qh}
    G \in \lebSpace{\sobIndexConj}{\domain} \text{ with }
    \norm{\lebSpace{\sobIndexConj}{\domain}}{G}^{\sobIndexConj} = \norm{\lebSpace{\sobIndex}{\domain}}{\discScalarTestFun}^{\sobIndex}.
  \end{equation}
  From \cite[Theorem~1]{BOGOVSKII:1979}, we know that there exists $\vectTestFun \in \vectSobSpace{1}{\sobIndexConj}{\domain}$ such that
  \begin{equation}\label{eq:div.surj.stable}
    \divergence \vectTestFun = G  \text{ in } \domain,
    \quad \text{and} \quad
    \norm{\vectSobSpace{1}{\sobIndexConj}{\domain}}{\vectTestFun} \leq C(\sobIndex) \norm{\lebSpace{\sobIndexConj}{\domain}}{G}.
  \end{equation}
  Using the commuting property \eqref{eq:projF.div.commute}, the fact that $\discScalarTestFun \in \discScalarSpace$ together with the definition of $\projL{\polyDegree}$, and then
  \eqref{eq:div.surj.stable} together with the definition of $G$, we obtain
  \begin{align}\label{eq:Fortin.cond.1}
    b(\vectProjF{\polyDegree} \vectTestFun,\discScalarTestFun)
    =
    \int_{\Omega} (\projL{\polyDegree}(\divergence \vectTestFun))\discScalarTestFun
    =
    \int_{\Omega} G \discScalarTestFun
    =
    \discScalarNorm{\discScalarTestFun}^{\sobIndex}.
  \end{align}
  From now on, we distinguish the case $ 1 < \sobIndexConj \leq 2$ and $\sobIndexConj > 2$.

  \medskip
  \noindent
  \textbf{Case $1 < \sobIndexConj \leq 2$.}
  For any $\element \in \domain_h$, we first apply \eqref{eq:disc.norm.simeq.cont.norm}, then we add and subtract
  $\vectTestFun$ followed by the triangle inequality and \eqref{eq:Fortin.approx.r-norm}, to write
  \begin{align*}
    \locDiscFullVectNorm{\vectProjF{\polyDegree} \vectTestFun}
    &\leq C(\polyDegree, \sobIndex, \rho)
    \left[
      h_{\element} \seminorm{\vectSobSpace{1}{\sobIndexConj}{\element}}{\vectTestFun} + \norm{\vectLebSpace{\sobIndexConj}{\element}}{\vectTestFun}
    \right] \leq
    C(\polyDegree, \sobIndex, \rho)
    \norm{\vectSobSpace{1}{\sobIndexConj}{\element}}{\vectTestFun}.
  \end{align*}

  \medskip
  \noindent
  \textbf{Case $\sobIndexConj > 2$.}
  For any $\element \in \domain_h$, we first apply \eqref{eq:poly.sob.embedding}, and then, applying stability bound \eqref{eq:vectProjL.stab.element}, we obtain
  \begin{equation}\label{eq:disc.inf.sup.g2.initial}
    \begin{aligned}
      \locDiscFullVectNorm{\vectProjF{\polyDegree} \vectTestFun}^{\sobIndexConj}
      &\leq C(\polyDegree, \sobIndex, \rho)
      \euNorm{\element}^{1-\frac{\sobIndexConj}{2}}
      \big[
        \norm{\vectLebSpace{2}{\element}}{\vectProjF{\polyDegree}\vectTestFun}^{\sobIndexConj}
        +
        \left(
          h_{\element}^{\frac{1}{2}} \norm{\lebSpace{2}{\partial \element}}{(\vectProjF{\polyDegree}\vectTestFun - \locVectProjL{\polyDegree} \vectProjF{\polyDegree}\vectTestFun)\cdot
        \normalElement}\right)^{\sobIndexConj} \\
        &\quad
        +
        \left(
        h_{\element} \norm{\lebSpace{2}{\element}}{\divergence\vectProjF{\polyDegree}\vectTestFun}\right)^{\sobIndexConj}
      \big].
    \end{aligned}
  \end{equation}
  For the first term in the square bracket, we add and subtract $\vectTestFun$, and then we apply \eqref{eq:Fortin.approx.r-norm} to write
  \begin{equation}\label{eq:disc.inf.sup.g2.term1}
    \begin{aligned}
      \norm{\vectLebSpace{2}{\element}}{\vectProjF{\polyDegree}\vectTestFun} \leq
      \norm{\vectLebSpace{2}{\element}}{\vectTestFun} + \norm{\vectLebSpace{2}{\element}}{\vectTestFun-\vectProjF{\polyDegree}\vectTestFun}
      \leq C(\polyDegree, \rho)
      \norm{\vectSobSpace{1}{2}{\element}}{\vectTestFun},
    \end{aligned}
  \end{equation}
  where in the last inequality we have used the fact that $h_{\element} \leq C$.
  For the second term, combining \eqref{eq:r-norm.normal.component} and \eqref{eq:r-norm.div.virtual.func}, and then applying a triangle inequality followed by \eqref{eq:vectProjL.stab.element}, we obtain
  \begin{equation}\label{eq:disc.inf.sup.g2.term2}
    h_{\element}^{\frac{1}{2}} \norm{\lebSpace{2}{\partial \element}}{(\vectProjF{\polyDegree}\vectTestFun - \locVectProjL{\polyDegree} \vectProjF{\polyDegree}\vectTestFun)\cdot \normalElement}
    \leq C(\polyDegree,\rho)
    \norm{\vectLebSpace{2}{\element}}{\vectProjF{\polyDegree}\vectTestFun}
    \overset{\eqref{eq:disc.inf.sup.g2.term1}}\leq C(\polyDegree,\rho)
    \norm{\vectSobSpace{1}{2}{\element}}{\vectTestFun}.
  \end{equation}
  For the third term, we apply \eqref{eq:r-norm.div.virtual.func} together with \eqref{eq:disc.inf.sup.g2.term1} to write
  \begin{equation}\label{eq:disc.inf.sup.g2.term3}
    h_{\element} \norm{\lebSpace{2}{\element}}{\divergence\vectProjF{\polyDegree}\vectTestFun}
    \leq C(\polyDegree,\rho)
    \norm{\vectSobSpace{1}{2}{\element}}{\vectTestFun}.
  \end{equation}
  Plugging \eqref{eq:disc.inf.sup.g2.term1}, \eqref{eq:disc.inf.sup.g2.term2}, and \eqref{eq:disc.inf.sup.g2.term3} into \eqref{eq:disc.inf.sup.g2.initial} and then applying a
  $(\frac{\sobIndexConj}{2},\frac{\sobIndexConj}{\sobIndexConj-2})$-H{\"o}lder inequality, we get
  \begin{equation*}
    \locDiscFullVectNorm{\vectProjF{\polyDegree} \vectTestFun}^{\sobIndexConj}
    \leq C(\polyDegree, \sobIndex, \rho)
    \euNorm{\element}^{1-\frac{\sobIndexConj}{2}}
    \norm{\vectSobSpace{1}{2}{\element}}{\vectTestFun}^{\sobIndexConj}
    \leq C(\polyDegree, \sobIndex, \rho)
    \norm{\vectSobSpace{1}{\sobIndexConj}{\element}}{\vectTestFun}^{\sobIndexConj}.
  \end{equation*}

  \medskip
  \noindent
  \textbf{Conclusion.} In both cases, i.e. for $1<\sobIndexConj \leq 2$ and $\sobIndexConj > 2$, we have that
  \begin{equation*}
    \locDiscFullVectNorm{\vectProjF{\polyDegree} \vectTestFun}
    \leq C(\polyDegree, \sobIndex, \rho)
    \norm{\vectSobSpace{1}{\sobIndexConj}{\element}}{\vectTestFun}.
  \end{equation*}
  Summing the above inequality over $\element \in \domain_h$, using the stability bound in \eqref{eq:div.surj.stable}, and observing that $\frac{\sobIndex}{\sobIndexConj} = \sobIndex - 1$, we get
  \begin{equation} \label{eq:Fortin.cond.2}
    \discFullVectNorm{\vectProjF{\polyDegree} \vectTestFun}
    \leq
    C(\polyDegree, \sobIndex, \rho)
    \norm{\lebSpace{\sobIndexConj}{\domain}}{G}
    \overset{\eqref{eq:identity.norm.G.qh}}\leq
    C(\polyDegree, \sobIndexConj, \rho)
    \discScalarNorm{\discScalarTestFun}^{\sobIndex-1}
  \end{equation}
  The conclusion follows by combining \eqref{eq:Fortin.cond.1} and \eqref{eq:Fortin.cond.2}.
\end{proof}
%
\subsubsection{Continuity}
In this section, we prove property \eqref{eq:Ah.tilde.continuity}.
\begin{lemma}[$a_{\element}$ and $s_{\element}$ H{\"o}lder continuity]\label{lem:aE.Holder.continuity.disc.norms}
  Let $\element \in \domain_h$. Then, for any $\discVectTestFunTilda, \discVectFunOneTilda, \discVectFunTwoTilda \in \discVectSpace (\element)$, setting $\tilde{\boldsymbol{e}}_h \coloneqq \discVectFunOneTilda - \discVectFunTwoTilda$, it holds that
  \begin{align}
    \euNorm{ a_{\element}(\discVectFunOneTilda,\discVectTestFunTilda) - a_{\element}(\discVectFunTwoTilda,\discVectTestFunTilda)}
    &\lesssim
    \left( \norm{\vectLebSpace{\sobIndexConj}{\element}}{ \discVectFunOneTilda}^{\sobIndexConj} + \norm{\vectLebSpace{\sobIndexConj}{\element}}{ \discVectFunTwoTilda}^{\sobIndexConj}
    \right)^{\frac{\sobIndexConj - \sobIndexConjMin}{\sobIndexConj}}
    \norm{\vectLebSpace{\sobIndexConj}{\element}}{ \tilde{\boldsymbol{e}}_h}^{\sobIndexConjMin - 1}
    \norm{\vectLebSpace{\sobIndexConj}{\element}}{ \discVectTestFunTilda}, \label{eq:aE.Holder.continuity} \\
    \euNorm{ s_{\element}(\discVectFunOneTilda,\discVectTestFunTilda) - s_{\element}(\discVectFunTwoTilda,\discVectTestFunTilda)}
      &\lesssim
      \left[ s_{\element}(\discVectFunOneTilda,\discVectFunOneTilda) + s_{\element}(\discVectFunTwoTilda,\discVectFunTwoTilda) \right]^{\frac{\sobIndexConj - \sobIndexConjMin}{\sobIndexConj}} 
      \left[ s_{\element}(\tilde{\boldsymbol{e}}_h,\tilde{\boldsymbol{e}}_h) \right]^{\frac{\sobIndexConjMin - 1}{\sobIndexConj}}
      \left[ s_{\element}(\discVectTestFunTilda,\discVectTestFunTilda) \right]^{\frac{1}{\sobIndexConj}}, \label{eq:sE.Holder.continuity}
  \end{align}
  with a hidden constant depending only on $\sobIndex$.
\end{lemma}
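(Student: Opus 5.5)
Both estimates follow from the pointwise Hölder continuity \eqref{eq:diffusive.flux.properties.hc} of Remark~\ref{rem:diffusive.flux.properties}, applied with $r=\sobIndexConj$. We use $n=\dimDomain$ for $a_{\element}$ and $n=N_{\element}$ for $s_{\element}$, and then apply Hölder's inequality with the three exponents $\frac{\sobIndexConj}{\sobIndexConj-\sobIndexConjMin}$, $\frac{\sobIndexConj}{\sobIndexConjMin-1}$ and $\sobIndexConj$. These are conjugate, since $\frac{\sobIndexConj-\sobIndexConjMin}{\sobIndexConj}+\frac{\sobIndexConjMin-1}{\sobIndexConj}+\frac{1}{\sobIndexConj}=1$. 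When $\sobIndexConj\le 2$ we have $\sobIndexConj=\sobIndexConjMin$, so the first exponent is $\infty$ and the corresponding factor is simply absent.

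For \eqref{eq:aE.Holder.continuity} we write
\begin{equation*}
\euNorm{a_{\element}(\discVectFunOneTilda,\discVectTestFunTilda)-a_{\element}(\discVectFunTwoTilda,\discVectTestFunTilda)}\le\int_{\element}\euNorm{\fluxFunction(\discVectFunOneTilda)-\fluxFunction(\discVectFunTwoTilda)}\,\euNorm{\discVectTestFunTilda}.
\end{equation*}
We bound the integrand pointwise by \eqref{eq:diffusive.flux.properties.hc}:
\begin{equation*}
\euNorm{\fluxFunction(\discVectFunOneTilda)-\fluxFunction(\discVectFunTwoTilda)}\lesssim(\euNorm{\discVectFunOneTilda}^{\sobIndexConj}+\euNorm{\discVectFunTwoTilda}^{\sobIndexConj})^{\frac{\sobIndexConj-\sobIndexConjMin}{\sobIndexConj}}\euNorm{\tilde{\boldsymbol{e}}_h}^{\sobIndexConjMin-1}.
\end{equation*}
Next we apply the three-factor Hölder inequality in $\lebSpace{1}{\element}$ with the exponents above. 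The first factor becomes $\big(\int_{\element}(\euNorm{\discVectFunOneTilda}^{\sobIndexConj}+\euNorm{\discVectFunTwoTilda}^{\sobIndexConj})\big)^{\frac{\sobIndexConj-\sobIndexConjMin}{\sobIndexConj}}$, which is exactly the sum of the $\sobIndexConj$-th powers of the norms raised to the stated power. The second factor is $\norm{\vectLebSpace{\sobIndexConj}{\element}}{\tilde{\boldsymbol{e}}_h}^{\sobIndexConjMin-1}$, because $(\sobIndexConjMin-1)\cdot\frac{\sobIndexConj}{\sobIndexConjMin-1}=\sobIndexConj$. The third factor is $\norm{\vectLebSpace{\sobIndexConj}{\element}}{\discVectTestFunTilda}$. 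The hidden constant comes only from Remark~\ref{rem:diffusive.flux.properties}, so it depends only on $\sobIndex$.

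For \eqref{eq:sE.Holder.continuity} the argument is the finite-dimensional analogue, with one structural point to check first: the constant in Lemma~\ref{lem:diffusive.flux.properties.original} (see \cite{DIENING.ETTWEIN:2008}) must be independent of the dimension $n$. It is, because the proof only uses the plane spanned by $\boldsymbol{A}$ and $\boldsymbol{B}$; this is also the only place where independence from $N_{\element}$ has to be justified. Set $\boldsymbol{A}=\locDofVect(\discVectFunOneTilda)$, $\boldsymbol{B}=\locDofVect(\discVectFunTwoTilda)$ and $\boldsymbol{C}=\locDofVect(\discVectTestFunTilda)$. By linearity of the DoFs, $\boldsymbol{A}-\boldsymbol{B}=\locDofVect(\tilde{\boldsymbol{e}}_h)$. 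Cauchy--Schwarz and \eqref{eq:diffusive.flux.properties.hc} then give
\begin{equation*}
\euNorm{s_{\element}(\discVectFunOneTilda,\discVectTestFunTilda)-s_{\element}(\discVectFunTwoTilda,\discVectTestFunTilda)}\lesssim h_{\element}^{2}(\euNorm{\boldsymbol{A}}^{\sobIndexConj}+\euNorm{\boldsymbol{B}}^{\sobIndexConj})^{\frac{\sobIndexConj-\sobIndexConjMin}{\sobIndexConj}}\euNorm{\boldsymbol{A}-\boldsymbol{B}}^{\sobIndexConjMin-1}\euNorm{\boldsymbol{C}}.
\end{equation*}
From the definition \eqref{eq:def.sE}, $s_{\element}(\boldsymbol{w},\boldsymbol{w})=h_{\element}^2\euNorm{\locDofVect(\boldsymbol{w})}^{\sobIndexConj}$. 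We split $h_{\element}^2$ according to the exponents, $h_{\element}^2=h_{\element}^{2\frac{\sobIndexConj-\sobIndexConjMin}{\sobIndexConj}}\,h_{\element}^{2\frac{\sobIndexConjMin-1}{\sobIndexConj}}\,h_{\element}^{2\frac{1}{\sobIndexConj}}$, and attach each power of $h_{\element}$ to the corresponding factor. Each factor is then the appropriate power of an $s_{\element}$ diagonal term, which gives \eqref{eq:sE.Holder.continuity} with a constant depending only on $\sobIndex$.
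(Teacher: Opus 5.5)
Your proposal is correct and follows the same route as the paper. For $a_E$, both use the pointwise bound \eqref{eq:diffusive.flux.properties.hc} with $n=2$ followed by the three-exponent H\"older inequality; for $s_E$, both apply \eqref{eq:diffusive.flux.properties.hc} with $n=N_E$ to the DoF vectors and then split $h_E^2$ according to the exponent identity. Your check that the constant does not depend on $N_E$ (by reducing to the plane spanned by $\boldsymbol{A},\boldsymbol{B}$) is a correct extra detail that the paper simply takes from the statement of Lemma~\ref{lem:diffusive.flux.properties.original}.
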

\begin{proof}
  The proof of \eqref{eq:aE.Holder.continuity} is a direct consequence of \eqref{eq:diffusive.flux.properties.hc} with
  $(\boldsymbol{A}, \boldsymbol{B}, r, n)=(\discVectFunOneTilda, \discVectFunTwoTilda, \sobIndexConj,2)$ and a $\left(\frac{\sobIndexConj}{\sobIndexConj - \sobIndexConjMin}, \frac{\sobIndexConj}{\sobIndexConjMin - 1}, \sobIndexConj \right)$-H{\"o}lder inequality.
  While applying \eqref{eq:diffusive.flux.properties.hc} with
  $(\boldsymbol{A}, \boldsymbol{B}, r, n)=(\locDofVect \discVectFunOneTilda, \locDofVect \discVectFunTwoTilda, \sobIndexConj, N_{{\element}})$ and the identity $\frac{\sobIndexConj - \sobIndexConjMin}{\sobIndexConj} + \frac{\sobIndexConjMin - 1}{\sobIndexConj} + \frac{1}{\sobIndexConj} = 1$ lead to \eqref{eq:sE.Holder.continuity}.
\end{proof}
From the previous Lemma we immediately obtain the following result.
\begin{lemma}[$a_{h,\element}$ H{\"o}lder continuity]\label{lem:ahE.Holder.continuity}
  Let $\element \in \domain_h$. For any $\discVectTestFun, \discVectFunOne, \discVectFunTwo \in \discVectSpace (\element)$ it holds that
  \begin{equation}\label{eq:ahE.Holder.continuity}
    \euNorm{ a_{h,\element}(\discVectFunOne,\discVectTestFun) - a_{h,\element}(\discVectFunTwo,\discVectTestFun)}
    \lesssim
    \left( \locDiscFullVectNorm{ \discVectFunOne}^{\sobIndexConj} + \locDiscFullVectNorm{\discVectFunTwo}^{\sobIndexConj} \right)^{\frac{\sobIndexConj - \sobIndexConjMin}{\sobIndexConj}}
    \locDiscFullVectNorm{ \discVectFunOne - \discVectFunTwo}^{\sobIndexConjMin - 1}
    \locDiscFullVectNorm{ \discVectTestFun},
  \end{equation}
  with a hidden constant depending only on $\sobIndex$.
\end{lemma}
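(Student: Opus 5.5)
The plan is to split $a_{h,\element}$ into its consistency and stabilization parts and apply Lemma~\ref{lem:aE.Holder.continuity.disc.norms} to each. By \eqref{eq:def.ah.local},
\begin{equation*}
  a_{h,\element}(\discVectFunOne,\discVectTestFun) - a_{h,\element}(\discVectFunTwo,\discVectTestFun)
  = \big[a_{\element}(\locVectProjL{\polyDegree}\discVectFunOne,\locVectProjL{\polyDegree}\discVectTestFun) - a_{\element}(\locVectProjL{\polyDegree}\discVectFunTwo,\locVectProjL{\polyDegree}\discVectTestFun)\big]
  + \big[s_{\element}(\discVectFunOneTilda,\discVectTestFunTilda) - s_{\element}(\discVectFunTwoTilda,\discVectTestFunTilda)\big],
\end{equation*}
where $\discVectFunOneTilda \coloneqq \discVectFunOne - \locVectProjL{\polyDegree}\discVectFunOne$, and $\discVectFunTwoTilda$, $\discVectTestFunTilda$ are defined in the same way. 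Linearity of the projector gives $\discVectFunOneTilda - \discVectFunTwoTilda = (\discVectFunOne-\discVectFunTwo) - \locVectProjL{\polyDegree}(\discVectFunOne-\discVectFunTwo)$.

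For the first bracket, I apply \eqref{eq:aE.Holder.continuity} with arguments $\locVectProjL{\polyDegree}\discVectFunOne$, $\locVectProjL{\polyDegree}\discVectFunTwo$, $\locVectProjL{\polyDegree}\discVectTestFun$. These are polynomials, so they belong to $\discVectSpace(\element)$. The error is $\locVectProjL{\polyDegree}(\discVectFunOne-\discVectFunTwo)$. For any $\boldsymbol{y}_h$, the definition of $\locDiscDivFreeNorm{\cdot}$ gives
\begin{equation*}
  \norm{\vectLebSpace{\sobIndexConj}{\element}}{\locVectProjL{\polyDegree}\boldsymbol{y}_h}^{\sobIndexConj} \le \locDiscDivFreeNorm{\boldsymbol{y}_h}^{\sobIndexConj} \le \locDiscFullVectNorm{\boldsymbol{y}_h}^{\sobIndexConj}.
\end{equation*}
Each $L^{\sobIndexConj}$ factor in \eqref{eq:aE.Holder.continuity} is therefore bounded by the corresponding local discrete norm. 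All exponents $\frac{\sobIndexConj-\sobIndexConjMin}{\sobIndexConj}$, $\sobIndexConjMin-1$ and $1$ are nonnegative, so each factor can be replaced by its upper bound.

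For the second bracket, I apply \eqref{eq:sE.Holder.continuity} with $\discVectFunOneTilda,\discVectFunTwoTilda,\discVectTestFunTilda$. By the definition of $\locDiscDivFreeNorm{\cdot}$, $s_{\element}(\tilde{\boldsymbol{y}}_h,\tilde{\boldsymbol{y}}_h)\le \locDiscFullVectNorm{\boldsymbol{y}_h}^{\sobIndexConj}$. Applied to $\boldsymbol{y}_h=\discVectFunOne-\discVectFunTwo$, this gives
\begin{equation*}
  [s_{\element}(\tilde{\boldsymbol{e}}_h,\tilde{\boldsymbol{e}}_h)]^{\frac{\sobIndexConjMin-1}{\sobIndexConj}} \le \locDiscFullVectNorm{\discVectFunOne-\discVectFunTwo}^{\sobIndexConjMin-1},
\end{equation*}
and the sum and test factors are handled the same way. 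Adding the two brackets yields \eqref{eq:ahE.Holder.continuity}. The constant is twice the constant of Lemma~\ref{lem:aE.Holder.continuity.disc.norms}, so it depends only on $\sobIndex$. No norm-equivalence results are needed, which is why $\polyDegree$ and $\rho$ do not enter.

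The only point needing care is that the Hölder exponents are nonnegative, which makes monotone replacement legitimate. This holds because $\sobIndexConjMin\in(1,2]$ and $\sobIndexConjMin \le \sobIndexConj$. In the edge case $\sobIndexConj\le 2$, the first factor has exponent $0$ and reduces to $1$. I do not expect a substantive obstacle.
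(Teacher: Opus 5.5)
Your proof is correct and follows the paper's route: split $a_{h,\element}$ into the projected part and the stabilization part, then apply \eqref{eq:aE.Holder.continuity} and \eqref{eq:sE.Holder.continuity} to them, respectively. The only difference is how you recombine the two pieces. The paper merges them into the full local norms with a discrete $\bigl(\frac{\sobIndexConj}{\sobIndexConj-\sobIndexConjMin},\frac{\sobIndexConj}{\sobIndexConjMin-1},\sobIndexConj\bigr)$-H\"older inequality, whereas you bound each factor separately by the corresponding full norm and absorb the resulting factor $2$ into the constant.
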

\begin{proof}
  Recalling the definition of $a_{h,\element}$ in \eqref{eq:def.ah.local}, the proof is a straightforward application of \eqref{eq:aE.Holder.continuity} with
  $(\discVectTestFunTilda, \discVectFunOneTilda, \discVectFunTwoTilda)=(\locVectProjL{\polyDegree}\discVectTestFun, \locVectProjL{\polyDegree}\discVectFunOne,
  \locVectProjL{\polyDegree}\discVectFunTwo) \in [\discVectSpace (\element)]^3$, and \eqref{eq:sE.Holder.continuity} with
  $(\discVectTestFunTilda, \discVectFunOneTilda, \discVectFunTwoTilda)=(\discVectTestFun - \locVectProjL{\polyDegree}\discVectTestFun, \discVectFunOne -
  \locVectProjL{\polyDegree}\discVectFunOne, \discVectFunTwo - \locVectProjL{\polyDegree}\discVectFunTwo) \in [\discVectSpace (\element)]^3$, together with a discrete $(\frac{\sobIndexConj}{\sobIndexConj-\sobIndexConjMin}, \frac{\sobIndexConj}{\sobIndexConjMin-1}, \sobIndexConj)$-H{\"o}lder inequality.
\end{proof}
\begin{lemma}[$a_{h}$ H{\"o}lder continuity]\label{lem:ah.Holder.continuity}
  For any $\discVectTestFun, \discVectFunOne, \discVectFunTwo \in \discVectSpace$ it holds that
  \begin{equation}\label{eq:ah.Holder.continuity}
    \euNorm{ a_{h}(\discVectFunOne,\discVectTestFun) - a_{h}(\discVectFunTwo,\discVectTestFun)}
    \lesssim
    \left( \discFullVectNorm{ \discVectFunOne}^{\sobIndexConj} + \discFullVectNorm{\discVectFunTwo}^{\sobIndexConj} \right)^{\frac{\sobIndexConj - \sobIndexConjMin}{\sobIndexConj}}
    \discFullVectNorm{ \discVectFunOne - \discVectFunTwo}^{\sobIndexConjMin - 1}
    \discFullVectNorm{ \discVectTestFun},
  \end{equation}
  with a hidden constant depending only on $\sobIndex$.
\end{lemma}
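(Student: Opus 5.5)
The global bound follows from the local one, Lemma~\ref{lem:ahE.Holder.continuity}, by summing over the elements and applying a discrete H\"older inequality. Before summing, the triangle inequality on the definition \eqref{eq:def.ah.bh} of $a_h$ gives
\begin{equation*}
  \euNorm{a_h(\discVectFunOne,\discVectTestFun) - a_h(\discVectFunTwo,\discVectTestFun)}
  \leq \sum_{\element \in \domain_h} \euNorm{a_{h,\element}(\discVectFunOne,\discVectTestFun) - a_{h,\element}(\discVectFunTwo,\discVectTestFun)}.
\end{equation*}
We then apply \eqref{eq:ahE.Holder.continuity} to each summand, with the restrictions of $\discVectFunOne,\discVectFunTwo,\discVectTestFun$ to $\element$.

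The summands now have the form $\alpha_\element^{\theta_1}\beta_\element^{\theta_2}\gamma_\element^{\theta_3}$, where
\begin{equation*}
  \alpha_\element \coloneqq \locDiscFullVectNorm{\discVectFunOne}^{\sobIndexConj} + \locDiscFullVectNorm{\discVectFunTwo}^{\sobIndexConj},
  \quad \beta_\element \coloneqq \locDiscFullVectNorm{\discVectFunOne-\discVectFunTwo}^{\sobIndexConj},
  \quad \gamma_\element \coloneqq \locDiscFullVectNorm{\discVectTestFun}^{\sobIndexConj},
\end{equation*}
and the exponents are
\begin{equation*}
  \theta_1 = \frac{\sobIndexConj-\sobIndexConjMin}{\sobIndexConj}, \quad \theta_2 = \frac{\sobIndexConjMin-1}{\sobIndexConj}, \quad \theta_3 = \frac{1}{\sobIndexConj}.
\end{equation*}
These exponents sum to one, as already noted in the proof of Lemma~\ref{lem:aE.Holder.continuity}. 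The generalized discrete H\"older inequality with exponents $(1/\theta_1, 1/\theta_2, 1/\theta_3)$ then gives
\begin{equation*}
  \sum_{\element} \alpha_\element^{\theta_1}\beta_\element^{\theta_2}\gamma_\element^{\theta_3}
  \leq \Big(\sum_\element \alpha_\element\Big)^{\theta_1}\Big(\sum_\element \beta_\element\Big)^{\theta_2}\Big(\sum_\element\gamma_\element\Big)^{\theta_3}.
\end{equation*}
By the definition \eqref{eq:def.discrete.norm.NEW} of the global norm, these sums are $\discFullVectNorm{\discVectFunOne}^{\sobIndexConj}+\discFullVectNorm{\discVectFunTwo}^{\sobIndexConj}$, $\discFullVectNorm{\discVectFunOne-\discVectFunTwo}^{\sobIndexConj}$ and $\discFullVectNorm{\discVectTestFun}^{\sobIndexConj}$ respectively. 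Raising them to the powers $\theta_i$ produces exactly the right-hand side of \eqref{eq:ah.Holder.continuity}.

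The one place needing care is the degenerate case $\sobIndexConj\leq 2$. There $\sobIndexConjMin=\sobIndexConj$, so $\theta_1=0$ and the first factor is identically one. The inequality then reduces to the two-factor H\"older inequality with conjugate exponents $(\sobIndexConj/(\sobIndexConj-1),\sobIndexConj)$, which we treat separately. The same holds if some exponent vanishes, with the convention $0^0=1$. The constant is the one from Lemma~\ref{lem:ahE.Holder.continuity}, so it depends only on $\sobIndex$. I do not expect a genuine obstacle: the argument is only exponent bookkeeping.
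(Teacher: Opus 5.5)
Your proposal is correct and follows the paper's argument: triangle inequality over the elements, the local bound \eqref{eq:ahE.Holder.continuity}, and a discrete H\"older inequality with exponents $\bigl(\frac{p'}{p'-\underline{p'}},\frac{p'}{\underline{p'}-1},p'\bigr)$, with the resulting sums identified with the global norms via \eqref{eq:def.discrete.norm.NEW}. You also treat the case $p'\le 2$ explicitly, where $\theta_1=0$ (the first H\"older exponent is infinite) and the estimate reduces to a two-factor $(p,p')$-H\"older inequality; this is a small extra detail that the paper leaves implicit.
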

\begin{proof}
  Recalling the definition of $a_{h}$ in \eqref{eq:def.ah.bh}, applying \eqref{eq:ahE.Holder.continuity}, and a discrete $\left(\frac{\sobIndexConj}{\sobIndexConj - \sobIndexConjMin},
  \frac{\sobIndexConj}{\sobIndexConjMin - 1},\sobIndexConj \right)$-H{\"o}lder inequality yields the desired result.
\end{proof}
\begin{lemma}[Continuity of $\mathcal{A}_h$]\label{lem:Ah.Holder.cont}
  For any $\discVectFunOne,\discVectFunTwo, \discVectTestFun \in \discVectSpace$, it holds that
  \begin{equation}\label{eq:Ah.Holder.cont}
    \euNorm{ \langle \mathcal{A}_h\discVectFunOne - \mathcal{A}_h \discVectFunTwo, \discVectTestFun \rangle}
    \lesssim
    \left( \discFullVectNorm{ \discVectFunOne}^{\sobIndexConj} + \discFullVectNorm{\discVectFunTwo}^{\sobIndexConj} \right)^{\frac{\sobIndexConj - \sobIndexConjMin}{\sobIndexConj}}
    \discFullVectNorm{ \discVectFunOne - \discVectFunTwo}^{\sobIndexConjMin - 1}
    \discFullVectNorm{ \discVectTestFun},
  \end{equation}
  with a hidden constant depending only on $\sobIndex$.
\end{lemma}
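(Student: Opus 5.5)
This is essentially a restatement of Lemma~\ref{lem:ah.Holder.continuity} in operator language, so the plan is to reduce it to that lemma.

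First, by the definition of $\mathcal{A}_h$ in \eqref{eq:def.abstract.operators}, for any $\discVectFunOne,\discVectFunTwo,\discVectTestFun \in \discVectSpace$ the duality pairing is linear in its first slot with respect to the functional. Hence
\begin{equation*}
  \langle \mathcal{A}_h\discVectFunOne - \mathcal{A}_h \discVectFunTwo, \discVectTestFun \rangle
  = a_h(\discVectFunOne,\discVectTestFun) - a_h(\discVectFunTwo,\discVectTestFun).
\end{equation*}
Taking absolute values and invoking \eqref{eq:ah.Holder.continuity} then gives \eqref{eq:Ah.Holder.cont} directly, with the same hidden constant, which depends only on $\sobIndex$.

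The only point needing a word of care is the bound \eqref{eq:ah.Holder.continuity} itself. It is obtained by summing the local estimates \eqref{eq:ahE.Holder.continuity} over $\element\in\domain_h$. One then applies the discrete three-term H\"older inequality with exponents $\frac{\sobIndexConj}{\sobIndexConj-\sobIndexConjMin}$, $\frac{\sobIndexConj}{\sobIndexConjMin-1}$ and $\sobIndexConj$, whose reciprocals sum to $1$. The powers then recombine into global norms through \eqref{eq:def.discrete.norm.NEW}.

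In the case $\sobIndexConj\le 2$ the first exponent degenerates, since $\sobIndexConj-\sobIndexConjMin=0$. There the first factor is identically $1$, and one uses a two-term H\"older inequality with exponents $\frac{\sobIndexConj}{\sobIndexConj-1}$ and $\sobIndexConj$ instead. This case distinction is the only mild obstacle, and it is already handled at the level of the earlier lemma. No further work is needed.

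As a consequence, restricting $\discVectFunOne,\discVectFunTwo,\discVectTestFun$ to $\discDivFreeSpace$ and shifting by $\tilde{\vectSol}_h$ yields the continuity property \eqref{eq:Ah.tilde.continuity}, using \eqref{eq:disc.full.norm.eq.ah.norm}.
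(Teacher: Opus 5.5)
Your proposal is correct and matches the paper's proof. The paper likewise combines the identity $\langle \mathcal{A}_h\discVectFunOne - \mathcal{A}_h\discVectFunTwo, \discVectTestFun\rangle = a_h(\discVectFunOne,\discVectTestFun) - a_h(\discVectFunTwo,\discVectTestFun)$ from \eqref{eq:def.abstract.operators} with Lemma~\ref{lem:ah.Holder.continuity}; your remark on the degenerate H\"older exponent when $\sobIndexConj \le 2$ correctly spells out a step the paper leaves implicit.
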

\begin{proof}
  The proof is a straightforward application of the definition of $\mathcal{A}_h$ in \eqref{eq:def.abstract.operators} and Lemma~\ref{lem:ah.Holder.continuity}.
\end{proof}
\begin{lemma}[Continuity of $\tilde{\mathcal{A}}_{h,\tilde{\boldsymbol{\tau}}_h}$]\label{lem:continuity.Atildeh}
  For any $\tilde{\boldsymbol{\tau}}_h \in \discVectSpace$, and any $\discVectFunOne,\discVectFunTwo \in \discDivFreeSpace$ such that $\norm{\discDivFreeSpace}{\discVectFunOne -
  \discVectFunTwo} \to 0$, it holds that
  \[
    \norm{(\discDivFreeSpace)'}{\tilde{\mathcal{A}}_{h,\tilde{\boldsymbol{\tau}}_h} \discVectFunOne -\tilde{\mathcal{A}}_{h,\tilde{\boldsymbol{\tau}}_h}\discVectFunTwo} \to 0
  \]
\end{lemma}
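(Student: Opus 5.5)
The plan is to reduce the statement to the Hölder continuity of $\mathcal{A}_h$ in Lemma~\ref{lem:Ah.Holder.cont}, applied to the shifted arguments $\discVectFunOne+\tilde{\boldsymbol{\tau}}_h$ and $\discVectFunTwo+\tilde{\boldsymbol{\tau}}_h$. Since the norm on $\discDivFreeSpace$ has not yet been fixed, we take $\norm{\discDivFreeSpace}{\cdot}\coloneqq\discDivFreeNorm{\cdot}$. By \eqref{eq:disc.full.norm.eq.ah.norm}, this coincides with $\discFullVectNorm{\cdot}$ on $\discDivFreeSpace$.

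\textbf{Step 1.} By the definition \eqref{eq:shifted.operator}, for every $\discVectTestFun\in\discDivFreeSpace$,
\[
\langle \tilde{\mathcal{A}}_{h,\tilde{\boldsymbol{\tau}}_h}\discVectFunOne-\tilde{\mathcal{A}}_{h,\tilde{\boldsymbol{\tau}}_h}\discVectFunTwo,\discVectTestFun\rangle
=\langle \mathcal{A}_h(\discVectFunOne+\tilde{\boldsymbol{\tau}}_h)-\mathcal{A}_h(\discVectFunTwo+\tilde{\boldsymbol{\tau}}_h),\discVectTestFun\rangle .
\]
Applying \eqref{eq:Ah.Holder.cont}, and noting that the difference of the two shifted arguments is $\discVectFunOne-\discVectFunTwo$, gives
\[
\bigl|\langle\cdot,\discVectTestFun\rangle\bigr|\lesssim\Bigl(\discFullVectNorm{\discVectFunOne+\tilde{\boldsymbol{\tau}}_h}^{\sobIndexConj}+\discFullVectNorm{\discVectFunTwo+\tilde{\boldsymbol{\tau}}_h}^{\sobIndexConj}\Bigr)^{\frac{\sobIndexConj-\sobIndexConjMin}{\sobIndexConj}}\discFullVectNorm{\discVectFunOne-\discVectFunTwo}^{\sobIndexConjMin-1}\discFullVectNorm{\discVectTestFun}.
\]
Dividing by $\discFullVectNorm{\discVectTestFun}=\norm{\discDivFreeSpace}{\discVectTestFun}$ and taking the supremum over $\discVectTestFun\in\discDivFreeSpace$ bounds the dual norm by the same right-hand side without the last factor.

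\textbf{Step 2.} We bound the prefactor uniformly. Minkowski's inequality holds for $\discFullVectNorm{\cdot}$: each local piece is an $\ell^{\sobIndexConj}$ combination of $L^{\sobIndexConj}$ norms and a weighted Euclidean norm of DoF vectors. Hence
\[
\discFullVectNorm{\discVectFunTwo+\tilde{\boldsymbol{\tau}}_h}\le \discFullVectNorm{\discVectFunOne}+\discFullVectNorm{\discVectFunOne-\discVectFunTwo}+\discFullVectNorm{\tilde{\boldsymbol{\tau}}_h}.
\]
Fixing $\discVectFunOne$ and letting $\discVectFunTwo\to\discVectFunOne$ (or conversely), the prefactor stays bounded by a constant depending on $\discVectFunOne$, $\tilde{\boldsymbol{\tau}}_h$ and $\sobIndex$ once $\discFullVectNorm{\discVectFunOne-\discVectFunTwo}\le1$. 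The statement is qualitative, so such a dependence is acceptable. Alternatively, one can simply note that on the bounded sets involved the prefactor is finite.

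\textbf{Step 3.} Since $\sobIndexConjMin-1>0$, the factor $\discFullVectNorm{\discVectFunOne-\discVectFunTwo}^{\sobIndexConjMin-1}$ tends to $0$. The product therefore tends to $0$, which proves the claim.

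The only delicate point is the triangle inequality for $\discFullVectNorm{\cdot}$, since the stabilization part is nonlinear in appearance. However, $s_\element(\boldsymbol v,\boldsymbol v)^{1/\sobIndexConj}=h_\element^{2/\sobIndexConj}\euNorm{\locDofVect(\boldsymbol v)}$ is a seminorm. The maps $\boldsymbol v\mapsto\locVectProjL{\polyDegree}\boldsymbol v$ and $\boldsymbol v\mapsto\boldsymbol v-\locVectProjL{\polyDegree}\boldsymbol v$ are linear, so $\discFullVectNorm{\cdot}$ is an $\ell^{\sobIndexConj}$-sum of seminorms composed with linear maps, and Minkowski's inequality applies.
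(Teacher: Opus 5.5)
Your proposal is correct and follows the paper's proof. Like the paper, you apply \eqref{eq:Ah.Holder.cont} to the shifted arguments $\discVectFunOne+\tilde{\boldsymbol{\tau}}_h$ and $\discVectFunTwo+\tilde{\boldsymbol{\tau}}_h$, use $\discDivFreeNorm{\cdot}=\discFullVectNorm{\cdot}$ on $\discDivFreeSpace$ to pass to the dual norm, and conclude from $\sobIndexConjMin-1>0$. Your Step~2 uses Minkowski's inequality for $\discFullVectNorm{\cdot}$ to keep the prefactor bounded while one argument is held fixed; this makes explicit a point the paper leaves implicit, which matters for $\sobIndexConj>2$, where that prefactor is not uniformly bounded.
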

\begin{proof}
  Let $\tilde{\boldsymbol{\tau}}_h \in \discVectSpace$ and $\discVectFunOne,\discVectFunTwo,\discVectTestFun \in \discDivFreeSpace$.
  Applying \eqref{eq:Ah.Holder.cont} with $(\discVectFunOne,\discVectFunTwo,\discVectTestFun) = (\discVectFunOne + \tilde{\boldsymbol{\tau}}_h, \discVectFunTwo +
  \tilde{\boldsymbol{\tau}}_h, \discVectTestFun) \in [\discVectSpace]^2 \times \discDivFreeSpace \subset [\discVectSpace]^3 $ and recalling the definition of
  $\tilde{\mathcal{A}}_{h,\tilde{\boldsymbol{\tau}}_h}$ in \eqref{eq:shifted.operator}, we obtain
  \begin{equation}\label{eq:continuity.Atildeh.initial}
    \euNorm{\langle \tilde{\mathcal{A}}_{h,\tilde{\boldsymbol{\tau}}_h} \discVectFunOne -\tilde{\mathcal{A}}_{h,\tilde{\boldsymbol{\tau}}_h}\discVectFunTwo, \discVectTestFun \rangle}
    \leq C(\sobIndex)
    \left( \discFullVectNorm{ \discVectFunOne + \tilde{\boldsymbol{\tau}}_h}^{\sobIndexConj} + \discFullVectNorm{\discVectFunTwo + \tilde{\boldsymbol{\tau}}_h}^{\sobIndexConj}
    \right)^{\frac{\sobIndexConj - \sobIndexConjMin}{\sobIndexConj}}
    \discFullVectNorm{ \discVectFunOne - \discVectFunTwo}^{\sobIndexConjMin - 1}
    \discFullVectNorm{ \discVectTestFun}.
  \end{equation}
  We endow $\discDivFreeSpace \subset \discVectSpace$ with the discrete norm $\discDivFreeNorm{\cdot}$ (cf. \eqref{eq:def.ah.norm}). Recalling that $\discDivFreeNorm{\cdot} =
  \discFullVectNorm{\cdot}$ on $\discDivFreeSpace$, applying \eqref{eq:continuity.Atildeh.initial}, and observing that $\sobIndexConjMin - 1 > 0$, we deduce the desired result from
  \begin{align*}
    \norm{(\discDivFreeSpace)'}{\tilde{\mathcal{A}}_{h,\tilde{\boldsymbol{\tau}}_h}\discVectFunOne - \tilde{\mathcal{A}}_{h,\tilde{\boldsymbol{\tau}}_h}\discVectFunTwo }
    \leq C(\sobIndex)
    \left( \discFullVectNorm{ \discVectFunOne + \tilde{\boldsymbol{\tau}}_h}^{\sobIndexConj} + \discFullVectNorm{\discVectFunTwo+ \tilde{\boldsymbol{\tau}}_h}^{\sobIndexConj}
    \right)^{\frac{\sobIndexConj - \sobIndexConjMin}{\sobIndexConj}}
    \discFullVectNorm{ \discVectFunOne - \discVectFunTwo}^{\sobIndexConjMin - 1}.
  \end{align*}
\end{proof}
%
\subsubsection{Coercivity}
In this section, we prove property \eqref{eq:Ah.tilde.coercivity}.
\begin{lemma}[$a_{\element}$ monotonicity]\label{lem:aE.monotonicity}
  For any $\discVectFunOneTilda, \discVectFunTwoTilda \in \discVectSpace$ it holds that
  \begin{align}\label{eq:aE.monotonicity}
    \sum_{\element \in \domain_h}
    a_{\element}(\discVectFunOneTilda,\discVectFunOneTilda-\discVectFunTwoTilda) - a_{\element}(\discVectFunTwoTilda,\discVectFunOneTilda-\discVectFunTwoTilda)
    &\gtrsim
    \left( \norm{\vectLebSpace{\sobIndexConj}{\domain}}{\discVectFunOneTilda}^{\sobIndexConj} + \norm{\vectLebSpace{\sobIndexConj}{\domain}}{\discVectFunTwoTilda}^{\sobIndexConj}
    \right)^{\frac{\sobIndexConjMin-2}{\sobIndexConj}}
    \norm{\vectLebSpace{\sobIndexConj}{\domain}}{ \discVectFunOneTilda - \discVectFunTwoTilda}^{\sobIndexConjMax},
  \end{align}
  with a hidden constant depending only on $\sobIndex$.
\end{lemma}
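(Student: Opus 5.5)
Since $a_{\element}(\boldsymbol{w},\boldsymbol{v}) = \int_{\element}\fluxFunction(\boldsymbol{w})\cdot\boldsymbol{v}$, the sum over the elements in \eqref{eq:aE.monotonicity} is simply
\[
\int_{\domain} \big(\fluxFunction(\discVectFunOneTilda)-\fluxFunction(\discVectFunTwoTilda)\big)\cdot(\discVectFunOneTilda-\discVectFunTwoTilda).
\]
The plan is therefore to bound this integral pointwise and then integrate with a (reverse) Hölder inequality, distinguishing the cases $\sobIndexConj\ge 2$ and $1<\sobIndexConj<2$.

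\textbf{Pointwise bound.} Apply \eqref{eq:diffusive.flux.properties.mono} with $(\boldsymbol{A},\boldsymbol{B},r,n)=(\discVectFunOneTilda(\boldsymbol{x}),\discVectFunTwoTilda(\boldsymbol{x}),\sobIndexConj,2)$. For $\sobIndexConj\ge2$ we have $\sobIndexConjMin=2$, so the weight equals one and the integrand is bounded below by $\euNorm{\discVectFunOneTilda-\discVectFunTwoTilda}^{\sobIndexConj}$. Integrating gives the claim directly, since the prefactor in \eqref{eq:aE.monotonicity} is then $(\cdot)^0=1$ and $\sobIndexConjMax=\sobIndexConj$.

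\textbf{Case $1<\sobIndexConj<2$.} Here $\sobIndexConjMax=2$ and the exponent on the prefactor is $(\sobIndexConj-2)/\sobIndexConj<0$. I would use the original form \eqref{eq:diffusive.flux.properties.original.mono}, with $M\coloneqq\euNorm{\discVectFunOneTilda}+\euNorm{\discVectFunTwoTilda}$ and $\boldsymbol{e}\coloneqq\discVectFunOneTilda-\discVectFunTwoTilda$, which yields
\[
\int_\domain(\fluxFunction(\discVectFunOneTilda)-\fluxFunction(\discVectFunTwoTilda))\cdot\boldsymbol{e}\gtrsim \int_\domain M^{\sobIndexConj-2}\euNorm{\boldsymbol{e}}^2 .
\]
The standard trick is to write $\euNorm{\boldsymbol{e}}^{\sobIndexConj}=(M^{\sobIndexConj-2}\euNorm{\boldsymbol{e}}^2)^{\sobIndexConj/2}M^{(2-\sobIndexConj)\sobIndexConj/2}$. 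Applying Hölder with exponents $(2/\sobIndexConj,\,2/(2-\sobIndexConj))$ then gives
\[
\norm{\vectLebSpace{\sobIndexConj}{\domain}}{\boldsymbol{e}}^{\sobIndexConj}\le\Big(\int_\domain M^{\sobIndexConj-2}\euNorm{\boldsymbol{e}}^2\Big)^{\sobIndexConj/2}\Big(\int_\domain M^{\sobIndexConj}\Big)^{(2-\sobIndexConj)/2}.
\]
Raising to the power $2/\sobIndexConj$ and using $\int M^{\sobIndexConj}\lesssim \norm{}{\discVectFunOneTilda}^{\sobIndexConj}+\norm{}{\discVectFunTwoTilda}^{\sobIndexConj}$ (both norms in $\vectLebSpace{\sobIndexConj}{\domain}$) yields
\[
\int M^{\sobIndexConj-2}\euNorm{\boldsymbol{e}}^2\ \ge\ \big(\norm{}{\discVectFunOneTilda}^{\sobIndexConj}+\norm{}{\discVectFunTwoTilda}^{\sobIndexConj}\big)^{(\sobIndexConj-2)/\sobIndexConj}\norm{}{\boldsymbol{e}}^2 ,
\]
which is exactly \eqref{eq:aE.monotonicity}, since $\sobIndexConjMin=\sobIndexConj$ here.

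\textbf{Main obstacle.} The only delicate point is the degenerate set where $M=0$, on which $M^{\sobIndexConj-2}$ is infinite. There $\boldsymbol{e}=0$, so I would restrict all integrals to $\{M>0\}$ (or use the convention $0\cdot\infty=0$) to keep the Hölder step rigorous. The case where both fields vanish identically is trivial. No property of the virtual spaces is needed beyond $\discVectSpace\subset\vectLebSpace{\sobIndexConj}{\domain}$, and the hidden constant depends only on $\sobIndex$.
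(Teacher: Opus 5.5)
Your proposal is correct and follows essentially the paper's argument: pointwise monotonicity of $\fluxFunction$ integrated directly for $\sobIndexConj \geq 2$, and for $1<\sobIndexConj<2$ the splitting of $\euNorm{\boldsymbol{e}}^{\sobIndexConj}$ followed by a $\left(\frac{2}{\sobIndexConj},\frac{2}{2-\sobIndexConj}\right)$-H{\"o}lder inequality. The only differences are minor: the paper applies H{\"o}lder element by element and then a discrete H{\"o}lder over the mesh, which is equivalent to your single H{\"o}lder on $\domain$; and your last displayed inequality should carry the harmless factor $2^{(\sobIndexConj-1)(\sobIndexConj-2)/\sobIndexConj}$, which comes from $\int_{\domain} M^{\sobIndexConj} \leq 2^{\sobIndexConj-1}\left(\norm{\vectLebSpace{\sobIndexConj}{\domain}}{\discVectFunOneTilda}^{\sobIndexConj}+\norm{\vectLebSpace{\sobIndexConj}{\domain}}{\discVectFunTwoTilda}^{\sobIndexConj}\right)$.
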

\begin{proof}
  We divide the proof into two cases.

  \medskip
  \noindent
  \textbf{Case $\sobIndexConj > 2$.}
  Applying \eqref{eq:diffusive.flux.properties.mono} with $(\boldsymbol{A}, \boldsymbol{B},r, n) = (\discVectFunOneTilda, \discVectFunTwoTilda, r, 2)$ on each $\element \in \domain_h$ and then summing over $\element \in \domain_h$ we obtain (notice that in this case $\sobIndexConjMax= \sobIndexConj$ and $\sobIndexConjMin = 2$)
  \begin{equation}\label{eq:aE.monotonicity.g2}
    \sum_{\element \in \domain_h}a_{\element}(\discVectFunOneTilda,\discVectFunOneTilda-\discVectFunTwoTilda) - a_{\element}(\discVectFunTwoTilda,\discVectFunOneTilda-\discVectFunTwoTilda)
    \geq C(\sobIndex)
    \norm{\vectLebSpace{\sobIndexConj}{\domain}}{\discVectFunOneTilda-\discVectFunTwoTilda}^{\sobIndexConj}.
  \end{equation}

  \medskip
  \noindent
  \textbf{Case $1 < \sobIndexConj \leq 2$.}
  Let $\element \in \domain_h$. We rewrite property \eqref{eq:diffusive.flux.properties.mono} with $(\boldsymbol{A}, \boldsymbol{B}, r,n)=(\discVectFunOneTilda,\discVectFunTwoTilda,\sobIndexConj,2)$ as (notice that in this case $\sobIndexConjMax= 2$ and $\sobIndexConjMin = \sobIndexConj$)
  \begin{equation*}
    \euNorm{\discVectFunOneTilda-\discVectFunTwoTilda}^{\sobIndexConj}
    \leq C(\sobIndex)
    \left[ \left(\fluxFunction (\discVectFunOneTilda) - \fluxFunction (\discVectFunTwoTilda) \right) \cdot (\discVectFunOneTilda-\discVectFunTwoTilda) \right]^{\frac{\sobIndexConj}{2}}
    \left( \euNorm{\discVectFunOneTilda}^{\sobIndexConj} + \euNorm{\discVectFunTwoTilda}^{\sobIndexConj} \right)^{\frac{2 -\sobIndexConj}{2}}.
  \end{equation*}
  We integrate the above inequality over $\element \in \domain_h$, use a $\left(\frac{2}{\sobIndexConj},\frac{2}{2 -\sobIndexConj}\right)$-H{\"o}lder inequality on the term in the right-hand, and then we sum over $\element \in \domain_h$ applying a discrete $\left(\frac{2}{\sobIndexConj},\frac{2}{2 -\sobIndexConj}\right)$-H{\"o}lder inequality obtaining
  \begin{equation*}
    \norm{\vectLebSpace{\sobIndexConj}{\domain}}{\discVectFunOneTilda-\discVectFunTwoTilda}^{\sobIndexConj}
    \leq C(\sobIndex)
    \left[ \sum_{\element \in \domain_h}a_{\element}(\discVectFunOneTilda,\discVectFunOneTilda-\discVectFunTwoTilda) -
    a_{\element}(\discVectFunTwoTilda,\discVectFunOneTilda-\discVectFunTwoTilda)\right]^{\frac{\sobIndexConj}{2}}
    \left( \norm{\vectLebSpace{\sobIndexConj}{\domain}}{\discVectFunOneTilda}^{\sobIndexConj} + \norm{\vectLebSpace{\sobIndexConj}{\domain}}{\discVectFunTwoTilda}^{\sobIndexConj}
    \right)^{\frac{2 -\sobIndexConj}{2}}.
  \end{equation*}
  We then conclude
  \begin{equation}\label{eq:aE.monotonicity.s2}
    \sum_{\element \in \domain_h}a_{\element}(\discVectFunOneTilda,\discVectFunOneTilda-\discVectFunTwoTilda) - a_{\element}(\discVectFunTwoTilda,\discVectFunOneTilda-\discVectFunTwoTilda)
    \geq C(\sobIndex)
    \left( \norm{\vectLebSpace{\sobIndexConj}{\domain}}{\discVectFunOneTilda}^{\sobIndexConj} + \norm{\vectLebSpace{\sobIndexConj}{\domain}}{\discVectFunTwoTilda}^{\sobIndexConj}
    \right)^{\frac{\sobIndexConj-2}{\sobIndexConj}}
    \norm{\vectLebSpace{\sobIndexConj}{\domain}}{\discVectFunOneTilda-\discVectFunTwoTilda}^{2}.
  \end{equation}

  \medskip
  \noindent
  \textbf{Conclusion.} Combining \eqref{eq:aE.monotonicity.s2} and \eqref{eq:aE.monotonicity.g2} yields the desired result.
\end{proof}
\begin{lemma}[$s_{\element}$ monotonicity]\label{lem:sE.monotonicity}
  For any $\discVectFunOneTilda, \discVectFunTwoTilda \in \discVectSpace$, setting $\tilde{\boldsymbol{e}}_h \coloneqq\discVectFunOneTilda - \discVectFunTwoTilda $, it holds that
  \begin{equation}\label{eq:sE.monotonicity}
    \begin{aligned}
      \sum_{\element \in \domain_h}
      \left[ s_{\element}(\discVectFunOneTilda,\tilde{\boldsymbol{e}}_h) - s_{\element}(\discVectFunTwoTilda,\tilde{\boldsymbol{e}}_h) \right]
      &\gtrsim
      \left[\sum_{\element \in \domain_h}
      \left[ s_{\element} (\discVectFunOneTilda,\discVectFunOneTilda) + s_{\element} (\discVectFunTwoTilda,\discVectFunTwoTilda)
      \right] \right]^{\frac{\sobIndexConjMin-2}{\sobIndexConj}} 
      \left[
      \sum_{\element \in \domain_h} 
      s_{\element} (\tilde{\boldsymbol{e}}_h, \tilde{\boldsymbol{e}}_h) \right]^{\frac{\sobIndexConjMax}{\sobIndexConj}},
    \end{aligned}
  \end{equation}
  with a hidden constant depending only on $\sobIndex$.
\end{lemma}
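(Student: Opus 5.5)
We mirror the proof of Lemma~\ref{lem:aE.monotonicity}, working at the level of degree-of-freedom vectors. On each $\element \in \domain_h$, set $\boldsymbol{A}_\element \coloneqq \locDofVect(\discVectFunOneTilda)$ and $\boldsymbol{B}_\element \coloneqq \locDofVect(\discVectFunTwoTilda)$. By linearity of the DoFs, $\locDofVect(\tilde{\boldsymbol{e}}_h) = \boldsymbol{A}_\element - \boldsymbol{B}_\element$. By the definition \eqref{eq:def.sE}, $s_{\element}(\discVectFunOneTilda,\tilde{\boldsymbol{e}}_h) - s_{\element}(\discVectFunTwoTilda,\tilde{\boldsymbol{e}}_h) = h_\element^2 (\fluxFunction_{N_\element}(\boldsymbol{A}_\element) - \fluxFunction_{N_\element}(\boldsymbol{B}_\element))\cdot(\boldsymbol{A}_\element - \boldsymbol{B}_\element)$. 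We also have $s_\element(\discVectFunOneTilda,\discVectFunOneTilda) = h_\element^2\euNorm{\boldsymbol{A}_\element}^{\sobIndexConj}$, and similarly for the other two terms. We therefore apply \eqref{eq:diffusive.flux.properties.mono} with $(\boldsymbol{A},\boldsymbol{B},r,n) = (\boldsymbol{A}_\element,\boldsymbol{B}_\element,\sobIndexConj,N_\element)$. The hidden constant depends only on $r=\sobIndexConj$ and not on $n$, which is the key point making the bound uniform in $N_\element$. Multiplying by $h_\element^2$ then gives local inequalities in terms of the $s_\element$ quantities alone.

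\textbf{Case $\sobIndexConj > 2$.} Here $\sobIndexConjMin = 2$ and $\sobIndexConjMax = \sobIndexConj$, so the first bracket in \eqref{eq:sE.monotonicity} has exponent $0$. The local inequality reads $h_\element^2(\fluxFunction_{N_\element}(\boldsymbol{A}_\element)-\fluxFunction_{N_\element}(\boldsymbol{B}_\element))\cdot(\boldsymbol{A}_\element-\boldsymbol{B}_\element) \gtrsim h_\element^2 \euNorm{\boldsymbol{A}_\element-\boldsymbol{B}_\element}^{\sobIndexConj} = s_\element(\tilde{\boldsymbol{e}}_h,\tilde{\boldsymbol{e}}_h)$. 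Summing over $\element$ yields the claim directly.

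\textbf{Case $1<\sobIndexConj\le 2$.} Now $\sobIndexConjMin = \sobIndexConj$ and $\sobIndexConjMax = 2$. As in the proof of Lemma~\ref{lem:aE.monotonicity}, we rewrite the local monotonicity bound in the form
\[
h_\element^2\euNorm{\boldsymbol{A}_\element-\boldsymbol{B}_\element}^{\sobIndexConj} \lesssim \left[h_\element^2 (\fluxFunction_{N_\element}(\boldsymbol{A}_\element)-\fluxFunction_{N_\element}(\boldsymbol{B}_\element))\cdot(\boldsymbol{A}_\element-\boldsymbol{B}_\element)\right]^{\frac{\sobIndexConj}{2}} \left[h_\element^2(\euNorm{\boldsymbol{A}_\element}^{\sobIndexConj}+\euNorm{\boldsymbol{B}_\element}^{\sobIndexConj})\right]^{\frac{2-\sobIndexConj}{2}} .
\]
To obtain this, raise \eqref{eq:diffusive.flux.properties.mono} to the power $\sobIndexConj/2$ and multiply by $h_\element^2 = (h_\element^2)^{\sobIndexConj/2}(h_\element^2)^{(2-\sobIndexConj)/2}$. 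The left-hand side is $s_\element(\tilde{\boldsymbol{e}}_h,\tilde{\boldsymbol{e}}_h)$. We then sum over $\element$ and apply a discrete $\left(\frac{2}{\sobIndexConj},\frac{2}{2-\sobIndexConj}\right)$-H\"older inequality, which gives $\sum_\element s_\element(\tilde{\boldsymbol{e}}_h,\tilde{\boldsymbol{e}}_h) \lesssim D^{\sobIndexConj/2} S^{(2-\sobIndexConj)/2}$. Here $D$ denotes the left-hand side of \eqref{eq:sE.monotonicity} and $S \coloneqq \sum_\element [s_\element(\discVectFunOneTilda,\discVectFunOneTilda)+s_\element(\discVectFunTwoTilda,\discVectFunTwoTilda)]$. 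Raising to the power $2/\sobIndexConj$ and rearranging gives $D \gtrsim S^{(\sobIndexConj-2)/\sobIndexConj}\left[\sum_\element s_\element(\tilde{\boldsymbol{e}}_h,\tilde{\boldsymbol{e}}_h)\right]^{2/\sobIndexConj}$, which is exactly \eqref{eq:sE.monotonicity}.

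The only delicate points are the following. First, the degenerate case $S=0$ must be handled separately; it is trivial, since then $\tilde{\boldsymbol{e}}_h$ has zero DoFs. Second, the scaling factors $h_\element^2$ must be distributed correctly between the two factors before applying H\"older. Third, we must confirm that the constant in Lemma~\ref{lem:diffusive.flux.properties.original} is independent of the dimension $N_\element$; it is, since the lemma is stated with constants depending only on $r$.
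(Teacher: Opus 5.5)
Your proof is correct and follows the paper's argument essentially line by line. You apply the degree-of-freedom-level monotonicity bound \eqref{eq:diffusive.flux.properties.mono} with $n=N_{\element}$; for $\sobIndexConj>2$ you sum directly, and for $1<\sobIndexConj\le 2$ you rewrite it, multiply by $h_{\element}^2=(h_{\element}^2)^{\sobIndexConj/2}(h_{\element}^2)^{(2-\sobIndexConj)/2}$, and combine via a discrete $\bigl(\frac{2}{\sobIndexConj},\frac{2}{2-\sobIndexConj}\bigr)$-H\"older inequality, exactly as the paper does. Your remarks on the degenerate case $S=0$ and on the independence of the constant from $N_{\element}$ are correct points the paper leaves implicit.
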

\begin{proof}
  We divide the proof into two cases.

  \medskip
  \noindent
  \textbf{Case $\sobIndexConj > 2$.} 
  Applying property \eqref{eq:diffusive.flux.properties.mono} with $(\boldsymbol{A}, \boldsymbol{B}, r, n)= (\discVectFunOneTilda,\discVectFunTwoTilda,\sobIndexConj,N_{\element})$ on each $\element \in \domain_h$ and then summing over $\element \in \domain_h$, we obtain
  \begin{equation}\label{eq:sE.monotonicity.g2}
    \sum_{\element \in \domain_h} \left[ s_{\element}(\discVectFunOneTilda,\tilde{\boldsymbol{e}}_h) - s_{\element}(\discVectFunTwoTilda,\tilde{\boldsymbol{e}}_h) \right]
    \geq C(\sobIndex)
    \sum_{\element \in \domain_h}
    s_{\element} (\tilde{\boldsymbol{e}}_h, \tilde{\boldsymbol{e}}_h)
  \end{equation}

  \medskip
  \noindent
  \textbf{Case $1 < \sobIndexConj \leq 2$.}
  Let $\element \in \domain_h$. We rewrite property \eqref{eq:diffusive.flux.properties.mono} with $(\boldsymbol{A}, \boldsymbol{B}, r, n)= (\locDofVect \discVectFunOneTilda,\locDofVect
  \discVectFunTwoTilda,\sobIndexConj,N_{\element})$ as
  \begin{equation*}
    \begin{aligned}
      \euNorm{\locDofVect(\tilde{\boldsymbol{e}}_h)}^{\sobIndexConj}
      &\leq C(\sobIndex)
      \left[ \left(\fluxFunction (\locDofVect \discVectFunOneTilda) - \fluxFunction (\locDofVect \discVectFunTwoTilda) \right) \cdot (\locDofVect(\tilde{\boldsymbol{e}}_h))
      \right]^{\frac{\sobIndexConj}{2}} 
      \left( \euNorm{\locDofVect \discVectFunOneTilda}^{\sobIndexConj} + \euNorm{\locDofVect \discVectFunTwoTilda}^{\sobIndexConj} \right)^{\frac{2 -\sobIndexConj}{2}}.
    \end{aligned}
  \end{equation*}
  We multiply both sides of the above inequality by $h_{\element}^{2}$ and, after observing that $\frac{\sobIndexConj}{2} + \frac{2 -\sobIndexConj}{2} = 1$, we sum over $\element \in \domain_h$, use a discrete $\left(\frac{2}{\sobIndexConj}, \frac{2}{2 -\sobIndexConj}\right)$-H{\"o}lder inequality and, after basic algebraic manipulations we obtain
  \begin{equation}\label{eq:sE.monotonicity.s2}
    \begin{aligned}
      \sum_{\element \in \domain_h}
      \left[s_{\element}(\discVectFunOneTilda,\tilde{\boldsymbol{e}}_h) - s_{\element}(\discVectFunTwoTilda,\tilde{\boldsymbol{e}}_h) \right]
      &\geq C(\sobIndex)
      \left[\sum_{\element \in \domain_h} \left[ s_{\element} (\discVectFunOneTilda,\discVectFunOneTilda) + s_{\element} (\discVectFunTwoTilda,\discVectFunTwoTilda) \right] \right]^{\frac{\sobIndexConj-2}{\sobIndexConj}} \left[ \sum_{\element \in \domain_h}
    s_{\element} (\tilde{\boldsymbol{e}}_h,\tilde{\boldsymbol{e}}_h) \right]^{\frac{2}{\sobIndexConj}}.
    \end{aligned}
  \end{equation}

  \medskip
  \noindent
  \textbf{Conclusion.}
  The desired result follows by combining \eqref{eq:sE.monotonicity.g2} and \eqref{eq:sE.monotonicity.s2}.
\end{proof}
\begin{lemma}[$a_{h}$ monotonicity]\label{lem:ah.monotonicity}
  For any $ \discVectFunOne, \discVectFunTwo \in \discVectSpace$ such that $\discVectFunOne - \discVectFunTwo \in \discDivFreeSpace $, it holds that
  \begin{align}\label{eq:ah.monotonicity}
    a_{h}(\discVectFunOne,\discVectFunOne-\discVectFunTwo) - a_{h}(\discVectFunTwo,\discVectFunOne-\discVectFunTwo)
    &\gtrsim
    \left( \discFullVectNorm{\discVectFunOne}^{\sobIndexConj} + \discFullVectNorm{\discVectFunTwo}^{\sobIndexConj} \right)^{\frac{\sobIndexConjMin-2}{\sobIndexConj}}
    \discFullVectNorm{\discVectFunOne - \discVectFunTwo }^{\sobIndexConjMax},
  \end{align}
  with a hidden constant depending only on $\sobIndex$.
\end{lemma}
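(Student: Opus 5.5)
The plan is to split $a_h$ into its consistency and stabilization parts and apply Lemma~\ref{lem:aE.monotonicity} and Lemma~\ref{lem:sE.monotonicity} to each. We set $\boldsymbol{e}_h \coloneqq \discVectFunOne - \discVectFunTwo \in \discDivFreeSpace$. By linearity of $\locVectProjL{\polyDegree}$, the definition \eqref{eq:def.ah.local} gives
\[
a_{h}(\discVectFunOne,\boldsymbol{e}_h) - a_{h}(\discVectFunTwo,\boldsymbol{e}_h) = T_1 + T_2 .
\]
Here $T_1$ is the sum of the $a_{\element}$-differences with $(\discVectFunOneTilda,\discVectFunTwoTilda) = (\vectProjL{\polyDegree}\discVectFunOne,\vectProjL{\polyDegree}\discVectFunTwo)$. $T_2$ is the sum of the $s_{\element}$-differences with $(\discVectFunOneTilda,\discVectFunTwoTilda) = (\discVectFunOne-\vectProjL{\polyDegree}\discVectFunOne,\discVectFunTwo-\vectProjL{\polyDegree}\discVectFunTwo)$. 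The difference of the latter pair is $\boldsymbol{e}_h-\vectProjL{\polyDegree}\boldsymbol{e}_h$. Both lemmas are stated elementwise and only use pointwise or DoF-vector inequalities, so they apply to these broken fields.

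We then introduce the shorthand
\[
X \coloneqq \norm{\vectLebSpace{\sobIndexConj}{\domain}}{\vectProjL{\polyDegree}\discVectFunOne}^{\sobIndexConj} + \norm{\vectLebSpace{\sobIndexConj}{\domain}}{\vectProjL{\polyDegree}\discVectFunTwo}^{\sobIndexConj}, \qquad Y \coloneqq \sum_{\element} \left[ s_{\element}(\discVectFunOne-\locVectProjL{\polyDegree}\discVectFunOne,\cdot) + s_{\element}(\discVectFunTwo-\locVectProjL{\polyDegree}\discVectFunTwo,\cdot) \right],
\]
where each $s_{\element}$ term in $Y$ is evaluated on the diagonal. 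Similarly, $X_e$ is the consistency part of $\boldsymbol{e}_h$ and $Y_e$ its stabilization part. With this notation $X+Y = \discDivFreeNorm{\discVectFunOne}^{\sobIndexConj} + \discDivFreeNorm{\discVectFunTwo}^{\sobIndexConj} \le \discFullVectNorm{\discVectFunOne}^{\sobIndexConj} + \discFullVectNorm{\discVectFunTwo}^{\sobIndexConj} \eqqcolon M$. Moreover, $X_e + Y_e = \discDivFreeNorm{\boldsymbol{e}_h}^{\sobIndexConj} = \discFullVectNorm{\boldsymbol{e}_h}^{\sobIndexConj}$ by \eqref{eq:disc.full.norm.eq.ah.norm}, since $\boldsymbol{e}_h \in \discDivFreeSpace$. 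This is exactly where the kernel assumption is used: the divergence part of the full norm does not appear on the right.

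\textbf{Case $\sobIndexConj > 2$.} The exponent $(\sobIndexConjMin-2)/\sobIndexConj$ is zero. The two lemmas give $T_1 + T_2 \gtrsim X_e + Y_e = \discFullVectNorm{\boldsymbol{e}_h}^{\sobIndexConj}$, which is the claim.

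\textbf{Case $1<\sobIndexConj\le 2$.} The lemmas give
\[
T_1 + T_2 \gtrsim X^{\frac{\sobIndexConj-2}{\sobIndexConj}} X_e^{\frac{2}{\sobIndexConj}} + Y^{\frac{\sobIndexConj-2}{\sobIndexConj}} Y_e^{\frac{2}{\sobIndexConj}}.
\]
The exponent $(\sobIndexConj-2)/\sobIndexConj$ is nonpositive. Since $X, Y \le M$, each prefactor is bounded below by $M^{(\sobIndexConj-2)/\sobIndexConj}$, which requires care when $X$ or $Y$ vanishes; in that case the corresponding term is handled by the convention implicit in Lemma~\ref{lem:aE.monotonicity}, namely that it is interpreted as zero when $X_e=0$. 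We then use the elementary inequality $a^{2/\sobIndexConj} + b^{2/\sobIndexConj} \ge 2^{1-2/\sobIndexConj}(a+b)^{2/\sobIndexConj}$, which follows from convexity of $t\mapsto t^{2/\sobIndexConj}$ because $2/\sobIndexConj \ge 1$. Applied with $a = X_e$ and $b = Y_e$, this yields
\[
T_1 + T_2 \gtrsim M^{\frac{\sobIndexConj-2}{\sobIndexConj}} \discFullVectNorm{\boldsymbol{e}_h}^{2},
\]
which is the claim since $\sobIndexConjMax = 2$ in this case.

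The main obstacle is this final recombination. Two points need care: the negative exponent must be handled monotonically with respect to the bound $X,Y\le M$, including the degenerate case where $X$ or $Y$ is zero; and the sum of two $2/\sobIndexConj$ powers must be correctly merged into a single power. Everything else is bookkeeping.
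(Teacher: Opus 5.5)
Your proposal is correct and follows essentially the same route as the paper: you split $a_h$ into its polynomial and stabilization parts, apply Lemmas~\ref{lem:aE.monotonicity} and~\ref{lem:sE.monotonicity}, bound the nonpositive-exponent prefactors from below using $X,Y\le M$, and invoke \eqref{eq:disc.full.norm.eq.ah.norm} for $\boldsymbol{e}_h\in\discDivFreeSpace$. Your explicit convexity step $X_e^{2/\sobIndexConj}+Y_e^{2/\sobIndexConj}\ge 2^{1-2/\sobIndexConj}(X_e+Y_e)^{2/\sobIndexConj}$ supplies a recombination the paper leaves implicit; for the degenerate case, it is cleaner to note that $X_e\le 2^{\sobIndexConj-1}X$ and $Y_e\le 2^{\sobIndexConj-1}Y$, so a vanishing base forces the corresponding term to vanish.
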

\begin{proof}
  Let $\boldsymbol{\xi}_h \coloneqq \discVectFunOne - \discVectFunTwo$.
  Applying \eqref{eq:aE.monotonicity} with $(\discVectFunOneTilda,\discVectFunTwoTilda
  )=(\vectProjL{\polyDegree}\discVectFunOne,\vectProjL{\polyDegree}\discVectFunTwo) $ and \eqref{eq:sE.monotonicity} with $(\discVectFunOneTilda,\discVectFunTwoTilda )=(\discVectFunOne -
  \vectProjL{\polyDegree}\discVectFunOne,\discVectFunTwo - \vectProjL{\polyDegree}\discVectFunTwo) $, and recalling that $\frac{\sobIndexConjMin-2}{\sobIndexConj} < 0$ and the equality in \eqref{eq:disc.full.norm.eq.ah.norm},  we get
  \begin{align*}
    a_{h}(\discVectFunOne,\boldsymbol{\xi}_h) - a_{h}(\discVectFunTwo,\boldsymbol{\xi}_h)
    &\geq
    C(\sobIndex) \left( \discFullVectNorm{\discVectFunOne}^{\sobIndexConj} + \discFullVectNorm{\discVectFunTwo}^{\sobIndexConj} \right)^{\frac{\sobIndexConjMin-2}{\sobIndexConj}} \discFullVectNorm{\boldsymbol{\xi}_h}^{\sobIndexConjMax}.
  \end{align*}
\end{proof}
\begin{lemma}[$\tilde{\mathcal{A}}_{h,\tilde{\boldsymbol{\tau}}_h}$ coercivity]\label{lem:Ahtilde.coercivity}
  For any $\tilde{\boldsymbol{\tau}}_h \in \boldsymbol{\mathcal{K}} (\mathcal{F}_h) $ and any $\discVectFunOne \in \discDivFreeSpace$ such that $\discDivFreeNorm{\discVectFunOne } \to \infty$, it holds that
  \[
    \frac{\langle\tilde{\mathcal{A}}_{h,\tilde{\boldsymbol{\tau}}_h} \discVectFunOne, \discVectFunOne \rangle }{\discDivFreeNorm{\discVectFunOne }} \to \infty.
  \]
\end{lemma}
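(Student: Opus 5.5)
The plan is to split $\langle\tilde{\mathcal{A}}_{h,\tilde{\boldsymbol{\tau}}_h} \discVectFunOne, \discVectFunOne \rangle$ into a monotone part and a linear remainder. Set $\discVectFunTwo \coloneqq \discVectFunOne + \tilde{\boldsymbol{\tau}}_h$ and write
\begin{equation*}
\langle\tilde{\mathcal{A}}_{h,\tilde{\boldsymbol{\tau}}_h} \discVectFunOne, \discVectFunOne \rangle
= \big[a_h(\discVectFunOne+\tilde{\boldsymbol{\tau}}_h,\discVectFunOne) - a_h(\tilde{\boldsymbol{\tau}}_h,\discVectFunOne)\big] + a_h(\tilde{\boldsymbol{\tau}}_h,\discVectFunOne).
\end{equation*}
The difference $(\discVectFunOne+\tilde{\boldsymbol{\tau}}_h) - \tilde{\boldsymbol{\tau}}_h = \discVectFunOne$ lies in $\discDivFreeSpace$, so Lemma~\ref{lem:ah.monotonicity} applies to the first bracket. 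It gives the lower bound
\begin{equation*}
C(\sobIndex)\left( \discFullVectNorm{\discVectFunOne+\tilde{\boldsymbol{\tau}}_h}^{\sobIndexConj} + \discFullVectNorm{\tilde{\boldsymbol{\tau}}_h}^{\sobIndexConj} \right)^{\frac{\sobIndexConjMin-2}{\sobIndexConj}} \discFullVectNorm{\discVectFunOne}^{\sobIndexConjMax}.
\end{equation*}
For the remainder, I will apply Lemma~\ref{lem:ah.Holder.continuity} with $\discVectFunTwo = \boldsymbol{0}$ (note $a_h(\boldsymbol{0},\cdot)=0$). This yields $|a_h(\tilde{\boldsymbol{\tau}}_h,\discVectFunOne)| \lesssim \discFullVectNorm{\tilde{\boldsymbol{\tau}}_h}^{\sobIndexConj-1}\discFullVectNorm{\discVectFunOne}$, since the exponents combine to $\sobIndexConj-\sobIndexConjMin+\sobIndexConjMin-1$. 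Throughout, $\discFullVectNorm{\cdot}=\discDivFreeNorm{\cdot}$ on $\discDivFreeSpace$ by \eqref{eq:disc.full.norm.eq.ah.norm}.

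Dividing by $\discDivFreeNorm{\discVectFunOne}$ and writing $M \coloneqq \discFullVectNorm{\tilde{\boldsymbol{\tau}}_h}$, which is fixed, gives
\begin{equation*}
\frac{\langle\tilde{\mathcal{A}}_{h,\tilde{\boldsymbol{\tau}}_h} \discVectFunOne, \discVectFunOne \rangle}{\discDivFreeNorm{\discVectFunOne}} \geq C_1 \left( \discFullVectNorm{\discVectFunOne+\tilde{\boldsymbol{\tau}}_h}^{\sobIndexConj} + M^{\sobIndexConj} \right)^{\frac{\sobIndexConjMin-2}{\sobIndexConj}} \discFullVectNorm{\discVectFunOne}^{\sobIndexConjMax-1} - C_2 M^{\sobIndexConj-1}.
\end{equation*}
It then suffices to show that the first term diverges, and I split into two cases.

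\textbf{Case $\sobIndexConj>2$.} Here $\sobIndexConjMin=2$, so the prefactor is $1$. The first term is $\discFullVectNorm{\discVectFunOne}^{\sobIndexConj-1}$, which tends to $\infty$.

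\textbf{Case $1<\sobIndexConj\le 2$.} Here the exponent $(\sobIndexConj-2)/\sobIndexConj$ is nonpositive, so the prefactor must be bounded from below. The triangle inequality for the norm $\discFullVectNorm{\cdot}$ gives $\discFullVectNorm{\discVectFunOne+\tilde{\boldsymbol{\tau}}_h} \le \discFullVectNorm{\discVectFunOne}+M$. Once $\discFullVectNorm{\discVectFunOne}\ge M$, this yields
\begin{equation*}
\left(\discFullVectNorm{\discVectFunOne+\tilde{\boldsymbol{\tau}}_h}^{\sobIndexConj}+M^{\sobIndexConj}\right)^{\frac{\sobIndexConj-2}{\sobIndexConj}} \gtrsim \discFullVectNorm{\discVectFunOne}^{\sobIndexConj-2}.
\end{equation*}
The first term is then $\gtrsim \discFullVectNorm{\discVectFunOne}^{\sobIndexConj-2}\discFullVectNorm{\discVectFunOne}^{2-1} = \discFullVectNorm{\discVectFunOne}^{\sobIndexConj-1}$, which also diverges because $\sobIndexConj>1$.

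The only delicate point is this exponent bookkeeping when $\sobIndexConj\le 2$. The monotonicity prefactor decays like $\discFullVectNorm{\discVectFunOne}^{\sobIndexConj-2}$, and the net growth $\discFullVectNorm{\discVectFunOne}^{\sobIndexConj-1}$ survives only because $\sobIndexConj>1$. This also relies on $\discFullVectNorm{\cdot}$ satisfying a triangle inequality, or at least a quasi-triangle inequality. The local stabilization $s_{\element}$ contributes an $\ell^{\sobIndexConj}$ norm of the DoFs of $\discVectFunOne-\locVectProjL{\polyDegree}\discVectFunOne$, which is linear in $\discVectFunOne$, so $\discFullVectNorm{\cdot}$ is indeed a norm. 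In any case, a quasi-triangle inequality with a constant depending on $\sobIndex$ would suffice.
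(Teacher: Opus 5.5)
Your proof is correct and follows the paper's argument. The paper also applies Lemma~\ref{lem:ah.monotonicity} to the pair $(\discVectFunOne+\tilde{\boldsymbol{\tau}}_h,\tilde{\boldsymbol{\tau}}_h)$ and Lemma~\ref{lem:ah.Holder.continuity} with $\discVectFunTwo=\boldsymbol{0}$, then concludes from $\sobIndexConjMin+\sobIndexConjMax-3=\sobIndexConj-1>0$; your triangle-inequality lower bound on the prefactor for $1<\sobIndexConj\le 2$ just makes explicit the step the paper leaves implicit in that exponent count.
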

\begin{proof}
  Applying \eqref{eq:ah.monotonicity} with $(\discVectFunOne,\discVectFunTwo)=(\discVectFunOne + \tilde{\boldsymbol{\tau}}_h,\tilde{\boldsymbol{\tau}}_h) \in [\discVectSpace]^2$ and taking $(\discVectFunOne,\discVectFunTwo,\discVectTestFun)=(\tilde{\boldsymbol{\tau}}_h,\boldsymbol{0},\discVectFunOne) \in [\discVectSpace]^3$ in
  \eqref{eq:ah.Holder.continuity} leads to
  \begin{align*}
    a_{h}(\discVectFunOne + \tilde{\boldsymbol{\tau}}_h,\discVectFunOne)
    &\geq C(\sobIndex) \left[
      \left( \discFullVectNorm{\discVectFunOne + \tilde{\boldsymbol{\tau}}_h}^{\sobIndexConj} + \discFullVectNorm{\tilde{\boldsymbol{\tau}}_h}^{\sobIndexConj}
      \right)^{\frac{\sobIndexConjMin-2}{\sobIndexConj}}
      \discFullVectNorm{\discVectFunOne}^{\sobIndexConjMax}
      - \discFullVectNorm{ \tilde{\boldsymbol{\tau}}_h}^{\sobIndexConj-1}
    \discFullVectNorm{ \discVectFunOne}\right].
  \end{align*}
  Using the definition of $\tilde{\mathcal{A}}_{h,\tilde{\boldsymbol{\tau}}_h} $, the above estimate and the fact that $\discDivFreeNorm{ \discVectFunOne}=\discFullVectNorm{ \discVectFunOne}$, we obtain
  \begin{align*}
    \frac{\langle\tilde{\mathcal{A}}_{h,\tilde{\boldsymbol{\tau}}_h} \discVectFunOne, \discVectFunOne\rangle}{\discDivFreeNorm{\discVectFunOne}}
    &\geq C(\sobIndex) \left[
      \left( \discFullVectNorm{\discVectFunOne + \tilde{\boldsymbol{\tau}}_h}^{\sobIndexConj} + \discFullVectNorm{\tilde{\boldsymbol{\tau}}_h}^{\sobIndexConj}
      \right)^{\frac{\sobIndexConjMin-2}{\sobIndexConj}}
      \discDivFreeNorm{\discVectFunOne}^{\sobIndexConjMax-1}
    - \discFullVectNorm{ \tilde{\boldsymbol{\tau}}_h}^{\sobIndexConj-1}\right].
  \end{align*}
  The conclusion follows by taking the limit $\discDivFreeNorm{\discVectFunOne} \to \infty$
  and observing that $\sobIndexConjMin +
  \sobIndexConjMax-3 = \sobIndexConj-1 >0$.
\end{proof}
%
\subsubsection{Well-posedness of the discrete problem}
We are now in a position to prove the main result of this section.
\begin{theorem}
  Problem~\eqref{eq:discrete.problem} has a unique solution $(\discVectSol,\discScalarSol) \in \discVectSpace \times \scalarSolSpace$ which satisfies
  \begin{equation}\label{eq:stab.bound.disc.sols}
    \discFullVectNorm{\discVectSol} \lesssim \norm{\lebSpace{\sobIndexConj}{\domain}}{\loadTerm} \quad \text{and} \quad \discScalarNorm{\discScalarSol} \lesssim
    \norm{\lebSpace{\sobIndexConj}{\domain}}{\loadTerm}^{\sobIndexConj-1},
  \end{equation}
  with the hidden constants depending only on $\polyDegree$, $\sobIndex$, and $\rho$.
\end{theorem}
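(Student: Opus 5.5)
Existence follows from the abstract framework of Section~\ref{sec:framework}: by Lemma~\ref{lem:discrete.infsup} the inf-sup condition \eqref{eq:discrete.infsup.generic} holds, by Lemma~\ref{lem:continuity.Atildeh} the shifted operator is continuous on the kernel, and by Lemma~\ref{lem:Ahtilde.coercivity} it is coercive. Before applying these, I would check that $\boldsymbol{\mathcal{K}}(\mathcal{F}_h)$ is nonempty. Since $\divergence \discVectSpace = \discScalarSpace$, there is $\tilde{\vectSol}_h$ with $\divergence \tilde{\vectSol}_h = -\projL{\polyDegree} \loadTerm$. With this, \cite[Proposition~2.3]{SCHEURER:1977} yields a solution $(\discVectSol,\discScalarSol)$.

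For uniqueness I would argue as follows. Take two solutions. Their flux difference lies in $\discDivFreeSpace$, since both fluxes have divergence $-\projL{\polyDegree}\loadTerm$. Testing the difference of the first equations with this flux difference eliminates the $b$-terms. The strict monotonicity \eqref{eq:ah.monotonicity} then forces the flux difference to vanish. With equal fluxes, $b(\discVectTestFun, \discScalarSol^1-\discScalarSol^2)=0$ for all $\discVectTestFun$, and the inf-sup condition \eqref{eq:discrete.infsup} gives $\discScalarSol^1=\discScalarSol^2$.

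For the flux bound I would not test with $\discVectSol$ directly, since $\discVectSol\notin\discDivFreeSpace$. Instead, test \eqref{eq:discrete.problem.line1} with $\discVectTestFun=\discVectSol$ and \eqref{eq:discrete.problem.line2} with $\discScalarTestFun=\discScalarSol$ and add. This gives
\[
a_h(\discVectSol,\discVectSol)=\discDivFreeNorm{\discVectSol}^{\sobIndexConj}=\int_\domain \loadTerm\,\discScalarSol\le \norm{\lebSpace{\sobIndexConj}{\domain}}{\loadTerm}\discScalarNorm{\discScalarSol}.
\]
The inf-sup condition together with the first equation and continuity \eqref{eq:ah.Holder.continuity} (taking $\discVectFunTwo=\boldsymbol 0$) give
\[
\discScalarNorm{\discScalarSol}\lesssim \sup_{\discVectTestFun}\frac{a_h(\discVectSol,\discVectTestFun)}{\discFullVectNorm{\discVectTestFun}}\lesssim \discFullVectNorm{\discVectSol}^{\sobIndexConj-1}.
\]
The divergence part of the full norm is controlled directly: $\divergence\discVectSol=-\projL{\polyDegree}\loadTerm$, so $\sum_\element h_\element^{\sobIndexConj}\norm{\lebSpace{\sobIndexConj}{\element}}{\divergence\discVectSol}^{\sobIndexConj}\lesssim \norm{\lebSpace{\sobIndexConj}{\domain}}{\loadTerm}^{\sobIndexConj}$, using $h_\element\lesssim 1$ and $L^{\sobIndexConj}$-stability of $\projL{\polyDegree}$. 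Combining, with $X\coloneqq\discFullVectNorm{\discVectSol}$,
\[
X^{\sobIndexConj}\lesssim \norm{\lebSpace{\sobIndexConj}{\domain}}{\loadTerm} X^{\sobIndexConj-1}+\norm{\lebSpace{\sobIndexConj}{\domain}}{\loadTerm}^{\sobIndexConj}.
\]
A Young inequality with exponents $(\sobIndexConj,\sobIndex)$ on the first term, absorbing into the left-hand side, then gives $X\lesssim\norm{\lebSpace{\sobIndexConj}{\domain}}{\loadTerm}$. Substituting into the previous bound yields $\discScalarNorm{\discScalarSol}\lesssim\norm{\lebSpace{\sobIndexConj}{\domain}}{\loadTerm}^{\sobIndexConj-1}$.

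The main obstacle is the step $\sup_{\discVectTestFun} a_h(\discVectSol,\discVectTestFun)/\discFullVectNorm{\discVectTestFun}\lesssim \discFullVectNorm{\discVectSol}^{\sobIndexConj-1}$. Continuity \eqref{eq:ah.Holder.continuity} with $\discVectFunTwo=\boldsymbol 0$ gives the exponent $\frac{\sobIndexConj-\sobIndexConjMin}{\sobIndexConj}\cdot\sobIndexConj+\sobIndexConjMin-1=\sobIndexConj-1$, so this should close. The delicate point is only bookkeeping that all constants depend solely on $\polyDegree,\sobIndex,\rho$.
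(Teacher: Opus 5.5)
Your proposal is correct and follows essentially the same route as the paper: existence via the abstract saddle-point framework, the flux bound by testing with $(\discVectSol,\discScalarSol)$, combined with the inf-sup estimate $\discScalarNorm{\discScalarSol}\lesssim\discFullVectNorm{\discVectSol}^{\sobIndexConj-1}$ and a $(\sobIndexConj,\sobIndex)$-Young inequality, and uniqueness via kernel monotonicity plus the inf-sup condition. The only differences are cosmetic. You check $\boldsymbol{\mathcal{K}}(\mathcal{F}_h)\neq\emptyset$ explicitly. You also argue uniqueness without first invoking the stability bound, which is fine because the prefactor in \eqref{eq:ah.monotonicity} is a positive finite number unless both fluxes vanish.
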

\begin{proof}
  We prove existence, uniqueness and the stability bound \eqref{eq:stab.bound.disc.sols} separately.

  \medskip
  \noindent
  \textbf{Existence.}
  The existence of a solution to Problem~\eqref{eq:discrete.problem} is guaranteed in light of the discussion in Section \ref{sec:framework} and the results in Lemmas \ref{lem:discrete.infsup},
  \ref{lem:continuity.Atildeh}, and \ref{lem:Ahtilde.coercivity}.

  \medskip
  \noindent
  \textbf{Stability bound.}
  Let $(\discVectSol,\discScalarSol)$ be a solution to the discrete problem. Then from the discrete inf-sup condition we have that
  \begin{equation}\label{eq:stab.bound.disc.sols.uh.initial}
    \begin{aligned}
      C(\polyDegree, \sobIndex, \rho) \discScalarNorm{\discScalarSol}
      &\leq
      \sup_{\discVectTestFun \in \discVectSpace} \frac{b(\discVectTestFun,\discScalarSol)}{\discFullVectNorm{\discVectTestFun}}
      \overset{\eqref{eq:discrete.problem.line1}}= \sup_{\discVectTestFun \in \discVectSpace} \frac{ - a_h(\discVectSol,\discVectTestFun)}{\discFullVectNorm{\discVectTestFun}}
      \leq \discFullVectNorm{\discVectSol}^{\sobIndexConj-1},
    \end{aligned}
  \end{equation}
  where in the last inequality we have applied \eqref{eq:ah.Holder.continuity} with $\discVectFunTwo = \boldsymbol{0}$.
  Furthermore, from the definition of the discrete norm in \eqref{eq:def.discrete.norm.NEW} and \eqref{eq:discrete.problem}, we have that
  \begin{equation}\label{eq:stab.bound.disc.sols.tauh.initial.initial}
    \begin{aligned}
      \discFullVectNorm{\discVectSol}^{\sobIndexConj}
      &= \int_{\domain} \loadTerm \discScalarSol + \sum_{\element \in \domain_h} h_{\element}^{\sobIndexConj}
      \norm{\lebSpace{\sobIndexConj}{\element}}{\loadTerm}^{\sobIndexConj} 
      \overset{\eqref{eq:stab.bound.disc.sols.uh.initial}}\leq C(\polyDegree, \sobIndex, \rho) \left(
        \norm{\lebSpace{\sobIndexConj}{\domain}}{\loadTerm} \discFullVectNorm{\discVectSol}^{\sobIndexConj-1} + h^{\sobIndexConj}
      \norm{\lebSpace{\sobIndexConj}{\domain}}{\loadTerm}^{\sobIndexConj} \right) \\
      &\leq \varepsilon \discFullVectNorm{\discVectSol}^{\sobIndexConj} + C(\polyDegree, \sobIndex, \rho,\varepsilon) \norm{\lebSpace{\sobIndexConj}{\domain}}{\loadTerm}^{\sobIndexConj},
    \end{aligned}
  \end{equation}
  where the last inequality follows by applying a $(\sobIndexConj,\sobIndex)$-Young inequality along with $h\leq C$.
  Choosing $\varepsilon = \frac{1}{2}$, after basic algebraic manipulations we get the first inequality in \eqref{eq:stab.bound.disc.sols}, which combined with
  \eqref{eq:stab.bound.disc.sols.uh.initial} yields the second inequality in \eqref{eq:stab.bound.disc.sols}.

  \medskip
  \noindent
  \textbf{Uniqueness.}
  Let $(\boldsymbol{\tau}_{h,1}, u_{h,1})$ and $(\boldsymbol{\tau}_{h,2},u_{h,2})$ be two solutions to Problem~\eqref{eq:discrete.problem}.
  Let $\boldsymbol{\xi}_h \coloneqq \boldsymbol{\tau}_{h,1} - \boldsymbol{\tau}_{h,2}$. It is straightforward to check that $\boldsymbol{\xi}_h \in \discDivFreeSpace$. We then have
  \begin{equation*}
    0\overset{\eqref{eq:discrete.problem.line1}}=a_h(\boldsymbol{\tau}_{h,1},\boldsymbol{\xi}_h) - a_h(\boldsymbol{\tau}_{h,2},\boldsymbol{\xi}_h)  
    \overset{\eqref{eq:stab.bound.disc.sols} \, \eqref{eq:ah.monotonicity}}
    \geq C(\polyDegree,\sobIndex,\rho,\loadTerm)
    \discFullVectNorm{\boldsymbol{\xi}_h}^{\overline{\sobIndexConj}}.
  \end{equation*}
  Therefore, $\boldsymbol{\tau}_{h,1} = \boldsymbol{\tau}_{h,2}$.
  From \eqref{eq:discrete.problem.line1}, since $\boldsymbol{\tau}_{h,1} = \boldsymbol{\tau}_{h,2}$, it follows that $b(\discVectTestFun,u_{h,1} - u_{h,2}) = a_h(\boldsymbol{\tau}_{h,1},\discVectTestFun) - a_h(\boldsymbol{\tau}_{h,2},\discVectTestFun) = 0$ for
  all $\discVectTestFun \in \discVectSpace$.
  We then have, using \eqref{eq:discrete.infsup} and $\boldsymbol{\tau}_{h,1} = \boldsymbol{\tau}_{h,2}$,
  \begin{equation*}
    \discScalarNorm{u_{h,1} - u_{h,2}} \leq C(\polyDegree,\sobIndex,\rho) \sup_{\discVectTestFun \in \discVectSpace} \frac{b(\discVectTestFun,u_{h,1} - u_{h,2})}{\discFullVectNorm{\discVectTestFun}} = 0.
  \end{equation*}
  Therefore, $u_{h,1} = u_{h,2}$.
\end{proof}
%
\section{A priori error estimates}\label{sec:a-priori}
This section is devoted to the derivation of a priori error estimates for both the flux $\vectSol$ and the scalar variable $\scalarSol$. Due to the non-linear nature of the constitutive law, the analysis is naturally divided into two main regimes depending on the Sobolev exponent $\sobIndexConj$.
As a preliminary step, in the next Lemma we collect two algebraic results that will play a fundamental role in the a priori error estimates in both the $1 < \sobIndexConj \leq 2$ and $\sobIndexConj > 2$ cases. The first one is an application of the generalized Young inequality (see \cite[Lemma 2.3]{BARRETT.LIU:1994a}) while the second is a straightforward vector equivalence.
\begin{lemma}[Young-type inequality and vector equivalence]
  For any real numbers $x,y,z \geq 0$, $r>1$, and $\varepsilon > 0$, there exists $c(\varepsilon) > 0$ such that
  \begin{equation}\label{eq:young.type.inequality}
    x (z+y)^{r-2} y \leq \varepsilon (z+x)^{r-2} x^2 + c(\varepsilon) (z+y)^{r-2} y^2.
  \end{equation}
  For any vectors $\boldsymbol{A}$, $\boldsymbol{B} \in \Real^n$, with $n \in \Natural$, it holds
  \begin{equation}\label{eq:simple.eu.norm.equivalence}
    2^{-1} \left(\euNorm{\boldsymbol{A}} + \euNorm{\boldsymbol{A} - \boldsymbol{B}}\right) \leq \euNorm{\boldsymbol{A}} + \euNorm{\boldsymbol{B}} \leq 2 \left(\euNorm{\boldsymbol{A}}+
    \euNorm{\boldsymbol{A} - \boldsymbol{B}} \right).
  \end{equation}
\end{lemma}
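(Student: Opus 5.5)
The plan is to prove the two claims separately. The Young-type inequality \eqref{eq:young.type.inequality} needs a case split based on how $x$ compares with a multiple of $y$. The vector equivalence \eqref{eq:simple.eu.norm.equivalence} follows from the triangle inequality alone.

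\textbf{Young-type inequality.} Fix $r>1$, $z\geq 0$ and $\varepsilon>0$, and set $\varphi(t) \coloneqq (z+t)^{r-2}t$ for $t\geq 0$. I take the usual convention that $\varphi(0)=0$ when $z=0$ and $r<2$. With this notation the left-hand side of \eqref{eq:young.type.inequality} is $x\,\varphi(y)$, and the first term on the right-hand side is $\varepsilon\, x\,\varphi(x)$. Introduce a threshold $K\geq 1$, to be fixed in terms of $\varepsilon$ and $r$.

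\emph{Case $x\leq K y$.} Directly, $x(z+y)^{r-2}y \leq K (z+y)^{r-2}y^2$, so the claim holds with $c(\varepsilon)=K$.

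\emph{Case $x> K y$.} The key step is to compare $\varphi(y)$ with $\varphi(x)$. If $y=0$ the left-hand side vanishes, so assume $x>y>0$. Then
\begin{equation*}
\frac{\varphi(y)}{\varphi(x)} = \frac{y}{x}\Big(\frac{z+x}{z+y}\Big)^{2-r}.
\end{equation*}
\begin{itemize}
\item If $r\geq 2$, the bracket is at most $1$, so $\varphi(y)/\varphi(x)\leq y/x$.
\item If $1<r<2$, use the elementary bound $\frac{z+x}{z+y}\leq \frac{x}{y}$, valid for $x\geq y$. This gives $\varphi(y)/\varphi(x) \leq (y/x)^{r-1}$.
\end{itemize}
In both cases, since $y/x< 1/K$, we get $\varphi(y)\leq K^{-\min\{1,r-1\}}\varphi(x)$. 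Hence
\begin{equation*}
x\varphi(y)\leq K^{-\min\{1,r-1\}}(z+x)^{r-2}x^2.
\end{equation*}

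\emph{Choice of $K$.} Take $K \coloneqq \max\{1,\varepsilon^{-1/\min\{1,r-1\}}\}$. Then $K^{-\min\{1,r-1\}}\leq\varepsilon$. Adding the two cases gives \eqref{eq:young.type.inequality} with $c(\varepsilon)=K$.

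The main point of care is the sub-quadratic regime $1<r<2$. There the weight $(z+t)^{r-2}$ decreases in $t$, so the comparison cannot be made through the weight alone. Passing through the ratio bound $(z+x)/(z+y)\leq x/y$ is what handles this.

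\textbf{Vector equivalence.} Both inequalities in \eqref{eq:simple.eu.norm.equivalence} follow from the triangle inequality.
\begin{itemize}
\item For the right inequality: $\euNorm{\boldsymbol{B}}\leq \euNorm{\boldsymbol{A}}+\euNorm{\boldsymbol{A}-\boldsymbol{B}}$. Adding $\euNorm{\boldsymbol{A}}$ to both sides gives $\euNorm{\boldsymbol{A}}+\euNorm{\boldsymbol{B}}\leq 2\euNorm{\boldsymbol{A}}+\euNorm{\boldsymbol{A}-\boldsymbol{B}}\leq 2(\euNorm{\boldsymbol{A}}+\euNorm{\boldsymbol{A}-\boldsymbol{B}})$.
\item For the left inequality: $\euNorm{\boldsymbol{A}-\boldsymbol{B}}\leq\euNorm{\boldsymbol{A}}+\euNorm{\boldsymbol{B}}$. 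Adding $\euNorm{\boldsymbol{A}}$ to both sides gives $\euNorm{\boldsymbol{A}}+\euNorm{\boldsymbol{A}-\boldsymbol{B}}\leq 2(\euNorm{\boldsymbol{A}}+\euNorm{\boldsymbol{B}})$.
\end{itemize}
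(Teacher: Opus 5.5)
Your proof is correct, and for the first inequality it takes a different route from the paper. The paper gives no argument of its own there: it cites the generalized Young inequality \cite[Lemma 2.3]{BARRETT.LIU:1994a} and calls the vector equivalence straightforward. Your triangle-inequality proof of the second claim is therefore exactly what the paper has in mind.

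For the Young-type inequality you give a self-contained elementary proof.
\begin{itemize}
\item You split the cases at the threshold $x = Ky$.
\item For $x>Ky$ you use the ratio estimate $\varphi(y)/\varphi(x)\le (y/x)^{\min\{1,r-1\}}$, valid for $x>y>0$.
\item In the sub-quadratic range $1<r<2$ the weight decreases, and the bound $(z+x)/(z+y)\le x/y$ is what handles this.
\end{itemize}
All of these steps check out, including the choice of $K$. One small precision: in each case the left-hand side is bounded by one of the two nonnegative terms, hence by their sum; that is what ``adding the two cases'' should mean.

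Compared with the citation, your argument gives the explicit constant $c(\varepsilon)=\max\{1,\varepsilon^{-1/\min\{1,r-1\}}\}$. This constant depends only on $\varepsilon$ and $r$ and is uniform in $x,y,z$. That uniformity is what the paper actually relies on when it later writes $C(\sobIndex,\varepsilon)$ and then fixes $\varepsilon$ to specific values such as $1/2$ or $2^{2-\sobIndexConj}/4$. Your argument also handles the degenerate case $z=0$, $1<r<2$ explicitly through the convention $\varphi(0)=0$.

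The citation keeps the exposition short and ties the inequality to Barrett and Liu's quasi-norm error analysis. The price is that the reader must trust that the constant has the required uniformity.
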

For the sake of readability, in what follows, for any sufficiently regular vector field $\vectTestFun$, we employ the following notation:
\begin{equation*}
  \vectTestFun^{\perp} \coloneqq \vectTestFun - \vectProjL{\polyDegree} \vectTestFun \quad \text{ and } \quad \vectTestFun_{I} \coloneqq \vectProjF{\polyDegree} \vectTestFun 
\end{equation*}
%
\subsection{A priori error estimates for $\vectSol$: $1<\sobIndexConj \leq 2$}\label{sec:a-priori.vect.1}
\subsubsection{Preliminary results}
%
\begin{lemma}[Discrete stability of the projected solution]
  Let $\vectSol \in \vectSolSpace$ be the unique solution to Problem~\eqref{eq:continuous.problem.dual.weak}. Assume $1 < \sobIndexConj \leq 2$ and assume furthermore that $\vectSol \in \vectSobSpace{1}{\sobIndexConj}{\domain_h}$. Then, it holds
  \begin{equation}\label{eq:projF.vectSol.stab}
    \discFullVectNorm{\vectSol_{I}}
    \lesssim  \norm{\lebSpace{\sobIndexConj}{\domain}}{\loadTerm} + \seminorm{\vectSobSpace{1}{\sobIndexConj}{\domain_h}}{\vectSol}
  \end{equation}
  with a hidden constant depending only on $\polyDegree$, $\sobIndex$, and $\rho$.
\end{lemma}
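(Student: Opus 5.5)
The plan is to work element by element with the norm equivalence \eqref{eq:disc.norm.simeq.cont.norm}, which is available because $1<\sobIndexConj\leq 2$. It reduces the discrete norm of $\vectSol_I$ to an $\vectLebSpace{\sobIndexConj}{\element}$-norm. That norm is then split into a term controlled by the continuous stability estimate \eqref{eq:cont.sols.stability} and an interpolation error controlled by \eqref{eq:Fortin.approx.r-norm}.

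First, a remark on well-definedness. $\vectSol$ is only assumed to be broken $\vectSobSpace{1}{\sobIndexConj}{\domain_h}$, but it also belongs to $\vectSolSpace=\sobDivSpace{\sobIndexConj}{\domain}$. Hence its normal component is single-valued across interior edges, and on each edge it is an $L^{\sobIndexConj}$ trace, since $\sobIndexConj>1$ and $\vectSol$ is elementwise $W^{1,\sobIndexConj}$. So the degrees of freedom \eqref{eq:FortinProj} make sense and $\vectSol_I\in\discVectSpace$ is well defined. Moreover, restricted to $\element$ it coincides with the local interpolant, so the local estimate \eqref{eq:Fortin.approx.r-norm} applies on each $\element$.

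Next, fix $\element\in\domain_h$. By \eqref{eq:disc.norm.simeq.cont.norm}, the triangle inequality, and \eqref{eq:Fortin.approx.r-norm} with $s=1$ and $r=\sobIndexConj$,
\begin{equation*}
  \locDiscFullVectNorm{\vectSol_I}
  \lesssim \norm{\vectLebSpace{\sobIndexConj}{\element}}{\vectSol_I}
  \leq \norm{\vectLebSpace{\sobIndexConj}{\element}}{\vectSol}
  + \norm{\vectLebSpace{\sobIndexConj}{\element}}{\vectSol-\vectSol_I}
  \lesssim \norm{\vectLebSpace{\sobIndexConj}{\element}}{\vectSol}
  + h_{\element}\seminorm{\vectSobSpace{1}{\sobIndexConj}{\element}}{\vectSol}.
\end{equation*}
Raise this to the power $\sobIndexConj$ and use $(a+b)^{\sobIndexConj}\leq 2^{\sobIndexConj-1}(a^{\sobIndexConj}+b^{\sobIndexConj})$. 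Together with $h_{\element}\leq h\lesssim 1$ (as $\domain$ is bounded), summing over $\element\in\domain_h$ gives
\begin{equation*}
  \discFullVectNorm{\vectSol_I}\lesssim \norm{\vectLebSpace{\sobIndexConj}{\domain}}{\vectSol}+\seminorm{\vectSobSpace{1}{\sobIndexConj}{\domain_h}}{\vectSol}.
\end{equation*}

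Finally, $\norm{\vectLebSpace{\sobIndexConj}{\domain}}{\vectSol}\leq\norm{\vectSolSpace}{\vectSol}\lesssim\norm{\lebSpace{\sobIndexConj}{\domain}}{\loadTerm}$ by \eqref{eq:cont.sols.stability}, which yields \eqref{eq:projF.vectSol.stab}. All constants depend only on $\polyDegree$, $\sobIndex$ and $\rho$.

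The only delicate point is the first step: justifying that the global interpolant is well defined and elementwise equal to the local one under merely broken regularity. This needs the normal continuity coming from $\vectSol\in\sobDivSpace{\sobIndexConj}{\domain}$. Everything else is a direct combination of earlier lemmas.
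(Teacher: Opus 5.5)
Your proposal is correct and follows essentially the same route as the paper: the elementwise equivalence \eqref{eq:disc.norm.simeq.cont.norm}, adding and subtracting $\vectSol$, the Fortin estimate \eqref{eq:Fortin.approx.r-norm} with $s=1$, the bound $h\lesssim 1$, and the continuous stability bound \eqref{eq:cont.sols.stability}. Your remark that $\vectSol_I$ is well defined under merely broken regularity, by normal continuity from $\vectSol\in\vectSolSpace$, supplies a detail the paper leaves implicit.
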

\begin{proof}
  Summing over $\element \in \domain_h$ the equivalence in \eqref{eq:disc.norm.simeq.cont.norm}, and then adding and subtracting
  $\vectSol$, we get
  \begin{align*}
    \discFullVectNorm{\vectSol_{I}}
    &\leq C(\polyDegree, \sobIndex, \rho) \left(\norm{\lebSpace{\sobIndexConj}{\domain}}{\vectSol - \vectSol_{I}} +
    \norm{\lebSpace{\sobIndexConj}{\domain}}{\vectSol}\right) \\
    &\leq C(\polyDegree, \sobIndex, \rho)
    \left(
      h \seminorm{\vectSobSpace{1}{\sobIndexConj}{\domain_h}}{\vectSol}
    + \norm{\lebSpace{\sobIndexConj}{\domain}}{\loadTerm} \right)\\
    &\leq C(\polyDegree, \sobIndex, \rho) \left( \seminorm{\vectSobSpace{1}{\sobIndexConj}{\domain_h}}{\vectSol} +\norm{\lebSpace{\sobIndexConj}{\domain}}{\loadTerm}\right),
  \end{align*}
  where the second inequality follows by summing over $\element \in \domain_h$ inequality \eqref{eq:Fortin.approx.r-norm}, by
  $h_{\element} \leq h$ for all $\element \in \domain_h$, and by the continuous stability bound \eqref{eq:cont.sols.stability}, while the last inequality holds since  $h \leq C$.
\end{proof}
\begin{lemma}
  Let $(\vectSol,\scalarSol) \in \vectSolSpace \times \scalarSolSpace$ and $(\discVectSol,\discScalarSol) \in \discVectSpace \times \discScalarSpace$ be the unique solutions to Problem~\eqref{eq:continuous.problem.dual.weak} and Problem~\eqref{eq:discrete.problem}, respectively. Assume $1<\sobIndexConj \leq 2$ and assume furthermore that $\vectSol \in \vectSobSpace{1}{\sobIndexConj}{\domain_h}$. Then, setting $\boldsymbol{\xi}_h \coloneqq \discVectSol-\vectSol_{I}$,
  \begin{equation}\label{eq:error.estimate.s2.mono}
    a_h(\discVectSol,\boldsymbol{\xi}_h) - a_h(\vectSol_{I},\boldsymbol{\xi}_h)
    \gtrsim
    \left(\norm{\lebSpace{\sobIndexConj}{\domain}}{\loadTerm}+ \seminorm{\vectSobSpace{1}{\sobIndexConj}{\domain_h}}{\vectSol} \right)^{\sobIndexConj-2} \discFullVectNorm{\boldsymbol{\xi}_h}^2,
  \end{equation}
  with a hidden constant depending only on $\polyDegree$, $\sobIndex$, and $\rho$.
\end{lemma}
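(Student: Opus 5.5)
The estimate follows by applying the monotonicity result, Lemma~\ref{lem:ah.monotonicity}, and then bounding the norms that appear in its weight factor by the data.

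\textbf{Step 1: the difference lies in the kernel.} We need $\boldsymbol{\xi}_h = \discVectSol - \vectSol_{I} \in \discDivFreeSpace$, which is what Lemma~\ref{lem:ah.monotonicity} requires. From \eqref{eq:discrete.problem.line2} and $\divergence \discVectSpace = \discScalarSpace$ we get $\divergence \discVectSol = -\projL{\polyDegree} \loadTerm$. From the commuting property \eqref{eq:projF.div.commute} and \eqref{eq:continuous.problem.dual.weak.2} (so $\divergence\vectSol = -\loadTerm$) we get $\divergence \vectSol_{I} = \projL{\polyDegree}\divergence\vectSol = -\projL{\polyDegree}\loadTerm$. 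Hence $\divergence\boldsymbol{\xi}_h = 0$.

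\textbf{Step 2: apply monotonicity.} Apply \eqref{eq:ah.monotonicity} with $(\discVectFunOne,\discVectFunTwo) = (\discVectSol,\vectSol_{I})$. Since $1<\sobIndexConj\le 2$, we have $\sobIndexConjMin=\sobIndexConj$ and $\sobIndexConjMax = 2$. This gives
\[
a_h(\discVectSol,\boldsymbol{\xi}_h) - a_h(\vectSol_{I},\boldsymbol{\xi}_h) \gtrsim \left(\discFullVectNorm{\discVectSol}^{\sobIndexConj} + \discFullVectNorm{\vectSol_{I}}^{\sobIndexConj}\right)^{\frac{\sobIndexConj-2}{\sobIndexConj}} \discFullVectNorm{\boldsymbol{\xi}_h}^{2}.
\]

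\textbf{Step 3: bound the weight factor from below.} The exponent $\frac{\sobIndexConj-2}{\sobIndexConj}$ is nonpositive, so an upper bound on the base gives a lower bound on the factor. We use:
\begin{itemize}
\item the discrete stability bound \eqref{eq:stab.bound.disc.sols}, which gives $\discFullVectNorm{\discVectSol}\lesssim \norm{\lebSpace{\sobIndexConj}{\domain}}{\loadTerm}$;
\item \eqref{eq:projF.vectSol.stab}, which gives $\discFullVectNorm{\vectSol_{I}} \lesssim \norm{\lebSpace{\sobIndexConj}{\domain}}{\loadTerm} + \seminorm{\vectSobSpace{1}{\sobIndexConj}{\domain_h}}{\vectSol}$.
\end{itemize}
Combining them,
\[
\discFullVectNorm{\discVectSol}^{\sobIndexConj} + \discFullVectNorm{\vectSol_{I}}^{\sobIndexConj} \lesssim \left(\norm{\lebSpace{\sobIndexConj}{\domain}}{\loadTerm}+ \seminorm{\vectSobSpace{1}{\sobIndexConj}{\domain_h}}{\vectSol}\right)^{\sobIndexConj}.
\]
Raising to the power $\frac{\sobIndexConj-2}{\sobIndexConj}\le 0$ reverses the inequality and yields the factor $(\cdots)^{\sobIndexConj-2}$ in \eqref{eq:error.estimate.s2.mono}, with constants depending only on $\polyDegree,\sobIndex,\rho$.

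The only delicate point is the sign bookkeeping in Step 3. When $\sobIndexConj=2$ the exponent is zero and the factor is trivially $1$. If the data vanish, the bound is trivial, since then $\boldsymbol{\xi}_h$-related quantities are zero or we may interpret the right-hand side as zero.
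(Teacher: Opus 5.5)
Your proof is correct and follows the paper's own argument: Lemma~\ref{lem:ah.monotonicity} with $(\discVectSol,\vectSol_{I})$, then the stability bounds \eqref{eq:stab.bound.disc.sols} and \eqref{eq:projF.vectSol.stab}, using that the exponent $\frac{\sobIndexConj-2}{\sobIndexConj}$ is nonpositive. Your explicit check in Step~1 that $\boldsymbol{\xi}_h\in\discDivFreeSpace$, via $\divergence\discVectSol=\divergence\vectSol_{I}=-\projL{\polyDegree}\loadTerm$, supplies a hypothesis of Lemma~\ref{lem:ah.monotonicity} that the paper leaves implicit.
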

\begin{proof}
  The proof is a straightforward application of \eqref{eq:ah.monotonicity} and the stability
  estimates \eqref{eq:stab.bound.disc.sols} and \eqref{eq:projF.vectSol.stab}.
\end{proof}
%
\subsubsection{Discrete error estimate}
%
%
\begin{lemma}[Error equation for $\vectSol$: $1 < \sobIndexConj \leq 2$]
  Let $(\vectSol,\scalarSol) \in \vectSolSpace \times \scalarSolSpace$ and $(\discVectSol,\discScalarSol) \in \discVectSpace \times \discScalarSpace$ be the unique solutions to Problem~\eqref{eq:continuous.problem.dual.weak} and Problem~\eqref{eq:discrete.problem}, respectively. Assume $1<\sobIndexConj \leq 2$. Then, for any $\discVectFunTwo \in \discDivFreeSpace$, the following error equation holds
  \begin{equation}\label{eq:error.equation.s2}
    a_h(\discVectSol,\discVectFunTwo) - a_h(\vectSol_{I},\discVectFunTwo) = a(\vectSol,\discVectFunTwo) - a_h(\vectSol_{I},\discVectFunTwo).
  \end{equation}
\end{lemma}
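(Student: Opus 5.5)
The identity reduces to showing $a_h(\discVectSol,\discVectFunTwo) = a(\vectSol,\discVectFunTwo)$ for every $\discVectFunTwo \in \discDivFreeSpace$. Once this holds, subtracting $a_h(\vectSol_{I},\discVectFunTwo)$ from both sides gives \eqref{eq:error.equation.s2} at once. So the plan is to test the discrete and the continuous first equations with the same divergence-free discrete field and compare them.

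\textbf{Discrete side.} Take $\discVectTestFun = \discVectFunTwo$ in \eqref{eq:discrete.problem.line1}. This gives $a_h(\discVectSol,\discVectFunTwo) = -b(\discVectFunTwo,\discScalarSol)$. By the characterization \eqref{eq:def.Zhk}, $\divergence \discVectFunTwo = 0$, so $b(\discVectFunTwo,\discScalarSol) = \int_\domain (\divergence\discVectFunTwo)\discScalarSol = 0$. Hence $a_h(\discVectSol,\discVectFunTwo)=0$.

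\textbf{Continuous side.} This is the only point that needs care. Since $1<\sobIndexConj\leq 2$, Remark~\ref{rem:conforming} gives the conformity $\discVectSpace \subset \vectSolSpace = \sobDivSpace{\sobIndexConj}{\domain}$. Therefore $\discVectFunTwo$ is an admissible test function in \eqref{eq:continuous.problem.dual.weak.1}, and we obtain $a(\vectSol,\discVectFunTwo) = -b(\discVectFunTwo,\scalarSol)$. One should check that this is well defined. The product $\fluxFunction(\vectSol)\cdot\discVectFunTwo$ is integrable, because $\fluxFunction(\vectSol)\in \vectLebSpace{\sobIndex}{\domain}$ (indeed $|\fluxFunction(\vectSol)|^{\sobIndex}=|\vectSol|^{\sobIndexConj}$) and $\discVectFunTwo\in\vectLebSpace{\sobIndexConj}{\domain}$. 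Also $\scalarSol\in\lebSpace{\sobIndex}{\domain}$. Since $\divergence\discVectFunTwo=0$ in $\domain$, the term $b(\discVectFunTwo,\scalarSol)$ vanishes as well, so $a(\vectSol,\discVectFunTwo)=0$.

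Combining the two sides gives $a_h(\discVectSol,\discVectFunTwo)=0=a(\vectSol,\discVectFunTwo)$, and \eqref{eq:error.equation.s2} follows. The only step that needs justification is the use of $\discVectFunTwo$ as a continuous test function, which is exactly why the lemma assumes $\sobIndexConj\le 2$. For $\sobIndexConj>2$ on nonconvex elements the inclusion may fail, and one would instead integrate by parts against the regularity of $\scalarSol$.
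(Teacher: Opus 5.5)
Your proof is correct and follows the paper's argument: test both first equations with the divergence-free $\discVectFunTwo$ to get $a_h(\discVectSol,\discVectFunTwo)=0=a(\vectSol,\discVectFunTwo)$, then subtract $a_h(\vectSol_{I},\discVectFunTwo)$ from both sides. Your explicit check, via Remark~\ref{rem:conforming}, that $\discVectFunTwo$ is an admissible continuous test function when $1<\sobIndexConj\le 2$ makes precise a point the paper leaves implicit.
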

\begin{proof}
  Taking $\discVectTestFun = \discVectFunTwo$ in \eqref{eq:discrete.problem.line1}, $\vectTestFun = \discVectFunTwo$ in \eqref{eq:continuous.problem.dual.weak.1} and using the fact that
  $\divergence \discVectFunTwo = 0$, we obtain $a_h(\discVectSol,\discVectFunTwo) = 0 = a(\vectSol,\discVectFunTwo)$. Subtracting $a_h(\vectSol_{I},\discVectFunTwo)$ from
  both sides leads to \eqref{eq:error.equation.s2}.
\end{proof}
\begin{lemma}[Consistency error for $\vectSol$: $1 < \sobIndexConj \leq 2$]
  Let $(\vectSol,\scalarSol) \in \vectSolSpace \times \scalarSolSpace$ and $(\discVectSol,\discScalarSol) \in \discVectSpace \times \discScalarSpace$ be the unique solutions to Problem~\eqref{eq:continuous.problem.dual.weak} and Problem~\eqref{eq:discrete.problem}, respectively.
  Assume that $1<\sobIndexConj \leq 2$, and suppose furthermore that the exact solution satisfies the following regularities:
  \begin{itemize}
    \item[] $\vectSol \in \vectSobSpace{\polyDegree_{\vectSol}}{\sobIndexConj}{\domain_h}$ for $\polyDegree_{\vectSol} \in \{1,2,\ldots,\polyDegree +1 \}$;
    \item[] $\fluxFunction(\vectSol) \in \vectSobSpace{\polyDegree_{\fluxFunction}}{\sobIndex}{\domain_h}$ for $\polyDegree_{\fluxFunction} \in \{0,1,\ldots,\polyDegree +1 \}$.
  \end{itemize}
  Then, setting $\boldsymbol{\xi}_h \coloneqq \discVectSol - \vectSol_{I}$, for any $\varepsilon > 0$, it holds
  \begin{equation}\label{eq:consistency.error.s2}
    \begin{aligned}
      a(\vectSol,\boldsymbol{\xi}_h) - a_h(\vectSol_{I},\boldsymbol{\xi}_h)
      &\leq
      \varepsilon \discFullVectNorm{\boldsymbol{\xi}_h}^2 +
      \frac{1}{2} \left[ a_h(\discVectSol,\boldsymbol{\xi}_h) - a_h(\vectSol_{I},\boldsymbol{\xi}_h) \right] \\
      &\quad 
      +\frac{C(\polyDegree,\sobIndex,\polyDegree_{\fluxFunction},\rho)}{\varepsilon} \mathcal{R}_h^{(1)} 
      + C(\polyDegree,\sobIndex,\polyDegree_{\vectSol},\rho) \mathcal{R}_h^{(2)} ,
    \end{aligned}
  \end{equation}
  where 
  \begin{equation}\label{eq:def.consistency.error.s2.conv.terms}
    \begin{aligned}
      \mathcal{R}_h^{(1)} &\coloneqq \left[\sum_{\element \in \domain_h} h_{\element}^{\sobIndex \polyDegree_{\fluxFunction}} \seminorm{\vectSobSpace{\polyDegree_{\fluxFunction}}{\sobIndex}{\element}}{\fluxFunction
      (\vectSol) }^{\sobIndex}\right]^{\frac{2}{\sobIndex}}, \\
      \mathcal{R}_h^{(2)} &\coloneqq \sum_{\element \in \domain_h}
      h_{\element}^{\sobIndexConj \polyDegree_{\vectSol} }\seminorm{\vectSobSpace{\polyDegree_{\vectSol}}{\sobIndexConj}{\element}}{\vectSol}^{\sobIndexConj}.
    \end{aligned}
  \end{equation}
\end{lemma}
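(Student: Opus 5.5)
Since $\divergence \discVectSol = -\projL{\polyDegree}\loadTerm = \projL{\polyDegree}\divergence\vectSol = \divergence \vectSol_I$ by \eqref{eq:discrete.problem.line2} and \eqref{eq:projF.div.commute}, we have $\boldsymbol{\xi}_h \in \discDivFreeSpace$. I would therefore use \eqref{eq:disc.full.norm.eq.ah.norm} freely and split the consistency error elementwise into three pieces:
\begin{equation*}
  a(\vectSol,\boldsymbol{\xi}_h) - a_h(\vectSol_I,\boldsymbol{\xi}_h)
  = \underbrace{\sum_{\element}\int_{\element} \fluxFunction(\vectSol)\cdot \boldsymbol{\xi}_h^{\perp}}_{T_1}
  + \underbrace{\sum_{\element}\int_{\element} \big(\fluxFunction(\vectSol) - \fluxFunction(\locVectProjL{\polyDegree}\vectSol_I)\big)\cdot \locVectProjL{\polyDegree}\boldsymbol{\xi}_h}_{T_2}
  - \underbrace{\sum_{\element} s_{\element}(\vectSol_I^{\perp},\boldsymbol{\xi}_h^{\perp})}_{T_3}.
\end{equation*}
The plan is to bound $T_1$ by $\varepsilon\discFullVectNorm{\boldsymbol{\xi}_h}^2 + C\varepsilon^{-1}\mathcal{R}_h^{(1)}$. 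The pieces $T_2$ and $T_3$ will be absorbed partly into half of the monotone quantity $a_h(\discVectSol,\boldsymbol{\xi}_h)-a_h(\vectSol_I,\boldsymbol{\xi}_h)$ and partly into $\mathcal{R}_h^{(2)}$.

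\textbf{Term $T_1$.} By $L^2$-orthogonality, $\fluxFunction(\vectSol)$ can be replaced by $\fluxFunction(\vectSol)-\locVectProjL{\polyDegree}\fluxFunction(\vectSol)$. A $(\sobIndex,\sobIndexConj)$-H\"older inequality, \eqref{eq:vectProjL.approx.element}, and the bound $\norm{\vectLebSpace{\sobIndexConj}{\element}}{\boldsymbol{\xi}_h^{\perp}}\lesssim \norm{\vectLebSpace{\sobIndexConj}{\element}}{\boldsymbol{\xi}_h}\lesssim\locDiscFullVectNorm{\boldsymbol{\xi}_h}$ (from \eqref{eq:vectProjL.stab.element} and \eqref{eq:disc.norm.simeq.cont.norm}) then give a local bound. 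Summing with a discrete H\"older inequality yields $T_1\lesssim (\mathcal{R}_h^{(1)})^{1/2}\discFullVectNorm{\boldsymbol{\xi}_h}$, and a standard Young inequality concludes.

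\textbf{Term $T_2$ (main obstacle).} The crude H\"older bound $|\fluxFunction(\boldsymbol{A})-\fluxFunction(\boldsymbol{B})|\lesssim|\boldsymbol{A}-\boldsymbol{B}|^{\sobIndexConj-1}$ followed by a quadratic Young inequality would produce the wrong power of the approximation error, not $\mathcal{R}_h^{(2)}$. Instead, pointwise I would use \eqref{eq:diffusive.flux.properties.original.hc} together with \eqref{eq:simple.eu.norm.equivalence} to get
\begin{equation*}
  |\fluxFunction(\vectSol) - \fluxFunction(\locVectProjL{\polyDegree}\vectSol_I)|\,|\locVectProjL{\polyDegree}\boldsymbol{\xi}_h| \lesssim x\,(z+y)^{\sobIndexConj-2}y,
\end{equation*}
where $x=|\locVectProjL{\polyDegree}\boldsymbol{\xi}_h|$, $y=|\vectSol-\locVectProjL{\polyDegree}\vectSol_I|$ and $z=|\locVectProjL{\polyDegree}\vectSol_I|$. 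I would then apply \eqref{eq:young.type.inequality}. The resulting $\varepsilon(z+x)^{\sobIndexConj-2}x^2$ term is comparable, again by \eqref{eq:simple.eu.norm.equivalence}, to $(|\locVectProjL{\polyDegree}\discVectSol|+|\locVectProjL{\polyDegree}\vectSol_I|)^{\sobIndexConj-2}|\locVectProjL{\polyDegree}\boldsymbol{\xi}_h|^2$. By \eqref{eq:diffusive.flux.properties.original.mono} this is bounded by $(\fluxFunction(\locVectProjL{\polyDegree}\discVectSol)-\fluxFunction(\locVectProjL{\polyDegree}\vectSol_I))\cdot\locVectProjL{\polyDegree}\boldsymbol{\xi}_h$. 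Choosing $\varepsilon$ small, its integral is at most $\frac12$ of the polynomial part of $a_h(\discVectSol,\boldsymbol{\xi}_h)-a_h(\vectSol_I,\boldsymbol{\xi}_h)$. The other term satisfies $(z+y)^{\sobIndexConj-2}y^2\le y^{\sobIndexConj}$ because $\sobIndexConj\le2$ and $z+y\ge y$. Integrating and using \eqref{eq:projL.Fortin.approx} with $s=\polyDegree_{\vectSol}$ gives $C\mathcal{R}_h^{(2)}$.

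\textbf{Term $T_3$.} I would do the same at the level of degree-of-freedom vectors, taking $\boldsymbol{A}=\locDofVect(\discVectSol^{\perp})$, $\boldsymbol{B}=\locDofVect(\vectSol_I^{\perp})$, $x=|\boldsymbol{A}-\boldsymbol{B}|$, $y=z=|\boldsymbol{B}|$ in \eqref{eq:young.type.inequality}. The $\varepsilon$-part is controlled, through \eqref{eq:simple.eu.norm.equivalence} and \eqref{eq:diffusive.flux.properties.original.mono} with $n=N_{\element}$, by the stabilization part of the monotone quantity; both parts of that quantity are nonnegative, so the two halves combine to the stated $\frac12$. The remaining part is $\lesssim h_{\element}^2|\locDofVect(\vectSol_I^{\perp})|^{\sobIndexConj}=s_{\element}(\vectSol_I^{\perp},\vectSol_I^{\perp})\lesssim\norm{\vectLebSpace{\sobIndexConj}{\element}}{\vectSol_I^{\perp}}^{\sobIndexConj}$ by \eqref{eq:sE.less.cont.norm}. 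Writing $\vectSol_I^{\perp}=(\vectSol_I-\vectSol)+(\vectSol-\locVectProjL{\polyDegree}\vectSol_I)$, I would finally apply \eqref{eq:Fortin.approx.r-norm} and \eqref{eq:projL.Fortin.approx} to obtain $C\mathcal{R}_h^{(2)}$. Collecting $T_1$, $T_2$ and $T_3$ gives \eqref{eq:consistency.error.s2}.
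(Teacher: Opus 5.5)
Your proposal is correct and follows essentially the same route as the paper: your $T_1$, $T_2$, $T_3$ are exactly the paper's $\mathfrak{T}_{1,\element}^{(1)}$, $\mathfrak{T}_{1,\element}^{(2)}$ and $\mathfrak{T}_{2,\element}$. The paper treats them with the same tools you use: H\"older, approximation and Young for the first; \eqref{eq:diffusive.flux.properties.original.hc}, \eqref{eq:young.type.inequality}, \eqref{eq:simple.eu.norm.equivalence}, \eqref{eq:diffusive.flux.properties.original.mono} and \eqref{eq:projL.Fortin.approx} for the second; and the same argument on the degree-of-freedom vectors together with \eqref{eq:sE.less.cont.norm} and the Fortin bounds for the third. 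Your preliminary observation that $\boldsymbol{\xi}_h\in\discDivFreeSpace$ is harmless but unnecessary, since the estimate is stated in the full norm $\discFullVectNorm{\cdot}$.
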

\begin{proof}
  Let $\boldsymbol{\xi}_h \coloneqq \discVectSol - \vectSol_{I}$.
  From the definitions of $a$ in \eqref{eq:def.a} and $a_h$ in \eqref{eq:def.ah.bh} we have that
  \begin{equation}\label{eq:error.estimate.s2.rhs.initial}
    \begin{aligned}
      a(\vectSol,\boldsymbol{\xi}_h) - a_h(\vectSol_{I},\boldsymbol{\xi}_h)
      &=
      \sum_{\element \in \domain_h} \int_{\element} \left(
        \fluxFunction (\vectSol) \cdot \boldsymbol{\xi}_h - \fluxFunction (\locVectProjL{\polyDegree} \vectSol_{I})\cdot \locVectProjL{\polyDegree}\boldsymbol{\xi}_h \right) 
      + \sum_{\element \in \domain_h}
        - s_{\element} (\vectSol_{I}^{\perp}, \boldsymbol{\xi}_h^{\perp}) \\
      &\eqqcolon
      \sum_{\element \in \domain_h} \mathfrak{T}_{1,\element}
      + \sum_{\element \in \domain_h} \mathfrak{T}_{2,\element}.
    \end{aligned}
  \end{equation}
  We analyze each of the two terms separately.

  \medskip
  \noindent
  \textbf{First term.}
  We first add and subtract $\locVectProjL{\polyDegree} \fluxFunction (\vectSol) \cdot \boldsymbol{\xi}_h$ inside the integral for $\mathfrak{T}_{1,\element}$, and exploiting the
  orthogonality property of $\locVectProjL{\polyDegree}$, we get
  \begin{equation}\label{eq:error.estimate.s2.rhs.T1E.initial}
    \begin{aligned}
      \mathfrak{T}_{1,\element}
      &=\int_{\element}
      \left[\fluxFunction (\vectSol) - \locVectProjL{\polyDegree} \fluxFunction (\vectSol) \right] \cdot \boldsymbol{\xi}_h
      + \int_{\element}
      \left[ \fluxFunction (\vectSol) - \fluxFunction (\locVectProjL{\polyDegree} \vectSol_{I}) \right] \cdot \locVectProjL{\polyDegree} \boldsymbol{\xi}_h 
      \eqqcolon \mathfrak{T}_{1,\element}^{(1)} + \mathfrak{T}_{1,\element}^{(2)}
    \end{aligned}
  \end{equation}
  For the first term $\mathfrak{T}_{1,\element}^{(1)}$ we apply a $(\sobIndex,\sobIndexConj)$-H{\"o}lder inequality together with \eqref{eq:vectProjL.approx.element} and \eqref{eq:disc.norm.simeq.cont.norm} to write
  \begin{equation*}
    \begin{aligned}
      \mathfrak{T}_{1,\element}^{(1)}
      \leq \norm{\vectLebSpace{\sobIndex}{\element}}{\fluxFunction (\vectSol) - \locVectProjL{\polyDegree} \fluxFunction (\vectSol)}
      \norm{\vectLebSpace{\sobIndexConj}{\element}}{\boldsymbol{\xi}_h } 
      \leq
      C(\polyDegree,\sobIndex,\polyDegree_{\fluxFunction},\rho) h_{\element}^{\polyDegree_{\fluxFunction}}
      \seminorm{\vectSobSpace{\polyDegree_{\fluxFunction}}{\sobIndex}{\element}}{\fluxFunction (\vectSol) } \locDiscFullVectNorm{\boldsymbol{\xi}_h}.
    \end{aligned}
  \end{equation*}
  Summing the above inequality over $\element \in \domain_h$, applying a discrete $(\sobIndex,\sobIndexConj)$-H{\"o}lder inequality and a generalized Young inequality yields, for any $\varepsilon > 0$,
  \begin{equation}\label{eq:error.estimate.s2.rhs.T1E.1}
    \begin{aligned}
      \sum_{\element \in \domain_h}\mathfrak{T}_{1,\element}^{(1)}
      &\leq \frac{C(\polyDegree,\sobIndex,\polyDegree_{\fluxFunction},\rho)}{\varepsilon}
      \left[\sum_{\element \in \domain_h} h_{\element}^{\sobIndex \polyDegree_{\fluxFunction}} \seminorm{\vectSobSpace{\polyDegree_{\fluxFunction}}{\sobIndex}{\element}}{\fluxFunction
      (\vectSol) }^{\sobIndex}\right]^{\frac{2}{\sobIndex}} +
      \varepsilon \discFullVectNorm{\boldsymbol{\xi}_h}^2.
    \end{aligned}
  \end{equation}
  For the second term $\mathfrak{T}_{1,\element}^{(2)}$, we apply \eqref{eq:diffusive.flux.properties.original.hc} and \eqref{eq:young.type.inequality}, together with \eqref{eq:simple.eu.norm.equivalence}, to write, for any $\varepsilon > 0$,
  \begin{equation*}
    \begin{aligned}\mathfrak{T}_{1,\element}^{(2)}
      &\leq C(\sobIndex)
      \int_{\element}
      \left( \euNorm{\locVectProjL{\polyDegree} \vectSol_{I}} + \euNorm{\vectSol - \locVectProjL{\polyDegree} \vectSol_{I}} \right)^{\sobIndexConj - 2}
      \euNorm{\vectSol - \locVectProjL{\polyDegree} \vectSol_{I}}
      \euNorm{\locVectProjL{\polyDegree} \boldsymbol{\xi}_h} \\ 
      &\leq
      \varepsilon
      \int_{\element}
      \left( \euNorm{\locVectProjL{\polyDegree} \vectSol_{I}} + \euNorm{\locVectProjL{\polyDegree} \discVectSol} \right)^{\sobIndexConj - 2}
      \euNorm{\locVectProjL{\polyDegree} \boldsymbol{\xi}_h}^2 
      + C(\sobIndex,\varepsilon)
      \norm{\vectLebSpace{\sobIndexConj}{\element}}{\vectSol - \locVectProjL{\polyDegree} \vectSol_{I}}^{\sobIndexConj}
    \end{aligned}
  \end{equation*}
  Applying \eqref{eq:diffusive.flux.properties.original.mono} and \eqref{eq:projL.Fortin.approx} to the term multiplied by $\varepsilon$ and by $C(\sobIndex,\varepsilon)$, respectively, and then summing over $\element \in \domain_h$ after choosing $\varepsilon = 1/2$, we get
  \begin{equation}\label{eq:error.estimate.s2.rhs.T1E.2}
    \begin{aligned}
      \sum_{\element \in \domain_h}
      \mathfrak{T}_{1,\element}^{(2)}
      &\leq
      \frac{1}{2} \sum_{\element \in \domain_h}
      \left[
        a_{\element} (\locVectProjL{\polyDegree} \discVectSol,\locVectProjL{\polyDegree} \boldsymbol{\xi}_h) - a_{\element} (\locVectProjL{\polyDegree}
      \vectSol_{I},\locVectProjL{\polyDegree} \boldsymbol{\xi}_h) \right]
      \\
      &\quad 
      + C(\polyDegree,\sobIndex,\polyDegree_{\vectSol},\rho)
      \sum_{\element \in \domain_h}
      h_{\element}^{\sobIndexConj \polyDegree_{\vectSol}} \seminorm{\vectSobSpace{\polyDegree_{\vectSol}}{\sobIndexConj}{\element}}{\vectSol}^{\sobIndexConj}
    \end{aligned}
  \end{equation}
  Plugging \eqref{eq:error.estimate.s2.rhs.T1E.1} and \eqref{eq:error.estimate.s2.rhs.T1E.2} into \eqref{eq:error.estimate.s2.rhs.T1E.initial} we get
  \begin{equation}\label{eq:error.estimate.s2.rhs.T1E}
    \begin{aligned}
      \sum_{\element \in \domain_h}
      \mathfrak{T}_{1,\element}
      &\leq
      \varepsilon \discFullVectNorm{\boldsymbol{\xi}_h}^2 +
      \frac{1}{2} \sum_{\element \in \domain_h}
      \left[
        a_{\element} (\locVectProjL{\polyDegree} \discVectSol,\locVectProjL{\polyDegree} \boldsymbol{\xi}_h) - a_{\element} (\locVectProjL{\polyDegree}
      \vectSol_{I},\locVectProjL{\polyDegree} \boldsymbol{\xi}_h) \right]\\
      &\quad +\frac{C(\polyDegree,\sobIndex,\polyDegree_{\fluxFunction},\rho)}{\varepsilon} \mathcal{R}_h^{(1)} 
      + C(\polyDegree,\sobIndex,\polyDegree_{\vectSol},\rho) \mathcal{R}_h^{(2)}.
    \end{aligned}
  \end{equation}

  \medskip
  \noindent
  \textbf{Second term.} 
  Recalling the definition of $s_{\element}$ in \eqref{eq:def.sE}, we apply \eqref{eq:diffusive.flux.properties.original.hc} and \eqref{eq:young.type.inequality} 
  , together with \eqref{eq:simple.eu.norm.equivalence} to obtain
  \begin{equation}\label{eq:aux.sE.ineq.for.g2.case}
    \begin{aligned}
      \mathfrak{T}_{2,\element}
      &\leq C(\sobIndex) h_{\element}^{2}
      \left( \euNorm{\locBoundaryDofVect (\vectSol_{I}^{\perp})} \right)^{\sobIndexConj-2}
      \euNorm{\locBoundaryDofVect (\vectSol_{I}^{\perp})}
      \euNorm{ \locBoundaryDofVect (\boldsymbol{\xi}_h^{\perp})} \\
      &\leq
      \varepsilon h_{\element}^{2}
      \left( \euNorm{\locBoundaryDofVect (\vectSol_{I}^{\perp})} + \euNorm{ \locBoundaryDofVect (\discVectSol^{\perp})} \right)^{\sobIndexConj-2}
      \euNorm{ \locBoundaryDofVect (\boldsymbol{\xi}_h^{\perp})}^2 + C(\sobIndex,\varepsilon) s_{\element} (\vectSol_{I}^{\perp},\vectSol_{I}^{\perp}).
    \end{aligned}
  \end{equation}
  Applying \eqref{eq:diffusive.flux.properties.original.mono} to the term multiplied by $\varepsilon$ and bound \eqref{eq:sE.less.cont.norm} to the term multiplied by $C(\sobIndex,\varepsilon)$, we get
  \begin{equation*}
    \begin{aligned}
      \mathfrak{T}_{2,\element}
      &\leq
      \varepsilon
      \left[s_{\element}(\discVectSol^{\perp},\boldsymbol{\xi}_h^{\perp}) -
      s_{\element}(\vectSol_{I}^{\perp},\boldsymbol{\xi}_h^{\perp})\right] + C(\polyDegree,\sobIndex,\rho,\varepsilon)
      \norm{\vectLebSpace{\sobIndexConj}{\element}}{\vectSol_{I}^{\perp}}^{\sobIndexConj}.
    \end{aligned}
  \end{equation*}
  Triangle inequality together with approximation results \eqref{eq:Fortin.approx.r-norm} and \eqref{eq:projL.Fortin.approx} yields
  \begin{equation*}
    \norm{\vectLebSpace{\sobIndexConj}{\element}}{\vectSol_{I}^{\perp}}^{\sobIndexConj} \leq C(\polyDegree,\sobIndex,\polyDegree_{\vectSol},\rho)
    h_{\element}^{\sobIndexConj \polyDegree_{\vectSol} }\seminorm{\vectSobSpace{\polyDegree_{\vectSol}}{\sobIndexConj}{\element}}{\vectSol}^{\sobIndexConj}.
  \end{equation*}
  Therefore, choosing $\varepsilon = 1/2$ and summing over $\element \in \domain_h$, we get
  \begin{equation}\label{eq:error.estimate.s2.rhs.T2E}
    \begin{aligned}
      \sum_{\element \in \domain_h}
      \mathfrak{T}_{2,\element}
      &\leq
      \frac{1}{2}
      \sum_{\element \in \domain_h}
      \left[s_{\element}(\discVectSol^{\perp},\boldsymbol{\xi}_h^{\perp}) -
      s_{\element}(\vectSol_{I}^{\perp},\boldsymbol{\xi}_h^{\perp})\right] + C(\polyDegree,\sobIndex,\polyDegree_{\vectSol},\rho) \mathcal{R}_h^{(2)}.
    \end{aligned}
  \end{equation}

  \medskip
  \noindent
  \textbf{Conclusion.}
  Plugging \eqref{eq:error.estimate.s2.rhs.T1E} and \eqref{eq:error.estimate.s2.rhs.T2E} into \eqref{eq:error.estimate.s2.rhs.initial}, and recalling the definition of $a_h$ in \eqref{eq:def.ah.bh} we obtain the desired result.
\end{proof}
\begin{theorem}[Discretization error for $\vectSol$: $1 < \sobIndexConj \leq 2$]\label{thm:discrete.error.s2}
  Let $(\vectSol,\scalarSol) \in \vectSolSpace \times \scalarSolSpace$ and $(\discVectSol,\discScalarSol) \in \discVectSpace \times \discScalarSpace$ be the unique solutions to Problem~\eqref{eq:continuous.problem.dual.weak} and Problem~\eqref{eq:discrete.problem}, respectively. Assume $1<\sobIndexConj \leq 2$, and assume furthermore
  \begin{itemize}
    \item[] $\vectSol \in \vectSobSpace{\polyDegree_{\vectSol}}{\sobIndexConj}{\domain_h}$ for $\polyDegree_{\vectSol} \in \{1,2,\ldots,\polyDegree +1 \}$;
    \item[] $\fluxFunction(\vectSol) \in \vectSobSpace{\polyDegree_{\fluxFunction}}{\sobIndex}{\domain_h}$ for $\polyDegree_{\fluxFunction} \in \{0,1,\ldots,\polyDegree +1 \}$.
  \end{itemize}
  Setting $\boldsymbol{\xi}_h \coloneqq \discVectSol - \vectSol_{I}$ and recalling the definition of $\mathcal{R}_h^{(1)}$ and $\mathcal{R}_h^{(2)}$ in \eqref{eq:def.consistency.error.s2.conv.terms}, it holds
  \begin{equation}\label{eq:discrete.error.s2}
    \discFullVectNorm{\boldsymbol{\xi}_h}^2
    \lesssim \mathcal{R}_h^{(1)} + \mathcal{R}_h^{(2)}
  \end{equation}
  with a hidden constant depending only on $\seminorm{\vectSobSpace{1}{\sobIndexConj}{\domain_h}}{\vectSol}$, $\norm{\lebSpace{\sobIndexConj}{\domain}}{\loadTerm}$, $\polyDegree$, $\sobIndex$, $\polyDegree_{\vectSol}$, $\polyDegree_{\fluxFunction}$, and $\rho$.
\end{theorem}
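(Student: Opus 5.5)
The plan is to combine the three preceding lemmas: the monotonicity bound \eqref{eq:error.estimate.s2.mono}, the error equation \eqref{eq:error.equation.s2}, and the consistency estimate \eqref{eq:consistency.error.s2}. The key observation is that $\boldsymbol{\xi}_h = \discVectSol - \vectSol_{I}$ belongs to $\discDivFreeSpace$. Indeed, by \eqref{eq:discrete.problem.line2}, $\divergence \discVectSol = -\projL{\polyDegree}\loadTerm$. By \eqref{eq:projF.div.commute} together with \eqref{eq:continuous.problem.dual.momentum}, $\divergence \vectSol_{I} = \projL{\polyDegree}\divergence\vectSol = -\projL{\polyDegree}\loadTerm$. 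Hence $\divergence\boldsymbol{\xi}_h = 0$, and by \eqref{eq:def.Zhk} we may take $\discVectFunTwo = \boldsymbol{\xi}_h$ in \eqref{eq:error.equation.s2}. The regularity $\vectSol \in \vectSobSpace{\polyDegree_{\vectSol}}{\sobIndexConj}{\domain_h}$ with $\polyDegree_{\vectSol}\geq 1$ makes $\vectSol_I$ well defined and gives $\vectSol\in\vectSobSpace{1}{\sobIndexConj}{\domain_h}$, which is the hypothesis needed for \eqref{eq:error.estimate.s2.mono}.

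Set $\Delta_h \coloneqq a_h(\discVectSol,\boldsymbol{\xi}_h) - a_h(\vectSol_{I},\boldsymbol{\xi}_h)$. By the error equation, $\Delta_h = a(\vectSol,\boldsymbol{\xi}_h) - a_h(\vectSol_I,\boldsymbol{\xi}_h)$. Applying \eqref{eq:consistency.error.s2} gives
\[
\Delta_h \leq \varepsilon \discFullVectNorm{\boldsymbol{\xi}_h}^2 + \tfrac12 \Delta_h + \tfrac{C}{\varepsilon}\mathcal{R}_h^{(1)} + C\mathcal{R}_h^{(2)}.
\]
Absorbing $\tfrac12\Delta_h$ into the left-hand side yields
\[
\tfrac12\Delta_h \leq \varepsilon \discFullVectNorm{\boldsymbol{\xi}_h}^2 + \tfrac{C}{\varepsilon}\mathcal{R}_h^{(1)} + C\mathcal{R}_h^{(2)}.
\]
Absorbing $\tfrac12\Delta_h$ is legitimate because $\Delta_h\geq 0$ by monotonicity.

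Next, bound $\Delta_h$ from below using \eqref{eq:error.estimate.s2.mono}:
\[
\Delta_h \geq c_\ast \discFullVectNorm{\boldsymbol{\xi}_h}^2, \qquad c_\ast \coloneqq C(\polyDegree,\sobIndex,\rho)\bigl(\norm{\lebSpace{\sobIndexConj}{\domain}}{\loadTerm}+\seminorm{\vectSobSpace{1}{\sobIndexConj}{\domain_h}}{\vectSol}\bigr)^{\sobIndexConj-2}.
\]
The constant $c_\ast$ is positive and depends only on the quantities allowed in the statement; since $\sobIndexConj - 2\le 0$, it degenerates only when the data vanish. Choose $\varepsilon = c_\ast/4$. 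Then $\tfrac{c_\ast}{2}\discFullVectNorm{\boldsymbol{\xi}_h}^2 \leq \tfrac{c_\ast}{4}\discFullVectNorm{\boldsymbol{\xi}_h}^2 + \tfrac{4C}{c_\ast}\mathcal{R}_h^{(1)} + C\mathcal{R}_h^{(2)}$. Rearranging gives \eqref{eq:discrete.error.s2}, with a constant depending on $c_\ast^{-1}$ and $c_\ast^{-2}$, hence on $\norm{\lebSpace{\sobIndexConj}{\domain}}{\loadTerm}$, $\seminorm{\vectSobSpace{1}{\sobIndexConj}{\domain_h}}{\vectSol}$, $\polyDegree$, $\sobIndex$, $\polyDegree_{\vectSol}$, $\polyDegree_{\fluxFunction}$ and $\rho$.

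The degenerate case, where the sum $\norm{\lebSpace{\sobIndexConj}{\domain}}{\loadTerm}+\seminorm{\vectSobSpace{1}{\sobIndexConj}{\domain_h}}{\vectSol}$ vanishes, is trivial. Then $\loadTerm=0$, so both solutions vanish by the stability bounds \eqref{eq:cont.sols.stability} and \eqref{eq:stab.bound.disc.sols}, and $\boldsymbol{\xi}_h=\boldsymbol{0}$. There is no real obstacle here. The only delicate points are the kernel membership of $\boldsymbol{\xi}_h$ and keeping track of how $\varepsilon$ depends on the data-dependent monotonicity constant.
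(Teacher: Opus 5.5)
Your proposal is correct and follows the paper's proof essentially step for step: it combines the error equation with the consistency estimate, absorbs the $\tfrac12\Delta_h$ term, invokes the data-dependent monotonicity bound \eqref{eq:error.estimate.s2.mono}, and chooses $\varepsilon$ proportional to $\bigl(\norm{\lebSpace{\sobIndexConj}{\domain}}{\loadTerm}+\seminorm{\vectSobSpace{1}{\sobIndexConj}{\domain_h}}{\vectSol}\bigr)^{\sobIndexConj-2}$. The paper leaves implicit your check that $\boldsymbol{\xi}_h\in\discDivFreeSpace$ (via $\divergence\discVectSol=\divergence\vectSol_{I}=-\projL{\polyDegree}\loadTerm$) and your treatment of vanishing data. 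Also, absorbing $\tfrac12\Delta_h$ does not actually need $\Delta_h\ge 0$.
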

\begin{proof}
  Applying \eqref{eq:error.equation.s2} together with \eqref{eq:consistency.error.s2}, we obtain, for any $\varepsilon >0$,
  \begin{equation*}
    \begin{aligned}
      a_h(\discVectSol,\boldsymbol{\xi}_h) - a_h(\vectSol_{I},\boldsymbol{\xi}_h)
      &\leq
      \varepsilon \discFullVectNorm{\boldsymbol{\xi}_h}^2 +
      \frac{1}{2} \left[ a_h(\discVectSol,\boldsymbol{\xi}_h) - a_h(\vectSol_{I},\boldsymbol{\xi}_h) \right] \\
      &\quad +\frac{C(\polyDegree,\sobIndex,\polyDegree_{\fluxFunction},\rho)}{\varepsilon} \mathcal{R}_h^{(1)} 
      + C(\polyDegree,\sobIndex,\polyDegree_{\vectSol},\rho) \mathcal{R}_h^{(2)}.
    \end{aligned}
  \end{equation*}
  Rearranging the terms and applying \eqref{eq:error.estimate.s2.mono} we have that
  \begin{equation*}
    \begin{aligned}
      C(\polyDegree,\sobIndex,\rho)
      \left( \norm{\lebSpace{\sobIndexConj}{\domain}}{\loadTerm} + \seminorm{\vectSobSpace{1}{\sobIndexConj}{\domain_h}}{\vectSol} \right)^{\sobIndexConj-2} \discFullVectNorm{\boldsymbol{\xi}_h}^2
      &\leq
      \varepsilon \discFullVectNorm{\boldsymbol{\xi}_h}^2
      +\frac{C(\polyDegree,\sobIndex,\polyDegree_{\fluxFunction},\rho)}{\varepsilon} \mathcal{R}_h^{(1)} \\
      &\quad + C(\polyDegree,\sobIndex,\polyDegree_{\vectSol},\rho) \mathcal{R}_h^{(2)}.
    \end{aligned}
  \end{equation*}
  Choosing $\varepsilon \coloneqq \frac{1}{2} C_1 \left( \norm{\lebSpace{\sobIndexConj}{\domain}}{\loadTerm} + \seminorm{\vectSobSpace{1}{\sobIndexConj}{\domain_h}}{\vectSol} \right)^{\sobIndexConj-2}$, with $C_1$ denoting the constant on the left-hand side of the above inequality, after basic algebraic manipulations, we obtain the desired result.
\end{proof}
%
\subsubsection{Interpolation error estimate}
%
\begin{lemma}[Interpolation error for $\vectSol$: $1 < \sobIndexConj \leq 2$]\label{lem:interpolation.error.norm.s2}
  Let $(\vectSol,\scalarSol) \in \vectSolSpace \times \scalarSolSpace$ be the unique solution to Problem~\eqref{eq:continuous.problem.dual.weak}.
  Assume $1 < \sobIndexConj \leq 2$ and that $\vectSol \in \vectSobSpace{\polyDegree_{\vectSol}}{\sobIndexConj}{\domain_h}$ with $\polyDegree_{\vectSol} \in \{ 1, 2, \ldots, \polyDegree+1 \}$. Then, it holds
  \begin{equation}\label{eq:interpolation.error.s2}
    \discFullVectNorm{\vectSol - \vectSol_{I}}^{\sobIndexConj} \lesssim \sum_{\element \in \domain_h} h_{\element}^{\sobIndexConj \polyDegree_{\vectSol}}
    \seminorm{\vectSobSpace{\polyDegree_{\vectSol}}{\sobIndexConj}{\domain_h}}{\vectSol}^{\sobIndexConj},
  \end{equation}
  with a hidden constant depending only on $\polyDegree$, $\sobIndex$, $\polyDegree_{\vectSol}$, and $\rho$.
\end{lemma}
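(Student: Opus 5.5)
The plan is to argue element by element. I read $\locDiscFullVectNorm{\cdot}$ for a non-discrete argument through its defining formula. For the difference $\boldsymbol{w} \coloneqq \vectSol - \vectSol_{I}$ restricted to $\element$, this makes sense because $\vectSol \in \vectSobSpace{1}{\sobIndexConj}{\element}$ with $\sobIndexConj > 1$, so its degrees of freedom \eqref{eq:DoFs} and its projection $\locVectProjL{\polyDegree}\vectSol$ are well defined. Writing $\boldsymbol{w}^{\perp} = \boldsymbol{w} - \locVectProjL{\polyDegree}\boldsymbol{w}$, I split
\[
\locDiscFullVectNorm{\boldsymbol{w}}^{\sobIndexConj} = \norm{\vectLebSpace{\sobIndexConj}{\element}}{\locVectProjL{\polyDegree}\boldsymbol{w}}^{\sobIndexConj} + s_{\element}(\boldsymbol{w}^{\perp},\boldsymbol{w}^{\perp}) + h_{\element}^{\sobIndexConj}\norm{\lebSpace{\sobIndexConj}{\element}}{\divergence \boldsymbol{w}}^{\sobIndexConj}.
\]
I bound each of the three terms by $h_{\element}^{\sobIndexConj\polyDegree_{\vectSol}}\seminorm{\vectSobSpace{\polyDegree_{\vectSol}}{\sobIndexConj}{\element}}{\vectSol}^{\sobIndexConj}$ and then sum over $\element \in \domain_h$.

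\textbf{First term.} I use the $L^{\sobIndexConj}$-stability \eqref{eq:vectProjL.stab.element} of $\locVectProjL{\polyDegree}$ and then \eqref{eq:Fortin.approx.r-norm} with $s=\polyDegree_{\vectSol}$ and $r=\sobIndexConj \in (1,2]$.

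\textbf{Divergence term.} I use \eqref{eq:Fortin.approx.div.r-norm} with $j = \polyDegree_{\vectSol}-1$, together with $\seminorm{\sobSpace{\polyDegree_{\vectSol}-1}{\sobIndexConj}{\element}}{\divergence\vectSol} \lesssim \seminorm{\vectSobSpace{\polyDegree_{\vectSol}}{\sobIndexConj}{\element}}{\vectSol}$. This gives $h_{\element}\norm{\lebSpace{\sobIndexConj}{\element}}{\divergence \boldsymbol{w}} \lesssim h_{\element}^{\polyDegree_{\vectSol}}\seminorm{\vectSobSpace{\polyDegree_{\vectSol}}{\sobIndexConj}{\element}}{\vectSol}$.

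\textbf{Stabilization term (main obstacle).} Since $\boldsymbol{w}$ is not a virtual function, \eqref{eq:sE.less.cont.norm} cannot be applied directly. The key observation is that, by the definition \eqref{eq:FortinProj} of the DoF interpolant, $\locDofVect(\boldsymbol{w}) = \boldsymbol{0}$. By linearity of the DoFs this gives $\locDofVect(\boldsymbol{w}^{\perp}) = -\locDofVect(\locVectProjL{\polyDegree}\boldsymbol{w})$, and therefore
\[
s_{\element}(\boldsymbol{w}^{\perp},\boldsymbol{w}^{\perp}) = h_{\element}^{2}\euNorm{\locDofVect(\locVectProjL{\polyDegree}\boldsymbol{w})}^{\sobIndexConj}.
\]
It then remains to prove the scaling estimate
\[
h_{\element}^{2}\euNorm{\locDofVect(\boldsymbol{q})}^{\sobIndexConj} \lesssim \norm{\vectLebSpace{\sobIndexConj}{\element}}{\boldsymbol{q}}^{\sobIndexConj} \qquad \text{for all } \boldsymbol{q} \in \vectPolySpace{\polyDegree}(\element).
\]
I treat the boundary and internal DoFs separately.
\begin{itemize}
\item For the boundary DoFs, I use Lemma~\ref{lem:boundaryDof.equivalence}, which gives $h_{\element}^2\euNorm{\locBoundaryDofVect\boldsymbol{q}}^{\sobIndexConj} \simeq h_{\element}\norm{\lebSpace{\sobIndexConj}{\partial\element}}{\boldsymbol{q}\cdot\normalElement}^{\sobIndexConj}$ up to $h_{\edge}\simeq h_{\element}$. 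I then apply Lemma~\ref{lem:r-norm.normal.component}, valid since $\boldsymbol{q}\in\discVectSpace(\element)$, followed by Lemma~\ref{lem:r-norm.div.virtual.func}.
\item For the internal DoFs \eqref{eq:DoF.grad.on.element}--\eqref{eq:DoF.perp.grad.on.element}, I apply H\"older's inequality and bound the scaled test polynomials in $L^{\sobIndex}(\element)$ by $\euNorm{\element}^{1/\sobIndex}$ (times $h_{\element}^{-1}$ for the gradients, which is compensated by the $h_{\element}$ prefactor). Each internal DoF is then bounded by $\euNorm{\element}^{-1/\sobIndexConj}\norm{\vectLebSpace{\sobIndexConj}{\element}}{\boldsymbol{q}}$. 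Raising to the power $\sobIndexConj$ and using $h_{\element}^{2}\simeq\euNorm{\element}$ gives the claim.
\end{itemize}
Taking $\boldsymbol{q} = \locVectProjL{\polyDegree}\boldsymbol{w}$ reduces this term to the first term already bounded above.

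\textbf{Conclusion.} Summing the three local bounds over $\element \in \domain_h$ yields \eqref{eq:interpolation.error.s2}. The only delicate point is the stabilization contribution. I handle it through the vanishing of the DoFs of $\vectSol - \vectSol_I$, which transfers everything onto the polynomial $\locVectProjL{\polyDegree}(\vectSol-\vectSol_I)$; there, the discrete estimates proved for $\discVectSpace(\element)$ apply because $\vectPolySpace{\polyDegree}(\element)\subset\discVectSpace(\element)$.
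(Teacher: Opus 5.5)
Your proposal is correct and follows the paper's proof. It uses the same three-term splitting of $\discFullVectNorm{\vectSol-\vectSol_{I}}^{\sobIndexConj}$, the same use of \eqref{eq:vectProjL.stab.element}, \eqref{eq:Fortin.approx.r-norm} and \eqref{eq:Fortin.approx.div.r-norm}, and the same key observation: the vanishing DoFs of $\vectSol-\vectSol_{I}$ turn the stabilization term into $s_{\element}$ evaluated at the polynomial $\locVectProjL{\polyDegree}(\vectSol-\vectSol_{I})$.

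Your explicit H\"older bound on the internal DoFs of an arbitrary polynomial is a slightly more self-contained replacement for the paper's direct appeal to \eqref{eq:sE.less.cont.norm}. That bound is stated only for fields $L^2$-orthogonal to $\vectPolySpace{\polyDegree}(\element)$, but it still applies here: the internal DoFs of $\locVectProjL{\polyDegree}(\vectSol-\vectSol_{I})$ coincide with those of $\vectSol-\vectSol_{I}$, and hence vanish.
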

\begin{proof}
  First, we observe that the stabilization form $s_{\element}(\cdot, \cdot)$ is well-defined for functions in $\vectSobSpace{1}{\sobIndexConj}{\element}$. Indeed, from the definition of
  $s_{\element}$ in \eqref{eq:def.sE} and the definition of the degrees of freedom in \eqref{eq:DoFs}, everything is well-defined since functions in
  $\vectSobSpace{1}{\sobIndexConj}{\element}$ have normal traces in $\lebSpace{1}{\partial \element}$.
  Then, from the definition of the discrete norm, we have
  \begin{equation*}
    \begin{aligned}
      \discFullVectNorm{\vectSol - \vectSol_{I}}^{\sobIndexConj}
      &= \norm{\vectLebSpace{\sobIndexConj}{\domain}}{\vectProjL{\polyDegree}(\vectSol - \vectSol_{I})}^{\sobIndexConj} + \sum_{\element \in \domain_h} s_{\element} \left( (\vectSol - \vectSol_{I})^{\perp},
      (\vectSol - \vectSol_{I})^{\perp} \right)\\
      &\quad + \sum_{\element \in \domain_h}h_{\element}^{\sobIndexConj}\norm{\lebSpace{\sobIndexConj}{\element}}{\divergence (\vectSol - \vectSol_{I})}^{\sobIndexConj}.
    \end{aligned}
  \end{equation*}
  For the first term, using the boundedness of $\vectProjL{\polyDegree}$ and \eqref{eq:Fortin.approx.r-norm} we get
  \begin{equation*}
    \begin{aligned}
      \norm{\vectLebSpace{\sobIndexConj}{\domain}}{\vectProjL{\polyDegree}(\vectSol - \vectSol_{I})}^{\sobIndexConj}
      \leq C(\polyDegree,\sobIndexConj,\polyDegree_{\vectSol},\rho)
      \sum_{\element \in \domain_h}
      h_{\element}^{\sobIndexConj \polyDegree_{\vectSol}} \seminorm{\vectSobSpace{\polyDegree_{\vectSol}}{\sobIndexConj}{\element}}{\vectSol}^{\sobIndexConj}.
    \end{aligned}
  \end{equation*}
  For the second term, observing first that $\locBoundaryDofVect(\vectSol - \vectSol_{I}) = 0$, and then applying bound \eqref{eq:sE.less.cont.norm} together with the stability of $\vectProjL{\polyDegree}$ and the approximation property \eqref{eq:Fortin.approx.r-norm}, we get
  \begin{equation*}
    \begin{aligned}
      \sum_{\element \in \domain_h} s_{\element} \left( (\vectSol - \vectSol_{I})^{\perp}, (\vectSol -
      \vectSol_{I})^{\perp} \right) &= \sum_{\element \in \domain_h} s_{\element} \left(\vectProjL{\polyDegree}(\vectSol - \vectSol_{I}), \vectProjL{\polyDegree}(\vectSol -
      \vectSol_{I}) \right) \\
      &\leq C(\polyDegree,\sobIndexConj,\rho,\polyDegree_{\vectSol}) \sum_{\element \in \domain_h} h_{\element}^{\sobIndexConj
      \polyDegree_{\vectSol}} \seminorm{\vectSobSpace{\polyDegree_{\vectSol}}{\sobIndexConj}{\element}}{\vectSol}^{\sobIndexConj}.
    \end{aligned}
  \end{equation*}
  Finally, for the third term involving the divergence, bound \eqref{eq:Fortin.approx.div.r-norm} yields
    \begin{equation*}
      \sum_{\element \in \domain_h} h_{\element}^{\sobIndexConj}\norm{\lebSpace{\sobIndexConj}{\element}}{\divergence (\vectSol - \vectSol_{I})}^{\sobIndexConj}
      \leq C(\polyDegree,\sobIndexConj,\rho,\polyDegree_{\vectSol})
      \sum_{\element \in \domain_h} h_{\element}^{\sobIndexConj \polyDegree_{\vectSol}} \seminorm{\vectSobSpace{\polyDegree_{\vectSol}}{\sobIndexConj}{\element}}{\vectSol}^{\sobIndexConj}.
    \end{equation*}
    Combining the three bounds above yields the desired result.
\end{proof}
As a direct consequence of Theorem~\ref{thm:discrete.error.s2}, Lemma~\ref{lem:interpolation.error.norm.s2}, and the triangle inequality, we obtain the following result.
\begin{corollary}\label{cor:total.error}
  Under the same assumptions of Theorem~\ref{thm:discrete.error.s2}, it holds
  \begin{equation}\label{eq:total.error.s2}
    \discFullVectNorm{\vectSol - \discVectSol}
    \lesssim \left(\mathcal{R}_h^{(1)}\right)^{\frac{1}{2}} + \left(\mathcal{R}_h^{(2)}\right)^{\frac{1}{2}} + \left(\mathcal{R}_h^{(2)}\right)^{\frac{1}{\sobIndexConj}},
  \end{equation}
  with a hidden constant depending only on $\seminorm{\vectSobSpace{1}{\sobIndexConj}{\domain_h}}{\vectSol}$, $\norm{\lebSpace{\sobIndexConj}{\domain}}{\loadTerm}$, $\polyDegree$, $\sobIndexConj$, $\polyDegree_{\vectSol}$, $\polyDegree_{\fluxFunction}$, and $\rho$.

  In particular, assuming full regularity, i.e., $\polyDegree_{\vectSol} = \polyDegree + 1$ and $\polyDegree_{\fluxFunction} = \polyDegree + 1$, estimate \eqref{eq:total.error.s2} yields the explicit asymptotic convergence rate
  \begin{equation}\label{eq:asymptotic.rate.s2}
    \discFullVectNorm{\vectSol - \discVectSol} \lesssim h^{\frac{\sobIndexConj}{2}(\polyDegree+1)}.
  \end{equation}
\end{corollary}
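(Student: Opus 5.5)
The estimate \eqref{eq:total.error.s2} follows by splitting the error through the Fortin interpolant, $\vectSol - \discVectSol = (\vectSol - \vectSol_{I}) - \boldsymbol{\xi}_h$, with $\boldsymbol{\xi}_h = \discVectSol - \vectSol_{I}$ as in Theorem~\ref{thm:discrete.error.s2}.

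One caveat: $\discFullVectNorm{\cdot}$ is defined through $a_{h,\element}$ and $s_{\element}$, and its value on $\vectSol - \vectSol_{I}$ is understood as in the proof of Lemma~\ref{lem:interpolation.error.norm.s2}, i.e.\ via degrees of freedom of a $\vectSobSpace{1}{\sobIndexConj}{\element}$ field. The map $\boldsymbol{v} \mapsto \discFullVectNorm{\boldsymbol{v}}$ is a sum over elements of $\ell^{\sobIndexConj}$-type quantities built from linear maps: $\locVectProjL{\polyDegree}\boldsymbol{v}$, $\locDofVect(\boldsymbol{v}-\locVectProjL{\polyDegree}\boldsymbol{v})$ and $\divergence \boldsymbol{v}$. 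It is therefore a seminorm on the sum of $\discVectSpace$ and $\vectSobSpace{1}{\sobIndexConj}{\domain_h}$, and by Minkowski's inequality (continuous, then discrete) it satisfies the triangle inequality. I would record this briefly, since the theorem and the lemma are stated for the same quantity.

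With the triangle inequality in hand,
\[
\discFullVectNorm{\vectSol - \discVectSol} \leq \discFullVectNorm{\vectSol - \vectSol_{I}} + \discFullVectNorm{\boldsymbol{\xi}_h}.
\]
For the second term, take square roots in \eqref{eq:discrete.error.s2} and use $\sqrt{a+b}\le\sqrt a+\sqrt b$. This gives $(\mathcal{R}_h^{(1)})^{1/2}+(\mathcal{R}_h^{(2)})^{1/2}$ up to a constant with the stated dependencies. For the first term, Lemma~\ref{lem:interpolation.error.norm.s2} gives $\discFullVectNorm{\vectSol-\vectSol_{I}}\lesssim (\mathcal{R}_h^{(2)})^{1/\sobIndexConj}$, since its right-hand side is exactly $\mathcal{R}_h^{(2)}$ (read elementwise). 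Adding the two bounds yields \eqref{eq:total.error.s2}.

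For the rate, set $\polyDegree_{\vectSol}=\polyDegree_{\fluxFunction}=\polyDegree+1$ and bound $h_{\element}\le h$. This gives
\[
\mathcal{R}_h^{(1)}\lesssim h^{2(\polyDegree+1)}\seminorm{\vectSobSpace{\polyDegree+1}{\sobIndex}{\domain_h}}{\fluxFunction(\vectSol)}^2
\quad\text{and}\quad
\mathcal{R}_h^{(2)}\lesssim h^{\sobIndexConj(\polyDegree+1)}\seminorm{\vectSobSpace{\polyDegree+1}{\sobIndexConj}{\domain_h}}{\vectSol}^{\sobIndexConj}.
\]
The three contributions then scale as $h^{\polyDegree+1}$, $h^{\frac{\sobIndexConj}{2}(\polyDegree+1)}$ and $h^{\polyDegree+1}$, respectively. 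Since $\sobIndexConj\le 2$ and $h\lesssim 1$, the smallest exponent $\frac{\sobIndexConj}{2}(\polyDegree+1)$ dominates. The other terms are absorbed by $h^{a}\le C h^{b}$ for $a\ge b$, which gives \eqref{eq:asymptotic.rate.s2}.

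The only step needing care is justifying the triangle inequality for $\discFullVectNorm{\cdot}$ on non-discrete arguments. Everything else is bookkeeping of exponents.
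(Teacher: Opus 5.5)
Your proposal is correct and follows the paper's argument: split the error through the Fortin interpolant $\vectSol_{I}$, apply the triangle inequality, and combine Theorem~\ref{thm:discrete.error.s2} (after taking square roots) with Lemma~\ref{lem:interpolation.error.norm.s2}, read elementwise as its proof actually gives. You then insert full regularity and use $\sobIndexConj \le 2$ to identify the dominant exponent; your remark that $\discFullVectNorm{\cdot}$ is an $\ell^{\sobIndexConj}$-sum of seminorms of linear maps, and hence satisfies the triangle inequality on $\discVectSpace + \vectSobSpace{1}{\sobIndexConj}{\domain_h}$, justifies a step the paper leaves implicit.
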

%
\subsection{A priori error estimates for $\vectSol$: $\sobIndexConj > 2$}\label{sec:a-priori.vect.2}
We start this section by observing that, in the case $\sobIndexConj > 2$, a straightforward application of \eqref{eq:ah.monotonicity} leads to
\begin{equation}\label{eq:error.estimate.g2.mono}
  a_h(\discVectSol,\discVectSol-\vectSol_{I}) - a_h(\vectSol_{I},\discVectSol-\vectSol_{I})
  \geq
  C(\sobIndex)
  \discFullVectNorm{\discVectSol-\vectSol_{I}}^{\sobIndexConj}.
\end{equation}
%
\subsubsection{Discrete error estimate}
%
\begin{lemma}[Error equation for $\vectSol$: $\sobIndexConj > 2$]
  Let $(\vectSol,\scalarSol) \in \vectSolSpace \times \scalarSolSpace$ and $(\discVectSol,\discScalarSol) \in \discVectSpace \times \discScalarSpace$ be the unique solutions to Problem
  \ref{eq:continuous.problem.dual.weak} and Problem \ref{eq:discrete.problem}, respectively. Assume $\sobIndexConj > 2$ and $\fluxFunction (\vectSol) \in
  \vectSobSpace{1}{\sobIndex}{\domain_h}$. Then, for any $\discVectFunTwo \in \discVectSpace$ such that $\divergence \discVectFunTwo = 0$, the following error equation holds
  \begin{equation}\label{eq:error.equation.g2}
    a_h(\discVectSol,\discVectFunTwo) - a_h(\vectSol_{I},\discVectFunTwo) = \int_{\domain} \fluxFunction(\vectSol) \cdot \discVectFunTwo -
    a_h(\vectSol_{I},\discVectFunTwo).
  \end{equation}
\end{lemma}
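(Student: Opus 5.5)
The plan is to mirror the argument used for the error equation \eqref{eq:error.equation.s2} in the case $1<\sobIndexConj\leq 2$, with one extra point. When $\sobIndexConj>2$, Remark~\ref{rem:conforming} says that the inclusion $\discVectSpace \subset \vectSolSpace$ may fail on nonconvex elements. So we cannot simply take $\vectTestFun = \discVectFunTwo$ in \eqref{eq:continuous.problem.dual.weak.1}. Instead, the identity $\int_{\domain}\fluxFunction(\vectSol)\cdot\discVectFunTwo = 0$ has to be derived directly from the constitutive law $\fluxFunction(\vectSol) = \gradScalarSol$.

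\textbf{Discrete side.} Take $\discVectTestFun = \discVectFunTwo$ in \eqref{eq:discrete.problem.line1}. Since $\divergence \discVectFunTwo = 0$, we have $b(\discVectFunTwo,\discScalarSol)=0$, and therefore $a_h(\discVectSol,\discVectFunTwo)=0$.

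\textbf{Continuous side.} From \eqref{eq:continuous.problem.dual.weak.1}, together with the $\sobIndex$/$\sobIndexConj$ duality, one identifies $\fluxFunction(\vectSol) = \gradScalarSol$ in the distributional sense. Hence $\scalarSol \in \sobSpace{1}{\sobIndex}{\domain}$, and testing with smooth compactly supported fields and then with general $\vectSolSpace$ fields gives $\scalarSol=0$ on $\domainBoundary$, i.e.\ $\scalarSol\in W^{1,\sobIndex}_0(\domain)$. This is the standard equivalence between the mixed and primal formulations, as in \cite{FARHLOUL.MANOUZI:2000}.

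We then compute $\int_{\domain}\fluxFunction(\vectSol)\cdot\discVectFunTwo=\sum_{\element}\int_{\element}\gradScalarSol\cdot\discVectFunTwo$ and integrate by parts element by element. This is legitimate because $\discVectFunTwo|_{\element}$ has polynomial divergence and polynomial normal traces, and $\gradScalarSol \in \vectLebSpace{\sobIndex}{\element}$. To justify the pairing when $\discVectFunTwo$ is only in $\vectLebSpace{2}{\element}$ and not in $\vectLebSpace{\sobIndexConj}{\element}$, one uses the assumption $\fluxFunction(\vectSol)\in\vectSobSpace{1}{\sobIndex}{\domain_h}$. This gives $\gradScalarSol\in \vectSobSpace{1}{\sobIndex}{\element}\subset \vectLebSpace{\infty}{\element}$ in two dimensions when $\sobIndex>1$ (strictly, the embedding needs $\sobIndex>2$, but $\sobIndexConj>2$ means $\sobIndex<2$, so one instead uses $\vectLebSpace{q}{\element}$ for all finite $q$, which suffices since $\discVectFunTwo\in\vectLebSpace{2+\delta}{\element}$). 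Likewise, $\scalarSol|_{\element}\in\sobSpace{2}{\sobIndex}{\element}$ has a continuous trace. The result is
\begin{equation*}
\int_{\element}\gradScalarSol\cdot\discVectFunTwo=-\int_{\element}\scalarSol\,\divergence\discVectFunTwo+\int_{\partial\element}\scalarSol\,\discVectFunTwo\cdot\normalElement .
\end{equation*}

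\textbf{Cancellation.} The volume term vanishes because $\divergence\discVectFunTwo=0$. The boundary terms cancel across interior edges, since $\discVectFunTwo\in\Hdiv{\domain}$ has single-valued normal components (polynomials on each edge) and $\scalarSol$ has single-valued traces. On $\domainBoundary$ they vanish because $\scalarSol=0$ there. Thus $\int_{\domain}\fluxFunction(\vectSol)\cdot\discVectFunTwo=0=a_h(\discVectSol,\discVectFunTwo)$, and subtracting $a_h(\vectSol_{I},\discVectFunTwo)$ from both sides gives \eqref{eq:error.equation.g2}.

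\textbf{Main obstacle.} The delicate step is making the elementwise integration by parts rigorous. We need enough integrability of $\discVectFunTwo$ on nonconvex polygons (the $\vectLebSpace{2+\delta}{\element}$ regularity of virtual fields) matched with the regularity of $\gradScalarSol$ supplied by the hypothesis $\fluxFunction(\vectSol)\in\vectSobSpace{1}{\sobIndex}{\domain_h}$. We also need to confirm that $\scalarSol$ lies in $W^{1,\sobIndex}_0(\domain)$, so that no boundary contribution survives.
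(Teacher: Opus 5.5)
Your argument is correct and is essentially the paper's. Both use $\grad\scalarSol=\fluxFunction(\vectSol)\in\vectSobSpace{1}{\sobIndex}{\domain_h}\subset\vectLebSpace{2}{\domain}$, $\scalarSol=0$ on $\domainBoundary$, and $\divergence\discVectFunTwo=0$. The only difference is that the paper integrates by parts once on all of $\domain$ using $\discVectFunTwo\in\Hdiv{\domain}$, so it needs neither your interior-edge cancellation nor the continuity of $\scalarSol|_{\element}$.

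One slip to fix: for $\sobIndex<2$, $\sobSpace{1}{\sobIndex}{\element}$ embeds into $\lebSpace{q}{\element}$ only for $q\le 2\sobIndex/(2-\sobIndex)$, not for every finite $q$. Since $2\sobIndex/(2-\sobIndex)\ge 2$, this still gives $\fluxFunction(\vectSol)\in\vectLebSpace{2}{\element}$, which is all you need to pair with $\discVectFunTwo\in\vectLebSpace{2}{\element}$; no $\vectLebSpace{2+\delta}{\element}$ regularity of the virtual fields is required.
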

\begin{proof}
  Taking $\discVectTestFun = \discVectFunTwo$ in \eqref{eq:discrete.problem.line1}, and using the fact that $\divergence \discVectFunTwo = 0$, we obtain $a_h(\discVectSol,\discVectFunTwo) = 0$.
  Furthermore, since $\scalarSol$ is the solution of \eqref{eq:continuous.problem.dual.weak.1}, we have that $\grad \scalarSol = \fluxFunction (\vectSol) \in
  \vectSobSpace{1}{\sobIndex}{\domain_h} \subset \vectLebSpace{2}{\domain}$ and also $\scalarSol = 0$ on $\domainBoundary$.
  Therefore, recalling that $\discVectFunTwo \in \Hdiv{\domain}$ and applying the integration by parts formula, we obtain
  \begin{align*}
    0&=\int_{\domain} (\fluxFunction(\vectSol) - \grad \scalarSol) \cdot \discVectFunTwo \\
    &=
    \int_{\domain} \fluxFunction(\vectSol) \cdot \discVectFunTwo
    - \left(
      \int_{\domainBoundary} (\discVectFunTwo \cdot \normalDomain) \scalarSol - \int_{\domain} \scalarSol \divergence \discVectFunTwo
    \right)
    =\int_{\domain} \fluxFunction(\vectSol) \cdot \discVectFunTwo.
  \end{align*}
  We conclude then $a_h(\discVectSol,\discVectFunTwo) = \int_{\domain} \fluxFunction(\vectSol) \cdot \discVectFunTwo$. Subtracting from both sides
  $a_h(\vectSol_{I},\discVectFunTwo)$ leads to the desired result.
\end{proof}
\begin{lemma}[Consistency error for $\vectSol$: $\sobIndexConj > 2$]
  Let $(\vectSol,\scalarSol) \in \vectSolSpace \times \scalarSolSpace$ and $(\discVectSol,\discScalarSol) \in \discVectSpace \times \discScalarSpace$ be the unique solutions to Problem~\eqref{eq:continuous.problem.dual.weak} and Problem \eqref{eq:discrete.problem}, respectively. Assume $\sobIndexConj > 2$, and suppose furthermore that the exact solution satisfies the
  following regularities:
  \begin{itemize}
    \item[] $\vectSol \in \vectSobSpace{\polyDegree_{\vectSol}}{\sobIndexConj}{\domain_h}$ for $\polyDegree_{\vectSol} \in \{1,2,\ldots,\polyDegree +1 \}$;
    \item[] $\fluxFunction(\vectSol) \in \vectSobSpace{\polyDegree_{\fluxFunction}}{\sobIndex}{\domain_h}$ for $\polyDegree_{\fluxFunction} \in \{1,\ldots,\polyDegree +1 \}$.
  \end{itemize}
  Then, setting $\boldsymbol{\xi}_h\coloneqq\discVectSol - \vectSol_{I}$, for any $\varepsilon > 0$, it holds
  \begin{equation}\label{eq:consistency.error.g2}
    \begin{aligned}
      \int_{\domain} \fluxFunction(\vectSol) \cdot \boldsymbol{\xi}_h - a_h(\vectSol_{I},\boldsymbol{\xi}_h)
      &\leq
      \varepsilon \discFullVectNorm{\boldsymbol{\xi}_h}^{\sobIndexConj}+
      \frac{1}{2} \left[ a_h(\discVectSol,\boldsymbol{\xi}_h) - a_h(\vectSol_{I},\boldsymbol{\xi}_h) \right] \\
      &\quad + \frac{C(\polyDegree,\sobIndex,\polyDegree_{\fluxFunction},\rho)}{\varepsilon} \tilde{\mathcal{R}}_h^{(1)} 
      + C(\polyDegree,\sobIndex,\polyDegree_{\vectSol},\rho) \left( \tilde{\mathcal{R}}_h^{(2)} \right)^{\frac{\sobIndexConj}{2}} \\
      &\quad + C(\polyDegree,\sobIndex,\polyDegree_{\vectSol},\rho,\domain) \left( \norm{\lebSpace{\sobIndexConj}{\domain}}{\loadTerm} + \seminorm{\vectSobSpace{1}{\sobIndexConj}{\domain_h}}{\vectSol} \right)^{\sobIndexConj-2} \tilde{\mathcal{R}}_h^{(2)} ,
    \end{aligned}
  \end{equation}
  where 
  \begin{equation}\label{eq:def.consistency.error.g2.conv.terms}
    \begin{aligned}
      \tilde{\mathcal{R}}_h^{(1)} &\coloneqq \sum_{\element \in \domain_h} h_{\element}^{\sobIndex \polyDegree_{\fluxFunction}} \seminorm{\vectSobSpace{\polyDegree_{\fluxFunction}}{\sobIndex}{\element}}{\fluxFunction
      (\vectSol) }^{\sobIndex}, \\
      \tilde{\mathcal{R}}_h^{(2)} &\coloneqq \left[\sum_{\element \in \domain_h}
      h_{\element}^{\sobIndexConj \polyDegree_{\vectSol} }\seminorm{\vectSobSpace{\polyDegree_{\vectSol}}{\sobIndexConj}{\element}}{\vectSol}^{\sobIndexConj}\right]^{\frac{2}{\sobIndexConj}}.
    \end{aligned}
  \end{equation}
\end{lemma}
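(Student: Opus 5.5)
We mirror the proof of \eqref{eq:consistency.error.s2}, adapting the exponents to the degenerate regime $\sobIndexConj>2$. By the definitions of $a_h$ and $s_{\element}$ we split, as in \eqref{eq:error.estimate.s2.rhs.initial},
\[
\int_{\domain}\fluxFunction(\vectSol)\cdot\boldsymbol{\xi}_h - a_h(\vectSol_{I},\boldsymbol{\xi}_h)=\sum_{\element\in\domain_h}\mathfrak{T}_{1,\element}+\sum_{\element\in\domain_h}\mathfrak{T}_{2,\element},
\]
with $\mathfrak{T}_{2,\element}=-s_{\element}(\vectSol_{I}^{\perp},\boldsymbol{\xi}_h^{\perp})$. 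We then subtract $\locVectProjL{\polyDegree}\fluxFunction(\vectSol)\cdot\boldsymbol{\xi}_h$ to obtain $\mathfrak{T}_{1,\element}=\mathfrak{T}_{1,\element}^{(1)}+\mathfrak{T}_{1,\element}^{(2)}$ exactly as in \eqref{eq:error.estimate.s2.rhs.T1E.initial}.

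\textbf{The term $\mathfrak{T}_{1,\element}^{(1)}$.} We apply the $(\sobIndex,\sobIndexConj)$-H\"older inequality together with \eqref{eq:vectProjL.approx.element}. Since $\sobIndexConj>2$ we cannot invoke \eqref{eq:disc.norm.simeq.cont.norm}, so $\norm{\vectLebSpace{\sobIndexConj}{\element}}{\boldsymbol{\xi}_h}$ must be bounded by $\locDiscFullVectNorm{\boldsymbol{\xi}_h}$ in another way. We write $\boldsymbol{\xi}_h=\locVectProjL{\polyDegree}\boldsymbol{\xi}_h+\boldsymbol{\xi}_h^{\perp}$. The polynomial part is directly controlled by the norm. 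For $\boldsymbol{\xi}_h^{\perp}$ we argue as in the case $\sobIndexConj>2$ of Lemma~\ref{lem:discrete.infsup}, passing through $L^2$ via \eqref{eq:poly.sob.embedding}, Lemma~\ref{lem:r-norm.virtual.fun} with $r=2$, and the boundary-dof equivalence (Lemma~\ref{lem:boundaryDof.equivalence}). This is the step we expect to be the main obstacle: reverse H\"older from $L^2$ to $L^{\sobIndexConj}$ is not available for virtual functions, so we may have to accept a bound with extra constants depending on $\domain$, or use $\boldsymbol{\xi}_h\in\discDivFreeSpace$. Once this bound is secured, we sum, apply a discrete $(\sobIndex,\sobIndexConj)$-H\"older inequality and then Young's inequality with exponents $(\sobIndex,\sobIndexConj)$. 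This gives $\varepsilon\discFullVectNorm{\boldsymbol{\xi}_h}^{\sobIndexConj}+C\varepsilon^{-1}\tilde{\mathcal{R}}_h^{(1)}$, after the usual redefinition of $\varepsilon$.

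\textbf{The term $\mathfrak{T}_{1,\element}^{(2)}$.} We use \eqref{eq:diffusive.flux.properties.original.hc}, now with $\sobIndexConj-2>0$. By \eqref{eq:simple.eu.norm.equivalence},
\[
(\euNorm{\vectSol}+\euNorm{\locVectProjL{\polyDegree}\vectSol_{I}})^{\sobIndexConj-2}\lesssim(\euNorm{\locVectProjL{\polyDegree}\vectSol_{I}}+\euNorm{\vectSol-\locVectProjL{\polyDegree}\vectSol_{I}})^{\sobIndexConj-2}.
\]
The Young-type inequality \eqref{eq:young.type.inequality} with $x=\euNorm{\locVectProjL{\polyDegree}\boldsymbol{\xi}_h}$, $y=\euNorm{\vectSol-\locVectProjL{\polyDegree}\vectSol_{I}}$ and $z=\euNorm{\locVectProjL{\polyDegree}\vectSol_{I}}$ splits it into two parts. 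The first is an $\varepsilon$-part, which by \eqref{eq:diffusive.flux.properties.original.mono} is absorbed into $\tfrac12[a_{\element}(\locVectProjL{\polyDegree}\discVectSol,\cdot)-a_{\element}(\locVectProjL{\polyDegree}\vectSol_{I},\cdot)]$. The second is $\int_{\element}(\euNorm{\locVectProjL{\polyDegree}\vectSol_{I}}+y)^{\sobIndexConj-2}y^2$. We bound the latter by $\int_{\element} y^{\sobIndexConj}+\int_{\element}\euNorm{\locVectProjL{\polyDegree}\vectSol_{I}}^{\sobIndexConj-2}y^2$. The first summand, summed over elements and combined with \eqref{eq:projL.Fortin.approx}, gives $\sum_{\element}h_{\element}^{\sobIndexConj\polyDegree_{\vectSol}}\seminorm{}{\cdot}^{\sobIndexConj}=(\tilde{\mathcal{R}}_h^{(2)})^{\sobIndexConj/2}$. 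The second is handled by a global $(\tfrac{\sobIndexConj}{\sobIndexConj-2},\tfrac{\sobIndexConj}{2})$-H\"older inequality. It gives $\norm{\vectLebSpace{\sobIndexConj}{\domain}}{\vectProjL{\polyDegree}\vectSol_{I}}^{\sobIndexConj-2}\,\norm{\vectLebSpace{\sobIndexConj}{\domain}}{y}^2$, where the second factor equals $\tilde{\mathcal{R}}_h^{(2)}$ by \eqref{eq:projL.Fortin.approx}. For the first factor we use \eqref{eq:vectProjL.stab.element}, then \eqref{eq:projL.Fortin.approx} with $s=1$ and \eqref{eq:cont.sols.stability}, which bound it by $(\norm{\lebSpace{\sobIndexConj}{\domain}}{\loadTerm}+\seminorm{\vectSobSpace{1}{\sobIndexConj}{\domain_h}}{\vectSol})^{\sobIndexConj-2}$, using $h\le C(\domain)$.

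\textbf{The term $\mathfrak{T}_{2,\element}$.} We treat it identically at the level of dof vectors, with $z=\euNorm{\locBoundaryDofVect(\vectSol_{I}^{\perp})}$. The $\varepsilon$-part is absorbed into $\tfrac12[s_{\element}(\discVectSol^{\perp},\cdot)-s_{\element}(\vectSol_{I}^{\perp},\cdot)]$ by monotonicity. The remainder is at most $C\,h_{\element}^2\euNorm{\locBoundaryDofVect(\vectSol_{I}^{\perp})}^{\sobIndexConj}=C\,s_{\element}(\vectSol_{I}^{\perp},\vectSol_{I}^{\perp})$, since here $y=z$. This is controlled through Lemma~\ref{lem:boundaryDof.equivalence} by $h_{\element}\norm{\lebSpace{\sobIndexConj}{\partial\element}}{\vectSol_{I}^{\perp}\cdot\normalElement}^{\sobIndexConj}$. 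On each edge the interpolation conditions \eqref{eq:FortinProj.normal.on.edge} replace $\vectSol_{I}$ by $\vectSol$, and a continuous trace inequality together with \eqref{eq:vectProjL.approx.element} yields $h_{\element}^{\sobIndexConj\polyDegree_{\vectSol}}\seminorm{\vectSobSpace{\polyDegree_{\vectSol}}{\sobIndexConj}{\element}}{\vectSol}^{\sobIndexConj}$. Summing over elements, this contributes to the $(\tilde{\mathcal{R}}_h^{(2)})^{\sobIndexConj/2}$ term. Collecting all the pieces yields \eqref{eq:consistency.error.g2}.
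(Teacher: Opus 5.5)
Your treatment of $\mathfrak{T}_{1,\element}^{(2)}$ and of $\mathfrak{T}_{2,\element}$ is fine. For $\mathfrak{T}_{2,\element}$, your edge estimate via \eqref{eq:FortinProj.normal.on.edge} and a continuous trace inequality is a legitimate alternative to the paper's $L^2$ route through \eqref{eq:r-norm.normal.component} and \eqref{eq:r-norm.div.virtual.func}, with one small addition. After the interpolation conditions you are left with $(\vectSol-\locVectProjL{\polyDegree}\vectSol_{I})\cdot\normalEdge$, not $(\vectSol-\locVectProjL{\polyDegree}\vectSol)\cdot\normalEdge$. So besides \eqref{eq:vectProjL.approx.element} you also need \eqref{eq:projL.Fortin.approx} and the inverse inequality \eqref{eq:grad.poly.inv.est} applied to the polynomial $\locVectProjL{\polyDegree}(\vectSol-\vectSol_{I})$.

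The genuine gap is $\mathfrak{T}_{1,\element}^{(1)}$. You leave it open, and the route you sketch cannot be completed. A $(\sobIndex,\sobIndexConj)$-H\"older inequality forces you to bound $\norm{\vectLebSpace{\sobIndexConj}{\element}}{\boldsymbol{\xi}_h^{\perp}}$ by $\locDiscFullVectNorm{\boldsymbol{\xi}_h}$ for a non-polynomial virtual field with $\sobIndexConj>2$. No such estimate is available:
\begin{itemize}
\item Lemma~\ref{lem:r-norm.virtual.fun} holds only for $r\le 2$.
\item The case $\sobIndexConj>2$ of Lemma~\ref{lem:discrete.infsup} goes in the opposite direction. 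It bounds the polynomial quantities making up the discrete norm by the $L^2$-norm of a virtual field, using \eqref{eq:poly.sob.embedding}, which is valid for polynomials only.
\item Remark~\ref{rem:conforming} indicates that on non-convex elements virtual fields need not lie in $\vectLebSpace{\sobIndexConj}{\element}$ at all when $\sobIndexConj>2$.
\end{itemize}
So neither $\divergence\boldsymbol{\xi}_h=0$ nor constants depending on $\domain$ can rescue this step.

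The missing idea is never to measure $\boldsymbol{\xi}_h$ in $L^{\sobIndexConj}$. By orthogonality, $\mathfrak{T}_{1,\element}^{(1)}=\int_{\element}(\fluxFunction(\vectSol)-\locVectProjL{\polyDegree}\fluxFunction(\vectSol))\cdot\boldsymbol{\xi}_h^{\perp}$, and you apply Cauchy--Schwarz in $L^2$.

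For the virtual factor, Lemma~\ref{lem:r-norm.virtual.fun} with $r=2$ applies, and its sup term vanishes. It bounds $\norm{\vectLebSpace{2}{\element}}{\boldsymbol{\xi}_h^{\perp}}$ by the purely polynomial quantities $\norm{\vectLebSpace{2}{\element}}{\locVectProjL{\polyDegree}\boldsymbol{\xi}_h}$ and $h_{\element}^{1/2}\norm{\lebSpace{2}{\partial\element}}{\boldsymbol{\xi}_h^{\perp}\cdot\normalElement}$, plus the divergence. For $\sobIndexConj>2$ the elementary H\"older inequality now goes in the right direction, giving $\norm{\vectLebSpace{2}{\element}}{\boldsymbol{\xi}_h^{\perp}}\lesssim h_{\element}^{1-2/\sobIndexConj}\locDiscFullVectNorm{\boldsymbol{\xi}_h}$.

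For the smooth factor, use
$\norm{\vectLebSpace{2}{\element}}{\fluxFunction(\vectSol)-\locVectProjL{\polyDegree}\fluxFunction(\vectSol)}\lesssim h_{\element}^{\polyDegree_{\fluxFunction}-1}\seminorm{\vectSobSpace{\polyDegree_{\fluxFunction}}{1}{\element}}{\fluxFunction(\vectSol)}\le h_{\element}^{\polyDegree_{\fluxFunction}-1}\euNorm{\element}^{1/\sobIndexConj}\seminorm{\vectSobSpace{\polyDegree_{\fluxFunction}}{\sobIndex}{\element}}{\fluxFunction(\vectSol)}$.
This rests on the two-dimensional embedding $W^{1,1}\subset L^2$. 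That is exactly why the statement requires $\polyDegree_{\fluxFunction}\ge 1$ here, whereas $\polyDegree_{\fluxFunction}=0$ is allowed when $\sobIndexConj\le 2$.

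The powers of $h_{\element}$ combine to give $\mathfrak{T}_{1,\element}^{(1)}\lesssim h_{\element}^{\polyDegree_{\fluxFunction}}\seminorm{\vectSobSpace{\polyDegree_{\fluxFunction}}{\sobIndex}{\element}}{\fluxFunction(\vectSol)}\locDiscFullVectNorm{\boldsymbol{\xi}_h}$. After that, your discrete H\"older and Young steps go through unchanged.
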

\begin{proof}
  Let $\boldsymbol{\xi}_h \coloneqq \discVectSol - \vectSol_{I}$.
  From the definition of $a_h$ in \eqref{eq:def.ah.bh} we have that
  \begin{equation}\label{eq:error.estimate.g2.rhs.initial}
    \begin{aligned}
      \int_{\domain} \fluxFunction(\vectSol) \cdot \boldsymbol{\xi}_h - a_h(\vectSol_{I},\boldsymbol{\xi}_h)
      &= \sum_{\element \in \domain_h} \int_{\element} \left(\fluxFunction(\vectSol) \cdot \boldsymbol{\xi}_h - \fluxFunction(\locVectProjL{\polyDegree}\vectSol_{I}) \cdot
      \locVectProjL{\polyDegree}\boldsymbol{\xi}_h \right) + \sum_{\element \in \domain_h}
        - s_{\element} (\vectSol_{I}^{\perp},\vectSol_{I}^{\perp}) \\
      &\eqqcolon
      \sum_{\element \in \domain_h} \mathfrak{T}_{1,\element}
      + \sum_{\element \in \domain_h} \mathfrak{T}_{2,\element}.
    \end{aligned}
  \end{equation}

  \medskip
  \noindent
  \textbf{First term.}
  Applying the same argument that leads to \eqref{eq:error.estimate.s2.rhs.T1E.initial} we obtain
  \begin{equation}\label{eq:error.estimate.g2.rhs.T1E.initial}
    \begin{aligned}
      \mathfrak{T}_{1,\element}
      &=\int_{\element}
      \left[\fluxFunction (\vectSol) - \locVectProjL{\polyDegree} \fluxFunction (\vectSol) \right] \cdot \boldsymbol{\xi}_h
      + \int_{\element}
      \left[ \fluxFunction (\vectSol) - \fluxFunction (\locVectProjL{\polyDegree} \vectSol_{I}) \right] \cdot \locVectProjL{\polyDegree} \boldsymbol{\xi}_h \eqqcolon \mathfrak{T}_{1,\element}^{(1)} + \mathfrak{T}_{1,\element}^{(2)}.
    \end{aligned}
  \end{equation}
  For the first term in \eqref{eq:error.estimate.g2.rhs.T1E.initial}, using the orthogonality property of $\locVectProjL{\polyDegree}$,  applying the Cauchy-Schwarz inequality along with the standard polynomial interpolation result (see, e.g.,
  \cite[Section 4.3]{BRENNER.SCOTT:2008})
  $\norm{\vectLebSpace{2}{\element}}{\fluxFunction (\vectSol) - \locVectProjL{\polyDegree} \fluxFunction (\vectSol)} \leq C(\polyDegree,\polyDegree_{\fluxFunction},\rho)
  h_{\element}^{\polyDegree_{\fluxFunction}-1} \seminorm{\vectSobSpace{\polyDegree_{\fluxFunction}}{1}{\element}}{\fluxFunction (\vectSol)}$, and following the same argument that leads to
  \eqref{eq:cont.norm.less.disc.norm.term.2} together with \eqref{eq:poly.sob.embedding}, we obtain
  \begin{equation}\label{eq:aux.for.u.g2}
    \begin{aligned}
      \mathfrak{T}_{1,\element}^{(1)}
      &\leq \norm{\vectLebSpace{2}{\element}}{\fluxFunction (\vectSol) - \locVectProjL{\polyDegree} \fluxFunction (\vectSol)} \norm{\vectLebSpace{2}{\element}}{\boldsymbol{\xi}_h^{\perp} } \\
      &\leq C(\polyDegree,\polyDegree_{\fluxFunction},\rho) h_{\element}^{\polyDegree_{\fluxFunction}-1} \seminorm{\vectSobSpace{\polyDegree_{\fluxFunction}}{1}{\element}}{\fluxFunction
      (\vectSol)} \left[ \norm{\vectLebSpace{2}{\element}}{\locVectProjL{\polyDegree} \boldsymbol{\xi}_h} + h_{\element}^{\frac{1}{2}}\norm{\lebSpace{2}{\partial
      \element}}{\boldsymbol{\xi}_h^{\perp} \cdot \normalElement} \right] \\
      &\leq
      C(\polyDegree,\polyDegree_{\fluxFunction},\rho) h_{\element}^{\polyDegree_{\fluxFunction}} \seminorm{\vectSobSpace{\polyDegree_{\fluxFunction}}{\sobIndex}{\element}}{\fluxFunction (\vectSol)}
      \locDiscFullVectNorm{\boldsymbol{\xi}_h}.
    \end{aligned}
  \end{equation}
  Therefore, summing over $\element \in \domain_h$, applying a discrete $(\sobIndex,\sobIndexConj)$-H{\"o}lder inequality and then a generalized $(\sobIndex,\sobIndexConj)$-Young's inequality, we arrive at
  \begin{equation}\label{eq:error.estimate.g2.rhs.T1E.1}
    \begin{aligned}
      \sum_{\element \in \domain_h}\mathfrak{T}_{1,\element}^{(1)}
      &\leq \frac{C(\polyDegree,\sobIndex,\polyDegree_{\fluxFunction},\rho)}{\varepsilon}
      \sum_{\element \in \domain_h} h_{\element}^{\sobIndex \polyDegree_{\fluxFunction}} \seminorm{\vectSobSpace{\polyDegree_{\fluxFunction}}{\sobIndex}{\element}}{\fluxFunction
      (\vectSol)}^{\sobIndex} +
      \varepsilon \discFullVectNorm{\boldsymbol{\xi}_h}^{\sobIndexConj}.
    \end{aligned}
  \end{equation}
  For the second term in \eqref{eq:error.estimate.g2.rhs.T1E.initial}, applying first \eqref{eq:diffusive.flux.properties.original.hc} together with \eqref{eq:young.type.inequality} and \eqref{eq:simple.eu.norm.equivalence}, and then applying \eqref{eq:diffusive.flux.properties.original.mono} we obtain for any $\varepsilon > 0$
  \begin{equation}\label{eq:error.estimate.g2.rhs.T1E.2.initial}
    \begin{aligned}
      \mathfrak{T}_{1,\element}^{(2)}
      &\leq\varepsilon
      \int_{\element}
      \left( \fluxFunction(\locVectProjL{\polyDegree} \discVectSol) - \fluxFunction(\locVectProjL{\polyDegree} \vectSol_{I}) \right) \cdot
      \locVectProjL{\polyDegree} \boldsymbol{\xi}_h \\
      &\quad
      + C(\sobIndex,\varepsilon) \int_{\element}
      \left( \euNorm{\locVectProjL{\polyDegree} \vectSol_{I}} + \euNorm{\vectSol - \locVectProjL{\polyDegree} \vectSol_{I}} \right)^{\sobIndexConj - 2}
      \euNorm{\vectSol - \locVectProjL{\polyDegree} \vectSol_{I}}^2.
    \end{aligned}
  \end{equation}
  Applying a $\left(\frac{\sobIndexConj}{\sobIndexConj - 2},\frac{\sobIndexConj}{2}\right)$-H{\"o}lder inequality to the second integral in \eqref{eq:error.estimate.g2.rhs.T1E.2.initial},
  summing over $\element
  \in \domain_h$, and then applying a discrete $\left(\frac{\sobIndexConj}{\sobIndexConj - 2},\frac{\sobIndexConj}{2}\right)$-H{\"o}lder inequality, we arrive at
  %
  \begin{equation}\label{eq:error.estimate.g2.rhs.sum.T1E.2.initial}
    \begin{aligned}
      \sum_{\element \in \domain_h} \mathfrak{T}_{1,\element}^{(2)}
      &\leq
      \varepsilon
      \sum_{\element \in \domain_h}
      \left[ a_{\element}(\locVectProjL{\polyDegree} \discVectSol, \locVectProjL{\polyDegree} \boldsymbol{\xi}_h ) -a_{\element}(\locVectProjL{\polyDegree} \vectSol_{I},
      \locVectProjL{\polyDegree} \boldsymbol{\xi}_h ) \right] \\
      &\quad
      +
      C(\sobIndex,\varepsilon)
      \left[
        \norm{\vectLebSpace{\sobIndexConj}{\domain}}{\locVectProjL{\polyDegree} \vectSol_{I}}^{\sobIndexConj} +
        \norm{\vectLebSpace{\sobIndexConj}{\domain}}{\vectSol}^{\sobIndexConj}
      \right]^{\frac{\sobIndexConj-2}{\sobIndexConj}}
      \left[
        \sum_{\element \in \domain_h}\norm{\vectLebSpace{\sobIndexConj}{\element}}{\vectSol - \locVectProjL{\polyDegree} \vectSol_{I}}^{\sobIndexConj}
      \right]^{\frac{2}{\sobIndexConj}}.
    \end{aligned}
  \end{equation}
  We first observe that, from the triangle inequality, \eqref{eq:projL.Fortin.approx} and \eqref{eq:cont.sols.stability}, it follows that
  \begin{align*}
    \left[ \norm{\vectLebSpace{\sobIndexConj}{\domain}}{\locVectProjL{\polyDegree} \vectSol_{I}}^{\sobIndexConj} +
    \norm{\vectLebSpace{\sobIndexConj}{\domain}}{\vectSol}^{\sobIndexConj} \right]^{\frac{\sobIndexConj-2}{\sobIndexConj}} 
    &\leq C(\polyDegree,\sobIndex,\rho)
    \left[ h^{\sobIndexConj} \seminorm{\vectSobSpace{1}{\sobIndexConj}{\domain}}{\vectSol}^{\sobIndexConj} +
    \norm{\vectLebSpace{\sobIndexConj}{\domain}}{\vectSol}^{\sobIndexConj} \right]^{\frac{\sobIndexConj-2}{\sobIndexConj}} \\
    &\leq C(\polyDegree,\sobIndex,\rho) \left(\seminorm{\vectSobSpace{1}{\sobIndexConj}{\domain}}{\vectSol} + \norm{\lebSpace{\sobIndexConj}{\domain}}{\loadTerm}\right)^{\sobIndexConj-2},
  \end{align*}
  Then plugging this last estimate into \eqref{eq:error.estimate.g2.rhs.sum.T1E.2.initial}, using again \eqref{eq:projL.Fortin.approx} and choosing $\varepsilon = \frac{1}{2}$, we obtain
  \begin{align*}
    \sum_{\element \in \domain_h} \mathfrak{T}_{1,\element}^{(2)}
    &\leq
    \frac{1}{2}
    \left[ a_{\element}(\locVectProjL{\polyDegree} \discVectSol, \locVectProjL{\polyDegree} \boldsymbol{\xi}_h ) -a_{\element}(\locVectProjL{\polyDegree} \vectSol_{I},
    \locVectProjL{\polyDegree} \boldsymbol{\xi}_h ) \right] \\
    &\quad
    +
    C(\polyDegree,\sobIndex,\polyDegree_{\vectSol},\rho) \left(\seminorm{\vectSobSpace{1}{\sobIndexConj}{\domain}}{\vectSol} + \norm{\lebSpace{\sobIndexConj}{\domain}}{\loadTerm}\right)^{\sobIndexConj-2}
    \left[
      \sum_{\element \in \domain_h}
      h_{\element}^{\sobIndexConj \polyDegree_{\vectSol}} \seminorm{\vectSobSpace{\polyDegree_{\vectSol}}{\sobIndexConj}{\element}}{\vectSol}^{\sobIndexConj}
    \right]^{\frac{2}{\sobIndexConj}}
  \end{align*}
  Combining this last estimate with \eqref{eq:error.estimate.g2.rhs.T1E.1} yields, for any $\varepsilon >0$,
  \begin{equation}\label{eq:error.estimate.g2.rhs.T1E}
    \begin{aligned}
      \sum_{\element \in \domain_h} \mathfrak{T}_{1,\element}
      &\leq
      \varepsilon \discFullVectNorm{\boldsymbol{\xi}_h}^{\sobIndexConj} +
      \frac{1}{2}
      \left[ a_{\element}(\locVectProjL{\polyDegree} \discVectSol, \locVectProjL{\polyDegree} \boldsymbol{\xi}_h ) -a_{\element}(\locVectProjL{\polyDegree} \vectSol_{I},
      \locVectProjL{\polyDegree} \boldsymbol{\xi}_h ) \right] \\
      &\quad
      +
      C(\polyDegree,\sobIndex,\polyDegree_{\vectSol},\rho) \left(\seminorm{\vectSobSpace{1}{\sobIndexConj}{\domain}}{\vectSol} + \norm{\lebSpace{\sobIndexConj}{\domain}}{\loadTerm}\right)^{\sobIndexConj-2}
      \left[
        \sum_{\element \in \domain_h}
        h_{\element}^{\sobIndexConj \polyDegree_{\vectSol}} \seminorm{\vectSobSpace{\polyDegree_{\vectSol}}{\sobIndexConj}{\element}}{\vectSol}^{\sobIndexConj}
      \right]^{\frac{2}{\sobIndexConj}} \\
      &\quad +\frac{C(\polyDegree,\sobIndex,\polyDegree_{\fluxFunction},\rho)}{\varepsilon}
      \sum_{\element \in \domain_h} h_{\element}^{\sobIndex \polyDegree_{\fluxFunction}} \seminorm{\vectSobSpace{\polyDegree_{\fluxFunction}}{\sobIndex}{\element}}{\fluxFunction
      (\vectSol)}^{\sobIndex}.
    \end{aligned}
  \end{equation}

  \medskip
  \noindent
  \textbf{Second term.}
  For any $\element \in \domain_h$, following the same argument that leads to \eqref{eq:aux.sE.ineq.for.g2.case} and applying \eqref{eq:diffusive.flux.properties.original.mono}, we deduce that, for any $\varepsilon > 0$,
  \begin{equation}\label{eq:error.estimate.g2.rhs.T2E.initial}
    \begin{aligned}
      \mathfrak{T}_{2,\element}
      &\leq
      \varepsilon
      \left[s_{\element}(\discVectSol^{\perp},\boldsymbol{\xi}_h^{\perp}) -
      s_{\element}(\vectSol_{I}^{\perp},\boldsymbol{\xi}_h^{\perp})\right] + C(\sobIndex,\varepsilon)
      h_{\element}^{2}
      \euNorm{\locBoundaryDofVect (\vectSol_{I}^{\perp})}^{\sobIndexConj}.
    \end{aligned}
  \end{equation}
  Using \eqref{eq:boundaryDof.equivalence}, we first observe that $h_{\element}^{2} \euNorm{\locBoundaryDofVect (\vectSol_{I}^{\perp})}^{\sobIndexConj} \leq C(\polyDegree,\sobIndex,\rho) h_{\element} \norm{\lebSpace{\sobIndexConj}{\partial \element}}{\vectSol_{I}^{\perp}\cdot \normalElement}^{\sobIndexConj}.$
  For any $\edge \in \edgeElement$, recalling that $\vectSol_{I}^{\perp}\cdot \normalEdge \in
  \polySpace{\polyDegree}(\edge)$ and applying \eqref{eq:poly.sob.embedding} followed by \eqref{eq:r-norm.normal.component} and \eqref{eq:r-norm.div.virtual.func}, we have
  \begin{align*}
    \norm{\lebSpace{\sobIndexConj}{\edge}}{(\vectSol_{I}^{\perp})\cdot \normalEdge}
    &\leq
    C(\polyDegree,\sobIndex,\rho)
    \euNorm{\edge}^{\frac{1}{\sobIndexConj} - \frac{1}{2}} h_{\element}^{-\frac{1}{2}}
    \norm{\lebSpace{2}{\element}}{\vectSol_{I}^{\perp}} \\
    &\leq
    C(\polyDegree,\sobIndex,\rho)
    \euNorm{\edge}^{\frac{1}{\sobIndexConj} - \frac{1}{2}} h_{\element}^{-\frac{1}{2}} \left(
      \norm{\lebSpace{2}{\element}}{\vectSol - \vectSol_{I}} + \norm{\lebSpace{2}{\element}}{\vectSol - \locVectProjL{\polyDegree} \vectSol_{I}}
    \right) \\
    &\leq
    C(\polyDegree,\sobIndex,\polyDegree_{\vectSol},\rho)
    h_{\element}^{-\frac{1}{\sobIndexConj}}
    h_{\element}^{\polyDegree_{\vectSol}} \seminorm{\vectSobSpace{\polyDegree_{\vectSol}}{\sobIndexConj}{\element}}{\vectSol},
  \end{align*}
  where the last inequality follows by applying the approximation properties \eqref{eq:Fortin.approx} and \eqref{eq:projL.Fortin.approx} together with a H{\"o}lder inequality.
  Therefore, $h_{\element}^{2} \euNorm{\locBoundaryDofVect (\vectSol_{I}^{\perp})}^{\sobIndexConj} C(\polyDegree,\sobIndex,\polyDegree_{\vectSol},\rho) h_{\element}^{\sobIndexConj \polyDegree_{\vectSol}} \seminorm{\vectSobSpace{\polyDegree_{\vectSol}}{\sobIndexConj}{\element}}{\vectSol}^{\sobIndexConj}$.
  Plugging this last estimate into \eqref{eq:error.estimate.g2.rhs.T2E.initial} and choosing $\varepsilon= \frac{1}{2}$, yields
  \begin{equation}\label{eq:error.estimate.g2.rhs.T2E}
    \begin{aligned}
      \sum_{\element \in \domain_h}
      \mathfrak{T}_{2,\element}
      &\leq
      \frac{1}{2}
      \sum_{\element \in \domain_h}
      \left[s_{\element}(\discVectSol^{\perp},\boldsymbol{\xi}_h^{\perp}) -
      s_{\element}(\vectSol_{I}^{\perp},\boldsymbol{\xi}_h^{\perp})\right] +
      C(\polyDegree,\sobIndex,\polyDegree_{\vectSol},\rho)
      \sum_{\element \in \domain_h}
      h_{\element}^{\sobIndexConj \polyDegree_{\vectSol}} \seminorm{\vectSobSpace{\polyDegree_{\vectSol}}{\sobIndexConj}{\element}}{\vectSol}^{\sobIndexConj}.
    \end{aligned}
  \end{equation}

  \medskip
  \noindent
  \textbf{Conclusion.}
  Plugging \eqref{eq:error.estimate.g2.rhs.T1E} and \eqref{eq:error.estimate.g2.rhs.T2E} into \eqref{eq:error.estimate.g2.rhs.initial} leads to the desired result.
\end{proof}
\begin{theorem}[Discretization error for $\vectSol$: $\sobIndexConj > 2$]\label{thm:discrete.error.g2}
  Let $(\vectSol,\scalarSol) \in \vectSolSpace \times \scalarSolSpace$ and $(\discVectSol,\discScalarSol) \in \discVectSpace \times \discScalarSpace$ be the unique solutions to Problem~\eqref{eq:continuous.problem.dual.weak} and Problem~\eqref{eq:discrete.problem}, respectively. Assume $\sobIndexConj > 2$, and assume furthermore
  \begin{itemize}
    \item[] $\vectSol \in \vectSobSpace{\polyDegree_{\vectSol}}{\sobIndexConj}{\domain_h}$ for $\polyDegree_{\vectSol} \in \{1,2,\ldots,\polyDegree +1 \}$;
    \item[] $\fluxFunction(\vectSol) \in \vectSobSpace{\polyDegree_{\fluxFunction}}{\sobIndex}{\domain_h}$ for $\polyDegree_{\fluxFunction} \in \{1,\ldots,\polyDegree +1 \}$.
  \end{itemize}
  Setting $\boldsymbol{\xi}_h \coloneqq \discVectSol - \vectSol_{I}$ and recalling the definition of $\tilde{\mathcal{R}}^{(1)}_h$ and $\tilde{\mathcal{R}}^{(2)}_h$ in \eqref{eq:def.consistency.error.g2.conv.terms}, it holds
  \begin{equation}\label{eq:discrete.error.g2}
    \discFullVectNorm{\boldsymbol{\xi}_h}^{\sobIndexConj}
    \lesssim \tilde{\mathcal{R}}^{(1)}_h + \tilde{\mathcal{R}}^{(2)}_h + \left(\tilde{\mathcal{R}}^{(2)}_h\right)^{\frac{\sobIndexConj}{2}},
  \end{equation}
  with a hidden constant depending only on $\seminorm{\vectSobSpace{1}{\sobIndexConj}{\domain_h}}{\vectSol}$, $\norm{\lebSpace{\sobIndexConj}{\domain}}{\loadTerm}$,  $\polyDegree$, $\sobIndex$, $\polyDegree_{\vectSol}$, $\polyDegree_{\fluxFunction}$, and $\rho$.
\end{theorem}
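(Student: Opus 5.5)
The plan mirrors the proof of Theorem~\ref{thm:discrete.error.s2}, with the monotonicity bound \eqref{eq:error.estimate.g2.mono} taking the place of \eqref{eq:error.estimate.s2.mono}. Since $\discVectSol$ and $\vectSol_{I}$ satisfy the same divergence constraint, $\boldsymbol{\xi}_h$ is divergence free. Indeed, \eqref{eq:discrete.problem.line2} gives $\divergence \discVectSol = -\projL{\polyDegree}\loadTerm$. By \eqref{eq:projF.div.commute} and \eqref{eq:continuous.problem.dual.weak.2}, $\divergence \vectSol_{I} = \projL{\polyDegree}\divergence\vectSol = -\projL{\polyDegree}\loadTerm$. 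Hence $\divergence\boldsymbol{\xi}_h = 0$, and we may take $\discVectFunTwo = \boldsymbol{\xi}_h$ in the error equation \eqref{eq:error.equation.g2}. This identifies $a_h(\discVectSol,\boldsymbol{\xi}_h) - a_h(\vectSol_{I},\boldsymbol{\xi}_h)$ with the consistency functional $\int_{\domain}\fluxFunction(\vectSol)\cdot\boldsymbol{\xi}_h - a_h(\vectSol_{I},\boldsymbol{\xi}_h)$. The regularity needed there, $\fluxFunction(\vectSol)\in\vectSobSpace{1}{\sobIndex}{\domain_h}$, follows from the assumption with $\polyDegree_{\fluxFunction}\geq 1$.

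Next I would insert the consistency bound \eqref{eq:consistency.error.g2}, which holds for every $\varepsilon>0$. This gives
\[
D \leq \varepsilon \discFullVectNorm{\boldsymbol{\xi}_h}^{\sobIndexConj} + \tfrac12 D + \tfrac{C}{\varepsilon}\tilde{\mathcal{R}}_h^{(1)} + C\big(\tilde{\mathcal{R}}_h^{(2)}\big)^{\frac{\sobIndexConj}{2}} + C\big(\norm{\lebSpace{\sobIndexConj}{\domain}}{\loadTerm} + \seminorm{\vectSobSpace{1}{\sobIndexConj}{\domain_h}}{\vectSol}\big)^{\sobIndexConj-2}\tilde{\mathcal{R}}_h^{(2)},
\]
where $D \coloneqq a_h(\discVectSol,\boldsymbol{\xi}_h) - a_h(\vectSol_{I},\boldsymbol{\xi}_h)$. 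Absorbing $\tfrac12 D$ into the left-hand side gives $\tfrac12 D$ on the left.

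Then I would apply \eqref{eq:error.estimate.g2.mono}, namely $D \geq C(\sobIndex)\discFullVectNorm{\boldsymbol{\xi}_h}^{\sobIndexConj}$. In the regime $\sobIndexConj>2$ this constant carries no data-dependent prefactor, because the exponent $\frac{\sobIndexConjMin-2}{\sobIndexConj}$ in \eqref{eq:ah.monotonicity} vanishes. This is simpler than the case $1<\sobIndexConj\leq 2$.

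The choice of $\varepsilon$ is the only point requiring care. I would take $\varepsilon = \tfrac14 C(\sobIndex)$, so that the term $\varepsilon\discFullVectNorm{\boldsymbol{\xi}_h}^{\sobIndexConj}$ is absorbed into the left-hand side. The factor $1/\varepsilon$ then depends only on $\sobIndex$. The data-dependent factor $(\norm{\lebSpace{\sobIndexConj}{\domain}}{\loadTerm} + \seminorm{\vectSobSpace{1}{\sobIndexConj}{\domain_h}}{\vectSol})^{\sobIndexConj-2}$ goes into the hidden constant, which the statement allows. The result is $\discFullVectNorm{\boldsymbol{\xi}_h}^{\sobIndexConj} \lesssim \tilde{\mathcal{R}}_h^{(1)} + \tilde{\mathcal{R}}_h^{(2)} + (\tilde{\mathcal{R}}_h^{(2)})^{\sobIndexConj/2}$, which is \eqref{eq:discrete.error.g2}.

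I do not expect a real obstacle, since all the work sits in the preceding consistency lemma. The points to check are the divergence-free property of $\boldsymbol{\xi}_h$, which needs the Fortin commuting property, and the $\varepsilon$-absorption bookkeeping.
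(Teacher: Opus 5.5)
Your proposal is correct and follows the paper's proof essentially verbatim. You test the error equation \eqref{eq:error.equation.g2} with $\boldsymbol{\xi}_h$, insert the consistency bound \eqref{eq:consistency.error.g2}, absorb the $\tfrac12$-term, invoke the data-independent monotonicity \eqref{eq:error.estimate.g2.mono}, and choose $\varepsilon$ proportional to $C(\sobIndex)$. Your explicit check that $\divergence\boldsymbol{\xi}_h = 0$ via \eqref{eq:projF.div.commute} is a step the paper leaves implicit, but it is needed to use both the error equation and \eqref{eq:ah.monotonicity}.
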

\begin{proof}
  Applying \eqref{eq:error.equation.g2} with $\discVectFunTwo = \boldsymbol{\xi}_h$, and \eqref{eq:consistency.error.g2}, we obtain that, for any $\varepsilon >0$, it holds
  \begin{equation}\label{eq:another.aux.for.u.g2}
    \begin{aligned}
      a_h(\discVectSol,\boldsymbol{\xi}_h) - a_h(\vectSol_{I},\boldsymbol{\xi}_h)
      &\leq
      \varepsilon \discFullVectNorm{\boldsymbol{\xi}_h}^{\sobIndexConj}+
      \frac{1}{2} \left[ a_h(\discVectSol,\boldsymbol{\xi}_h) - a_h(\vectSol_{I},\boldsymbol{\xi}_h) \right] \\
      &\quad 
      + \frac{C(\polyDegree,\sobIndex,\polyDegree_{\fluxFunction},\rho)}{\varepsilon} \tilde{\mathcal{R}}_h^{(1)} 
      + C(\polyDegree,\sobIndex,\polyDegree_{\vectSol},\rho) \left( \tilde{\mathcal{R}}_h^{(2)} \right)^{\frac{\sobIndexConj}{2}} \\
      &\quad + C(\polyDegree,\sobIndex,\polyDegree_{\vectSol},\rho) \left( \norm{\lebSpace{\sobIndexConj}{\domain}}{\loadTerm} + \seminorm{\vectSobSpace{1}{\sobIndexConj}{\domain_h}}{\vectSol} \right)^{\sobIndexConj-2} \tilde{\mathcal{R}}_h^{(2)} .
    \end{aligned}
  \end{equation}
  Rearranging the terms and applying \eqref{eq:error.estimate.g2.mono} we have that
  \begin{equation*}
    \begin{aligned}
      C(\sobIndex) \discFullVectNorm{\boldsymbol{\xi}_h}^{\sobIndexConj}
      &\leq
      \varepsilon \discFullVectNorm{\boldsymbol{\xi}_h}^{\sobIndexConj} 
      + \frac{C(\polyDegree,\sobIndex,\polyDegree_{\fluxFunction},\rho)}{\varepsilon} \tilde{\mathcal{R}}_h^{(1)} 
      + C(\polyDegree,\sobIndex,\polyDegree_{\vectSol},\rho) \left( \tilde{\mathcal{R}}_h^{(2)} \right)^{\frac{\sobIndexConj}{2}} \\ 
      &\quad + C(\polyDegree,\sobIndex,\polyDegree_{\vectSol},\rho) \left( \norm{\lebSpace{\sobIndexConj}{\domain}}{\loadTerm} + \seminorm{\vectSobSpace{1}{\sobIndexConj}{\domain_h}}{\vectSol} \right)^{\sobIndexConj-2} \tilde{\mathcal{R}}_h^{(2)} .
    \end{aligned}
  \end{equation*}
  Denoting by $C_1$ the constant on the left-hand side of the above inequality and choosing $\varepsilon \coloneqq \frac{C_1}{2}$, after basic algebraic manipulations, we obtain the desired result.
\end{proof}
%
\subsubsection{Interpolation error estimate}
%
\begin{lemma}[Interpolation error for $\vectSol$: $\sobIndexConj > 2$]\label{lem:interpolation.error.norm.g2}
  Let $(\vectSol,\scalarSol) \in \vectSolSpace \times \scalarSolSpace$ be the unique solution to Problem~\eqref{eq:continuous.problem.dual.weak}.
  Assume $\sobIndexConj > 2$ and that $\vectSol \in \vectSobSpace{\polyDegree_{\vectSol}}{\sobIndexConj}{\domain_h}$ with $\polyDegree_{\vectSol} \in \{ 1, 2, \ldots, \polyDegree+1 \}$. Then, it holds
  \begin{equation}\label{eq:interpolation.error.g2}
    \discFullVectNorm{\vectSol - \vectSol_{I}}^{\sobIndexConj} \lesssim \sum_{\element \in \domain_h} h_{\element}^{\sobIndexConj \polyDegree_{\vectSol}}
    \seminorm{\vectSobSpace{\polyDegree_{\vectSol}}{\sobIndexConj}{\domain_h}}{\vectSol}^{\sobIndexConj},
  \end{equation}
  with a hidden constant depending only on $\polyDegree$, $\sobIndex$, $\polyDegree_{\vectSol}$, and $\rho$.
\end{lemma}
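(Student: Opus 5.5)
We follow the structure of Lemma~\ref{lem:interpolation.error.norm.s2}. The difference is that for $\sobIndexConj>2$ we may not invoke the $L^{\sobIndexConj}$ results restricted to $r\le 2$ (Lemmas~\ref{lem:r-norm.normal.component}, \ref{lem:r-norm.div.virtual.func}, \ref{lem:r-norm.virtual.fun} and \eqref{eq:Fortin.approx.r-norm}). Wherever a virtual function appears, we therefore pass through $L^2$ via \eqref{eq:poly.sob.embedding} and H\"older, exactly as in the case $\sobIndexConj>2$ of the inf-sup proof and in the estimate of $\mathfrak{T}_{2,\element}$ above.

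First we note that $s_{\element}$ is well defined on $\vectSobSpace{1}{\sobIndexConj}{\element}$, since the DoFs only need normal traces in $L^1$. Then we expand $\discFullVectNorm{\vectSol-\vectSol_{I}}^{\sobIndexConj}$ into three sums over $\element\in\domain_h$:
\begin{itemize}
\item the polynomial term $\norm{\vectLebSpace{\sobIndexConj}{\element}}{\locVectProjL{\polyDegree}(\vectSol-\vectSol_{I})}^{\sobIndexConj}$;
\item the stabilization term $s_{\element}((\vectSol-\vectSol_{I})^{\perp},(\vectSol-\vectSol_{I})^{\perp})$;
\item the divergence term $h_{\element}^{\sobIndexConj}\norm{\lebSpace{\sobIndexConj}{\element}}{\divergence(\vectSol-\vectSol_{I})}^{\sobIndexConj}$.
\end{itemize}

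The divergence term is immediate. The bound \eqref{eq:Fortin.approx.div.r-norm} holds for every $r$, and combined with $\divergence\vectSol = -\loadTerm$ (or simply with $j=\polyDegree_{\vectSol}-1$ and $h_{\element}\le h_{\element}$ bookkeeping) it gives $h_{\element}^{\sobIndexConj\polyDegree_{\vectSol}}\seminorm{\vectSobSpace{\polyDegree_{\vectSol}}{\sobIndexConj}{\element}}{\vectSol}^{\sobIndexConj}$.

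The polynomial term is also immediate. We write $\locVectProjL{\polyDegree}(\vectSol-\vectSol_{I}) = (\vectSol - \locVectProjL{\polyDegree}\vectSol_{I}) - (\vectSol-\locVectProjL{\polyDegree}\vectSol)$ and apply \eqref{eq:projL.Fortin.approx}, valid for all $r>1$, together with \eqref{eq:vectProjL.approx.element}.

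The stabilization term is the main obstacle. In the case $\sobIndexConj\le 2$ it was handled through \eqref{eq:sE.less.cont.norm}, which is unavailable here. We first observe that the boundary DoFs of $\vectSol-\vectSol_{I}$ vanish, so $\locBoundaryDofVect((\vectSol-\vectSol_{I})^{\perp}) = -\locBoundaryDofVect(\locVectProjL{\polyDegree}(\vectSol-\vectSol_{I}))$. The relevant function is therefore the polynomial $\boldsymbol{\eta}\coloneqq\locVectProjL{\polyDegree}(\vectSol-\vectSol_{I})$ rather than a virtual one. Since the internal DoFs of the $\perp$-part vanish, Lemma~\ref{lem:boundaryDof.equivalence} gives
\[
s_{\element}(\cdot,\cdot)\lesssim h_{\element}\norm{\lebSpace{\sobIndexConj}{\partial\element}}{\boldsymbol{\eta}\cdot\normalElement}^{\sobIndexConj}.
\]
For a polynomial, a polynomial trace/inverse inequality (scaling on the subtriangulation $\mathcal{T}_h(\element)$, or \eqref{eq:poly.sob.embedding} on edges and elements) yields $h_{\element}\norm{\lebSpace{\sobIndexConj}{\partial\element}}{\boldsymbol{\eta}\cdot\normalElement}^{\sobIndexConj}\lesssim\norm{\vectLebSpace{\sobIndexConj}{\element}}{\boldsymbol{\eta}}^{\sobIndexConj}$. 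This reduces the stabilization term to the polynomial term already bounded.

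If the trace inequality for polynomials feels too quick, the fallback is the route used for $\mathfrak{T}_{2,\element}$ in the previous consistency proof. We would pass to $L^2$ on each edge via \eqref{eq:poly.sob.embedding}, use \eqref{eq:r-norm.normal.component} and \eqref{eq:r-norm.div.virtual.func} with $r=2$, then \eqref{eq:Fortin.approx} and \eqref{eq:projL.Fortin.approx} in $L^2$, and finish with a H\"older inequality converting $\seminorm{\vectSobSpace{\polyDegree_{\vectSol}}{2}{\element}}{\vectSol}$ into $\euNorm{\element}^{\frac12-\frac1{\sobIndexConj}}\seminorm{\vectSobSpace{\polyDegree_{\vectSol}}{\sobIndexConj}{\element}}{\vectSol}$. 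The powers of $h_{\element}$ balance to $h_{\element}^{\sobIndexConj\polyDegree_{\vectSol}}$.

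Summing the three contributions over $\element\in\domain_h$ concludes.
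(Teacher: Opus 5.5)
Your proof is correct and follows the paper's argument: the same three-term splitting, the divergence term via \eqref{eq:Fortin.approx.div.r-norm} with $j=\polyDegree_{\vectSol}-1$, and the same key observation that the vanishing DoFs of $\vectSol-\vectSol_{I}$ reduce the stabilization term to the polynomial $\locVectProjL{\polyDegree}(\vectSol-\vectSol_{I})$. The only difference is technical: you use a polynomial trace inequality directly in $L^{\sobIndexConj}$, whereas the paper follows your fallback route, passing to $L^2$ via \eqref{eq:poly.sob.embedding}, applying \eqref{eq:r-norm.normal.component} and \eqref{eq:r-norm.div.virtual.func} with $r=2$, and returning to $L^{\sobIndexConj}$ by H\"older; both routes are valid.
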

\begin{proof}
  Recalling the remark at the beginning of the proof of Lemma~\ref{lem:interpolation.error.norm.s2}, we write
  \begin{equation*}
    \begin{aligned}
      \discFullVectNorm{\vectSol - \vectSol_{I}}^{\sobIndexConj}
      &= \norm{\vectLebSpace{\sobIndexConj}{\domain}}{\vectProjL{\polyDegree}(\vectSol - \vectSol_{I})}^{\sobIndexConj}  + \sum_{\element \in \domain_h} s_{\element} \left( (\vectSol - \vectSol_{I})^{\perp},
      (\vectSol - \vectSol_{I})^{\perp} \right)\\
      &\quad + \sum_{\element \in \domain_h} h_{\element}^{\sobIndexConj}\norm{\lebSpace{\sobIndexConj}{\element}}{\divergence (\vectSol - \vectSol_{I})}^{\sobIndexConj}.
    \end{aligned}
  \end{equation*}
  For the first term, applying \eqref{eq:poly.sob.embedding} and \eqref{eq:vectProjL.stab.element}, and then using the approximation properties \eqref{eq:Fortin.approx.r-norm} and \eqref{eq:projL.Fortin.approx} together with a H{\"o}lder inequalilty, we have
  \begin{align*}
    \norm{\vectLebSpace{\sobIndexConj}{\domain}}{\vectProjL{\polyDegree}(\vectSol - \vectSol_{I})}^{\sobIndexConj}
    &\leq C(\polyDegree,\sobIndex,\rho) \sum_{\element \in \domain_h} \euNorm{\element}^{1 - \frac{\sobIndexConj}{2}} \left(
      \norm{\vectLebSpace{2}{\element}}{\vectSol - \vectProjL{\polyDegree} \vectSol_{I}}^{\sobIndexConj}
      +
    \norm{\vectLebSpace{2}{\element}}{\vectSol - \vectSol_{I}}^{\sobIndexConj}\right) \\
    &\leq
    C(\polyDegree,\sobIndex,\rho,\polyDegree_{\vectSol}) \sum_{\element \in \domain_h}
    h_{\element}^{\sobIndexConj \polyDegree_{\vectSol}} \seminorm{\vectSobSpace{\polyDegree_{\vectSol}}{\sobIndexConj}{\element}}{\vectSol}^{\sobIndexConj}
  \end{align*}
  For the second term, we first recall that $\locBoundaryDofVect(\vectSol - \vectSol_{I}) = 0$. 
  Then, recalling the definition of $s_{\element}$ and applying bound \eqref{eq:boundaryDof.equivalence} and the polynomial embedding \eqref{eq:poly.sob.embedding} together with \eqref{eq:r-norm.normal.component} and \eqref{eq:r-norm.div.virtual.func}, we obtain
  \begin{equation*}
    \begin{aligned}
      \sum_{\element \in \domain_h} s_{\element} \left( (\vectSol - \vectSol_{I})^{\perp}, (\vectSol -
      \vectSol_{I})^{\perp} \right) &\leq C(\polyDegree,\sobIndex,\rho)
      \sum_{\element \in \domain_h} h_{\element} \norm{\lebSpace{\sobIndexConj}{\partial \element}}{(\vectProjL{\polyDegree}(\vectSol - \vectSol_{I}))\cdot
      \normalElement}^{\sobIndexConj} \\
      &\leq C(\polyDegree,\sobIndex,\rho)
      \sum_{\element \in \domain_h} h_{\element}^{2-\sobIndexConj} \norm{\vectLebSpace{2}{\element}}{\vectProjL{\polyDegree}(\vectSol - \vectSol_{I})}^{\sobIndexConj} \\
      &\leq C(\polyDegree,\sobIndex,\rho,\polyDegree_{\vectSol}) \sum_{\element \in \domain_h} h_{\element}^{\sobIndexConj \polyDegree_{\vectSol}}
      \seminorm{\vectSobSpace{\polyDegree_{\vectSol}}{\sobIndexConj}{\element}}{\vectSol}^{\sobIndexConj},
    \end{aligned}
  \end{equation*}
  where in the last inequality we have applied \eqref{eq:vectProjL.stab.element} and \eqref{eq:Fortin.approx.r-norm} together with a H{\"o}lder inequality.
  Finally, for the third term involving the divergence, bound \eqref{eq:Fortin.approx.div.r-norm} yields
  \begin{equation*}
    \sum_{\element \in \domain_h} h_{\element}^{\sobIndexConj}\norm{\lebSpace{\sobIndexConj}{\element}}{\divergence (\vectSol - \vectSol_{I})}^{\sobIndexConj}
    \leq C(\polyDegree,\sobIndex,\rho,\polyDegree_{\vectSol})
    \sum_{\element \in \domain_h} h_{\element}^{\sobIndexConj \polyDegree_{\vectSol}} \seminorm{\vectSobSpace{\polyDegree_{\vectSol}}{\sobIndexConj}{\element}}{\vectSol}^{\sobIndexConj}.
  \end{equation*}
  Combining the three bounds above yields the desired result.
\end{proof}
As a direct consequence of Theorem~\ref{thm:discrete.error.g2}, Lemma~\ref{lem:interpolation.error.norm.g2}, and the triangle inequality, we obtain the following result.
\begin{corollary}\label{cor:total.error.g2}
  Under the same assumptions of Theorem~\ref{thm:discrete.error.g2}, it holds
  \begin{equation}\label{eq:total.error.g2}
    \discFullVectNorm{\vectSol - \discVectSol}
    \lesssim \left(\tilde{\mathcal{R}}_h^{(1)}\right)^{\frac{1}{\sobIndexConj}} + \left(\tilde{\mathcal{R}}_h^{(2)}\right)^{\frac{1}{\sobIndexConj}} + \left(\tilde{\mathcal{R}}_h^{(2)}\right)^{\frac{1}{2}},
  \end{equation}
  with a hidden constant depending only on $\seminorm{\vectSobSpace{1}{\sobIndexConj}{\domain_h}}{\vectSol}$, $\norm{\lebSpace{\sobIndexConj}{\domain}}{\loadTerm}$, $\polyDegree$, $\sobIndexConj$, $\polyDegree_{\vectSol}$, $\polyDegree_{\fluxFunction}$, and $\rho$.

  In particular, assuming full regularity, i.e., $\polyDegree_{\vectSol} = \polyDegree + 1$ and $\polyDegree_{\fluxFunction} = \polyDegree + 1$, estimate \eqref{eq:total.error.g2} yields the explicit asymptotic convergence rate
  \begin{equation}\label{eq:asymptotic.rate.g2}
    \discFullVectNorm{\vectSol - \discVectSol} \lesssim h^{\frac{1}{\sobIndexConj-1}(\polyDegree+1)}.
  \end{equation}
\end{corollary}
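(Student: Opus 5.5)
The estimate \eqref{eq:total.error.g2} follows from the triangle inequality for the norm $\discFullVectNorm{\cdot}$ (extended to $\vectSol - \vectSol_{I}$ as in Lemma~\ref{lem:interpolation.error.norm.g2}), applied after splitting $\vectSol - \discVectSol = (\vectSol - \vectSol_{I}) - \boldsymbol{\xi}_h$ with $\boldsymbol{\xi}_h = \discVectSol - \vectSol_{I}$. This gives
\[
\discFullVectNorm{\vectSol - \discVectSol} \leq \discFullVectNorm{\vectSol - \vectSol_{I}} + \discFullVectNorm{\boldsymbol{\xi}_h}.
\]

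\textbf{Interpolation part.} Lemma~\ref{lem:interpolation.error.norm.g2} bounds $\discFullVectNorm{\vectSol - \vectSol_{I}}^{\sobIndexConj}$ by $\sum_{\element} h_{\element}^{\sobIndexConj\polyDegree_{\vectSol}} \seminorm{\vectSobSpace{\polyDegree_{\vectSol}}{\sobIndexConj}{\element}}{\vectSol}^{\sobIndexConj} = (\tilde{\mathcal{R}}_h^{(2)})^{\sobIndexConj/2}$. Taking the $1/\sobIndexConj$ root gives $(\tilde{\mathcal{R}}_h^{(2)})^{1/2}$.

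\textbf{Discrete part.} Theorem~\ref{thm:discrete.error.g2} bounds $\discFullVectNorm{\boldsymbol{\xi}_h}^{\sobIndexConj}$ by $\tilde{\mathcal{R}}_h^{(1)} + \tilde{\mathcal{R}}_h^{(2)} + (\tilde{\mathcal{R}}_h^{(2)})^{\sobIndexConj/2}$. Taking the $1/\sobIndexConj$ root and using $(a+b+c)^{1/\sobIndexConj} \leq a^{1/\sobIndexConj} + b^{1/\sobIndexConj} + c^{1/\sobIndexConj}$ gives the three terms $(\tilde{\mathcal{R}}_h^{(1)})^{1/\sobIndexConj}$, $(\tilde{\mathcal{R}}_h^{(2)})^{1/\sobIndexConj}$ and $(\tilde{\mathcal{R}}_h^{(2)})^{1/2}$. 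Adding the two parts yields \eqref{eq:total.error.g2}, with the constants of the two cited results.

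\textbf{Rates.} Under full regularity, since $h_{\element} \leq h$ and the seminorms are fixed data:
\[
\tilde{\mathcal{R}}_h^{(1)} \lesssim h^{\sobIndex(\polyDegree+1)}, \qquad \tilde{\mathcal{R}}_h^{(2)} \lesssim h^{2(\polyDegree+1)}.
\]
The three contributions therefore behave like $h^{\frac{\sobIndex}{\sobIndexConj}(\polyDegree+1)}$, $h^{\frac{2}{\sobIndexConj}(\polyDegree+1)}$ and $h^{\polyDegree+1}$.

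Since $\sobIndexConj > 2$ means $\sobIndex < 2$, we have $\sobIndex/\sobIndexConj = \sobIndex - 1 = \frac{1}{\sobIndexConj-1}$. This exponent is below $2/\sobIndexConj$, because $\sobIndex < 2$, and below $1$, because $\sobIndexConj - 1 > 1$. For $h \leq C$ the smallest exponent dominates, which gives \eqref{eq:asymptotic.rate.g2}.

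The only step needing care is this exponent comparison, where the identity $\sobIndex - 1 = 1/(\sobIndexConj - 1)$ does the work; everything else is immediate.
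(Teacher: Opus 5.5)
Your proposal is correct and follows the paper's own route. The paper obtains the corollary in one line from the triangle inequality, Theorem~\ref{thm:discrete.error.g2} (for $\boldsymbol{\xi}_h$) and Lemma~\ref{lem:interpolation.error.norm.g2} (for $\vectSol - \vectSol_{I}$), and your exponent comparison via $\sobIndex - 1 = 1/(\sobIndexConj - 1) < \min\{1, 2/\sobIndexConj\}$ correctly supplies the details behind the rate \eqref{eq:asymptotic.rate.g2}.
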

%
\subsection{A priori error estimates for $\scalarSol$: $1 < \sobIndexConj \leq 2$}\label{sec:a-priori.scalar.1}
\subsubsection{Preliminary results}
The next two lemmas are auxiliary for the error analysis of the scalar solution $\scalarSol$. Their proofs are reported in Appendix~\ref{app:proof.sE.approx.for.u} and Appendix~\ref{app:proof.sigma.approx.for.u}.
\begin{lemma}\label{lem:sE.approx.for.u}
  Let $\vectSol \in \vectSolSpace$ and $\discVectSol \in \discVectSpace$ be the unique solutions to Problem~\eqref{eq:continuous.problem.dual.weak} and Problem~\eqref{eq:discrete.problem}, respectively. 
  Assume that $\vectSol \in \vectSobSpace{\polyDegree_{\vectSol}}{\sobIndexConj}{\domain_h}$ for $\polyDegree_{\vectSol} \in \{1,2,\ldots,\polyDegree +1 \}$.
  Then, for any $\element \in \domain_h$, it holds
  \begin{equation}\label{eq:sE.approx.for.u}
    \begin{aligned}
      s_{\element}(\discVectSol^{\perp}, \discVectSol^{\perp})
      &\leq
      C(\polyDegree,\sobIndex,\polyDegree_{\vectSol},\rho)
            h_{\element}^{\sobIndexConj \polyDegree_{\vectSol} }\seminorm{\vectSobSpace{\polyDegree_{\vectSol}}{\sobIndexConj}{\element}}{\vectSol}^{\sobIndexConj} + 
      C(\sobIndex)\left[ s_{\element}(\discVectSol^{\perp} , \boldsymbol{\xi}_h^{\perp} ) - s_{\element}( \vectSol_{I}^{\perp} , \boldsymbol{\xi}_h^{\perp} ) \right],
    \end{aligned}
  \end{equation}
  where $\boldsymbol{\xi}_h \coloneqq \discVectSol - \vectSol_{I}$.
\end{lemma}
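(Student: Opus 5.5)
The plan is to work at the level of degree-of-freedom vectors and exploit the power-law structure of $s_{\element}$. Set $\boldsymbol{A} \coloneqq \locBoundaryDofVect(\discVectSol^{\perp})$ and $\boldsymbol{B} \coloneqq \locBoundaryDofVect(\vectSol_{I}^{\perp})$. By Remark~\ref{rem:boundary.dof.eu.norm.NEW}, the internal degrees of freedom of these $L^2$-orthogonal fields vanish, so
\[
s_{\element}(\discVectSol^{\perp},\discVectSol^{\perp}) = h_{\element}^2 \euNorm{\boldsymbol{A}}^{\sobIndexConj},
\]
and the bracket on the right of \eqref{eq:sE.approx.for.u} equals $h_{\element}^2(\fluxFunction_{N_{\element}}(\boldsymbol{A})-\fluxFunction_{N_{\element}}(\boldsymbol{B}))\cdot(\boldsymbol{A}-\boldsymbol{B})$. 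Note that $\boldsymbol{\xi}_h^{\perp} = \discVectSol^{\perp}-\vectSol_{I}^{\perp}$, so its degree-of-freedom vector is $\boldsymbol{A}-\boldsymbol{B}$. We start from the elementary splitting
\[
\euNorm{\boldsymbol{A}}^{\sobIndexConj} = \fluxFunction_{N_{\element}}(\boldsymbol{A})\cdot\boldsymbol{A} = (\fluxFunction_{N_{\element}}(\boldsymbol{A})-\fluxFunction_{N_{\element}}(\boldsymbol{B}))\cdot(\boldsymbol{A}-\boldsymbol{B}) + \fluxFunction_{N_{\element}}(\boldsymbol{B})\cdot(\boldsymbol{A}-\boldsymbol{B}) + \fluxFunction_{N_{\element}}(\boldsymbol{A})\cdot\boldsymbol{B}.
\]
The first term is exactly the monotone bracket we want. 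For the last two terms, Hölder and a $(\sobIndexConj,\sobIndex)$-Young inequality give
\[
\fluxFunction_{N_{\element}}(\boldsymbol{A})\cdot\boldsymbol{B} \le \euNorm{\boldsymbol{A}}^{\sobIndexConj-1}\euNorm{\boldsymbol{B}} \le \tfrac14\euNorm{\boldsymbol{A}}^{\sobIndexConj}+C\euNorm{\boldsymbol{B}}^{\sobIndexConj}.
\]
Similarly, $\fluxFunction_{N_{\element}}(\boldsymbol{B})\cdot(\boldsymbol{A}-\boldsymbol{B}) \le \euNorm{\boldsymbol{B}}^{\sobIndexConj-1}(\euNorm{\boldsymbol{A}}+\euNorm{\boldsymbol{B}}) \le \tfrac14\euNorm{\boldsymbol{A}}^{\sobIndexConj}+C\euNorm{\boldsymbol{B}}^{\sobIndexConj}$. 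Absorbing the $\tfrac12\euNorm{\boldsymbol{A}}^{\sobIndexConj}$ into the left-hand side and multiplying by $h_{\element}^2$ yields
\[
s_{\element}(\discVectSol^{\perp},\discVectSol^{\perp}) \le 2\left[s_{\element}(\discVectSol^{\perp},\boldsymbol{\xi}_h^{\perp})-s_{\element}(\vectSol_{I}^{\perp},\boldsymbol{\xi}_h^{\perp})\right] + C(\sobIndex)\, s_{\element}(\vectSol_{I}^{\perp},\vectSol_{I}^{\perp}).
\]
This step is valid for every $\sobIndexConj>1$ and uses neither a case distinction nor Lemma~\ref{lem:diffusive.flux.properties.original}; the bracket is nonnegative by monotonicity. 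The constant $2$ in front of the bracket is a pure number, consistent with the claimed $C(\sobIndex)$.

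It remains to bound $s_{\element}(\vectSol_{I}^{\perp},\vectSol_{I}^{\perp})$ by $h_{\element}^{\sobIndexConj \polyDegree_{\vectSol}}\seminorm{\vectSobSpace{\polyDegree_{\vectSol}}{\sobIndexConj}{\element}}{\vectSol}^{\sobIndexConj}$. This is exactly the interpolation-type estimate already carried out inside the consistency proofs, and we split according to $\sobIndexConj$.

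If $1<\sobIndexConj\le 2$, we apply \eqref{eq:sE.less.cont.norm} to get $s_{\element}(\vectSol_{I}^{\perp},\vectSol_{I}^{\perp}) \lesssim \norm{\vectLebSpace{\sobIndexConj}{\element}}{\vectSol_{I}^{\perp}}^{\sobIndexConj}$. We then bound $\norm{\vectLebSpace{\sobIndexConj}{\element}}{\vectSol_{I}^{\perp}} \le \norm{\vectLebSpace{\sobIndexConj}{\element}}{\vectSol-\vectSol_{I}} + \norm{\vectLebSpace{\sobIndexConj}{\element}}{\vectSol-\locVectProjL{\polyDegree}\vectSol_{I}}$ by the triangle inequality, and estimate the two terms with \eqref{eq:Fortin.approx.r-norm} and \eqref{eq:projL.Fortin.approx}.

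If $\sobIndexConj>2$, we repeat the argument of the second-term estimate in the consistency lemma for $\sobIndexConj>2$. First, \eqref{eq:boundaryDof.equivalence} turns the stabilization into $h_{\element}\norm{\lebSpace{\sobIndexConj}{\partial\element}}{\vectSol_{I}^{\perp}\cdot\normalElement}^{\sobIndexConj}$. Next, the polynomial embedding \eqref{eq:poly.sob.embedding} on each edge passes to $L^2$, and the $L^2$ versions of \eqref{eq:r-norm.normal.component} and \eqref{eq:r-norm.div.virtual.func} bring us back to $\norm{\vectLebSpace{2}{\element}}{\vectSol_{I}^{\perp}}$. Finally, the approximation bounds plus Hölder give the factor $h_{\element}^{-1/\sobIndexConj+\polyDegree_{\vectSol}}$.

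The only delicate point is this $\sobIndexConj>2$ case, where the scaling must be tracked carefully. Under Assumption~\ref{ass:mesh.assumptions}, the factor $\euNorm{\edge}^{1/\sobIndexConj-1/2}h_{\element}^{-1/2}$ combined with Hölder from $L^{\sobIndexConj}$ to $L^2$, which contributes $\euNorm{\element}^{1/2-1/\sobIndexConj}$, must produce exactly $h_{\element}^{-1/\sobIndexConj}$. The paper has already verified this, so it can be quoted directly.
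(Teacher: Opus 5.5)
Your proof is correct. The second half is the same as the paper's: the paper also bounds $s_{\element}(\vectSol_{I}^{\perp},\vectSol_{I}^{\perp})$ by pointing back to the consistency estimates, namely \eqref{eq:sE.less.cont.norm} for $1<\sobIndexConj\le 2$, and \eqref{eq:boundaryDof.equivalence}, \eqref{eq:poly.sob.embedding}, \eqref{eq:r-norm.normal.component}, \eqref{eq:r-norm.div.virtual.func} plus the approximation bounds for $\sobIndexConj>2$.

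The first half takes a different and more elementary route. With $\boldsymbol{A}=\locBoundaryDofVect(\discVectSol^{\perp})$ and $\boldsymbol{B}=\locBoundaryDofVect(\vectSol_{I}^{\perp})$, the paper splits $\euNorm{\boldsymbol{A}}^{\sobIndexConj}$ into only two pieces, $(\fluxFunction_{N_{\element}}(\boldsymbol{A})-\fluxFunction_{N_{\element}}(\boldsymbol{B}))\cdot\boldsymbol{A}$ and $\fluxFunction_{N_{\element}}(\boldsymbol{B})\cdot\boldsymbol{A}$. The first piece is not sign-definite, so the paper controls it in three steps:
\begin{enumerate}
\item the H\"older continuity \eqref{eq:diffusive.flux.properties.original.hc};
\item the generalized Young inequality \eqref{eq:young.type.inequality} together with \eqref{eq:simple.eu.norm.equivalence};
\item the strong monotonicity \eqref{eq:diffusive.flux.properties.original.mono}, which converts $(\euNorm{\boldsymbol{A}}+\euNorm{\boldsymbol{B}})^{\sobIndexConj-2}\euNorm{\boldsymbol{A}-\boldsymbol{B}}^2$ back into the monotone bracket.
\end{enumerate}
Your three-term identity instead yields the bracket exactly, with coefficient one. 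The remaining cross terms then need only $\euNorm{\fluxFunction_{N_{\element}}(\boldsymbol{v})}=\euNorm{\boldsymbol{v}}^{\sobIndexConj-1}$ and the plain $(\sobIndex,\sobIndexConj)$-Young inequality. As a result you bypass Lemma~\ref{lem:diffusive.flux.properties.original} and \eqref{eq:young.type.inequality} altogether, need no regime distinction in this step, and obtain the explicit constant $2$ in front of the bracket.

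What the paper's route buys is uniformity with the rest of its analysis. It is the same hc/generalized-Young/monotonicity chain already used for the consistency terms in \eqref{eq:error.estimate.s2.rhs.T1E.2} and \eqref{eq:aux.sE.ineq.for.g2.case}, so the appendix can reuse it almost verbatim. For the pure power law treated here, your identity-based absorption is the shorter and cleaner argument.
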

\begin{lemma}\label{lem:sigma.approx.for.u}
  Let $\vectSol \in \vectSolSpace$ and $\discVectSol \in \discVectSpace$ be the unique solutions to Problem~\eqref{eq:continuous.problem.dual.weak} and Problem~\eqref{eq:discrete.problem}, respectively. 
  Assume $1<\sobIndexConj \leq 2$, and assume furthermore
    \begin{itemize}
      \item[] $\vectSol \in \vectSobSpace{\polyDegree_{\vectSol}}{\sobIndexConj}{\domain_h}$ for $\polyDegree_{\vectSol} \in \{1,2,\ldots,\polyDegree +1 \}$.
    \end{itemize}
  Then, for any $\element \in \domain_h$, it holds
  \begin{equation}\label{eq:sigma.approx.for.u}
    \begin{aligned}
      \norm{\vectLebSpace{\sobIndex}{\element}}{\fluxFunction(\vectSol) -\fluxFunction(\vectProjL{\polyDegree}\discVectSol)}^{\sobIndex}
      &\leq
      C(\polyDegree,\sobIndex,\polyDegree_{\vectSol},\rho) h_{\element}^{\sobIndexConj \polyDegree_{\vectSol}}
      \seminorm{\vectSobSpace{\polyDegree_{\vectSol}}{\sobIndexConj}{\element}}{\vectSol}^{\sobIndexConj} \\ 
      &\quad + C(\sobIndex) \left[ a_{\element}(\vectProjL{\polyDegree}
      \discVectSol,\vectProjL{\polyDegree}\boldsymbol{\xi}_h) - a_{\element}(\vectProjL{\polyDegree}\vectSol_{I},\vectProjL{\polyDegree}\boldsymbol{\xi}_h) \right],
    \end{aligned}
  \end{equation}
  where $\boldsymbol{\xi}_h \coloneqq \discVectSol - \vectSol_{I}$.
\end{lemma}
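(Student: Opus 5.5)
We must bound $\norm{\vectLebSpace{\sobIndex}{\element}}{\fluxFunction(\vectSol)-\fluxFunction(\locVectProjL{\polyDegree}\discVectSol)}^{\sobIndex}$ for $1<\sobIndexConj\le 2$, i.e.\ $\sobIndex\ge 2$. The plan is to insert $\locVectProjL{\polyDegree}\vectSol_{I}$ and split
\[
\fluxFunction(\vectSol)-\fluxFunction(\locVectProjL{\polyDegree}\discVectSol)
=\bigl[\fluxFunction(\vectSol)-\fluxFunction(\locVectProjL{\polyDegree}\vectSol_{I})\bigr]
+\bigl[\fluxFunction(\locVectProjL{\polyDegree}\vectSol_{I})-\fluxFunction(\locVectProjL{\polyDegree}\discVectSol)\bigr].
\]
Since $(a+b)^{\sobIndex}\le C(\sobIndex)(a^{\sobIndex}+b^{\sobIndex})$, it suffices to bound each bracket separately in $L^{\sobIndex}(\element)$.

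For the first bracket, apply \eqref{eq:diffusive.flux.properties.hc} with $r=\sobIndexConj$, so that $\underline{r}=\sobIndexConj$ and the prefactor exponent $(r-\underline r)/r$ vanishes. This gives the pointwise bound $\euNorm{\fluxFunction(\boldsymbol{A})-\fluxFunction(\boldsymbol{B})}\lesssim\euNorm{\boldsymbol{A}-\boldsymbol{B}}^{\sobIndexConj-1}$. Because $(\sobIndexConj-1)\sobIndex=\sobIndexConj$, integrating yields
\[
\norm{\vectLebSpace{\sobIndex}{\element}}{\fluxFunction(\vectSol)-\fluxFunction(\locVectProjL{\polyDegree}\vectSol_{I})}^{\sobIndex}\lesssim\norm{\vectLebSpace{\sobIndexConj}{\element}}{\vectSol-\locVectProjL{\polyDegree}\vectSol_{I}}^{\sobIndexConj}.
\]
By \eqref{eq:projL.Fortin.approx} with $s=\polyDegree_{\vectSol}$, this is bounded by $h_{\element}^{\sobIndexConj\polyDegree_{\vectSol}}\seminorm{\vectSobSpace{\polyDegree_{\vectSol}}{\sobIndexConj}{\element}}{\vectSol}^{\sobIndexConj}$, which is the first term of \eqref{eq:sigma.approx.for.u}.

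For the second bracket, set $\boldsymbol{A}=\locVectProjL{\polyDegree}\discVectSol$ and $\boldsymbol{B}=\locVectProjL{\polyDegree}\vectSol_{I}$, so that $\boldsymbol{A}-\boldsymbol{B}=\locVectProjL{\polyDegree}\boldsymbol{\xi}_h$. The key is a pointwise inequality for the dual flux: since $\fluxFunction=\fluxFunction_{2,\sobIndexConj}$ has inverse $\fluxFunction_{2,\sobIndex}$, apply \eqref{eq:diffusive.flux.properties.mono} with $r=\sobIndex\ge2$ to the points $\fluxFunction(\boldsymbol{A}),\fluxFunction(\boldsymbol{B})$. Here $\overline r=\sobIndex$ and $\underline r=2$, so the prefactor is trivial, and
\[
\euNorm{\fluxFunction(\boldsymbol{A})-\fluxFunction(\boldsymbol{B})}^{\sobIndex}\lesssim\bigl(\fluxFunction(\boldsymbol{A})-\fluxFunction(\boldsymbol{B})\bigr)\cdot(\boldsymbol{A}-\boldsymbol{B}).
\]
Integrating over $\element$ gives exactly $C(\sobIndex)\bigl[a_{\element}(\locVectProjL{\polyDegree}\discVectSol,\locVectProjL{\polyDegree}\boldsymbol{\xi}_h)-a_{\element}(\locVectProjL{\polyDegree}\vectSol_{I},\locVectProjL{\polyDegree}\boldsymbol{\xi}_h)\bigr]$. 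No regularity is needed for this term; the bound follows from monotonicity alone.

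The main obstacle is the dualization step for the second bracket: one must identify that $\fluxFunction_{2,\sobIndex}\circ\fluxFunction_{2,\sobIndexConj}=\mathrm{id}$ and choose the arguments correctly so that \eqref{eq:diffusive.flux.properties.mono} applied in the $\sobIndex$-setting controls $\euNorm{\fluxFunction(\boldsymbol{A})-\fluxFunction(\boldsymbol{B})}^{\sobIndex}$ without a weight. A direct Hölder-continuity estimate of $\fluxFunction$ in terms of $\locVectProjL{\polyDegree}\boldsymbol{\xi}_h$ would instead produce a power of $\euNorm{\locVectProjL{\polyDegree}\boldsymbol{\xi}_h}$ that does not match the monotone form. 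Once this identification is in place, summing the two bounds concludes the proof.
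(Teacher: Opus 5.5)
Your proof is correct, but it is organized differently from the paper's. The paper never splits in $\vectLebSpace{\sobIndex}{\element}$. It first uses \eqref{eq:diffusive.flux.properties.original.hc}, the exponent identity $(\sobIndex-1)(\sobIndexConj-2)+(\sobIndex-2)=0$, and $\euNorm{\vectSol-\vectProjL{\polyDegree}\discVectSol}\le\euNorm{\vectSol}+\euNorm{\vectProjL{\polyDegree}\discVectSol}$ to reduce the left-hand side to the shifted quantity $\int_{\element}(\euNorm{\vectSol}+\euNorm{\vectProjL{\polyDegree}\discVectSol})^{\sobIndexConj-2}\euNorm{\vectSol-\vectProjL{\polyDegree}\discVectSol}^{2}$. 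It bounds this by the monotone form at the pair $(\vectSol,\vectProjL{\polyDegree}\discVectSol)$ and inserts $\vectProjL{\polyDegree}\vectSol_{I}$ there. Because that weighted quantity is not subadditive, it then needs the Young-type inequality \eqref{eq:young.type.inequality} and the shift equivalence \eqref{eq:simple.eu.norm.equivalence} to absorb an $\varepsilon$-multiple of the left-hand side.

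You split first, with the linear triangle inequality in $\vectLebSpace{\sobIndex}{\element}$. As a result you apply the co-coercivity bound $\euNorm{\fluxFunction(\boldsymbol{A})-\fluxFunction(\boldsymbol{B})}^{\sobIndex}\lesssim(\fluxFunction(\boldsymbol{A})-\fluxFunction(\boldsymbol{B}))\cdot(\boldsymbol{A}-\boldsymbol{B})$ only to the discrete pair, where its right-hand side is exactly the required $a_{\element}$-difference. The interpolation part is handled by the unweighted H\"older bound available for $\sobIndexConj\le 2$. This is shorter and $\varepsilon$-free. Your derivation of the co-coercivity from \eqref{eq:diffusive.flux.properties.mono}, applied to the inverse flux $\fluxFunction_{2,\sobIndex}$ with $\sobIndex\ge 2$, is sound.

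What the paper's route buys in exchange is an intermediate bound on the shifted quasi-norm error $\int_{\element}(\euNorm{\vectSol}+\euNorm{\vectProjL{\polyDegree}\discVectSol})^{\sobIndexConj-2}\euNorm{\vectSol-\vectProjL{\polyDegree}\discVectSol}^{2}$. This quantity dominates the flux error in the lemma and fits the Young/shift machinery used in the consistency estimates. That extra information is not needed for the statement itself.
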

%
\subsubsection{Discrete error estimate}
\begin{theorem}[Discretization error for $\scalarSol$: $1 < \sobIndexConj \leq 2$]\label{thm:discrete.error.scalar.s2}
  Let $(\vectSol,\scalarSol) \in \vectSolSpace \times \scalarSolSpace$ and $(\discVectSol,\discScalarSol) \in \discVectSpace \times \discScalarSpace$ be the unique solutions to Problem~\eqref{eq:continuous.problem.dual.weak} and Problem~\eqref{eq:discrete.problem}, respectively. Assume $1<\sobIndexConj \leq 2$, and assume furthermore
  \begin{enumerate}
    \item[] $\vectSol \in \vectSobSpace{\polyDegree_{\vectSol}}{\sobIndexConj}{\domain_h}$ for $\polyDegree_{\vectSol} \in \{1,2,\ldots,\polyDegree +1 \}$;
    \item[] $\fluxFunction(\vectSol) \in \vectSobSpace{\polyDegree_{\fluxFunction}}{\sobIndex}{\domain_h}$ for $\polyDegree_{\fluxFunction} \in \{0,1,\ldots,\polyDegree +1 \}$.
  \end{enumerate}
  Then, recalling the definition of $\mathcal{R}_h^{(1)}$ and $\mathcal{R}_h^{(2)}$ in \eqref{eq:def.consistency.error.s2.conv.terms},
  \begin{equation}\label{eq:discrete.error.scalar.s2}
    \discScalarNorm{\discScalarSol - \projL{\polyDegree} \scalarSol} \lesssim 
     \left(\mathcal{R}_h^{(1)}\right)^{\frac{1}{\sobIndex}} + \left(\mathcal{R}_h^{(2)}\right)^{\frac{1}{\sobIndex}} + \left(\mathcal{R}_h^{(2)}\right)^{\frac{1}{2}},
  \end{equation}
  with a hidden constant depending only on $\seminorm{\vectSobSpace{1}{\sobIndexConj}{\domain_h}}{\vectSol}$, $\norm{\lebSpace{\sobIndexConj}{\domain}}{\loadTerm}$, $\polyDegree$, $\sobIndexConj$, $\polyDegree_{\vectSol}$, $\polyDegree_{\fluxFunction}$, and $\rho$.
\end{theorem}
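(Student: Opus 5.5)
The plan is to use the discrete inf-sup condition of Lemma~\ref{lem:discrete.infsup} applied to $\discScalarTestFun \coloneqq \discScalarSol - \projL{\polyDegree}\scalarSol \in \discScalarSpace$. This reduces the task to bounding $b(\discVectTestFun,\discScalarSol-\projL{\polyDegree}\scalarSol)$ for arbitrary $\discVectTestFun\in\discVectSpace$.

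Since $\divergence\discVectTestFun\in\polySpace{\polyDegree}(\domain_h)$, we have $b(\discVectTestFun,\projL{\polyDegree}\scalarSol)=b(\discVectTestFun,\scalarSol)$. Integrating by parts with $\scalarSol=0$ on $\domainBoundary$ and $\grad\scalarSol=\fluxFunction(\vectSol)$ gives $b(\discVectTestFun,\scalarSol)=-\int_\domain\fluxFunction(\vectSol)\cdot\discVectTestFun$. Combined with \eqref{eq:discrete.problem.line1}, this yields the error identity
\begin{equation*}
b(\discVectTestFun,\discScalarSol-\projL{\polyDegree}\scalarSol)=\sum_{\element\in\domain_h}\Big[\int_\element\fluxFunction(\vectSol)\cdot\discVectTestFun-a_\element(\locVectProjL{\polyDegree}\discVectSol,\locVectProjL{\polyDegree}\discVectTestFun)-s_\element(\discVectSol^{\perp},\discVectTestFun^{\perp})\Big].
\end{equation*}
The integration by parts needs $\fluxFunction(\vectSol)\in\vectLebSpace{\sobIndex}{\domain}$. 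The pairing with $\discVectTestFun\in\vectLebSpace{\sobIndexConj}{}$ is then fine, since $\discVectSpace\subset\vectSolSpace$ for $\sobIndexConj\le2$ by Remark~\ref{rem:conforming}.

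As in the consistency lemma, we add and subtract $\locVectProjL{\polyDegree}\fluxFunction(\vectSol)$ and use orthogonality. This splits the volume term into two pieces:
\begin{itemize}
\item[] $\int_\element(\fluxFunction(\vectSol)-\locVectProjL{\polyDegree}\fluxFunction(\vectSol))\cdot\discVectTestFun$;
\item[] $\int_\element(\fluxFunction(\vectSol)-\fluxFunction(\locVectProjL{\polyDegree}\discVectSol))\cdot\locVectProjL{\polyDegree}\discVectTestFun$.
\end{itemize}
By a $(\sobIndex,\sobIndexConj)$-Hölder inequality, \eqref{eq:vectProjL.approx.element} and \eqref{eq:disc.norm.simeq.cont.norm}, the first piece is bounded by $(\mathcal{R}_h^{(1)})^{1/2}\discFullVectNorm{\discVectTestFun}$; note that $(\mathcal{R}_h^{(1)})^{1/2}$ is exactly $(\sum_E h_E^{\sobIndex\polyDegree_{\fluxFunction}}|\cdot|^{\sobIndex})^{1/\sobIndex}$. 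For the second piece we use Hölder together with Lemma~\ref{lem:sigma.approx.for.u}. This gives the bound
\begin{equation*}
\Big[\sum_E\norm{\vectLebSpace{\sobIndex}{\element}}{\fluxFunction(\vectSol)-\fluxFunction(\locVectProjL{\polyDegree}\discVectSol)}^{\sobIndex}\Big]^{1/\sobIndex}\discFullVectNorm{\discVectTestFun}\lesssim\big(\mathcal{R}_h^{(2)}+\mathcal{D}_a\big)^{1/\sobIndex}\discFullVectNorm{\discVectTestFun},
\end{equation*}
where $\mathcal{D}_a$ denotes the sum of the $a_\element$-monotonicity differences.

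For the stabilization term we use \eqref{eq:sE.Holder.continuity} with $\discVectFunTwoTilda=\boldsymbol 0$. This gives $|s_\element(\discVectSol^\perp,\discVectTestFun^\perp)|\lesssim s_\element(\discVectSol^\perp,\discVectSol^\perp)^{1/\sobIndex}\,s_\element(\discVectTestFun^\perp,\discVectTestFun^\perp)^{1/\sobIndexConj}$, since $(\sobIndexConj-1)/\sobIndexConj=1/\sobIndex$. A discrete Hölder inequality and Lemma~\ref{lem:sE.approx.for.u} then bound the sum by $(\mathcal{R}_h^{(2)}+\mathcal{D}_s)^{1/\sobIndex}\discFullVectNorm{\discVectTestFun}$, where $\mathcal{D}_s$ is the sum of the stabilization monotonicity differences. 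Dividing by $\discFullVectNorm{\discVectTestFun}$ and taking the supremum gives
\begin{equation*}
\discScalarNorm{\discScalarSol-\projL{\polyDegree}\scalarSol}\lesssim(\mathcal{R}_h^{(1)})^{1/2}+(\mathcal{R}_h^{(2)})^{1/\sobIndex}+\big(a_h(\discVectSol,\boldsymbol\xi_h)-a_h(\vectSol_I,\boldsymbol\xi_h)\big)^{1/\sobIndex}.
\end{equation*}

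The main obstacle is controlling the monotonicity difference $a_h(\discVectSol,\boldsymbol\xi_h)-a_h(\vectSol_I,\boldsymbol\xi_h)$ sharply, and matching the exponents in the statement. Following the proof of Theorem~\ref{thm:discrete.error.s2}, I would not absorb this difference but bound it directly. The error equation \eqref{eq:error.equation.s2} together with \eqref{eq:consistency.error.s2} gives
\begin{equation*}
\tfrac12\big(a_h(\discVectSol,\boldsymbol\xi_h)-a_h(\vectSol_I,\boldsymbol\xi_h)\big)\le\varepsilon\discFullVectNorm{\boldsymbol\xi_h}^2+C\varepsilon^{-1}\mathcal{R}_h^{(1)}+C\mathcal{R}_h^{(2)}.
\end{equation*}
Inserting \eqref{eq:discrete.error.s2}, the difference is $\lesssim\mathcal{R}_h^{(1)}+\mathcal{R}_h^{(2)}$. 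Taking the $1/\sobIndex$ power gives the terms $(\mathcal{R}_h^{(1)})^{1/\sobIndex}$ and $(\mathcal{R}_h^{(2)})^{1/\sobIndex}$.

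The leftover $(\mathcal{R}_h^{(1)})^{1/2}$ must still be reconciled with the claimed form. Since $\sobIndex\ge2$ and $\mathcal{R}_h^{(1)}$ is bounded (using $h\le C$ and the regularity of $\vectSol$), we have $(\mathcal{R}_h^{(1)})^{1/2}\lesssim(\mathcal{R}_h^{(1)})^{1/\sobIndex}$, with a constant depending on the data. The term $(\mathcal{R}_h^{(2)})^{1/2}$ in the statement presumably arises if one instead bounds part of the $s_\element$ or volume terms through $\discFullVectNorm{\boldsymbol\xi_h}$ and \eqref{eq:discrete.error.s2}. Keeping it in the final bound is harmless, since it only adds a nonnegative term.
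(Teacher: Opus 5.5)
Your proposal is essentially the paper's own proof. It uses the same inf-sup reduction, the same three-term splitting of $b(\discVectTestFun,\discScalarSol-\projL{\polyDegree}\scalarSol)$, Lemmas~\ref{lem:sigma.approx.for.u} and~\ref{lem:sE.approx.for.u} for the volume and stabilization pieces, and the bound $a_h(\discVectSol,\boldsymbol{\xi}_h)-a_h(\vectSol_{I},\boldsymbol{\xi}_h)\lesssim\mathcal{R}_h^{(1)}+\mathcal{R}_h^{(2)}$ obtained from \eqref{eq:error.equation.s2}, \eqref{eq:consistency.error.s2} and \eqref{eq:discrete.error.s2}.

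The paper's first term also yields $(\mathcal{R}_h^{(1)})^{1/2}$, and no step of its argument produces $(\mathcal{R}_h^{(2)})^{1/2}$. The displayed $(\mathcal{R}_h^{(2)})^{1/2}$ is therefore most likely a misprint for $(\mathcal{R}_h^{(1)})^{1/2}$. You should keep that term explicitly rather than absorbing it into $(\mathcal{R}_h^{(1)})^{1/\sobIndex}$. The absorption needs an upper bound on $\mathcal{R}_h^{(1)}$, so in general it makes the hidden constant depend on $\seminorm{\vectSobSpace{\polyDegree_{\fluxFunction}}{\sobIndex}{\domain_h}}{\fluxFunction(\vectSol)}$, which is not among the stated dependencies.
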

\begin{proof}
  Let $\discVectFunOne \in \discVectSpace$. 
  Using the discrete and continuous equations \eqref{eq:discrete.problem.line1}-\eqref{eq:continuous.problem.dual.weak.1}, and the commuting property $b(\discVectFunOne, \projL{\polyDegree} \scalarSol) = b(\discVectFunOne, \scalarSol)$, we have $b(\discVectFunOne, \discScalarSol - \projL{\polyDegree} \scalarSol) = a(\vectSol, \discVectFunOne) - a_h(\discVectSol, \discVectFunOne)$.
  By adding and subtracting $\int_{\element} \vectProjL{\polyDegree}\fluxFunction(\vectSol) \cdot \discVectFunOne$ on each $\element \in \domain_h$, and exploiting the properties of $\vectProjL{\polyDegree}$, we decompose the error as
  \begin{equation}\label{eq:scalar.err.estimate.initial.s2}
    \begin{aligned}
      b(\discVectFunOne,\discScalarSol - \projL{\polyDegree} \scalarSol)&=
      \sum_{\element \in \domain_h}
      \int_{\element} (\fluxFunction(\vectSol) - \vectProjL{\polyDegree}\fluxFunction(\vectSol)) \cdot \discVectFunOne + \sum_{\element \in \domain_h} \int_{\element}(\fluxFunction(\vectSol)
      -\fluxFunction(\vectProjL{\polyDegree}\discVectSol)) \cdot \vectProjL{\polyDegree} \discVectFunOne \\
      &\quad +
      \sum_{\element \in \domain_h} -s_{\element} (\discVectSol^{\perp}, \discVectFunOne - \vectProjL{\polyDegree} \discVectFunOne) \\
      &\eqqcolon
      \sum_{\element \in \domain_h}\mathfrak{T}_{1,\element} + \sum_{\element \in \domain_h}\mathfrak{T}_{2,\element} + \sum_{\element \in \domain_h}\mathfrak{T}_{3,\element}
    \end{aligned}
  \end{equation}
  We bound each of the above terms separately.

  \medskip
  \noindent
  \textbf{First term.}
  For any $\element \in \domain_h$, we apply a $(\sobIndex,\sobIndexConj)$-H{\"o}lder inequality along with $\vectProjL{\polyDegree}$ approximation property and the equivalence in \eqref{eq:disc.norm.simeq.cont.norm}, to write
  \begin{align*}
    \mathfrak{T}_{1,\element}
    &\leq
    C(\polyDegree,\sobIndex,\polyDegree_{\fluxFunction},\rho)
    h_{\element}^{\polyDegree_{\fluxFunction}} \seminorm{\vectSobSpace{\polyDegree_{\fluxFunction}}{\sobIndex}{\element}}{\fluxFunction(\vectSol)}
    \locDiscFullVectNorm{\discVectFunOne}.
  \end{align*}
  Summing over $\element \in \domain_h$ and applying a discrete $(\sobIndex,\sobIndexConj)$-H{\"o}lder inequality, we conclude that
  \begin{equation}\label{eq:scalar.err.estimate.T1.s2}
    \sum_{\element \in \domain_h}
    \mathfrak{T}_{1,\element}
    \leq
    C(\polyDegree,\sobIndex,\polyDegree_{\fluxFunction},\rho)
    \left[
      \sum_{\element \in \domain_h}
      h_{\element}^{\sobIndex \polyDegree_{\fluxFunction}} \seminorm{\vectSobSpace{\polyDegree_{\fluxFunction}}{\sobIndex}{\element}}{\fluxFunction(\vectSol)}^\sobIndex
    \right]^{\frac{1}{\sobIndex}}
    \discFullVectNorm{\discVectFunOne}.
  \end{equation}

  \medskip
  \noindent
  \textbf{Second term.}
  For any $\element \in \domain_h$, we apply a $(\sobIndex,\sobIndexConj)$-H{\"o}lder inequality, and recalling the definition of the local discrete norm in \eqref{eq:def.local.discrete.norm.NEW}, we write
  \begin{align*}
    \mathfrak{T}_{2,\element}
    \leq
    \norm{\vectLebSpace{\sobIndex}{\element}}{\fluxFunction(\vectSol)
    -\fluxFunction(\vectProjL{\polyDegree}\discVectSol)} \locDiscFullVectNorm{\discVectFunOne}
  \end{align*}
  Summing over $\element \in \domain_h$ and applying \eqref{eq:sigma.approx.for.u}, we get
  \begin{equation}\label{eq:scalar.err.estimate.T2.s2}
    \begin{aligned}
      \sum_{\element \in \domain_h} \mathfrak{T}_{2,\element}
      &\leq
      \Bigg[ C(\polyDegree,\sobIndex,\polyDegree_{\vectSol},\rho)
      \left( 
      \sum_{\element \in \domain_h}
       h_{\element}^{\sobIndexConj \polyDegree_{\vectSol}}
      \seminorm{\vectSobSpace{\polyDegree_{\vectSol}}{\sobIndexConj}{\element}}{\vectSol}^{\sobIndexConj}
      \right)^{\frac{1}{\sobIndex}} \\
      &\quad + C(\sobIndex) \left( \sum_{\element \in \domain_h} a_{\element}(\vectProjL{\polyDegree}
      \discVectSol,\vectProjL{\polyDegree}\boldsymbol{\xi}_h) - a_{\element}(\vectProjL{\polyDegree}\vectSol_{I},\vectProjL{\polyDegree}\boldsymbol{\xi}_h) \right)^{\frac{1}{\sobIndex}}
      \Bigg] \discFullVectNorm{\discVectFunOne},
    \end{aligned}
  \end{equation}
  where $\boldsymbol{\xi}_h \coloneqq \discVectSol - \vectSol_{I}$.

  \medskip
  \noindent
  \textbf{Third term.}
  Applying \eqref{eq:sE.Holder.continuity} and recalling the definition of the local discrete norm in \eqref{eq:def.local.discrete.norm.NEW} yields, for any $\element \in \domain_h$, $\mathfrak{T}_{3,\element}
    \leq
    C(\sobIndex) \left[s_{\element}(\discVectSol^{\perp}, \discVectSol^{\perp})\right]^{\frac{\sobIndexConj-1}{\sobIndexConj}} \locDiscFullVectNorm{\discVectFunOne}$.
  Summing over $\element \in \domain_h$, applying \eqref{eq:sE.approx.for.u} and recalling that ${\frac{\sobIndexConj-1}{\sobIndexConj}}={\frac{1}{\sobIndex}}$, we get
  \begin{equation}\label{eq:scalar.err.estimate.T3.s2}
    \begin{aligned}
      \sum_{\element \in \domain_h} \mathfrak{T}_{3,\element}
      &\leq
      \discFullVectNorm{\discVectFunOne} \Bigg[
      C(\polyDegree,\sobIndex,\polyDegree_{\vectSol},\rho) 
      \left(\sum_{\element \in \domain_h} 
      h_{\element}^{\sobIndexConj \polyDegree_{\vectSol} }\seminorm{\vectSobSpace{\polyDegree_{\vectSol}}{\sobIndexConj}{\element}}{\vectSol}^{\sobIndexConj}\right)^{\frac{1}{\sobIndex}} \\
      &\quad+
      C(\sobIndex) \sum_{\element \in \domain_h} \left( s_{\element}(\discVectSol^{\perp} , \boldsymbol{\xi}_h^{\perp} ) - s_{\element}( \vectSol_{I}^{\perp} , \boldsymbol{\xi}_h^{\perp} ) \right)^{\frac{1}{\sobIndex}} \Bigg]
    \end{aligned}
  \end{equation}

  \medskip
  \noindent
  \textbf{Conclusion.}
  Plugging \eqref{eq:scalar.err.estimate.T1.s2}, \eqref{eq:scalar.err.estimate.T2.s2} and \eqref{eq:scalar.err.estimate.T3.s2} into \eqref{eq:scalar.err.estimate.initial.s2}, after recalling the definition of $a_h$ in \eqref{eq:def.ah.bh} and observing that from \eqref{eq:error.equation.s2}, \eqref{eq:consistency.error.s2} and \eqref{eq:discrete.error.s2} follows
  \begin{equation*}
    a_h(\discVectSol,\boldsymbol{\xi}_h) - a_h(\vectSol_{I},\boldsymbol{\xi}_h) \leq C(\polyDegree,\sobIndex,\polyDegree_{\vectSol},\polyDegree_{\fluxFunction},\seminorm{\vectSobSpace{1}{\sobIndexConj}{\domain_h}}{\vectSol},\norm{\lebSpace{\sobIndexConj}{\domain}}{\loadTerm},\rho) \left( \mathcal{R}_h^{(1)} + \mathcal{R}_h^{(2)} \right),
  \end{equation*}
  the discrete inf-sup inequality \eqref{eq:discrete.infsup} yields the desired result.
\end{proof}
As a direct consequence of Theorem~\ref{thm:discrete.error.scalar.s2}, the approximation properties of $\projL{\polyDegree}$, and the triangle inequality, we obtain the following result.
\begin{corollary}\label{cor:total.error.scalar}
  Under the same assumptions of Theorem~\ref{thm:discrete.error.scalar.s2} and assuming furthermore 
  \begin{itemize}
    \item[] $\scalarSol \in \sobSpace{\polyDegree_{\scalarSol}}{\sobIndex}{\domain_h}$ for $\polyDegree_{\scalarSol} \in \{0,1,\ldots,\polyDegree +1 \}$,
  \end{itemize}
  we have that
  \begin{equation}\label{eq:total.error.scalar.s2}
    \discScalarNorm{\scalarSol - \discScalarSol}
    \lesssim 
    \left(\mathcal{R}_h^{(1)}\right)^{\frac{1}{\sobIndex}} + \left(\mathcal{R}_h^{(2)}\right)^{\frac{1}{\sobIndex}} + \left(\mathcal{R}_h^{(2)}\right)^{\frac{1}{2}}+ \left(\sum_{\element \in \domain_h} h_{\element}^{\sobIndex \polyDegree_{\scalarSol}} \seminorm{\sobSpace{\polyDegree_{\scalarSol}}{\sobIndex}{\element}}{\scalarSol}\right)^{\frac{1}{\sobIndex}},
  \end{equation}
  with a hidden constant depending only on $\seminorm{\vectSobSpace{1}{\sobIndexConj}{\domain_h}}{\vectSol}$, $\norm{\lebSpace{\sobIndexConj}{\domain}}{\loadTerm}$, $\polyDegree$, $\sobIndexConj$, $\polyDegree_{\vectSol}$, $\polyDegree_{\fluxFunction}$, $\polyDegree_{\scalarSol}$, and $\rho$.

  In particular, assuming full regularity, i.e., $\polyDegree_{\vectSol} = \polyDegree + 1$, $\polyDegree_{\fluxFunction} = \polyDegree + 1$, and $\polyDegree_{\scalarSol} = \polyDegree+1$, estimate \eqref{eq:total.error.scalar.s2} yields the explicit asymptotic convergence rate
  \begin{equation}\label{eq:asymptotic.rate.scalar.s2}
    \discScalarNorm{\scalarSol - \discScalarSol} \lesssim h^{\frac{1}{\sobIndex-1}(\polyDegree+1)}.
  \end{equation}
\end{corollary}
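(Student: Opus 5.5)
The corollary follows by splitting the scalar error through the piecewise polynomial projection. The triangle inequality gives
\[
\discScalarNorm{\scalarSol - \discScalarSol} \leq \norm{\lebSpace{\sobIndex}{\domain}}{\scalarSol - \projL{\polyDegree}\scalarSol} + \discScalarNorm{\discScalarSol - \projL{\polyDegree}\scalarSol}.
\]
The second term is bounded directly by Theorem~\ref{thm:discrete.error.scalar.s2}. This produces the first three contributions on the right-hand side of \eqref{eq:total.error.scalar.s2}, with the same dependence of the hidden constant.

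For the first term we work elementwise with the scalar version of \eqref{eq:vectProjL.approx.element}, taking $m=0$, $n=\polyDegree_{\scalarSol}$, $\ell=\polyDegree$ and $r=\sobIndex$. This is admissible because $\polyDegree_{\scalarSol}\le \polyDegree+1$. It gives
\[
\norm{\lebSpace{\sobIndex}{\element}}{\scalarSol-\locProjL{\polyDegree}\scalarSol}^{\sobIndex}\lesssim h_{\element}^{\sobIndex\polyDegree_{\scalarSol}}\seminorm{\sobSpace{\polyDegree_{\scalarSol}}{\sobIndex}{\element}}{\scalarSol}^{\sobIndex}.
\]
Summing over $\element\in\domain_h$ and taking the $1/\sobIndex$ power yields the last term. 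The displayed statement writes the seminorm without the power $\sobIndex$, which I read as a typo for the natural $\sobIndex$-th power.

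For the asymptotic rate we substitute full regularity into each term, using $h_\element\le h$:
\begin{itemize}
\item $\mathcal{R}_h^{(1)}\lesssim h^{2(\polyDegree+1)}$, so $(\mathcal{R}_h^{(1)})^{1/\sobIndex}\lesssim h^{\frac{2}{\sobIndex}(\polyDegree+1)}$.
\item $\mathcal{R}_h^{(2)}\lesssim h^{\sobIndexConj(\polyDegree+1)}$, so $(\mathcal{R}_h^{(2)})^{1/\sobIndex}\lesssim h^{\frac{\sobIndexConj}{\sobIndex}(\polyDegree+1)}=h^{\frac{1}{\sobIndex-1}(\polyDegree+1)}$.
\item $(\mathcal{R}_h^{(2)})^{1/2}\lesssim h^{\frac{\sobIndexConj}{2}(\polyDegree+1)}$.
\item The projection term is $\lesssim h^{\polyDegree+1}$.
\end{itemize}

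The only real step is comparing these exponents. Since $1<\sobIndexConj\le 2$ we have $\sobIndex\ge 2$, and therefore the following hold:
\begin{itemize}
\item $\frac{1}{\sobIndex-1}\le 1$.
\item $\frac{1}{\sobIndex-1}\le \frac{2}{\sobIndex}$, which is equivalent to $\sobIndex\le 2\sobIndex-2$.
\item $\frac{1}{\sobIndex-1}=\sobIndexConj-1\le \frac{\sobIndexConj}{2}$, which is equivalent to $\sobIndexConj\le 2$.
\end{itemize}
So $\frac{1}{\sobIndex-1}(\polyDegree+1)$ is the smallest exponent. For $h\le 1$, or more generally bounded $h$ absorbed into the constant, every term is bounded by $h^{\frac{1}{\sobIndex-1}(\polyDegree+1)}$, which gives \eqref{eq:asymptotic.rate.scalar.s2}.
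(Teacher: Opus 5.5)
Your proposal is correct and follows the paper's own argument: the triangle inequality through $\projL{\polyDegree}\scalarSol$, Theorem~\ref{thm:discrete.error.scalar.s2} for the discrete part, and the scalar approximation property of $\projL{\polyDegree}$ for the projection part. Your explicit exponent comparison (using $\sobIndex \geq 2$) is right, and so is your reading of the missing $\sobIndex$-th power on the seminorm as a typo; both spell out what the paper's one-line justification leaves implicit.
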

%
\subsection{A priori error estimates for $\scalarSol$: $\sobIndexConj > 2$}\label{sec:a-priori.scalar.2}
\begin{theorem}[Discretization error for $\scalarSol$: $\sobIndexConj > 2$]\label{thm:discrete.error.scalar.g2}
  Let $(\vectSol,\scalarSol) \in \vectSolSpace \times \scalarSolSpace$ and $(\discVectSol,\discScalarSol) \in \discVectSpace \times \discScalarSpace$ be the unique solutions to Problem~\eqref{eq:continuous.problem.dual.weak} and Problem~\eqref{eq:discrete.problem}, respectively. Assume $\sobIndexConj > 2$, and assume furthermore
  \begin{itemize}
    \item[] $\vectSol \in \vectSobSpace{\polyDegree_{\vectSol}}{\sobIndexConj}{\domain_h}$ for $\polyDegree_{\vectSol} \in \{1,2,\ldots,\polyDegree +1 \}$;
    \item[] $\fluxFunction(\vectSol) \in \vectSobSpace{\polyDegree_{\fluxFunction}}{\sobIndex}{\domain_h}$ for $\polyDegree_{\fluxFunction} \in \{1,\ldots,\polyDegree +1 \}$.
  \end{itemize}
  Then, recalling the definition of $\tilde{\mathcal{R}}^{(1)}_h$ and $\tilde{\mathcal{R}}^{(2)}_h$  in \eqref{eq:def.consistency.error.g2.conv.terms},
  \begin{equation}\label{eq:discrete.error.scalar.g2}
    \discScalarNorm{\discScalarSol - \projL{\polyDegree} \scalarSol} 
    \lesssim 
    \left( \tilde{\mathcal{R}}_h^{(1)} \right)^{\frac{1}{\sobIndex}} + \left(\tilde{\mathcal{R}}^{(2)}_h\right)^{{\frac{1}{\sobIndex}}}
    + \left(\tilde{\mathcal{R}}_h^{(1)}\right)^{\frac{1}{\sobIndexConj}} + \left(\tilde{\mathcal{R}}_h^{(2)}\right)^{\frac{1}{\sobIndexConj}} + \left(\tilde{\mathcal{R}}_h^{(2)}\right)^{\frac{1}{2}}
     + \left(\tilde{\mathcal{R}}^{(2)}_h\right)^{\frac{1}{2(\sobIndex-1)}}
  \end{equation}
  with a hidden constant depending only on $\seminorm{\vectSobSpace{1}{\sobIndexConj}{\domain_h}}{\vectSol}$, $\norm{\lebSpace{\sobIndexConj}{\domain}}{\loadTerm}$, $\polyDegree$, $\sobIndexConj$, $\polyDegree_{\vectSol}$, $\polyDegree_{\fluxFunction}$, and $\rho$.
\end{theorem}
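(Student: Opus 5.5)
We follow the structure of the proof of Theorem~\ref{thm:discrete.error.scalar.s2}. Fix $\discVectFunOne \in \discVectSpace$. Using \eqref{eq:discrete.problem.line1}, \eqref{eq:continuous.problem.dual.weak.1} and $b(\discVectFunOne,\projL{\polyDegree}\scalarSol)=b(\discVectFunOne,\scalarSol)$, we obtain
\[
b(\discVectFunOne,\discScalarSol-\projL{\polyDegree}\scalarSol)=a(\vectSol,\discVectFunOne)-a_h(\discVectSol,\discVectFunOne).
\]
We split this into the same three terms as in \eqref{eq:scalar.err.estimate.initial.s2}:
\begin{itemize}
\item $\mathfrak{T}_{1,\element}$, the projection error of $\fluxFunction(\vectSol)$ tested against $\discVectFunOne$;
\item $\mathfrak{T}_{2,\element}$, the term $\int_\element(\fluxFunction(\vectSol)-\fluxFunction(\vectProjL{\polyDegree}\discVectSol))\cdot\vectProjL{\polyDegree}\discVectFunOne$;
\item $\mathfrak{T}_{3,\element}=-s_\element(\discVectSol^\perp,\discVectFunOne^\perp)$.
\end{itemize}
Each term will be bounded by (something)$\cdot\discFullVectNorm{\discVectFunOne}$, and the discrete inf-sup \eqref{eq:discrete.infsup} then converts the supremum over $\discVectFunOne$ into a bound on $\discScalarNorm{\discScalarSol-\projL{\polyDegree}\scalarSol}$.

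For $\mathfrak{T}_{1,\element}$ with $\sobIndexConj>2$ we cannot use \eqref{eq:disc.norm.simeq.cont.norm}. We instead reuse the argument of \eqref{eq:aux.for.u.g2}: $L^2$ orthogonality, Cauchy--Schwarz, the $L^2$ approximation bound in terms of the $W^{\polyDegree_{\fluxFunction},1}$-seminorm, and \eqref{eq:poly.sob.embedding}. This gives $h_\element^{\polyDegree_{\fluxFunction}}|\fluxFunction(\vectSol)|_{\vectSobSpace{\polyDegree_{\fluxFunction}}{\sobIndex}{\element}}\locDiscFullVectNorm{\discVectFunOne}$, and a discrete $(\sobIndex,\sobIndexConj)$-Hölder inequality yields $(\tilde{\mathcal R}^{(1)}_h)^{1/\sobIndex}\discFullVectNorm{\discVectFunOne}$.

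For $\mathfrak{T}_{3,\element}$ we use \eqref{eq:sE.Holder.continuity} with $\discVectFunTwoTilda=0$, which gives $\mathfrak{T}_{3,\element}\lesssim s_\element(\discVectSol^\perp,\discVectSol^\perp)^{1/\sobIndex}\locDiscFullVectNorm{\discVectFunOne}$. Lemma~\ref{lem:sE.approx.for.u} holds for all $\sobIndexConj$, so after summing we get a contribution of order $(\tilde{\mathcal R}^{(2)}_h)^{\sobIndexConj/(2\sobIndex)}$ plus $\big(\sum_\element [s_\element(\discVectSol^\perp,\boldsymbol{\xi}_h^\perp)-s_\element(\vectSol_I^\perp,\boldsymbol{\xi}_h^\perp)]\big)^{1/\sobIndex}$.

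For $\mathfrak{T}_{2,\element}$ Lemma~\ref{lem:sigma.approx.for.u} is unavailable, since it was stated only for $\sobIndexConj\le2$. This is the main obstacle, and the plan is the following.
\begin{enumerate}
\item Apply \eqref{eq:diffusive.flux.properties.hc} with $r=\sobIndexConj>2$, so that $\underline{r}=2$:
\[
|\fluxFunction(\vectSol)-\fluxFunction(\vectProjL{\polyDegree}\discVectSol)|\lesssim(|\vectSol|^{\sobIndexConj}+|\vectProjL{\polyDegree}\discVectSol|^{\sobIndexConj})^{\frac{\sobIndexConj-2}{\sobIndexConj}}|\vectSol-\vectProjL{\polyDegree}\discVectSol|.
\]
\item Use a Hölder inequality with exponents $(\frac{\sobIndexConj}{\sobIndexConj-2},\sobIndexConj,\sobIndexConj)$, which is consistent with $\frac{\sobIndexConj-2}{\sobIndexConj}+\frac{1}{\sobIndexConj}+\frac{1}{\sobIndexConj}=1$.
\item Bound the first factor globally via \eqref{eq:cont.sols.stability}, \eqref{eq:stab.bound.disc.sols} and \eqref{eq:disc.norm.simeq.cont.norm}-type bounds on polynomials. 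This should go through \eqref{eq:poly.sob.embedding} and the local $L^2$ equivalence, and it introduces the dependence on $\norm{\lebSpace{\sobIndexConj}{\domain}}{\loadTerm}$.
\item Split the second factor as $\|\vectSol-\vectProjL{\polyDegree}\vectSol_I\|+\|\vectProjL{\polyDegree}\boldsymbol{\xi}_h\|$. The first piece is bounded by \eqref{eq:projL.Fortin.approx}, giving $(\tilde{\mathcal R}^{(2)}_h)^{1/2}$. The second is bounded by $\discFullVectNorm{\boldsymbol{\xi}_h}$, which Theorem~\ref{thm:discrete.error.g2} controls by $(\tilde{\mathcal R}^{(1)}_h+\tilde{\mathcal R}^{(2)}_h+(\tilde{\mathcal R}^{(2)}_h)^{\sobIndexConj/2})^{1/\sobIndexConj}$. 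This produces the terms $(\tilde{\mathcal R}^{(1)}_h)^{1/\sobIndexConj}$, $(\tilde{\mathcal R}^{(2)}_h)^{1/\sobIndexConj}$ and $(\tilde{\mathcal R}^{(2)}_h)^{1/2}$.
\item Handle the remaining $\vectLebSpace{\sobIndexConj}{}$-norm of $\vectProjL{\polyDegree}\discVectFunOne$ through the definition of $\locDiscFullVectNorm{\cdot}$.
\end{enumerate}

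Finally, we bound the monotone differences appearing in $\mathfrak{T}_3$. By \eqref{eq:error.equation.g2}, \eqref{eq:consistency.error.g2} and Theorem~\ref{thm:discrete.error.g2}, $a_h(\discVectSol,\boldsymbol{\xi}_h)-a_h(\vectSol_I,\boldsymbol{\xi}_h)\lesssim\tilde{\mathcal R}^{(1)}_h+\tilde{\mathcal R}^{(2)}_h+(\tilde{\mathcal R}^{(2)}_h)^{\sobIndexConj/2}$. Raising this to the power $1/\sobIndex$ gives $(\tilde{\mathcal R}^{(1)}_h)^{1/\sobIndex}$ and $(\tilde{\mathcal R}^{(2)}_h)^{1/\sobIndex}$. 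Since $\frac{\sobIndexConj}{2\sobIndex}=\frac{1}{2(\sobIndex-1)}$, it also gives $(\tilde{\mathcal R}^{(2)}_h)^{1/(2(\sobIndex-1))}$. Collecting all contributions and applying \eqref{eq:discrete.infsup} yields \eqref{eq:discrete.error.scalar.g2}.
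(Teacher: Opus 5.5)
Your argument follows the paper's proof: the same three-term splitting, $\mathfrak{T}_{1,\element}$ via the argument behind \eqref{eq:aux.for.u.g2}, $\mathfrak{T}_{3,\element}$ via \eqref{eq:sE.Holder.continuity} and Lemma~\ref{lem:sE.approx.for.u}, and Theorem~\ref{thm:discrete.error.g2} to close. However, the very first identity rests on a step that is not available when $\sobIndexConj>2$. You obtain $b(\discVectFunOne,\discScalarSol-\projL{\polyDegree}\scalarSol)=a(\vectSol,\discVectFunOne)-a_h(\discVectSol,\discVectFunOne)$ by testing \eqref{eq:continuous.problem.dual.weak.1} with $\vectTestFun=\discVectFunOne$, copying the case $\sobIndexConj\le2$. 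By Remark~\ref{rem:conforming}, for $\sobIndexConj>2$ the inclusion $\discVectSpace\subset\vectSolSpace$ may fail on meshes with non-convex elements. Then $\discVectFunOne$ need not lie in $\vectLebSpace{\sobIndexConj}{\domain}$ and is not an admissible test function for the continuous problem. The identity is still true, but you must derive it as in the proof of \eqref{eq:error.equation.g2}. The assumption $\polyDegree_{\fluxFunction}\ge1$ gives $\grad\scalarSol=\fluxFunction(\vectSol)\in\vectSobSpace{1}{\sobIndex}{\domain_h}\subset\vectLebSpace{2}{\domain}$. Since $\scalarSol=0$ on $\domainBoundary$ and $\discVectFunOne\in\Hdiv{\domain}$, integration by parts yields $b(\discVectFunOne,\scalarSol)=-\int_{\domain}\fluxFunction(\vectSol)\cdot\discVectFunOne$. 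This is an $L^2$ pairing, not the form $a$ on $\vectSolSpace\times\vectSolSpace$. It is one of the places where $\polyDegree_{\fluxFunction}\ge1$ (rather than $\ge0$, as for $\sobIndexConj\le2$) is essential, and you should say so.

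There is also a smaller omission. To feed the stabilization differences from Lemma~\ref{lem:sE.approx.for.u} into Theorem~\ref{thm:discrete.error.g2}, you need $\sum_{\element\in\domain_h}[s_{\element}(\discVectSol^{\perp},\boldsymbol{\xi}_h^{\perp})-s_{\element}(\vectSol_{I}^{\perp},\boldsymbol{\xi}_h^{\perp})]\le a_h(\discVectSol,\boldsymbol{\xi}_h)-a_h(\vectSol_{I},\boldsymbol{\xi}_h)$. This follows by discarding the nonnegative polynomial part of $a_h$ (Lemma~\ref{lem:aE.monotonicity}), but you only use it implicitly.

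On the positive side, your treatment of $\mathfrak{T}_{2,\element}$ is more direct than the paper's. You split $\vectSol-\vectProjL{\polyDegree}\discVectSol=(\vectSol-\vectProjL{\polyDegree}\vectSol_{I})-\vectProjL{\polyDegree}\boldsymbol{\xi}_h$ and use $\norm{\vectLebSpace{\sobIndexConj}{\element}}{\vectProjL{\polyDegree}\boldsymbol{\xi}_h}\le\locDiscFullVectNorm{\boldsymbol{\xi}_h}$, which holds by the definition of the local norm. This avoids the paper's detour through $L^2$ norms and \eqref{eq:r-norm.virtual.fun}. For the same reason, the first H\"older factor needs only $\norm{\vectLebSpace{\sobIndexConj}{\element}}{\vectProjL{\polyDegree}\discVectSol}\le\locDiscFullVectNorm{\discVectSol}$ together with \eqref{eq:cont.sols.stability} and \eqref{eq:stab.bound.disc.sols}. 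No polynomial embedding or $L^2$ equivalence is required, and \eqref{eq:disc.norm.simeq.cont.norm} is unavailable for $\sobIndexConj>2$ anyway.
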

\begin{proof}
  Let $\discVectFunOne \in \discVectSpace$. 
  Using the discrete equation \eqref{eq:discrete.problem.line1} and the commuting property $b(\discVectFunOne, \projL{\polyDegree} \scalarSol) = b(\discVectFunOne, \scalarSol)$, we have $b(\discVectFunOne, \discScalarSol - \projL{\polyDegree} \scalarSol) = \int_{\domain} \fluxFunction(\vectSol) \cdot \discVectFunOne - a_h(\discVectSol, \discVectFunOne)$, where we have also applied integration by parts on $b(\discVectFunOne, \scalarSol)$ since $\fluxFunction(\vectSol) \in \vectSobSpace{1}{\sobIndex}{\domain_h} \subset \vectLebSpace{2}{\domain}$.
  By adding and subtracting $\int_{\element} \vectProjL{\polyDegree}\fluxFunction(\vectSol) \cdot \discVectFunOne$ on each $\element \in \domain_h$, and exploiting the properties of $\vectProjL{\polyDegree}$, we decompose the error as
  \begin{equation}\label{eq:scalar.err.estimate.initial.g2}
    \begin{aligned}
      b(\discVectFunOne,\discScalarSol - \projL{\polyDegree} \scalarSol)
      &=
      \sum_{\element \in \domain_h}
      \int_{\element} (\fluxFunction(\vectSol) - \vectProjL{\polyDegree}\fluxFunction(\vectSol)) \cdot \discVectFunOne
      + \sum_{\element \in \domain_h} \int_{\element}(\fluxFunction(\vectSol) -\fluxFunction(\vectProjL{\polyDegree}\discVectSol)) \cdot \vectProjL{\polyDegree} \discVectFunOne \\
      &\quad
      + \sum_{\element \in \domain_h} -s_{\element} (\discVectSol^{\perp}, \discVectFunOne - \vectProjL{\polyDegree} \discVectFunOne)  \\
      &\eqqcolon
      \sum_{\element \in \domain_h}\mathfrak{T}_{1,\element} + \sum_{\element \in \domain_h}\mathfrak{T}_{2,\element} + \sum_{\element \in \domain_h}\mathfrak{T}_{3,\element}.
    \end{aligned}
  \end{equation}
  We bound each of the above terms separately.

  \medskip
  \noindent
  \textbf{First term.}
  Observing that $\mathfrak{T}_{1,\element}$ is identical to the first term in \eqref{eq:error.estimate.g2.rhs.T1E.initial}, we can apply the same argument that leads to \eqref{eq:aux.for.u.g2} and write, for any $\element \in \domain_h$,
  \begin{align*}
    \mathfrak{T}_{1,\element}
    \leq
    C(\polyDegree,\sobIndex,\polyDegree_{\fluxFunction},\rho) h_{\element}^{\polyDegree_{\fluxFunction}}
    \seminorm{\vectSobSpace{\polyDegree_{\fluxFunction}}{\sobIndex}{\element}}{\fluxFunction (\vectSol)}
    \locDiscFullVectNorm{\discVectFunOne}
  \end{align*}
  Summing over $\element \in \domain_h$ and applying a discrete $(\sobIndex,\sobIndexConj)$-H{\"o}lder inequality, we conclude that
  \begin{equation}\label{eq:scalar.err.estimate.T1.g2}
    \sum_{\element \in \domain_h}
    \mathfrak{T}_{1,\element}
    \leq
    C(\polyDegree,\sobIndex,\polyDegree_{\fluxFunction},\rho) \left( \tilde{\mathcal{R}}_h^{(1)} \right)^{\frac{1}{\sobIndex}}
    \discFullVectNorm{\discVectFunOne}
  \end{equation}

  \medskip
  \noindent
  \textbf{Second term.}
  Applying \eqref{eq:diffusive.flux.properties.original.hc} together with a $(\frac{\sobIndexConj}{\sobIndexConj-2},\sobIndexConj,\sobIndexConj)$-H{\"o}lder inequality, we get for any $\element \in \domain_h$
  \begin{align*}
    \mathfrak{T}_{2,\element}
    &\leq C(\sobIndex)
    \left[\norm{\vectLebSpace{\sobIndexConj}{\element}}{\vectSol}^{\sobIndexConj} + \norm{\vectLebSpace{\sobIndexConj}{\element}}{\vectProjL{\polyDegree}\discVectSol}^{\sobIndexConj}
    \right]^{\frac{\sobIndexConj-2}{\sobIndexConj}}
    \norm{\vectLebSpace{\sobIndexConj}{\element}}{\vectSol - \vectProjL{\polyDegree}\discVectSol}
    \norm{\vectLebSpace{\sobIndexConj}{\element}}{\vectProjL{\polyDegree} \discVectFunOne} \\
    &\leq C(\sobIndex)
    \left[\norm{\vectLebSpace{\sobIndexConj}{\element}}{\vectSol}^{\sobIndexConj} + \locDiscFullVectNorm{\discVectSol}^{\sobIndexConj} \right]^{\frac{\sobIndexConj-2}{\sobIndexConj}}
    \left[ \norm{\vectLebSpace{\sobIndexConj}{\element}}{\vectSol - \vectProjL{\polyDegree}\vectSol} + \norm{\vectLebSpace{\sobIndexConj}{\element}}{\vectProjL{\polyDegree}(\vectSol -
    \discVectSol)} \right]
    \norm{\vectLebSpace{\sobIndexConj}{\element}}{\vectProjL{\polyDegree} \discVectFunOne} \\
    &\leq C(\polyDegree,\sobIndex,\polyDegree_{\vectSol},\rho)
    \left[\norm{\vectLebSpace{\sobIndexConj}{\element}}{\vectSol}^{\sobIndexConj} + \locDiscFullVectNorm{\discVectSol}^{\sobIndexConj} \right]^{\frac{\sobIndexConj-2}{\sobIndexConj}}
    \left[
      h_{\element}^{\polyDegree_{\vectSol}} \seminorm{\vectSobSpace{\polyDegree_{\vectSol}}{\sobIndexConj}{\element}}{\vectSol}
      +
      \euNorm{\element}^{\frac{1}{\sobIndexConj} - \frac{1}{2}}
    \norm{\vectLebSpace{2}{\element}}{\vectSol - \discVectSol} \right]
    \locDiscFullVectNorm{\discVectFunOne},
  \end{align*}
  where in the last inequality we have used \eqref{eq:poly.sob.embedding} together with $\vectProjL{\polyDegree}$ stability and approximation property.
  We preliminarly observe that, setting $\boldsymbol{\xi}_h \coloneqq \discVectSol -\vectSol_{I}$, and applying first \eqref{eq:Fortin.approx.r-norm} and then \eqref{eq:r-norm.virtual.fun} together with \eqref{eq:poly.sob.embedding}, we have
  \begin{align*}
    \norm{\vectLebSpace{2}{\element}}{\vectSol - \discVectSol}
    &\leq
    C(\polyDegree,\polyDegree_{\vectSol},\rho) h_{\element}^{\polyDegree_{\vectSol}} \seminorm{\vectSobSpace{\polyDegree_{\vectSol}}{2}{\element}}{\vectSol} +
    \norm{\vectLebSpace{2}{\element}}{\boldsymbol{\xi}_h} \\
    &\leq
    C(\polyDegree,\polyDegree_{\vectSol},\rho) \euNorm{\element}^{\frac{1}{2} - \frac{1}{\sobIndexConj}} \left[ h_{\element}^{\polyDegree_{\vectSol}}
      \seminorm{\vectSobSpace{\polyDegree_{\vectSol}}{\sobIndexConj}{\element}}{\vectSol} + h_{\element}^{\frac{1}{\sobIndexConj}} \norm{\lebSpace{\sobIndexConj}{\partial
    \element}}{\boldsymbol{\xi}_h \cdot \normalElement} \right] \\
    &\leq
    C(\polyDegree,\polyDegree_{\vectSol},\rho) \euNorm{\element}^{\frac{1}{2} - \frac{1}{\sobIndexConj}} \left[ h_{\element}^{\polyDegree_{\vectSol}}
    \seminorm{\vectSobSpace{\polyDegree_{\vectSol}}{\sobIndexConj}{\element}}{\vectSol} + \locDiscFullVectNorm{\boldsymbol{\xi}_h} \right].
  \end{align*}
  Therefore,
  \begin{align*}
    \mathfrak{T}_{2,\element}
    &\leq C(\polyDegree,\sobIndex,\polyDegree_{\vectSol},\rho)
    \left[\norm{\vectLebSpace{\sobIndexConj}{\element}}{\vectSol}^{\sobIndexConj} + \locDiscFullVectNorm{\discVectSol}^{\sobIndexConj} \right]^{\frac{\sobIndexConj-2}{\sobIndexConj}}
    \left[
      h_{\element}^{\polyDegree_{\vectSol}} \seminorm{\vectSobSpace{\polyDegree_{\vectSol}}{\sobIndexConj}{\element}}{\vectSol}
      +
      \locDiscFullVectNorm{\boldsymbol{\xi}_h} \right]
    \locDiscFullVectNorm{\discVectFunOne}.
  \end{align*}
  Summing the above inequality over $\element \in \domain_h$, applying a discrete $(\frac{\sobIndexConj}{\sobIndexConj-2},\sobIndexConj,\sobIndexConj)$-H{\"o}lder inequality, and recalling the bounds \eqref{eq:cont.sols.stability}, \eqref{eq:stab.bound.disc.sols}, and \eqref{eq:discrete.error.g2} we arrive at
  \begin{equation}\label{eq:scalar.err.estimate.T2.g2}
    \begin{aligned}
      \sum_{\element \in \domain_h}
      \mathfrak{T}_{2,\element}
      &\leq
      C(\polyDegree,\sobIndex,\polyDegree_{\vectSol},\polyDegree_{\fluxFunction},\seminorm{\vectSobSpace{1}{\sobIndexConj}{\domain_h}}{\vectSol},\norm{\lebSpace{\sobIndexConj}{\domain}}{\loadTerm},\rho) \\ 
      &\quad \times
      \left(
      \left(\tilde{\mathcal{R}}_h^{(1)}\right)^{\frac{1}{\sobIndexConj}} + \left(\tilde{\mathcal{R}}_h^{(2)}\right)^{\frac{1}{\sobIndexConj}} + \left(\tilde{\mathcal{R}}_h^{(2)}\right)^{\frac{1}{2}} \right)   \discFullVectNorm{\discVectFunOne}
    \end{aligned}
  \end{equation}

  \medskip
  \noindent
  \textbf{Third term.}
  From \eqref{eq:sE.Holder.continuity} and \eqref{eq:sE.approx.for.u} it follows, for any $\element \in \domain_h$,
  \begin{equation}\label{eq:scalar.err.estimate.T3.g2.init}
    \begin{aligned}
      \mathfrak{T}_{3,\element}
      &\leq
      C(\sobIndex) \left[s_{\element}(\discVectSol^{\perp},\discVectSol^{\perp}) \right]^{\frac{\sobIndexConj-1}{\sobIndexConj}}
      \locDiscFullVectNorm{\discVectFunOne} \\ 
      &\leq C(\polyDegree,\sobIndex,\polyDegree_{\vectSol},\rho) \locDiscFullVectNorm{\discVectFunOne} 
      \Big[ h_{\element}^{\sobIndexConj \polyDegree_{\vectSol}} \seminorm{\vectSobSpace{\polyDegree_{\vectSol}}{\sobIndexConj}{\element}}{\vectSol}^{\sobIndexConj} \\ 
      &\quad 
      + \left[
        s_{\element}(\discVectSol^{\perp},\boldsymbol{\xi}_h^{\perp}) - s_{\element}(\vectProjF {\polyDegree} \vectSol -
        \vectProjL{\polyDegree} \vectProjF {\polyDegree} \vectSol,\boldsymbol{\xi}_h^{\perp}) \right]\Big]^{\frac{\sobIndexConj-1}{\sobIndexConj}}.
    \end{aligned}
  \end{equation}
  We preliminarly observe that from \eqref{eq:aE.monotonicity} and combining \eqref{eq:error.equation.g2}, \eqref{eq:consistency.error.g2} and \eqref{eq:discrete.error.g2} we have
  \begin{align*}
    &\sum_{\element \in \domain_h} \left[s_{\element}(\discVectSol^{\perp},\boldsymbol{\xi}_h^{\perp}) - s_{\element}((\vectProjF {\polyDegree} \vectSol)^{\perp},\boldsymbol{\xi}_h^{\perp})\right] \\
    &\quad \leq
    a_{h}(\discVectSol,\boldsymbol{\xi}_h) - a_{h}(\vectProjF {\polyDegree} \vectSol,\boldsymbol{\xi}_h) 
    \\ 
    &\quad 
    \leq
    C(\polyDegree,\sobIndex, \polyDegree_{\vectSol}, \polyDegree_{\fluxFunction},\seminorm{\vectSobSpace{1}{\sobIndexConj}{\domain_h}}{\vectSol},\norm{\lebSpace{\sobIndexConj}{\domain}}{\loadTerm},\rho)
    \left( \tilde{\mathcal{R}}^{(1)}_h + \tilde{\mathcal{R}}^{(2)}_h + \left(\tilde{\mathcal{R}}^{(2)}_h\right)^{\frac{\sobIndexConj}{2}} \right).
  \end{align*}
  Therefore, summing \eqref{eq:scalar.err.estimate.T3.g2.init} over $\element \in \domain_h$, applying a discrete $(\sobIndexConj,\sobIndex)$-H{\"o}lder inequality, and the above bound, we conclude
  \begin{equation}\label{eq:scalar.err.estimate.T3.g2}
    \begin{aligned}
      \sum_{\element \in \domain_h}
      \mathfrak{T}_{3,\element}
      &\leq
      C(\polyDegree,\sobIndex, \polyDegree_{\vectSol}, \polyDegree_{\fluxFunction},\seminorm{\vectSobSpace{1}{\sobIndexConj}{\domain_h}}{\vectSol},\norm{\lebSpace{\sobIndexConj}{\domain}}{\loadTerm},\rho) \\ 
      &\quad \times
      \left( \left(\tilde{\mathcal{R}}^{(1)}_h\right)^{\frac{1}{\sobIndex}} + \left(\tilde{\mathcal{R}}^{(2)}_h\right)^{\frac{1}{\sobIndex}} + \left(\tilde{\mathcal{R}}^{(2)}_h\right)^{\frac{1}{2(\sobIndex-1)}} \right)
      \discFullVectNorm{\discVectFunOne}.
    \end{aligned}
  \end{equation}

  \medskip
  \noindent
  \textbf{Conclusion.}
  Plugging \eqref{eq:scalar.err.estimate.T1.g2}, \eqref{eq:scalar.err.estimate.T2.g2} and \eqref{eq:scalar.err.estimate.T3.g2} into \eqref{eq:scalar.err.estimate.initial.g2} and combining the result with the discrete inf-sup inequality \eqref{eq:discrete.infsup} yields the desired result.
\end{proof}
As a direct consequence of Theorem~\ref{thm:discrete.error.scalar.g2}, the approximation properties of $\projL{\polyDegree}$, and the triangle inequality, we obtain the following result.
\begin{corollary}\label{cor:total.error.scalar.g2}
  Under the same assumptions of Theorem~\ref{thm:discrete.error.scalar.g2} and assuming furthermore 
  \begin{itemize}
    \item[] $\scalarSol \in \sobSpace{\polyDegree_{\scalarSol}}{\sobIndex}{\domain_h}$ for $\polyDegree_{\scalarSol} \in \{0,1,\ldots,\polyDegree +1 \}$,
  \end{itemize}
  we have that
  \begin{equation}\label{eq:total.error.scalar.g2}
    \begin{aligned}
      \discScalarNorm{\scalarSol - \discScalarSol}
      &\lesssim 
      \left( \tilde{\mathcal{R}}_h^{(1)} \right)^{\frac{1}{\sobIndex}} + \left(\tilde{\mathcal{R}}^{(2)}_h\right)^{{\frac{1}{\sobIndex}}}
      + \left(\tilde{\mathcal{R}}_h^{(1)}\right)^{\frac{1}{\sobIndexConj}} + \left(\tilde{\mathcal{R}}_h^{(2)}\right)^{\frac{1}{\sobIndexConj}} + \left(\tilde{\mathcal{R}}_h^{(2)}\right)^{\frac{1}{2}}
       + \left(\tilde{\mathcal{R}}^{(2)}_h\right)^{\frac{1}{2(\sobIndex-1)}} \\
       &\quad + \left(\sum_{\element \in \domain_h} h_{\element}^{\sobIndex \polyDegree_{\scalarSol}} \seminorm{\sobSpace{\polyDegree_{\scalarSol}}{\sobIndex}{\element}}{\scalarSol} \right)^{\frac{1}{\sobIndex}},
     \end{aligned}
  \end{equation}
  with a hidden constant depending only on $\seminorm{\vectSobSpace{1}{\sobIndexConj}{\domain_h}}{\vectSol}$, $\norm{\lebSpace{\sobIndexConj}{\domain}}{\loadTerm}$, $\polyDegree$, $\sobIndexConj$, $\polyDegree_{\vectSol}$, $\polyDegree_{\fluxFunction}$, $\polyDegree_{\scalarSol}$, and $\rho$.

  In particular, assuming full regularity, i.e., $\polyDegree_{\vectSol} = \polyDegree + 1$, $\polyDegree_{\fluxFunction} = \polyDegree + 1$, and $\polyDegree_{\scalarSol} = \polyDegree+1$, estimate \eqref{eq:total.error.scalar.g2} yields the explicit asymptotic convergence rate
  \begin{equation}\label{eq:asymptotic.rate.scalar.g2}
    \discScalarNorm{\scalarSol - \discScalarSol} \lesssim h^{(\sobIndex-1)(\polyDegree+1)}.
  \end{equation}
\end{corollary}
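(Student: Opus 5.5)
The statement is Corollary~\ref{cor:total.error.scalar.g2}, so the plan is a short triangle-inequality argument on top of Theorem~\ref{thm:discrete.error.scalar.g2}. The only real work is tracking the exponents in the rate claim.

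\textbf{Step 1: splitting.} I would write
\[
\discScalarNorm{\scalarSol - \discScalarSol} \leq \norm{\lebSpace{\sobIndex}{\domain}}{\scalarSol - \projL{\polyDegree}\scalarSol} + \discScalarNorm{\discScalarSol - \projL{\polyDegree}\scalarSol}.
\]
The second term is bounded directly by \eqref{eq:discrete.error.scalar.g2}; this produces the first six contributions on the right-hand side of \eqref{eq:total.error.scalar.g2}. For the first term, I would localize the $L^{\sobIndex}$ norm to the elements. On each $\element\in\domain_h$, the scalar version of \eqref{eq:vectProjL.approx.element} with $(m,n,r)=(0,\polyDegree_{\scalarSol},\sobIndex)$ gives
\[
\norm{\lebSpace{\sobIndex}{\element}}{\scalarSol-\locProjL{\polyDegree}\scalarSol}^{\sobIndex}\lesssim h_{\element}^{\sobIndex\polyDegree_{\scalarSol}}\seminorm{\sobSpace{\polyDegree_{\scalarSol}}{\sobIndex}{\element}}{\scalarSol}^{\sobIndex}.
\]
Summing over elements and taking the $1/\sobIndex$ power yields the last term of \eqref{eq:total.error.scalar.g2}. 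The seminorm in the displayed statement is presumably meant to carry the power $\sobIndex$.

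\textbf{Step 2: the full-regularity rate.} Set $\polyDegree_{\vectSol}=\polyDegree_{\fluxFunction}=\polyDegree_{\scalarSol}=\polyDegree+1$ and use $h_{\element}\le h$. The regularity seminorms are absorbed into the constant, so $\tilde{\mathcal R}_h^{(1)}\lesssim h^{\sobIndex(\polyDegree+1)}$ and $\tilde{\mathcal R}_h^{(2)}\lesssim h^{2(\polyDegree+1)}$. Substituting into \eqref{eq:total.error.scalar.g2}, each term becomes a power of $h^{\polyDegree+1}$ with exponent:
\begin{itemize}
\item $1$ from $(\tilde{\mathcal R}_h^{(1)})^{1/\sobIndex}$;
\item $2/\sobIndex$ from $(\tilde{\mathcal R}_h^{(2)})^{1/\sobIndex}$;
\item $\sobIndex/\sobIndexConj=\sobIndex-1$ from $(\tilde{\mathcal R}_h^{(1)})^{1/\sobIndexConj}$;
\item $2/\sobIndexConj$ from $(\tilde{\mathcal R}_h^{(2)})^{1/\sobIndexConj}$;
\item $1$ from $(\tilde{\mathcal R}_h^{(2)})^{1/2}$;
\item $1/(\sobIndex-1)$ from $(\tilde{\mathcal R}_h^{(2)})^{1/(2(\sobIndex-1))}$;
\item $1$ from the interpolation term.
\end{itemize}

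\textbf{Step 3: identifying the dominant exponent.} Since $\sobIndexConj>2$, we have $1<\sobIndex<2$, hence $0<\sobIndex-1<1$. Each exponent in the list is then at least $\sobIndex-1$:
\begin{itemize}
\item $2/\sobIndex>1$, and the three exponents equal to $1$ trivially exceed $\sobIndex-1$;
\item $2/\sobIndexConj=2(\sobIndex-1)/\sobIndex>\sobIndex-1$, because $\sobIndex<2$;
\item $1/(\sobIndex-1)>1$.
\end{itemize}
For $h\le C$, every term is therefore bounded by a constant times $h^{(\sobIndex-1)(\polyDegree+1)}$, which gives \eqref{eq:asymptotic.rate.scalar.g2}.

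This exponent comparison is the only step that needs care. In particular, one must check that the $(\tilde{\mathcal R}_h^{(1)})^{1/\sobIndexConj}$ term is the slowest one, and that no term coming from $\tilde{\mathcal R}_h^{(2)}$ is slower.
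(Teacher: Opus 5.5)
Your proposal is correct and follows the paper's own route: Theorem~\ref{thm:discrete.error.scalar.g2} for $\|u_h-\pi_{0,h}^{k}u\|_{L^p(\Omega)}$, the elementwise $L^p$-approximation of $\pi_{0,h}^{k}$ for $\|u-\pi_{0,h}^{k}u\|_{L^p(\Omega)}$, and the triangle inequality, which is all the paper invokes. Your exponent bookkeeping correctly identifies $(\tilde{\mathcal{R}}_h^{(1)})^{1/p'}$ as the slowest term, giving $h^{(p-1)(k+1)}$. You are also right that the seminorm in the last term of the statement should carry the power $p$.
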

%
\subsection{Summary of convergence rates} \label{sec:global.summary}
Now that the error analysis for both variables and regimes is complete, we collect and summarize the asymptotic convergence rates established in Corollaries~\ref{cor:total.error}, \ref{cor:total.error.g2}, \ref{cor:total.error.scalar}, and~\ref{cor:total.error.scalar.g2} in Table~\ref{tab:convergence.rates}.
\begin{table}[h]
  \centering
  \bgroup
  \def\arraystretch{1.4}
  \begin{tabular}{c|cc}
    \hline
    & $\discFullVectNorm{\vectSol - \discVectSol}$ & $\discScalarNorm{\scalarSol - \discScalarSol}$ \\
    \hline
    $1 < \sobIndexConj \leq 2$ & $h^{\frac{\sobIndexConj}{2} (\polyDegree +1)}$  & $h^{\frac{1}{\sobIndex-1} (\polyDegree +1)}$ \\
    $\sobIndexConj > 2$        & $h^{\frac{1}{\sobIndexConj-1} (\polyDegree +1)}$ & $h^{(\sobIndex-1) (\polyDegree +1)}$ \\
    \hline
  \end{tabular}
  \egroup
  \caption{Dominant error components assuming high regularity for both $\vectSol$ and $\scalarSol$.}
  \label{tab:convergence.rates}
\end{table}
As show in Table~\ref{tab:convergence.rates}, for the lowest-order case ($\polyDegree = 0$), the derived convergence rates match the theoretical bounds established for classical conforming mixed finite element methods \cite{FARHLOUL.MANOUZI:2000} in the regime $1<\sobIndexConj \leq 2$. Conversely, for $\sobIndexConj > 2$, our analysis yields a rate of $\mathcal{O}(h^{\frac{1}{\sobIndexConj-1}})$ for the flux (instead of $\mathcal{O}(h^{\frac{2}{\sobIndexConj}})$) and $\mathcal{O}(h^{\sobIndex-1})$ for the scalar solution (instead of $\mathcal{O}(h^{1})$). 

While these bounds are lower than those achieved in \cite{FARHLOUL.MANOUZI:2000}, the discrepancy stems from the fact that their analysis fundamentally relies on the conforming nature of the discrete functional spaces. In our polytopal framework, as pointed out in Remark~\ref{rem:conforming}, the presence of general non-convex elements causes the discrete space to lose conformity within the range $1<p<2$. Nevertheless, when compared to other non-conforming approaches, such as the Hybrid High-Order methods analyzed in \cite{DIPIETRO.DRONIOU:2017}, our scheme recovers the exact same convergence rates for the scalar variable.
\section{Numerical tests}\label{sec:numerical.tests}
In this section, we investigate the numerical performance of the proposed scheme \eqref{eq:discrete.problem}. 
We first describe the  Ka\v{c}anov-based linearization strategy adopted to solve the resulting non-linear discrete problem.
In Test \ref{sub:test1} we consider the $p$-Laplace equation with a manufactured smooth solution in order to numerically validate the a priori error estimates derived in Section~\ref{sec:a-priori}. The study covers both ranges $1<p'\leq 2$ and $p'>2$, and is carried out on families of meshes consisting of convex and non-convex polygonal elements. This allows us to assess the robustness and accuracy of the method with respect to both the non-linear exponent and the mesh geometry.
In Test \ref{sub:test2}, we further assess the performance of the proposed scheme on a benchmark problem  from torsional creep.

\subsection{Relaxed Ka\v{c}anov method}
\label{sub:kacanov}
We first describe the linearization strategy based on a fixed-point Ka\v{c}anov  iteration \cite{kacanov} adopted to solve the non-linear $p$-Laplace equation. 
To this end, we introduce a linearized version of the diffusive flux
function \eqref{eq:def.sigma} and of the associated operator
\eqref{eq:def.a}. More precisely, for any $p > 1$ and for any fixed $\boldsymbol{\chi} \in \vectSolSpace$ we set
\[
\begin{aligned}
\fluxFunction_{p'}(\boldsymbol{\chi}; \boldsymbol{w})  &\coloneqq \euNorm{\boldsymbol{\chi}}^{p'-2} \boldsymbol{w} 
&\quad &\text{for all $\boldsymbol{w} \in \vectSolSpace$,} 
\\
a_{p'}(\boldsymbol{\chi}; \boldsymbol{w},\vectTestFun) &\coloneqq \int_{\domain}\fluxFunction_{p'}(\boldsymbol{\chi}; \boldsymbol{w}) \cdot \vectTestFun = \int_{\domain} \euNorm{\boldsymbol{\chi}}^{\sobIndexConj-2} \boldsymbol{w} \cdot
  \vectTestFun, 
&\quad &\text{for all $\boldsymbol{w}$, $\vectTestFun \in \vectSolSpace$.}
\end{aligned}
\]
Similarly, $a_{h, p'}(\cdot; \cdot,  \cdot)$ will denote the VEM counterpart of $a_{p'}(\cdot; \cdot,  \cdot)$ obtained generalizing in the linear setting the construction in Subsection \ref{sub:disc-operators}.

Consider the discrete Problem \eqref{eq:discrete.problem} associated with the $p$-Laplace equation with exponent $p > 1$.
Then, in the light of the notation introduced above, Problem \eqref{eq:discrete.problem} can be formulated as follows:

\medskip
\noindent
Find $\discVectSol \in \discVectSpace$ and $\discScalarSol \in \discScalarSpace$ such that
\begin{subequations}\label{eq:discrete.problemK}
  \begin{alignat}{2}
    a_{h, p'}(\discVectSol; \discVectSol,\discVectTestFun) + b(\discVectTestFun, \discScalarSol) &= 0
    \qquad
    &&\text{for all } \discVectTestFun \in \discVectSpace, \label{eq:discrete.problemK1} \\
    -b(\discVectSol,\discScalarTestFun) &= \int_{\domain} \loadTerm \discScalarTestFun
    \qquad
    &&\text{for all } \discScalarTestFun \in \discScalarSpace. \label{eq:discrete.problemK2}
  \end{alignat}
\end{subequations}
To solve the non-linear system \eqref{eq:discrete.problemK} we adopt the  \textit{relaxed Ka\v{c}anov method} described below.

\vspace{0.2cm}

\noindent
Let $(\discVectSol^{\rm D}, \discScalarSol^{\rm D})$ be the solution of the discrete Darcy problem obtained from \eqref{eq:discrete.problemK} by setting $p=p'=2$.

\noindent
Starting from $(\discVectSol^0, \discScalarSol^0) =(\discVectSol^{\rm D}, \discScalarSol^{\rm D})$, until convergence repeat the following steps:
 
\texttt{STEP 1.} For $n \geq 0$ solve
\[
  \begin{aligned}
     a_{h, p'}(\discVectSol^n; \widetilde{\discVectSol}^{n+1},\discVectTestFun) + b(\discVectTestFun, \widetilde{\discScalarSol}^{n+1}) &= 0
    \qquad
    &&\text{for all } \discVectTestFun \in \discVectSpace,
    \\
     -b(\widetilde{\discVectSol}^{n+1},\discScalarTestFun) &= \int_{\domain} \loadTerm \discScalarTestFun
    \qquad
    &&\text{for all } \discScalarTestFun \in \discScalarSpace.
  \end{aligned}
\]

\texttt{STEP 2.} Given a \textit{relaxation parameter} $\omega \in (0, 1]$, set
\[
\discVectSol^{n+1} = \omega \widetilde{\discVectSol}^{n+1} + (1 - \omega) \discVectSol^{n} \,,
\qquad
\discScalarSol^{n+1} = \widetilde{\discScalarSol}^{n+1} \,.
\]

\noindent
Given a tolerance $\texttt{tol} >0$, the iteration is terminated when the relative variation between two consecutive iterates satisfies
\[
\frac{
\vert \boldsymbol{\mathrm{dof}_{\discVectSpace}}(\discVectSol^{n+1} - \discVectSol^{n}) \vert + 
\vert \boldsymbol{\mathrm{dof}_{\discScalarSpace}}(\discScalarSol^{n+1} - \discScalarSol^{n}) \vert
}
{\vert \boldsymbol{\mathrm{dof}_{\discVectSpace}}(\discVectSol^{n+1}) \vert
+ 
\vert \boldsymbol{\mathrm{dof}_{\discScalarSpace}}(\discScalarSol^{n+1}) \vert} \leq \texttt{tol} \,, 
\]
where $\boldsymbol{\mathrm{dof}_{\discVectSpace}}(\cdot)$  and $\boldsymbol{\mathrm{dof}_{\discScalarSpace}}(\cdot)$ denote the vector containing the global degrees of freedom in $\discVectSpace$ and $\discScalarSpace$ respectively.

In the numerical experiments reported below, we set the relaxation parameter to $\omega=\texttt{0.25}$ and the stopping tolerance to $\texttt{tol}=\texttt{1e-6}$. Moreover, we consider the lowest-order approximation, corresponding to the polynomial degree $k=0$.

\subsection{Test 1. Convergence analysis}
\label{sub:test1}

The aim of this test is to validate the theoretical results established in Section \ref{sec:a-priori}.
To this end, we consider Problem \ref{eq:continuous.problem.dual}
posed on the domain $\Omega=(0,1)^2$ for  different values of the Sobolev exponent $p$. 
The load term $f$ (depending on $p$) and the non homogeneous boundary conditions on $\partial \Omega$ are chosen according to the exact solution 
\[
\scalarSol(x_1, x_2) =  e^{x_1 + x_2} + \sin(2\pi x_1) \, \sin(2\pi x_2)\,,
\qquad 
\vectSol(x_1, x_2) = \vectSol(p; x_1, x_2) = 
\fluxFunction_{\dimDomain,\sobIndex} (\gradScalarSol) \,.
\]
We remark that an additional term appears in the right-hand side of Equation~\eqref{eq:continuous.problem.dual.weak.1} 
as a consequence of the integration by parts formula and the prescribed non homogeneous boundary conditions.
In order to verify the a priori error estimates of Section \ref{sec:a-priori} in both ranges $1 < p' \leq 2$ and $p' > 2$, 
numerical tests are performed with the following values of $p'$:
\begin{equation}
\label{eq:test1-pp}
\begin{aligned}
1 <p' \leq 2&:
& \qquad 
p'&=
\texttt{1.10} \,,  
\texttt{1.25} \,,
\texttt{1.50} \,,
\texttt{1.75} \,,
\texttt{2.00} \,,
\\
p' > 2&:
& \qquad 
p'&=
\texttt{2.25} \,,
\texttt{2.50} \,,
\texttt{3.00} \,,
\texttt{4.00} \,,
\texttt{6.00} \,.
\end{aligned}
\end{equation}
To compute the VEM error between the exact solution 
$(\vectSol,\scalarSol) \in \vectSolSpace \times \scalarSolSpace$ and the VEM solution $(\discVectSol,\discScalarSol) \in \discVectSpace \times \discScalarSpace$, recalling definitions \eqref{eq:def.discrete.norm.NEW} and \eqref{eq:def.ah.norm}, for any $(\boldsymbol{\eta}_h, v_h) \in \discVectSpace \times \discScalarSpace$ we consider the computable error quantities 
\begin{equation}
\label{eq:err_quant}
\begin{gathered}
\begin{aligned}
\texttt{err}(\boldsymbol{\eta}_h, \discFullVectNorm{\cdot}) &:=
\frac{\discFullVectNorm{\vectSol - \boldsymbol{\eta}_h} }
{\discFullVectNorm{\vectSol} }\,,
& \quad
\texttt{err}(\boldsymbol{\eta}_h, \discDivFreeNorm{\cdot}) &:=
\frac{\discDivFreeNorm{\vectSol - \boldsymbol{\eta}_h} }
{\discDivFreeNorm{\vectSol} }\,,
\\
\texttt{err}(\boldsymbol{\eta}_h, \boldsymbol{L}^{p'}) &:=
\frac{\|\vectSol - \vectProjL{\polyDegree}\boldsymbol{\eta}_h\|_{\boldsymbol{L}^{p'}(\Omega)}}
{\|\vectSol\|_{\boldsymbol{L}^{p'}(\Omega)}}  \,,
& \quad
\texttt{err}(v_h, L^{p}) &:= \frac{\|\scalarSol - v_h\|_{L^{p}(\Omega)}}
{\|\scalarSol\|_{L^{p}(\Omega)}} \,, 
\\
\end{aligned}
\\
\texttt{err}(\fluxFunction_{p'}(\boldsymbol{\eta}_h), \boldsymbol{L}^{p}) :=
\frac{\|\fluxFunction_{p'}(\vectSol) - \fluxFunction_{p'}(\vectProjL{\polyDegree} \boldsymbol{\eta}_h)\|_{\boldsymbol{L}^{p}(\Omega)}}
{\|\fluxFunction_{p'}(\vectSol)\|_{\boldsymbol{L}^{p}(\Omega)}}
\,.
\end{gathered}
\end{equation}
Furthermore, given a sequence of $N+1$ meshes with mesh diameters $h_0 > \dots > h_N$, and denoting by $\texttt{err}(h_n)$ any of the error quantities listed in \eqref{eq:err_quant} computed on the mesh with diameter $h_n$, we define average experimental order of convergence \texttt{AEOC}  as  
\begin{equation}
\label{eq:err_aeoc}
\texttt{AEOC}= \frac{1}{N} \sum_{n=1}^N 
\frac{\log(\texttt{err}(h_{n-1})/\texttt{err}(h_{n}))}{\log(h_{n-1}/h_{n})} \,.
\end{equation}
The domain $\Omega$  is partitioned with the following sequences of polygonal meshes:
\texttt{QUADRILATERAL} distorted meshes,
\texttt{RANDOM} Voronoi meshes, and 
\texttt{NON CONVEX} polygonal meshes (see Fig.~\ref{fig:meshes}).
For the generation of the Voronoi meshes we used the code \texttt{Polymesher} \cite{polymesher}.
For each mesh family, we consider a mesh sequence with diameters
$\texttt{h} = \texttt{1/4}, \, \texttt{1/8}, \, \texttt{1/16}, \, \texttt{1/32}, \, \texttt{1/64}$.
\begin{figure}
\centering
\begin{overpic}[scale=0.23]{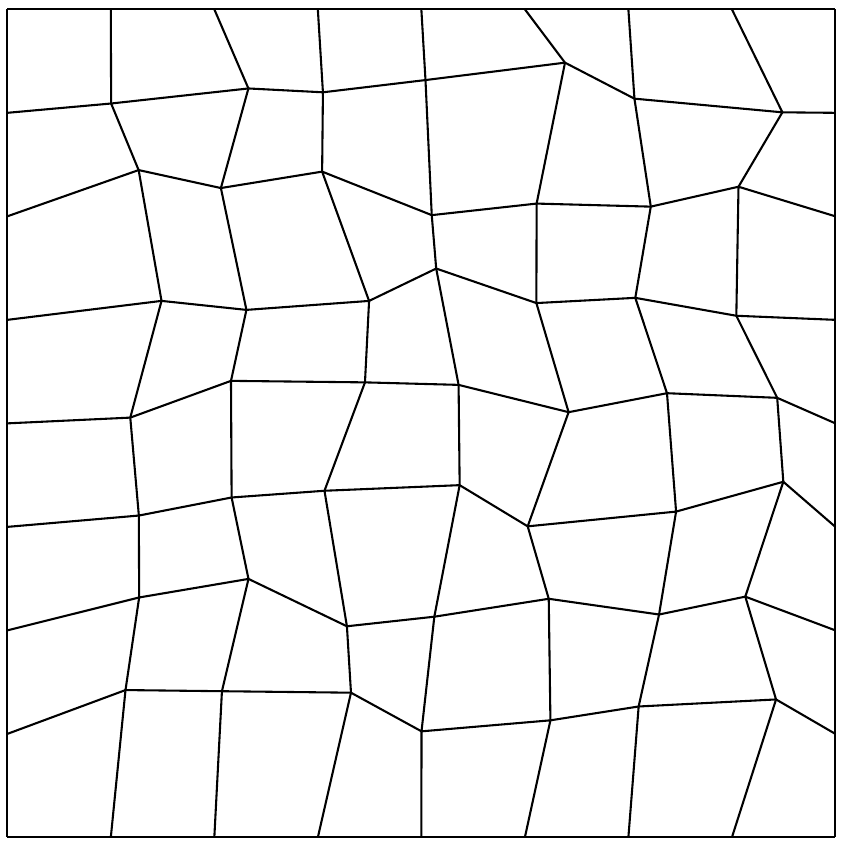}
\put(10,-15){{{\texttt{QUADRILATERAL}}}}
\end{overpic}
\qquad
\begin{overpic}[scale=0.23]{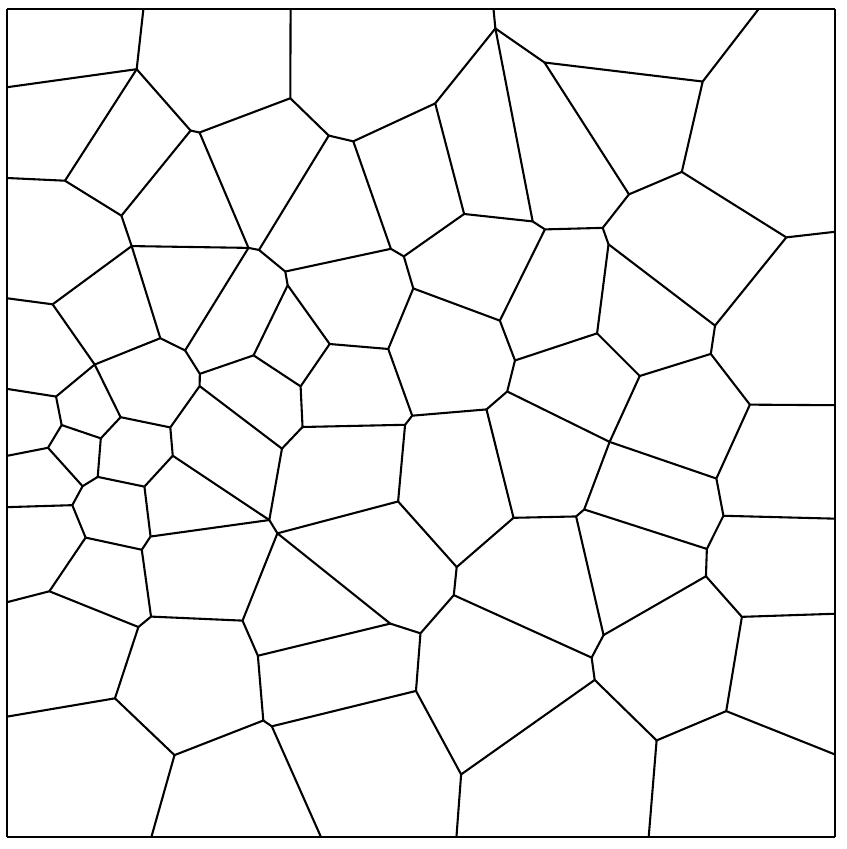}
\put(30,-15){{{\texttt{RANDOM}}}}
\end{overpic}
\qquad
\begin{overpic}[scale=0.20]{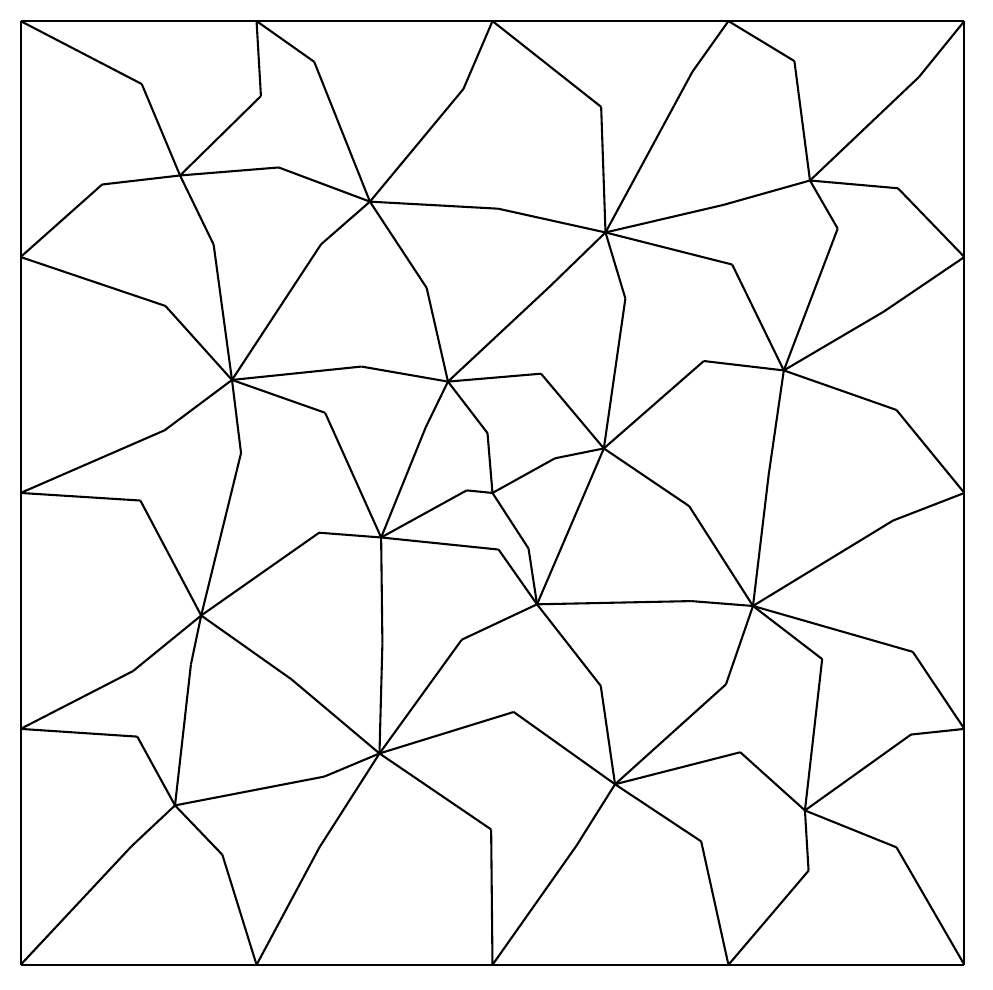}
\put(20,-15){{{\texttt{NON CONVEX}}}}
\end{overpic}
\vspace{0.5cm}
\caption{Example of the adopted polygonal meshes.}
\label{fig:meshes}
\end{figure}
We emphasize that the \texttt{QUADRILATERAL} meshes  and the \texttt{RANDOM} meshes consist exclusively of convex elements. Therefore, according to Remark~\ref{rem:conforming}, the inclusion
$\discVectSpace \subset \vectSolSpace$ holds for all
  $\sobIndexConj \in (1, \infty)$. 
In contrast, the \texttt{NON CONVEX} meshes family satisfy the inclusion $\discVectSpace \subset \vectSolSpace$  if $\sobIndexConj \in (1,2]$.

In Fig.~\ref{fig:test1-Q}, Fig.~\ref{fig:test1-V} and Fig.~\ref{fig:test1-W}, we plot the computed error quantities in \eqref{eq:err_quant} 
for the sequences of aforementioned mesh families (\texttt{QUADRILATERAL}, \texttt{RANDOM}, \texttt{NON CONVEX}, respectively)
and for the values of $p'$ reported in \eqref{eq:test1-pp}.
Whenever the asymptotic behavior of the errors is not readily apparent from the plots, the corresponding average experimental orders of convergence \texttt{AEOC} are also reported.

%
\begin{figure}[htbp]
    \centering
    {\texttt{QUADRILATERAL MESHES}}
\\
    \begin{subfigure}{0.45\textwidth}
        \centering
        \includegraphics[width=\linewidth]{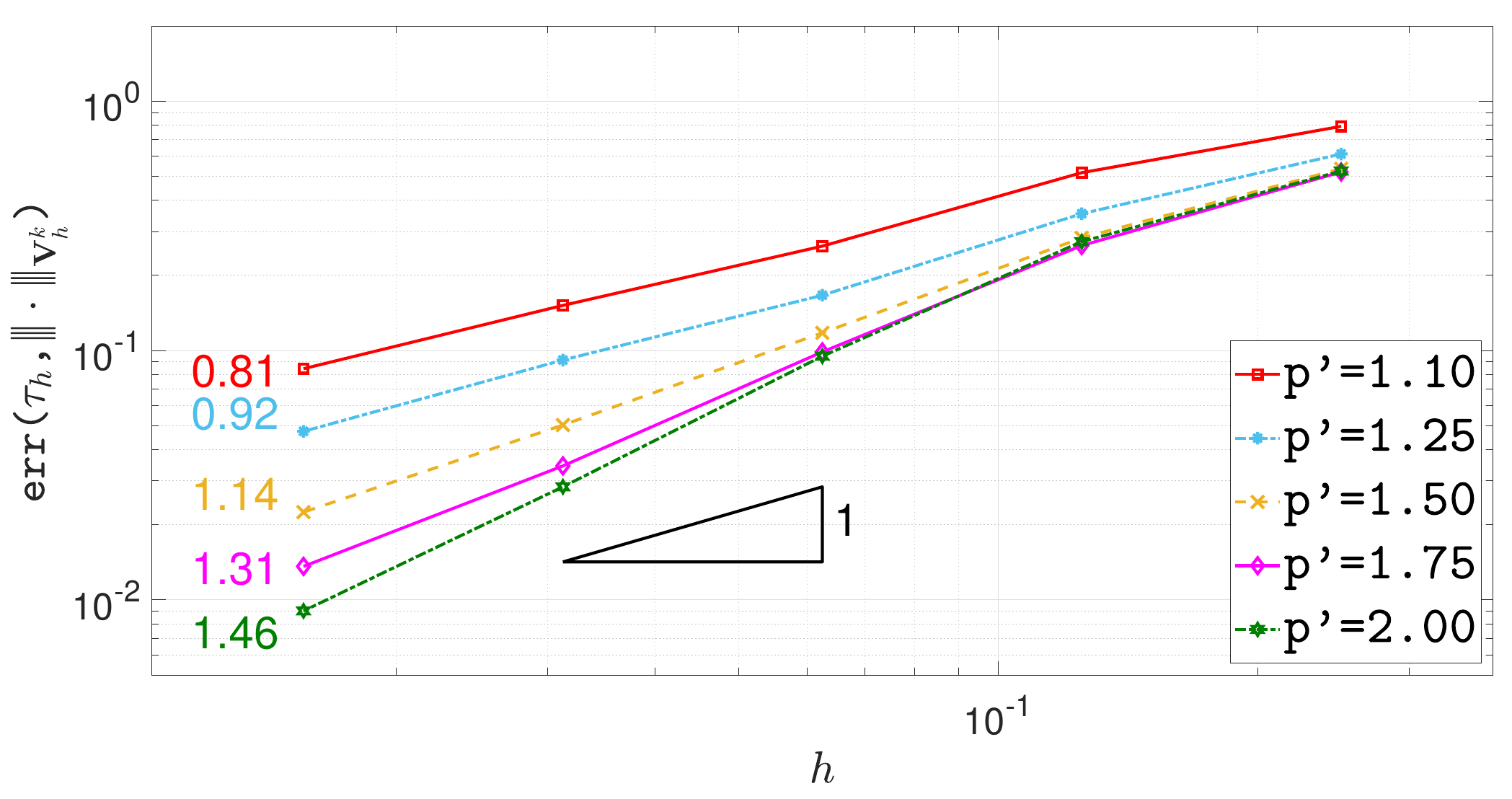}
    \end{subfigure}\qquad
    \begin{subfigure}{0.45\textwidth}
        \centering
        \includegraphics[width=\linewidth]{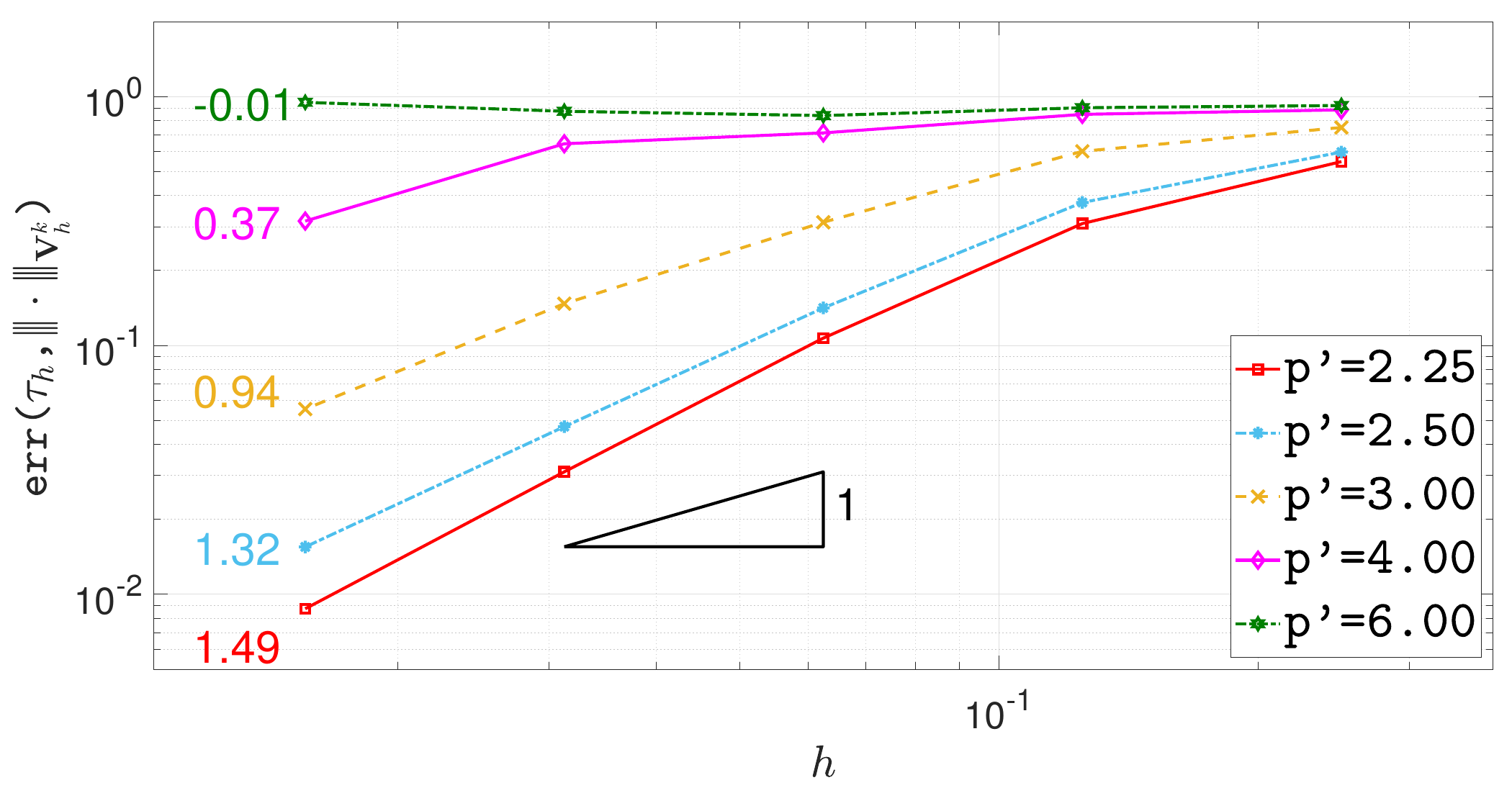}
    \end{subfigure}

    \vspace{0.25cm}
    
    \begin{subfigure}{0.45\textwidth}
        \centering
        \includegraphics[width=\linewidth]{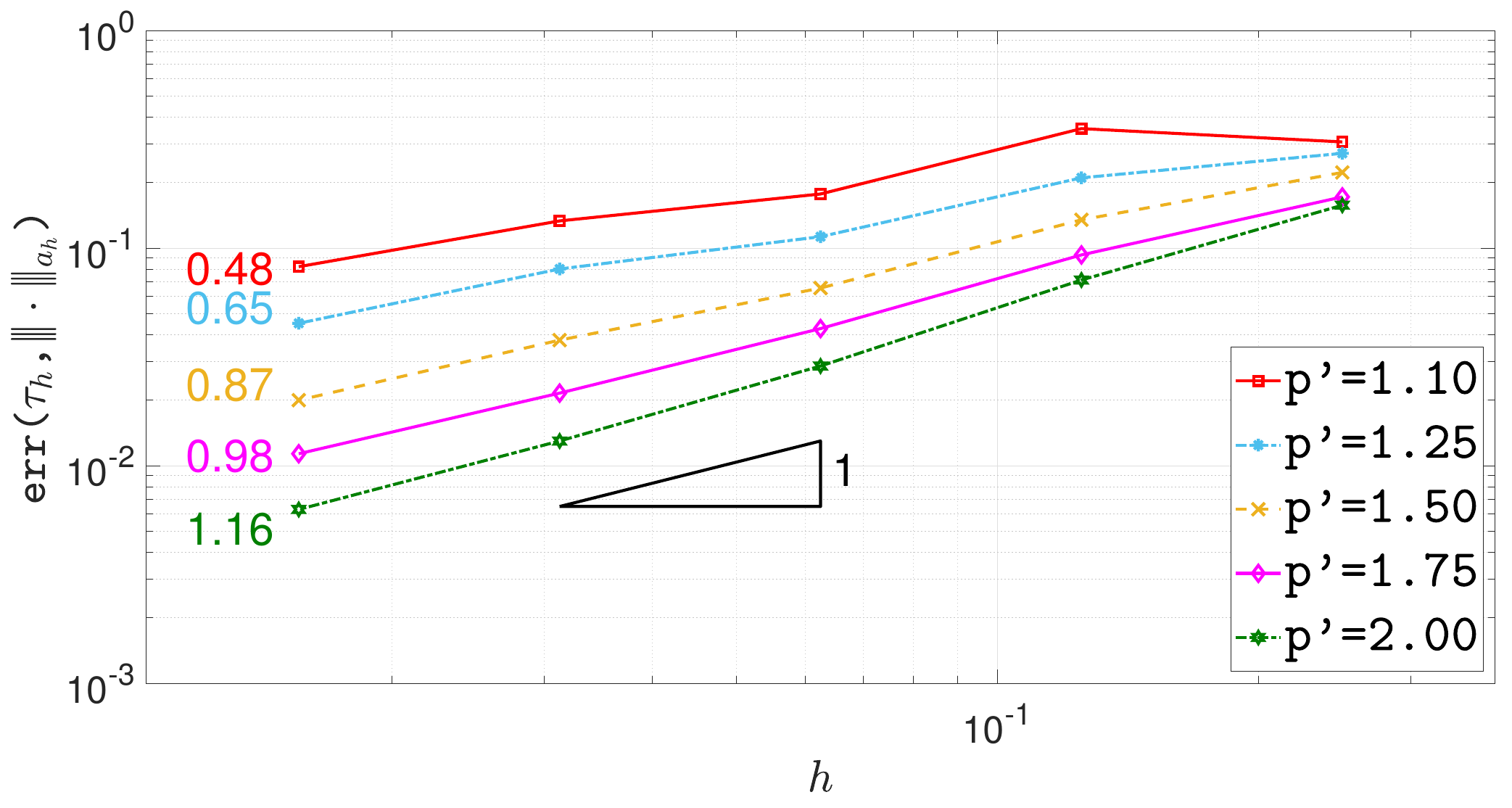}
    \end{subfigure}\qquad
    \begin{subfigure}{0.45\textwidth}
        \centering
        \includegraphics[width=\linewidth]{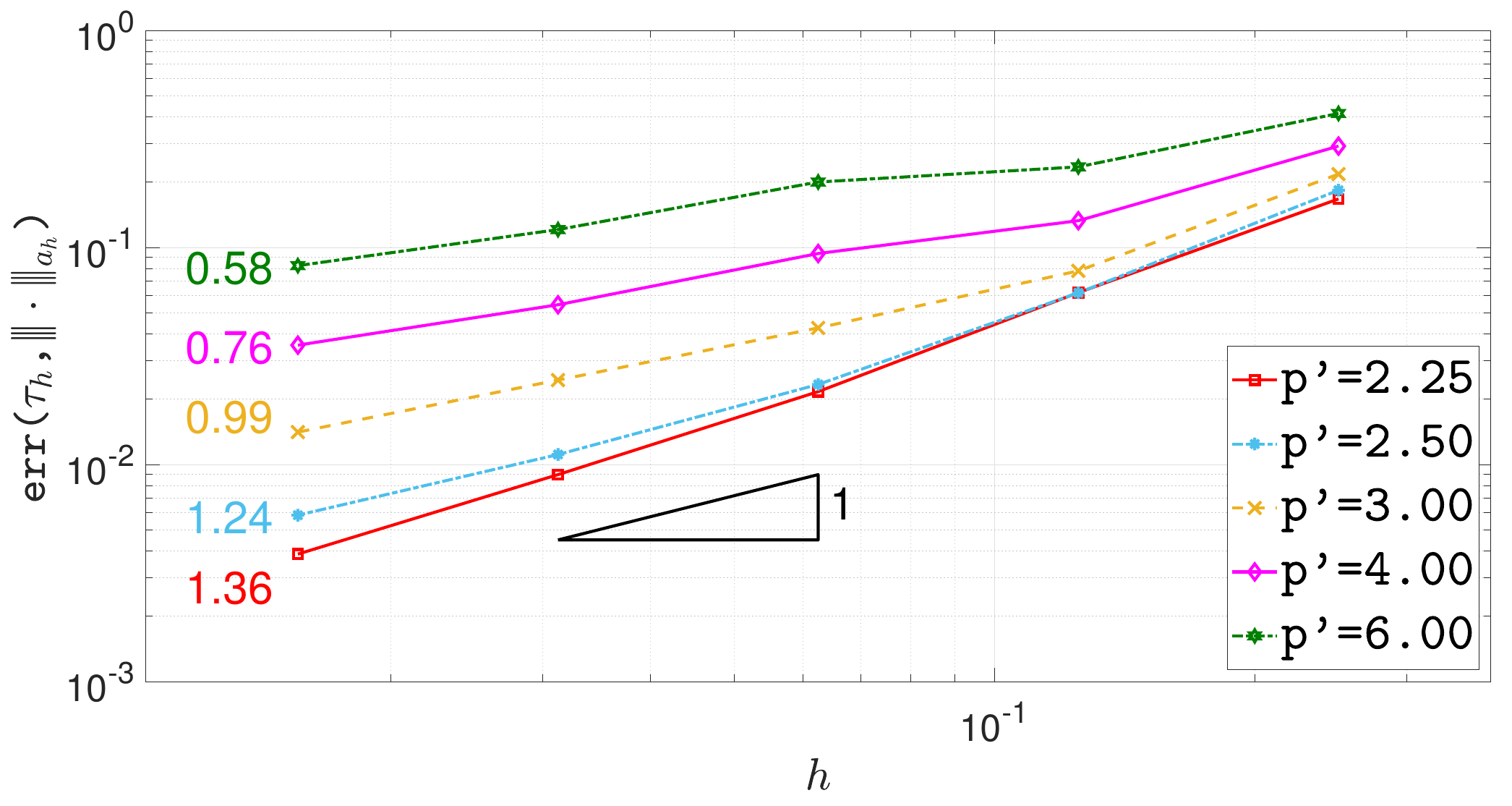}
    \end{subfigure}
    
    \vspace{0.25cm}
    
    \begin{subfigure}{0.45\textwidth}
        \centering
        \includegraphics[width=\linewidth]{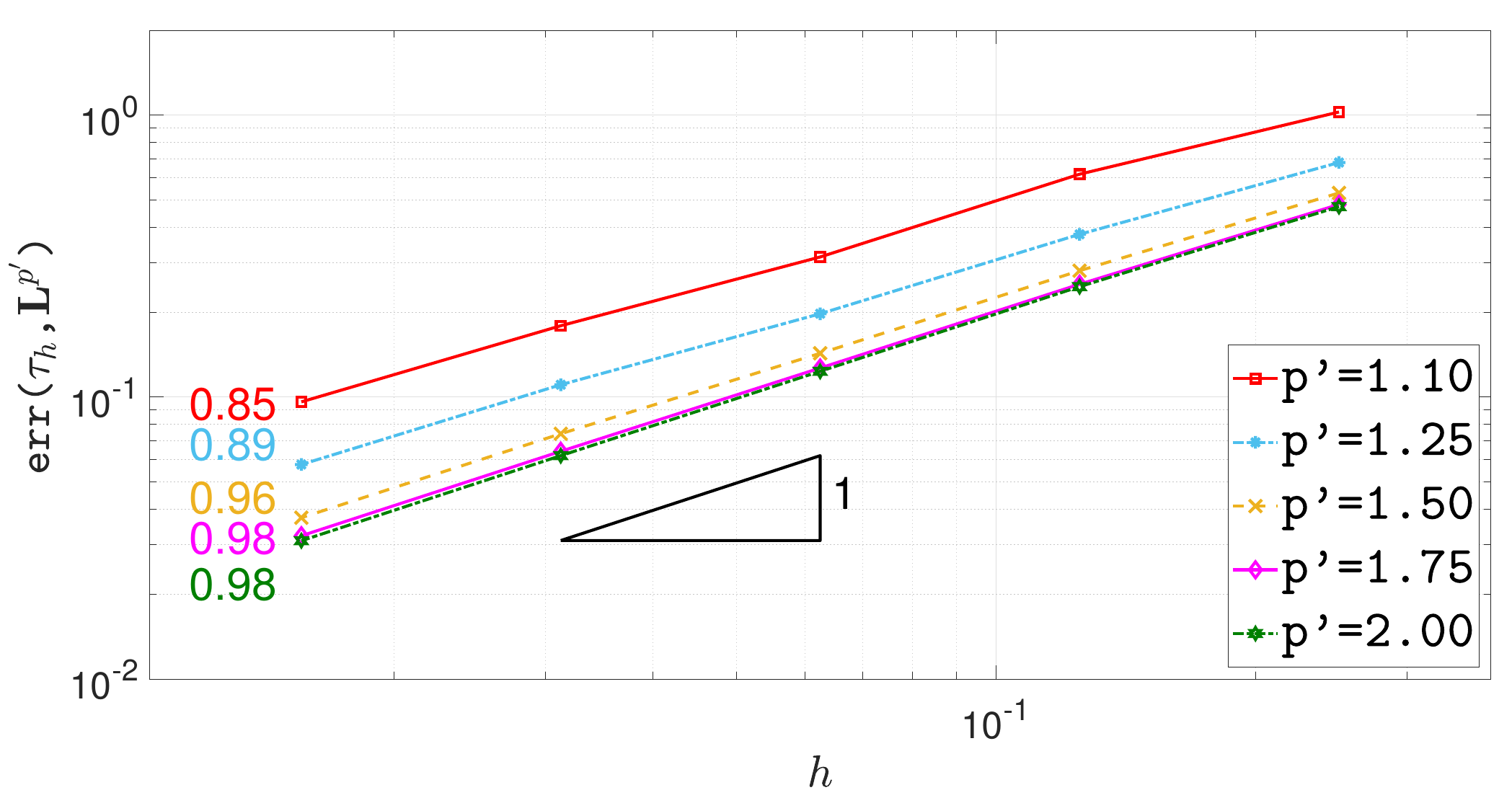}
    \end{subfigure}\qquad
    \begin{subfigure}{0.45\textwidth}
        \centering
        \includegraphics[width=\linewidth]{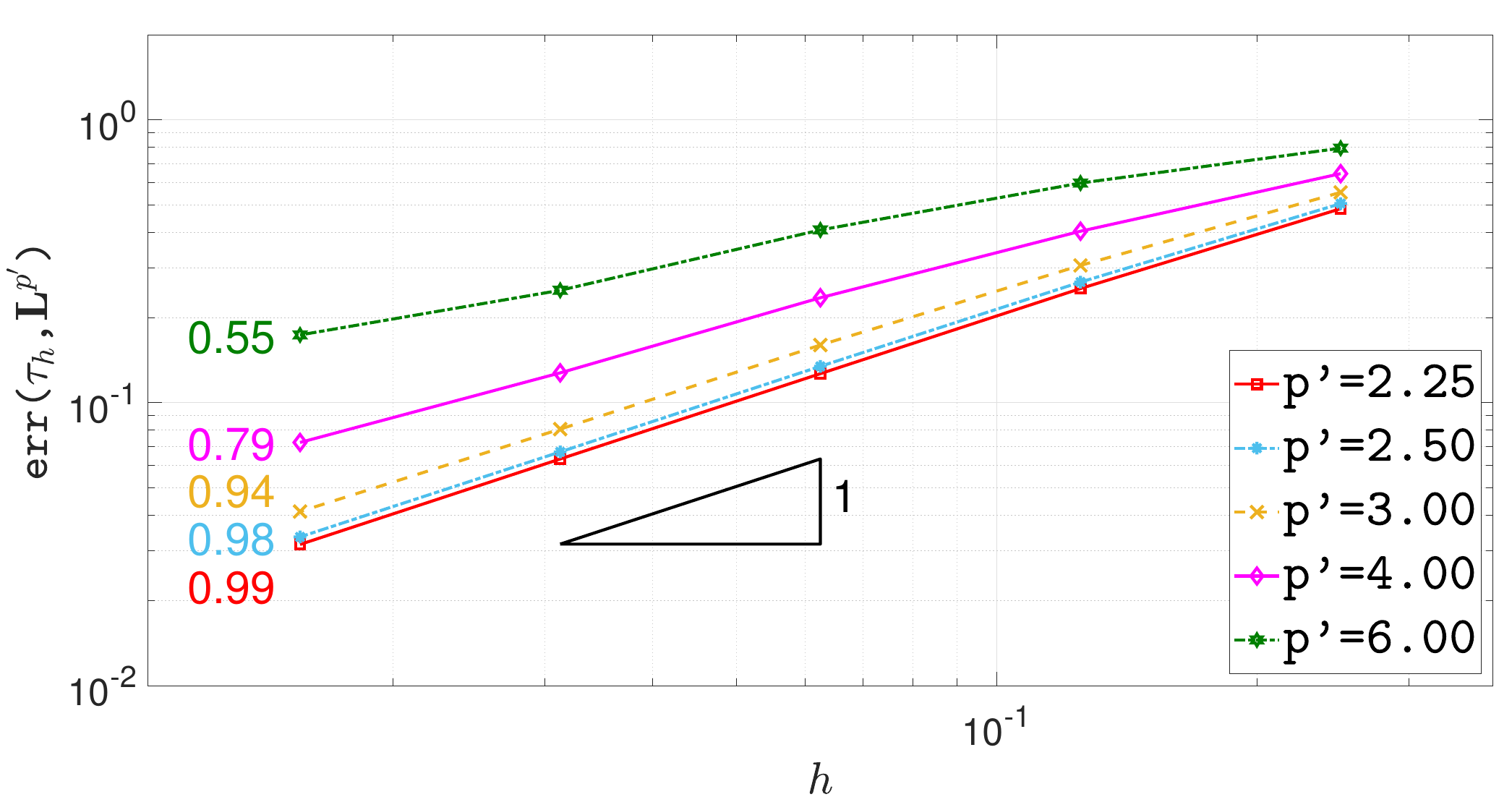}
    \end{subfigure}

    \vspace{0.25cm}    
    
    \begin{subfigure}{0.45\textwidth}
        \centering
        \includegraphics[width=\linewidth]{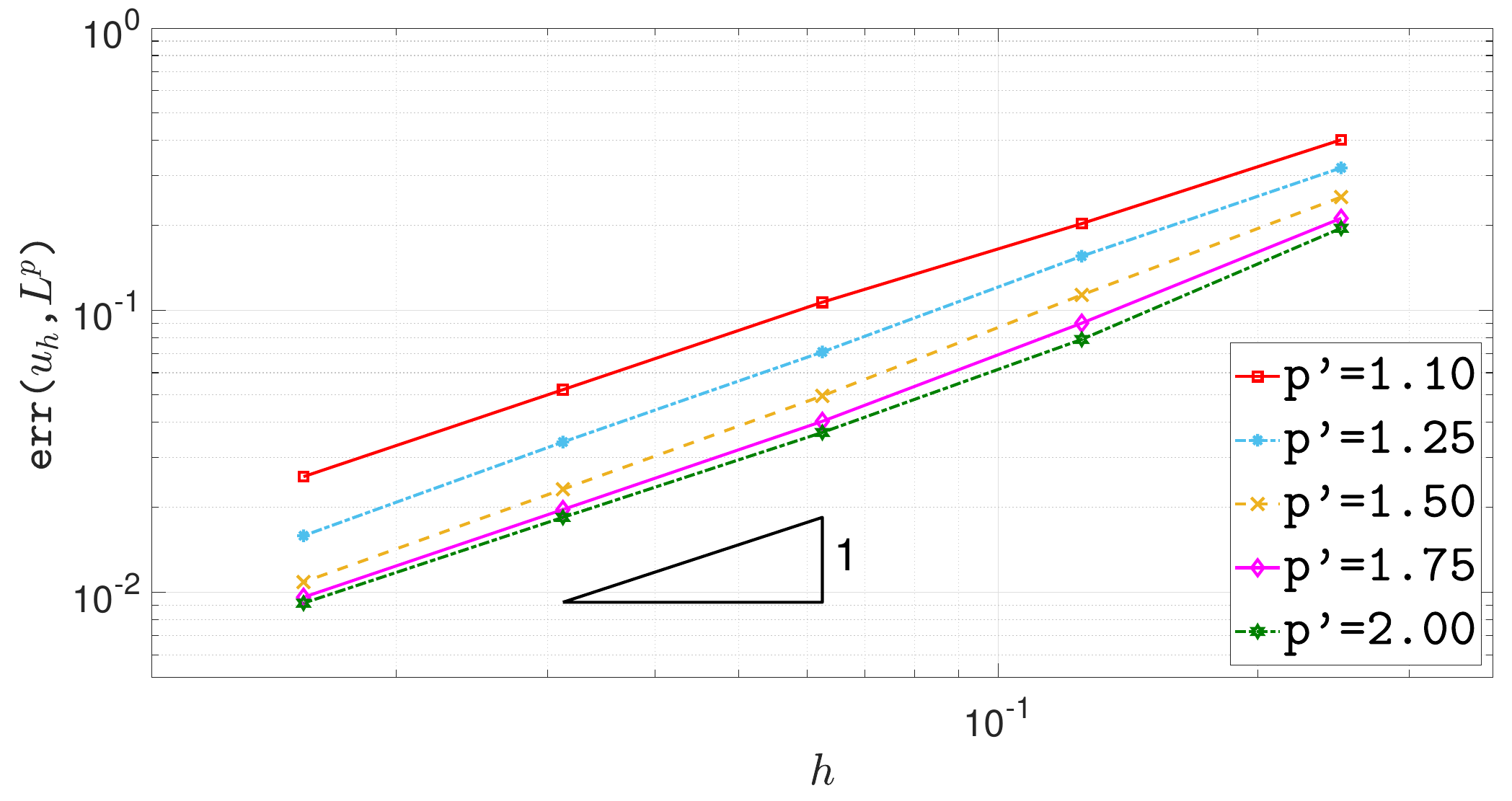}
        
    \end{subfigure}\qquad
    \begin{subfigure}{0.45\textwidth}
        \centering
        \includegraphics[width=\linewidth]{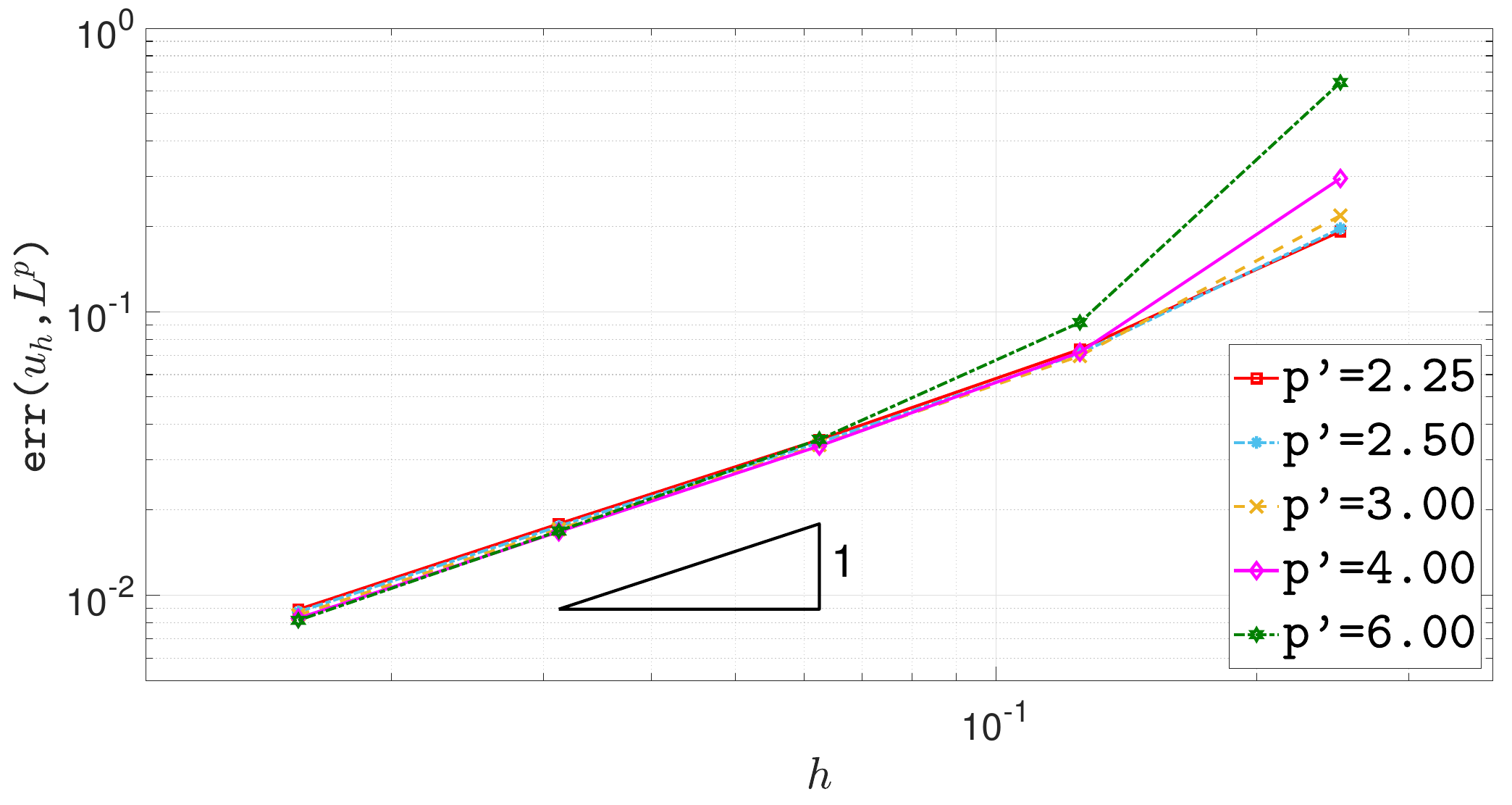}
    \end{subfigure}

    \vspace{0.25cm}

    \begin{subfigure}{0.45\textwidth}
        \centering
        \includegraphics[width=\linewidth]{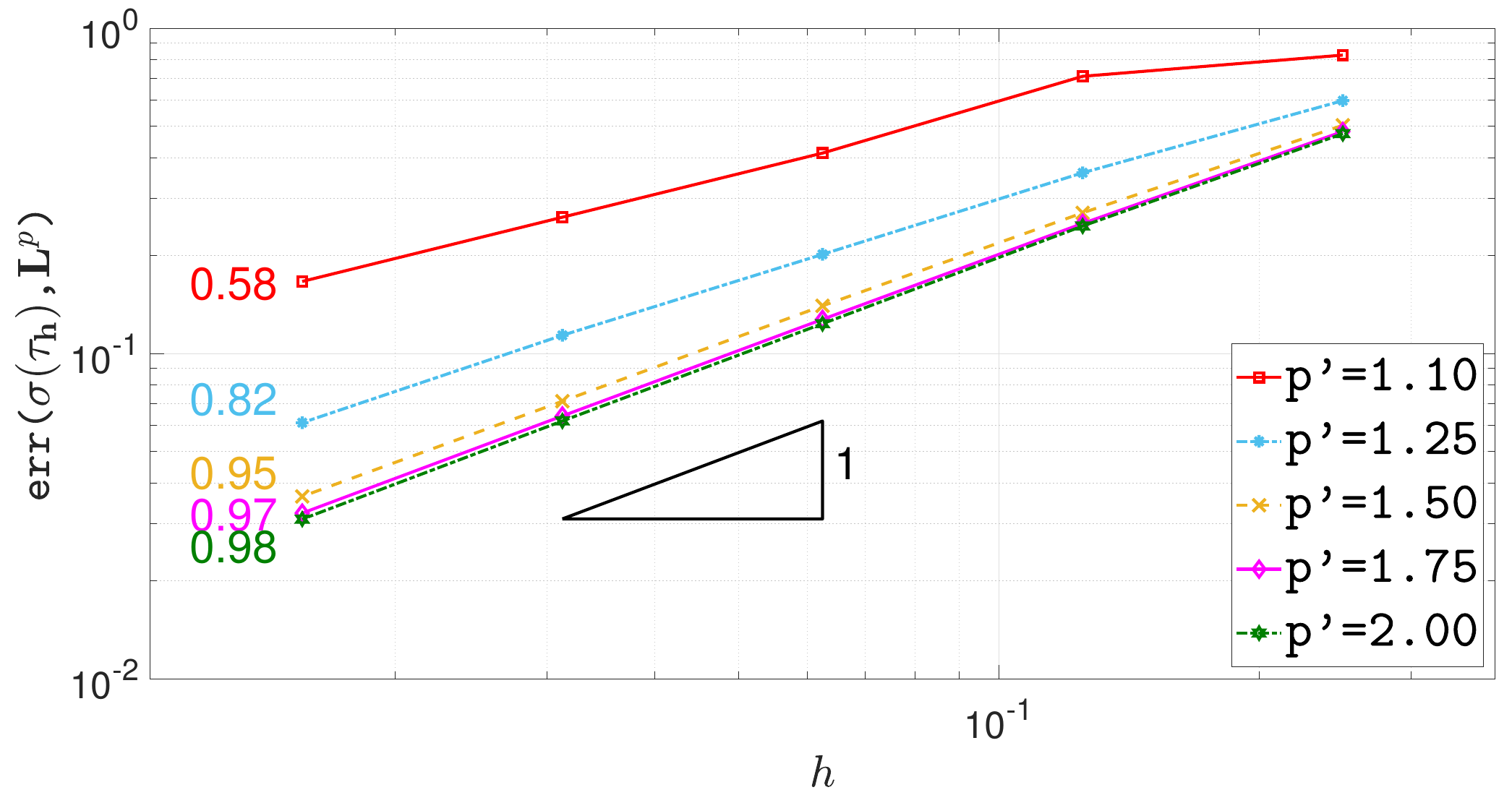}
    \end{subfigure}\qquad
    \begin{subfigure}{0.45\textwidth}
        \centering
        \includegraphics[width=\linewidth]{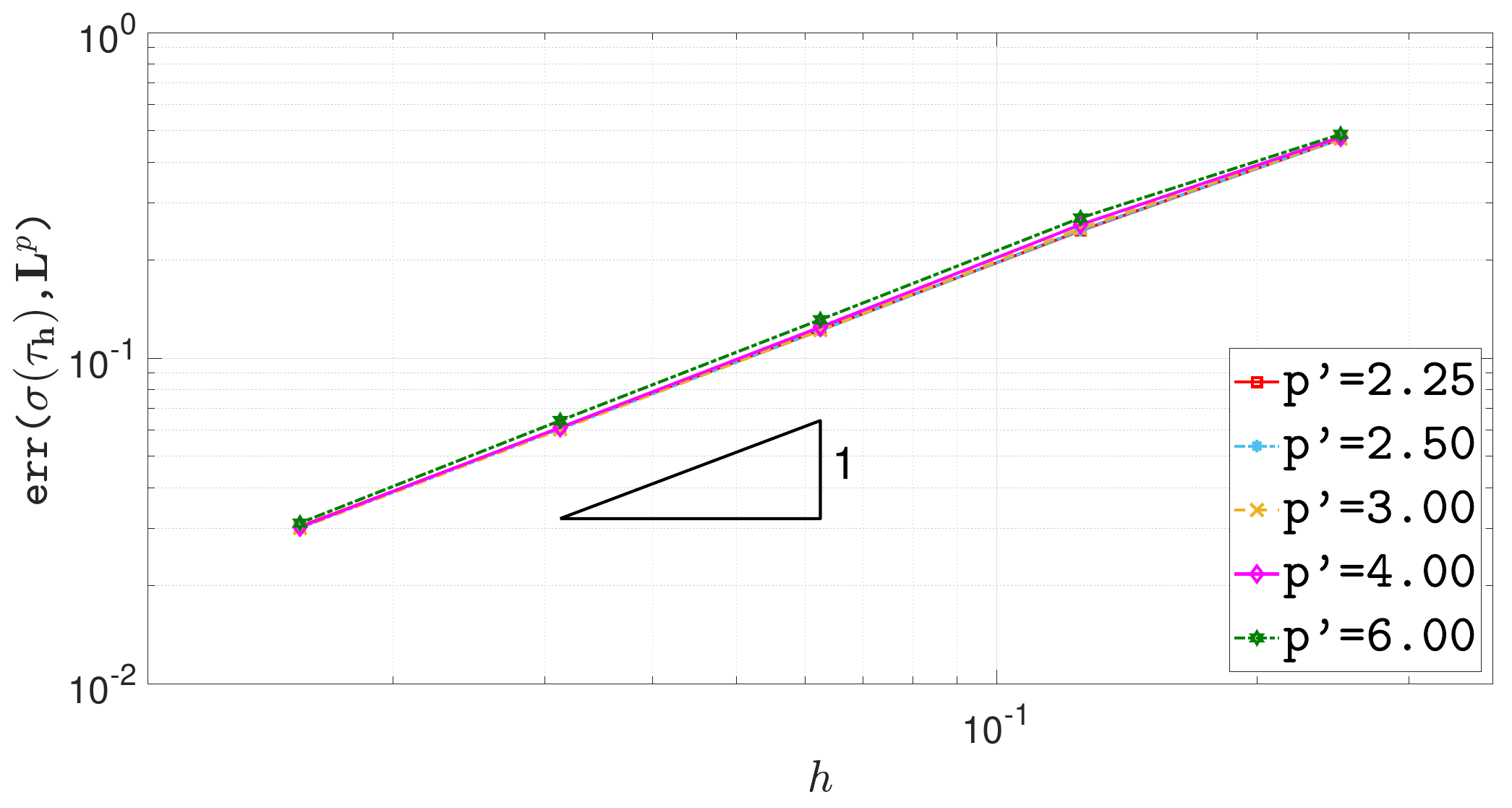}
    \end{subfigure}

   \caption{Test 1. Computed errors defined as in \eqref{eq:err_quant} as a function of the mesh size (loglog scale), for the mesh family \texttt{QUADRILATERAL}. Left panel: $1 <  \sobIndexConj \leq 2$, right panel: $\sobIndexConj > 2$.} 
    \label{fig:test1-Q}
\end{figure}
\begin{figure}[htbp]
    \centering
    {\texttt{RANDOM MESHES}}
\\
    \begin{subfigure}{0.45\textwidth}
        \centering
        \includegraphics[width=\linewidth]{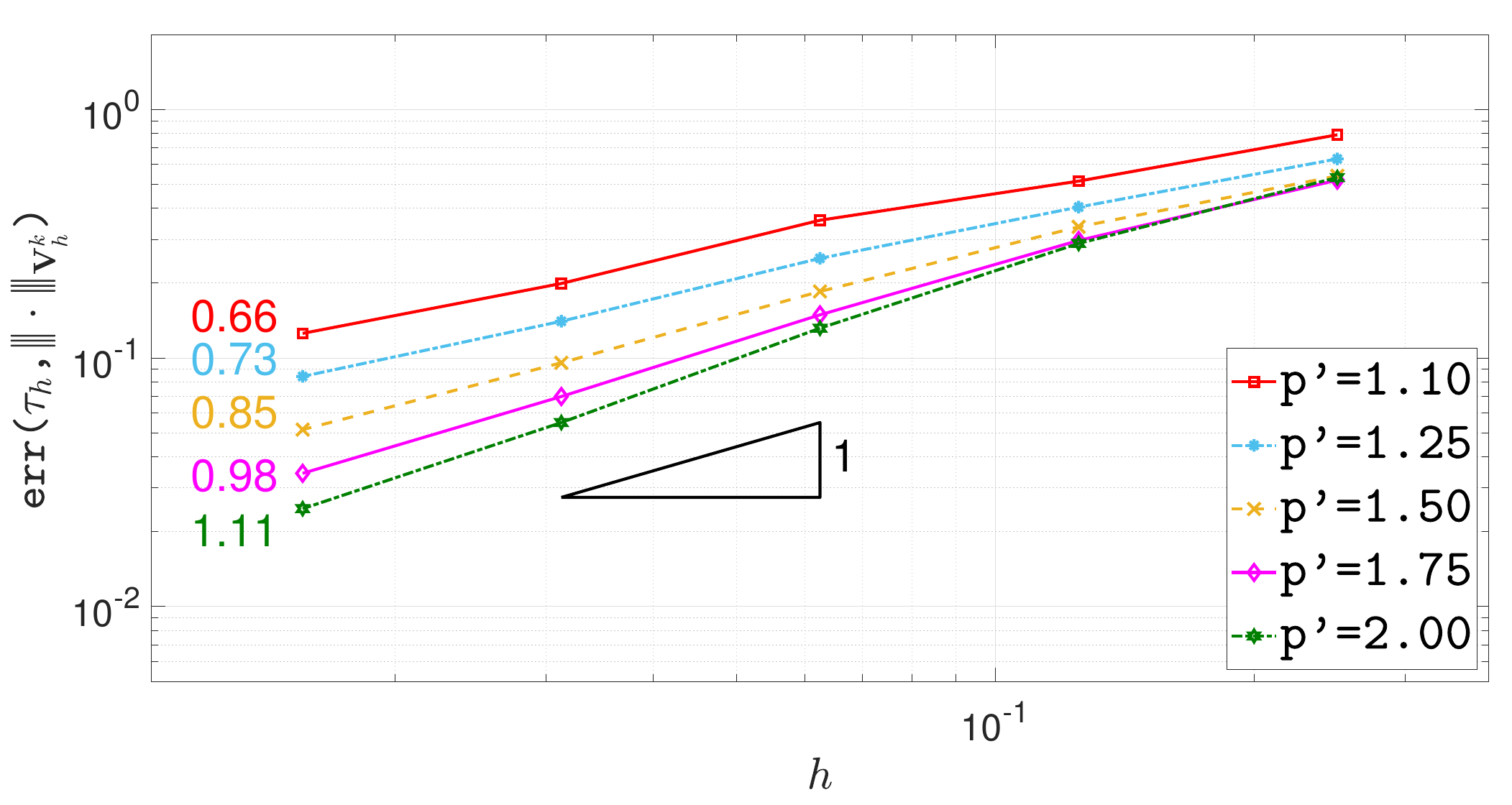}
    \end{subfigure}\qquad
    \begin{subfigure}{0.45\textwidth}
        \centering
        \includegraphics[width=\linewidth]{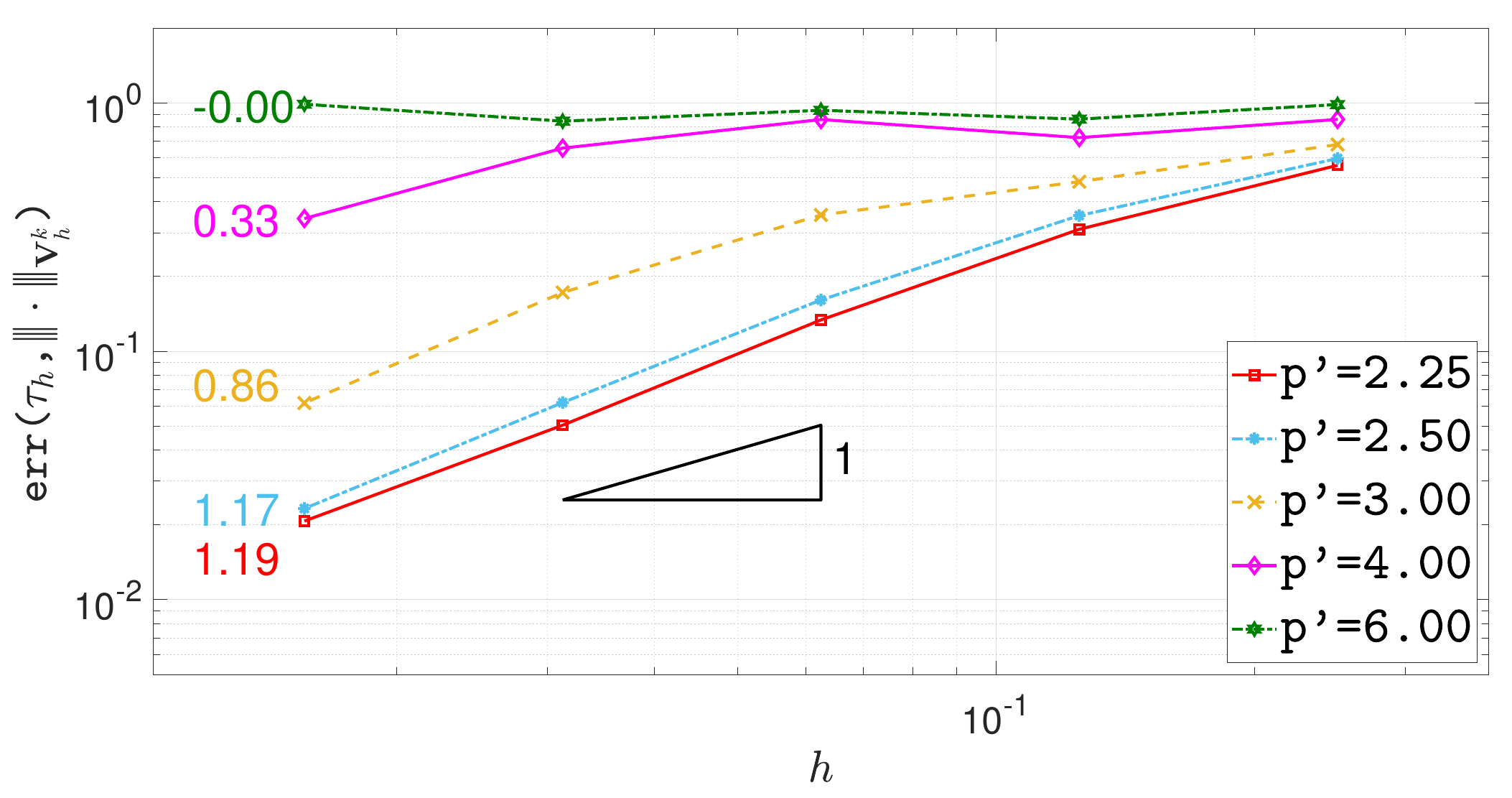}
    \end{subfigure}

    \vspace{0.25cm}
    
    \begin{subfigure}{0.45\textwidth}
        \centering
        \includegraphics[width=\linewidth]{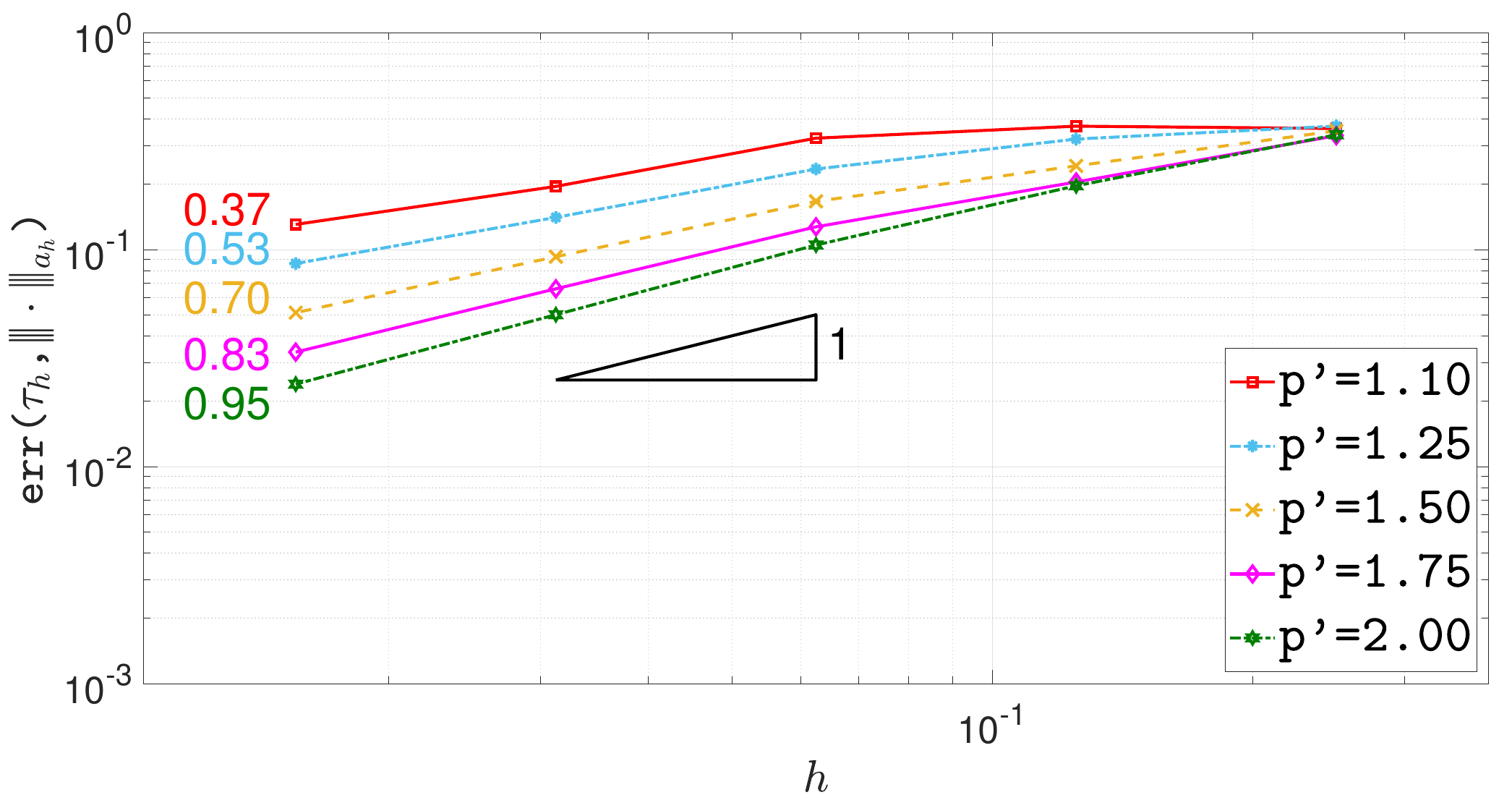}
    \end{subfigure}\qquad
    \begin{subfigure}{0.45\textwidth}
        \centering
        \includegraphics[width=\linewidth]{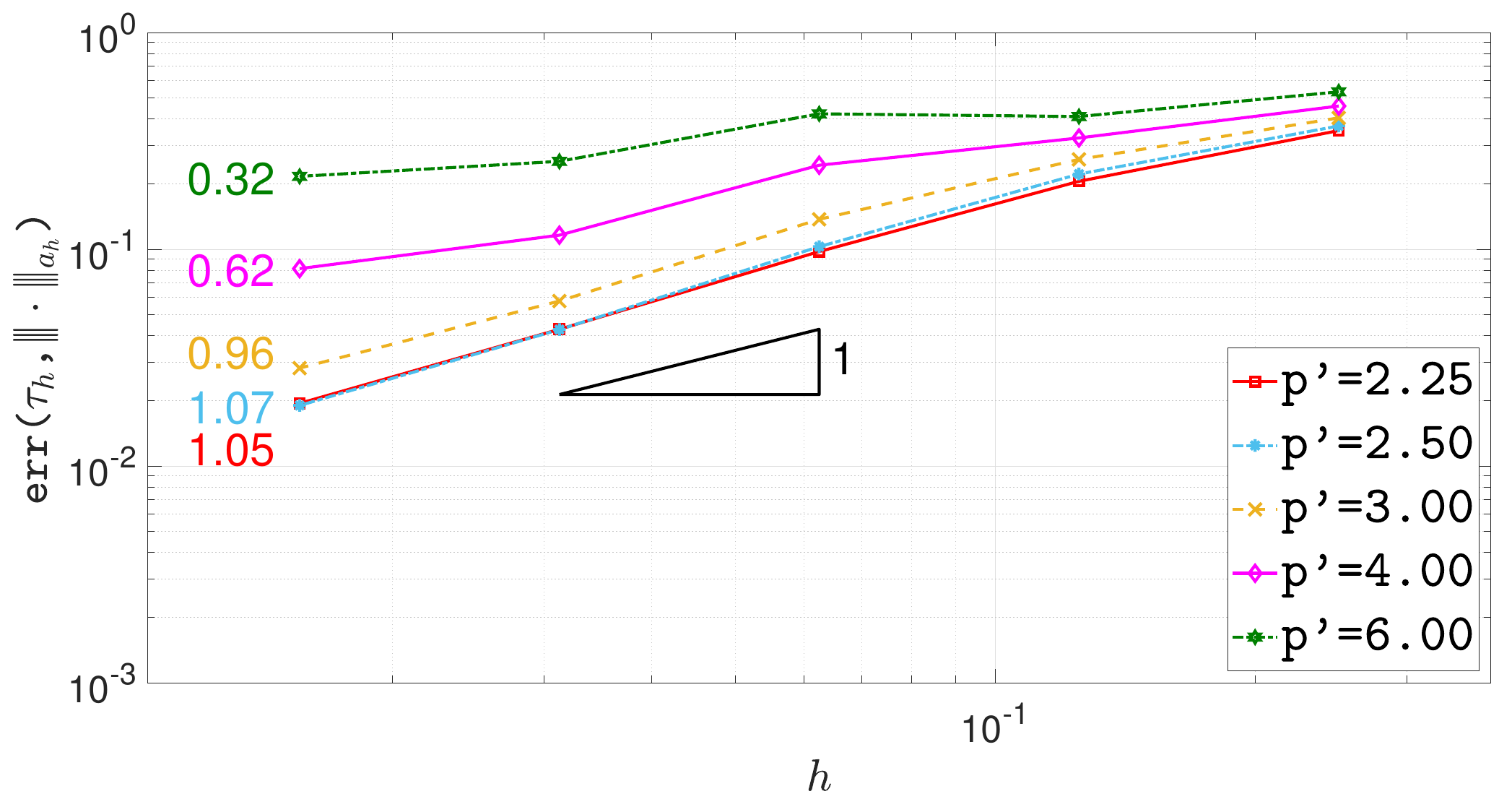}
    \end{subfigure}
    
    \begin{subfigure}{0.45\textwidth}
        \centering
        \includegraphics[width=\linewidth]{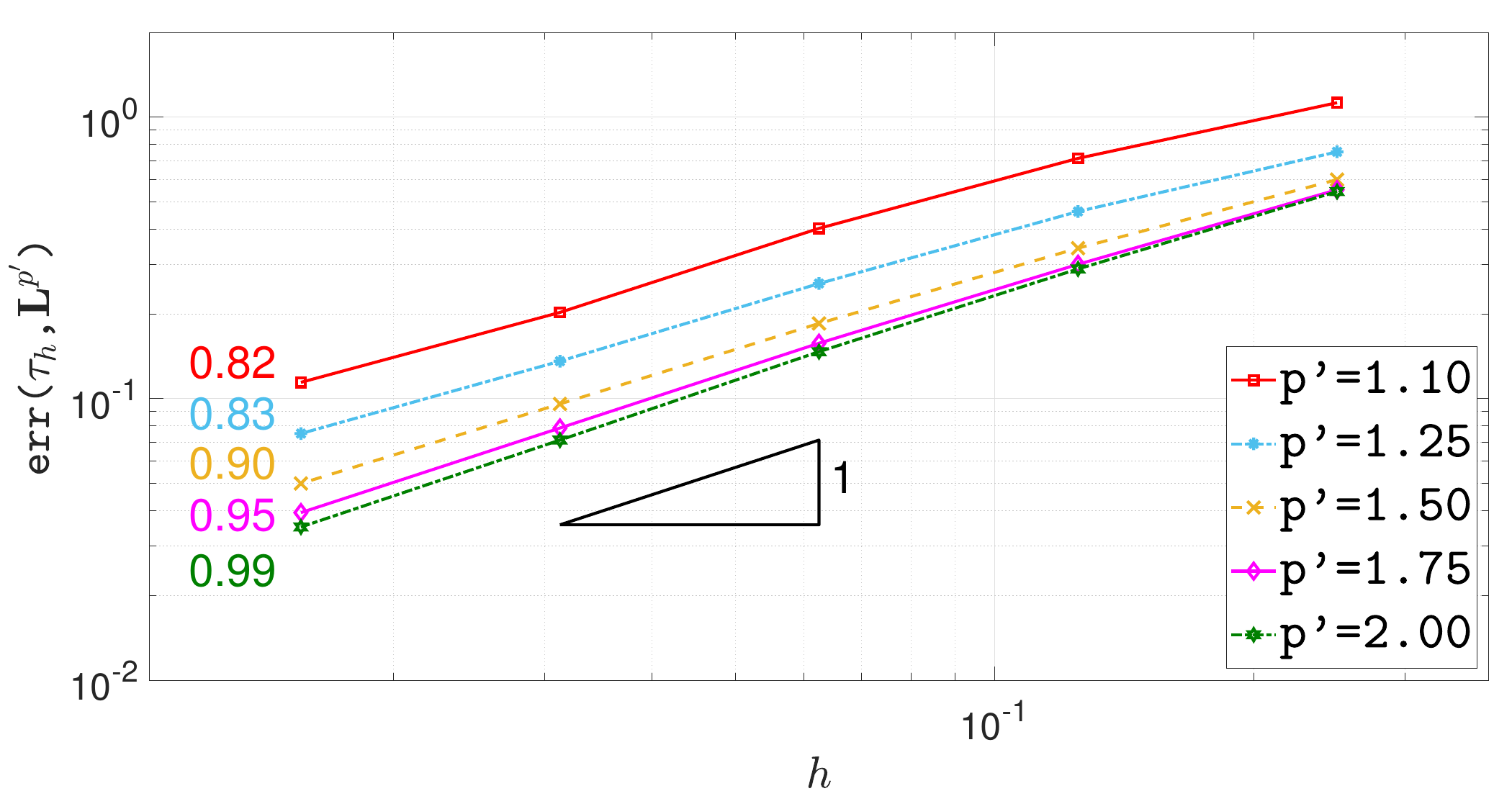}
    \end{subfigure}\qquad
    \begin{subfigure}{0.45\textwidth}
        \centering
        \includegraphics[width=\linewidth]{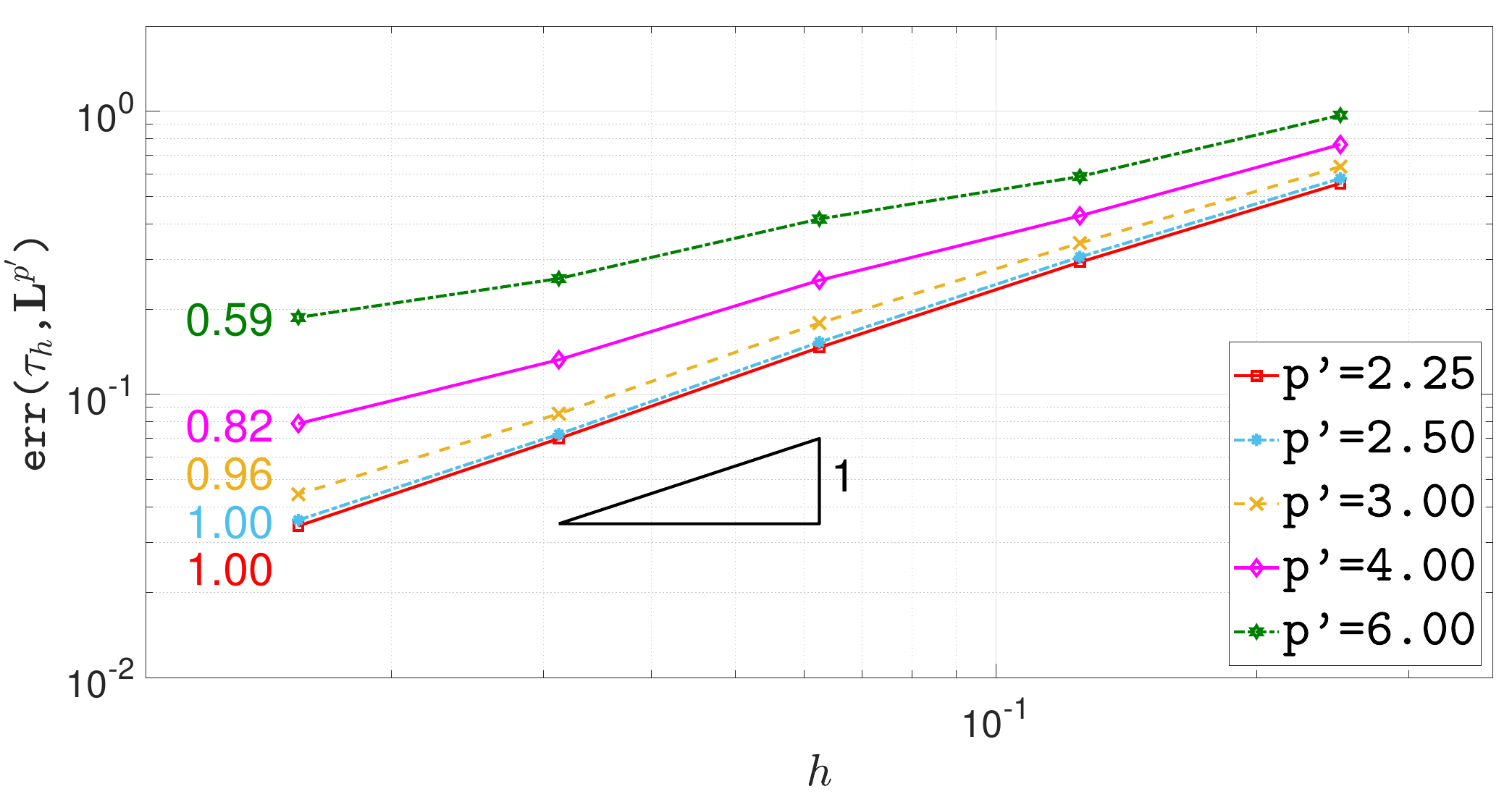}
    \end{subfigure}

    \vspace{0.25cm}

    \vspace{0.25cm}
    
    \begin{subfigure}{0.45\textwidth}
        \centering
        \includegraphics[width=\linewidth]{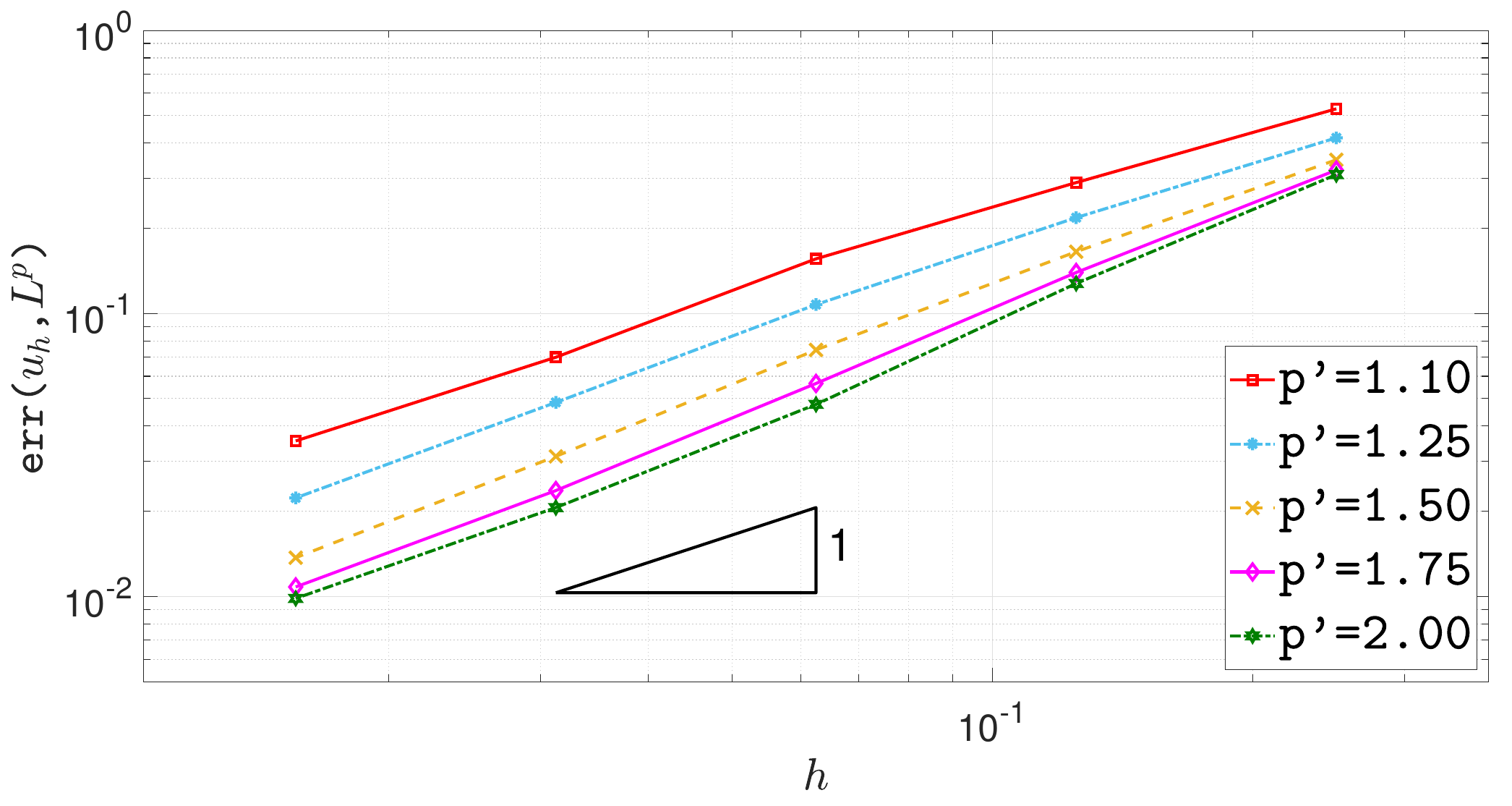}
        
    \end{subfigure}\qquad
    \begin{subfigure}{0.45\textwidth}
        \centering
        \includegraphics[width=\linewidth]{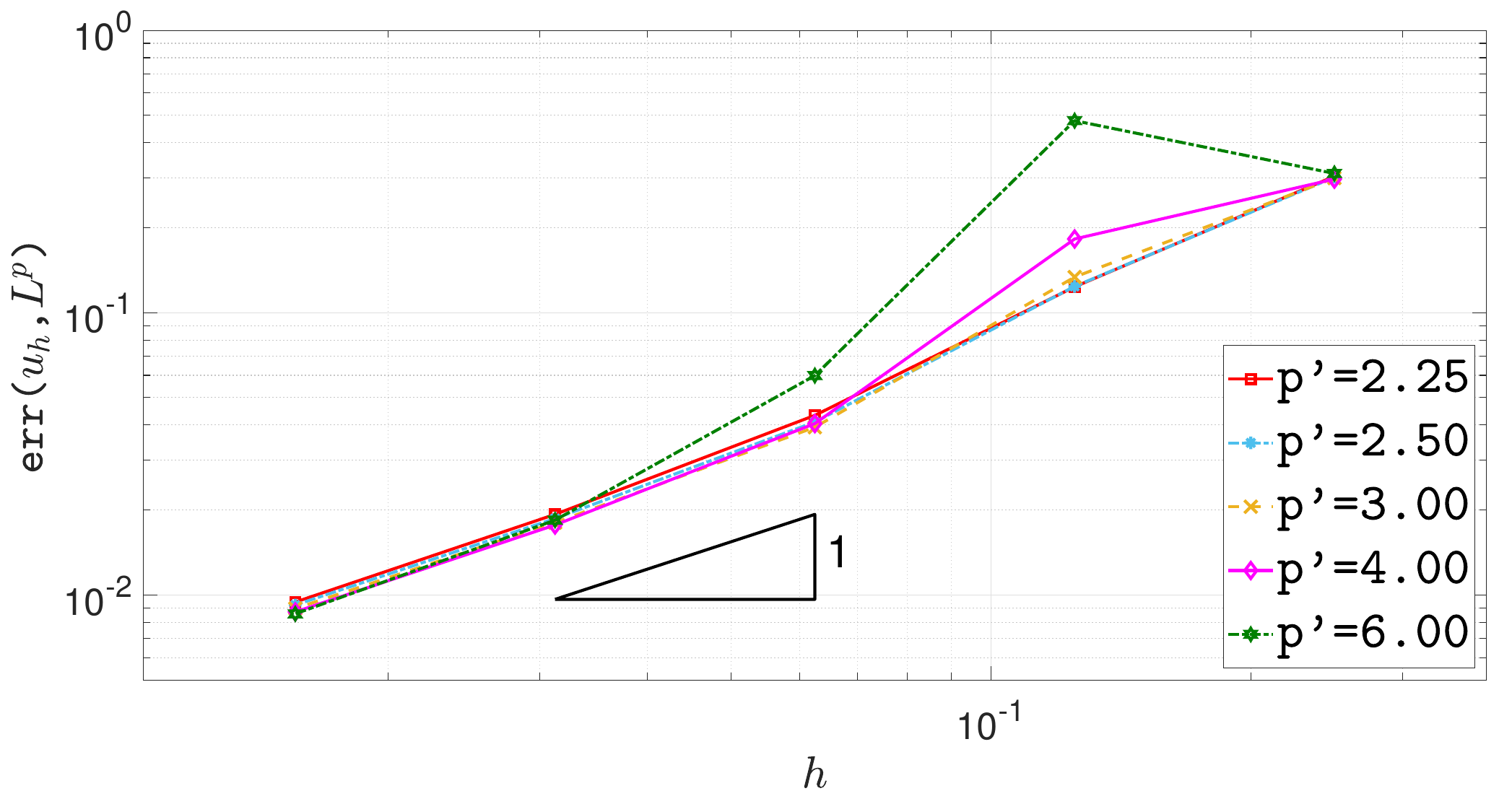}
    \end{subfigure}

    \vspace{0.25cm}

    \begin{subfigure}{0.45\textwidth}
        \centering
        \includegraphics[width=\linewidth]{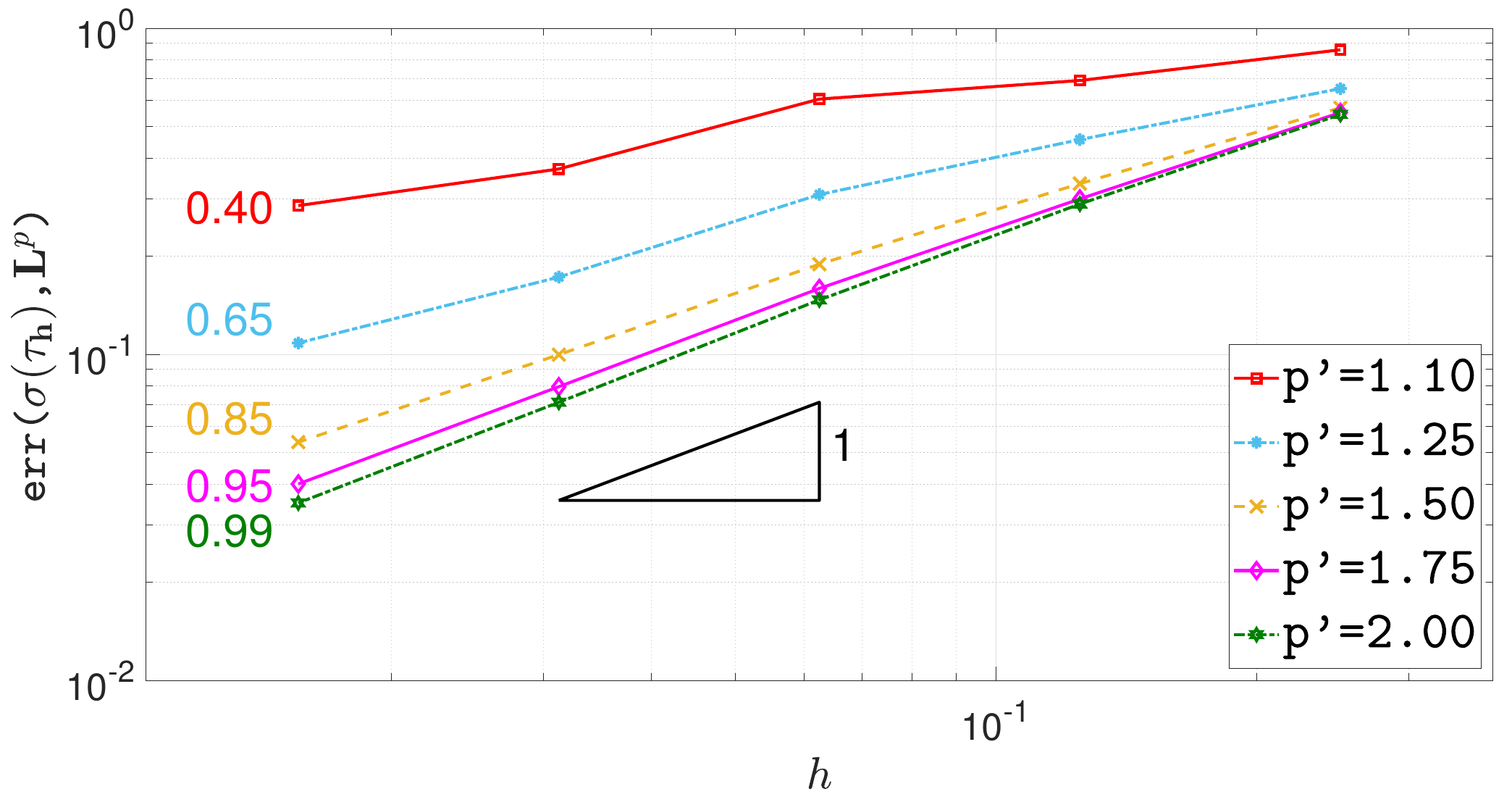}
    \end{subfigure}\qquad
    \begin{subfigure}{0.45\textwidth}
        \centering
        \includegraphics[width=\linewidth]{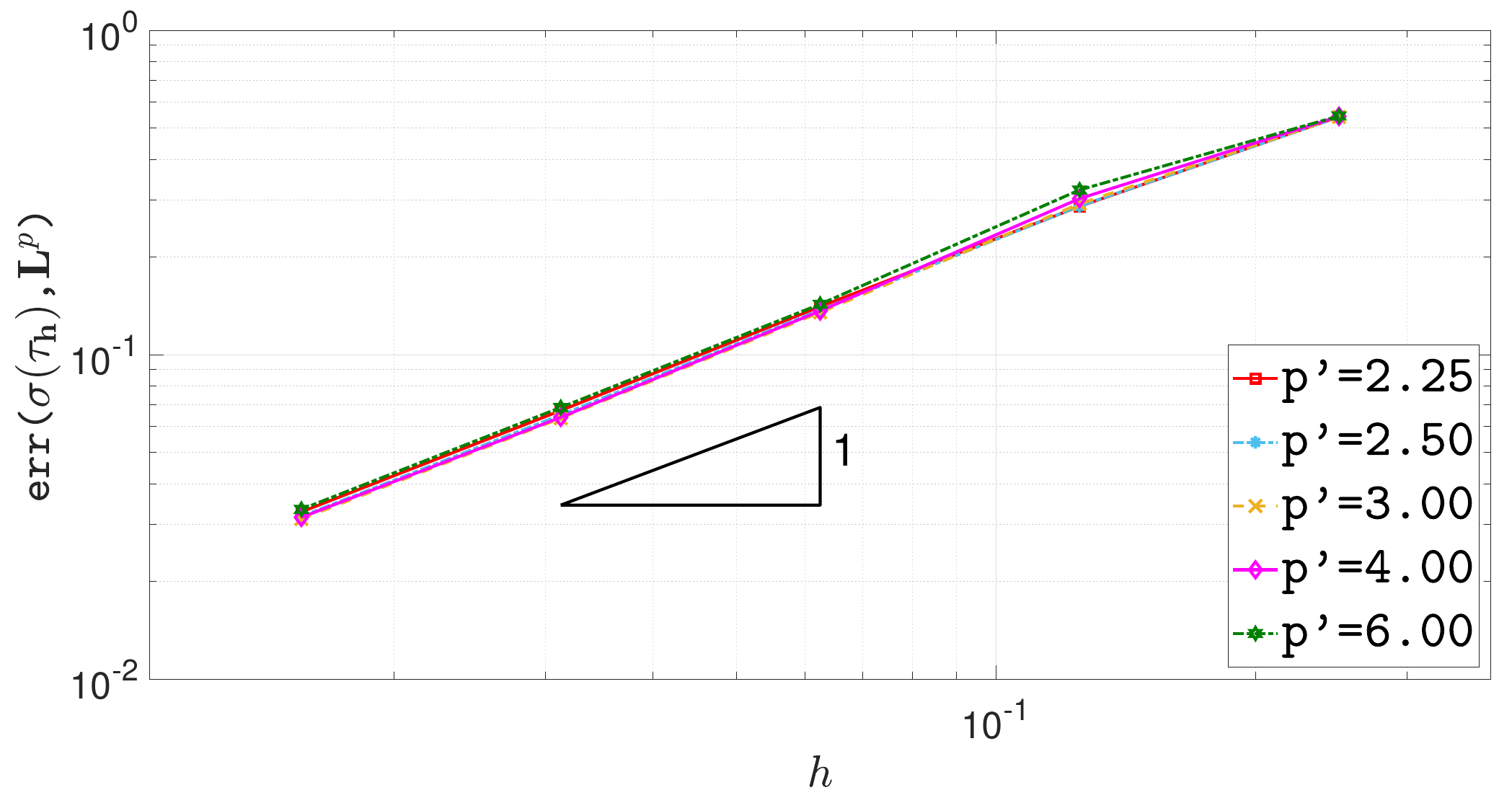}
    \end{subfigure}

   \caption{Test 1. Computed errors defined as in \eqref{eq:err_quant} as a function of the mesh size (loglog scale), for the mesh family \texttt{RANDOM}. Left panel: $1 <  \sobIndexConj \leq 2$, right panel: $\sobIndexConj > 2$.} 
    \label{fig:test1-V}
\end{figure}

\begin{figure}[htbp]
    \centering
    {\texttt{NON CONVEX MESHES}}
\\
    \begin{subfigure}{0.45\textwidth}
        \centering
        \includegraphics[width=\linewidth]{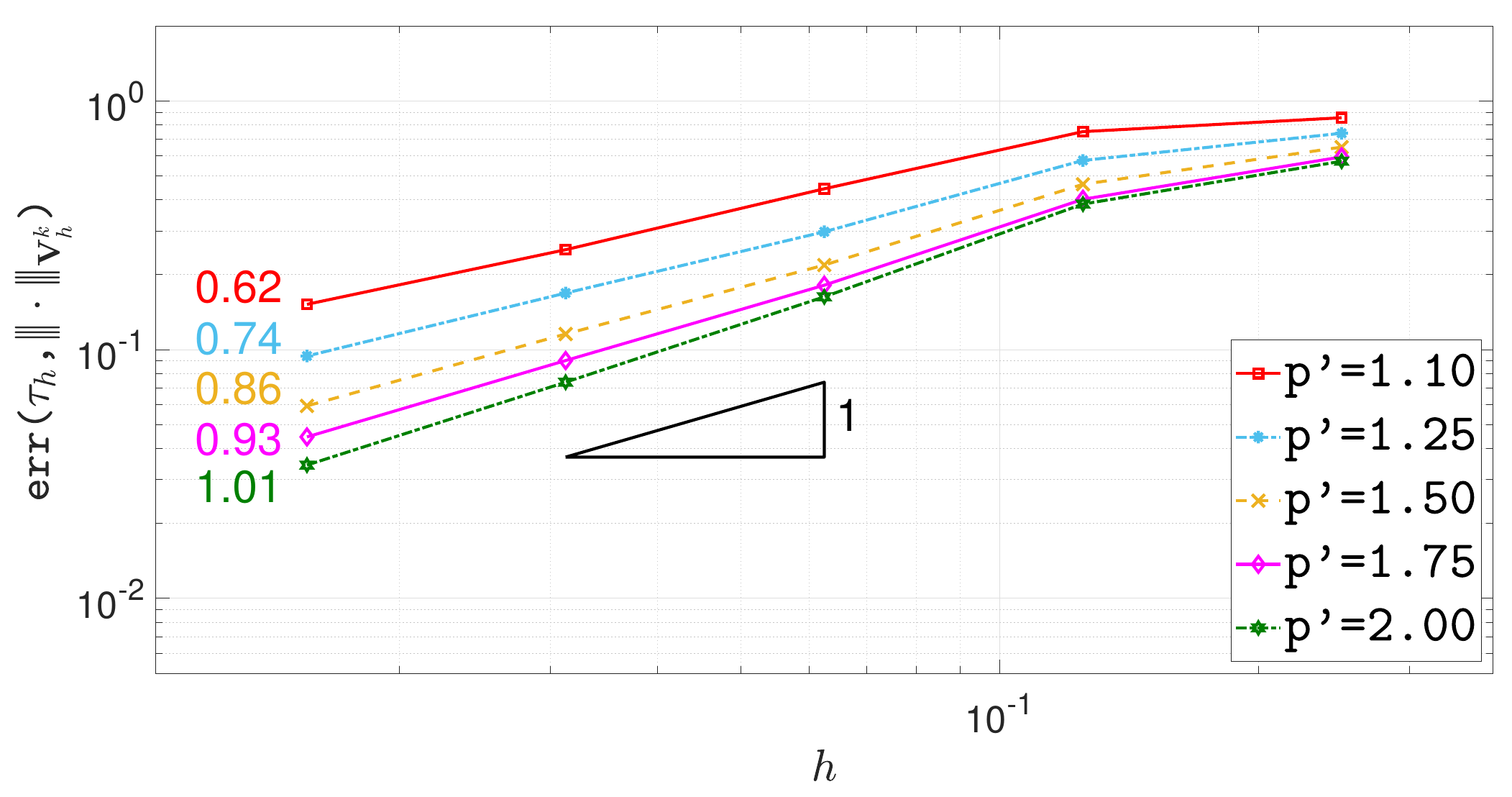}
    \end{subfigure}\qquad
    \begin{subfigure}{0.45\textwidth}
        \centering
        \includegraphics[width=\linewidth]{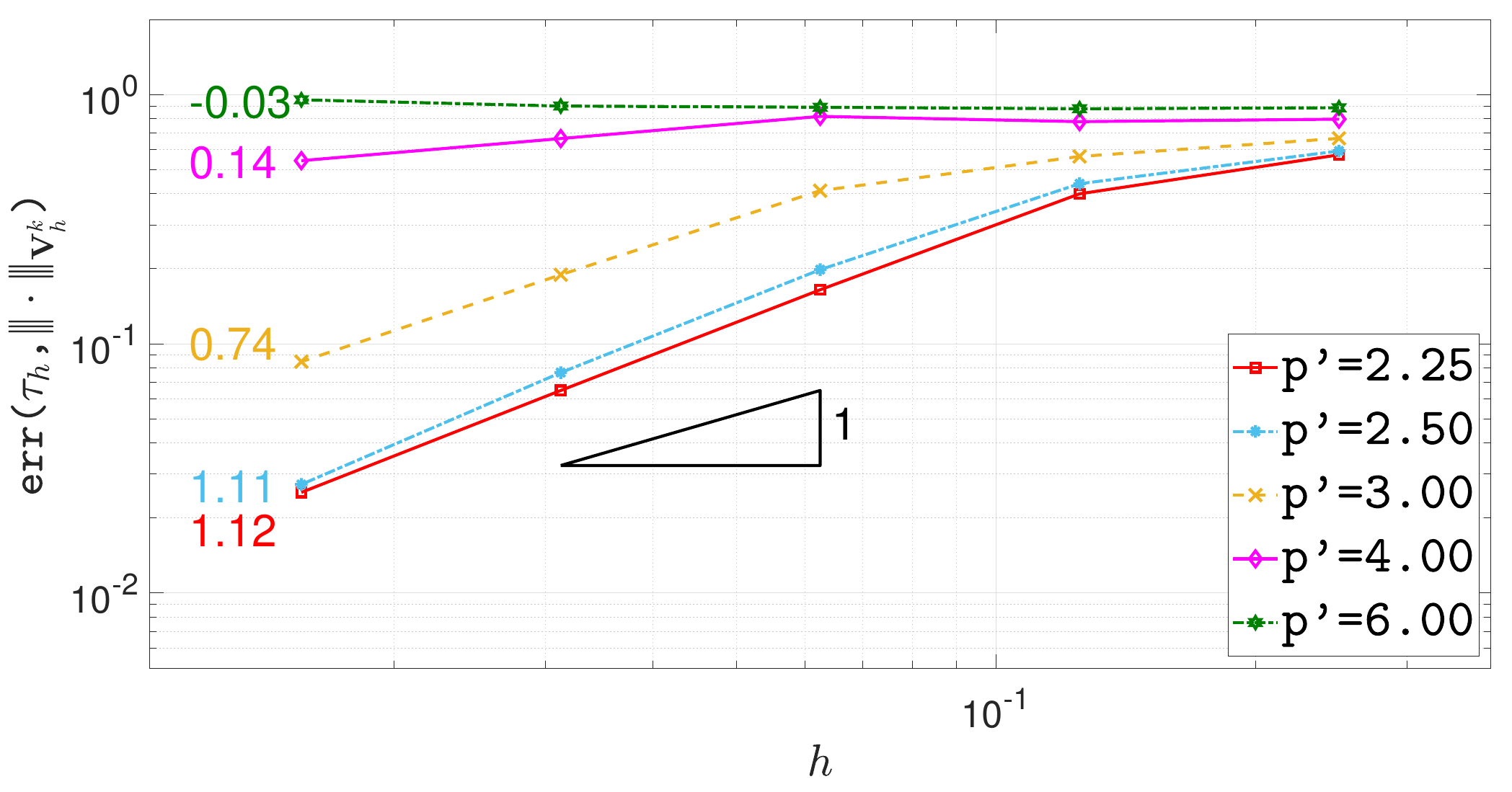}
    \end{subfigure}

    \vspace{0.25cm}
    
    \begin{subfigure}{0.45\textwidth}
        \centering
        \includegraphics[width=\linewidth]{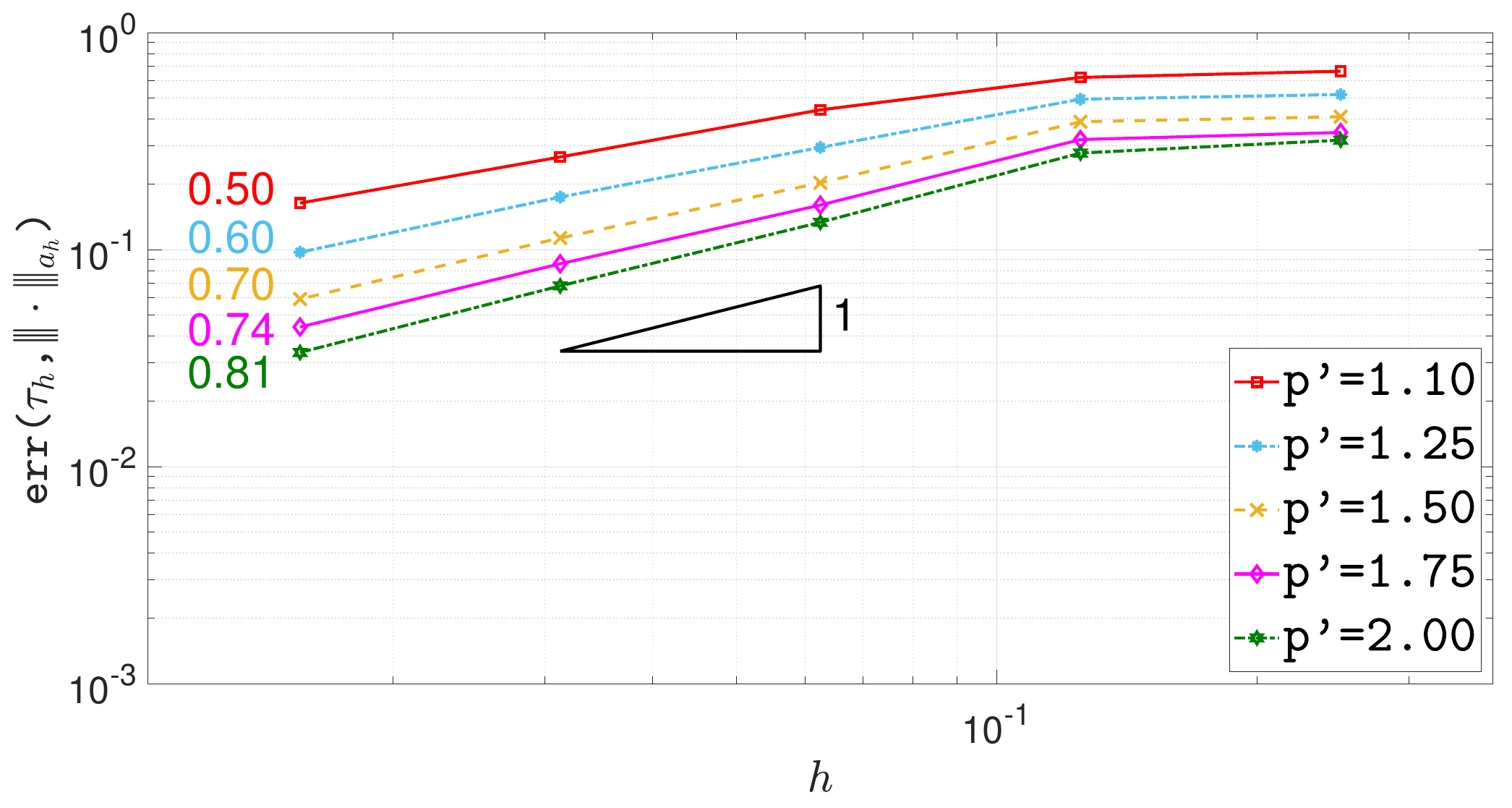}
    \end{subfigure}\qquad
    \begin{subfigure}{0.45\textwidth}
        \centering
        \includegraphics[width=\linewidth]{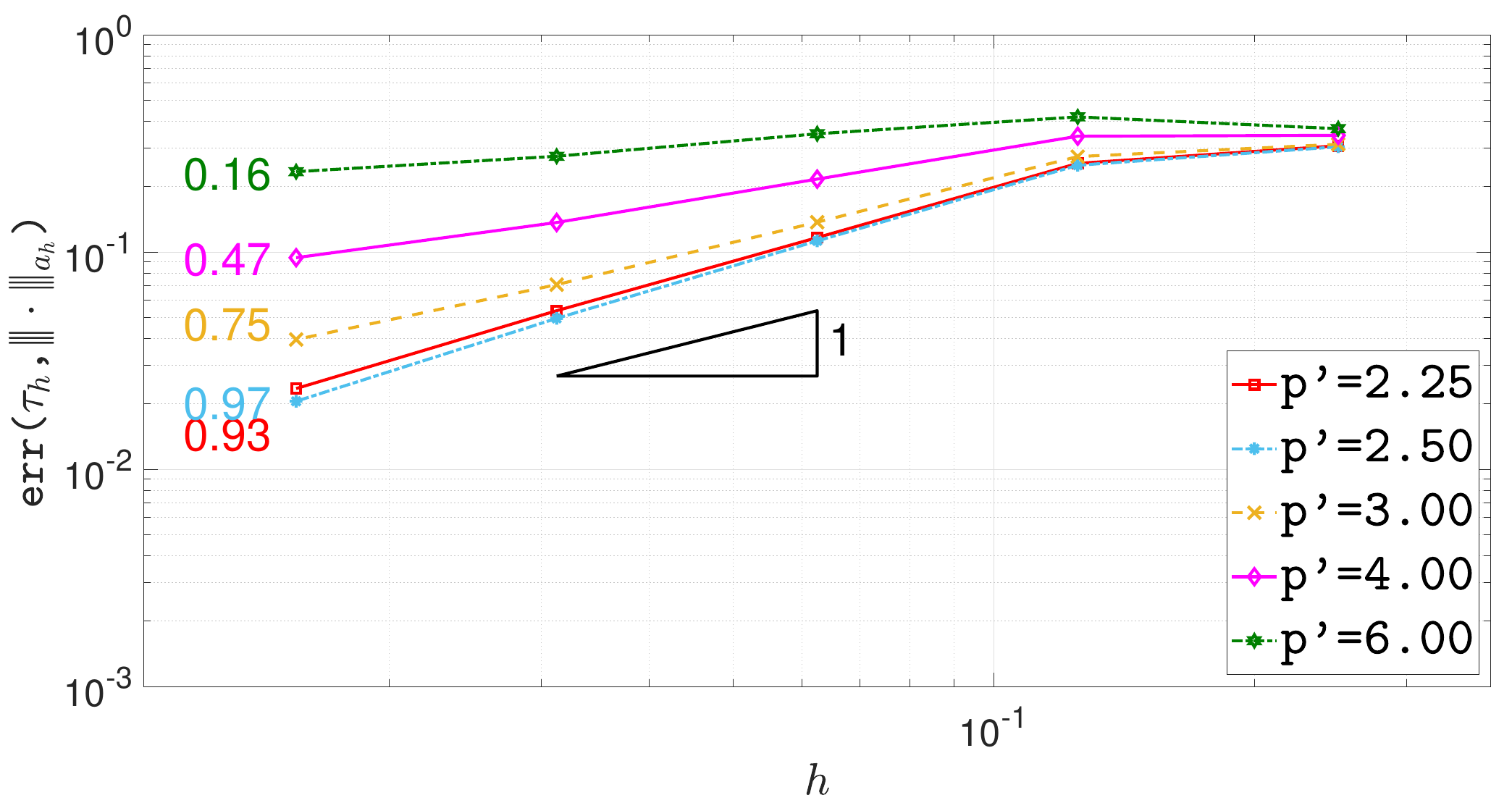}
    \end{subfigure}
    
    \vspace{0.25cm}
    
    \begin{subfigure}{0.45\textwidth}
        \centering
        \includegraphics[width=\linewidth]{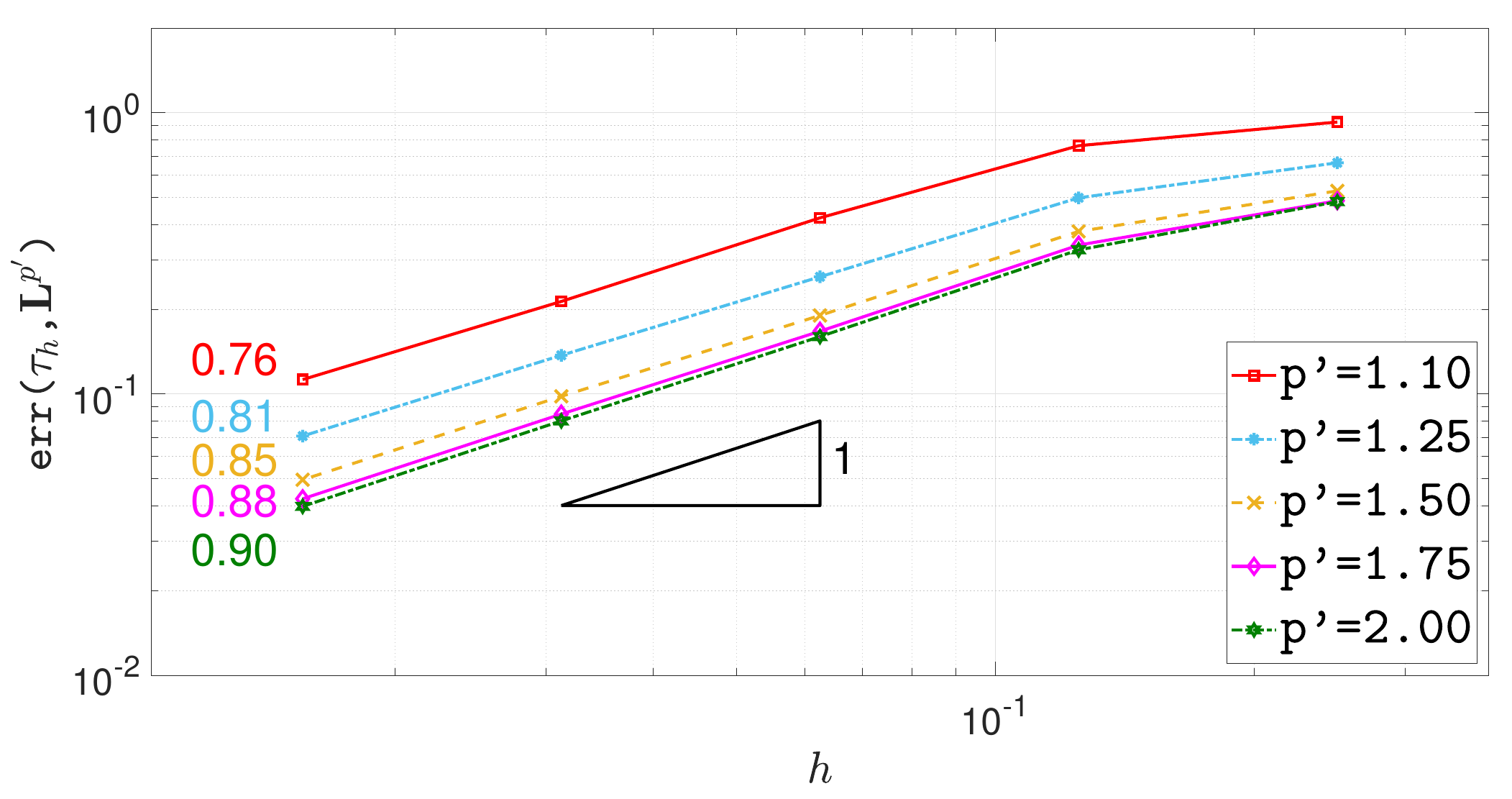}
    \end{subfigure}\qquad
    \begin{subfigure}{0.45\textwidth}
        \centering
        \includegraphics[width=\linewidth]{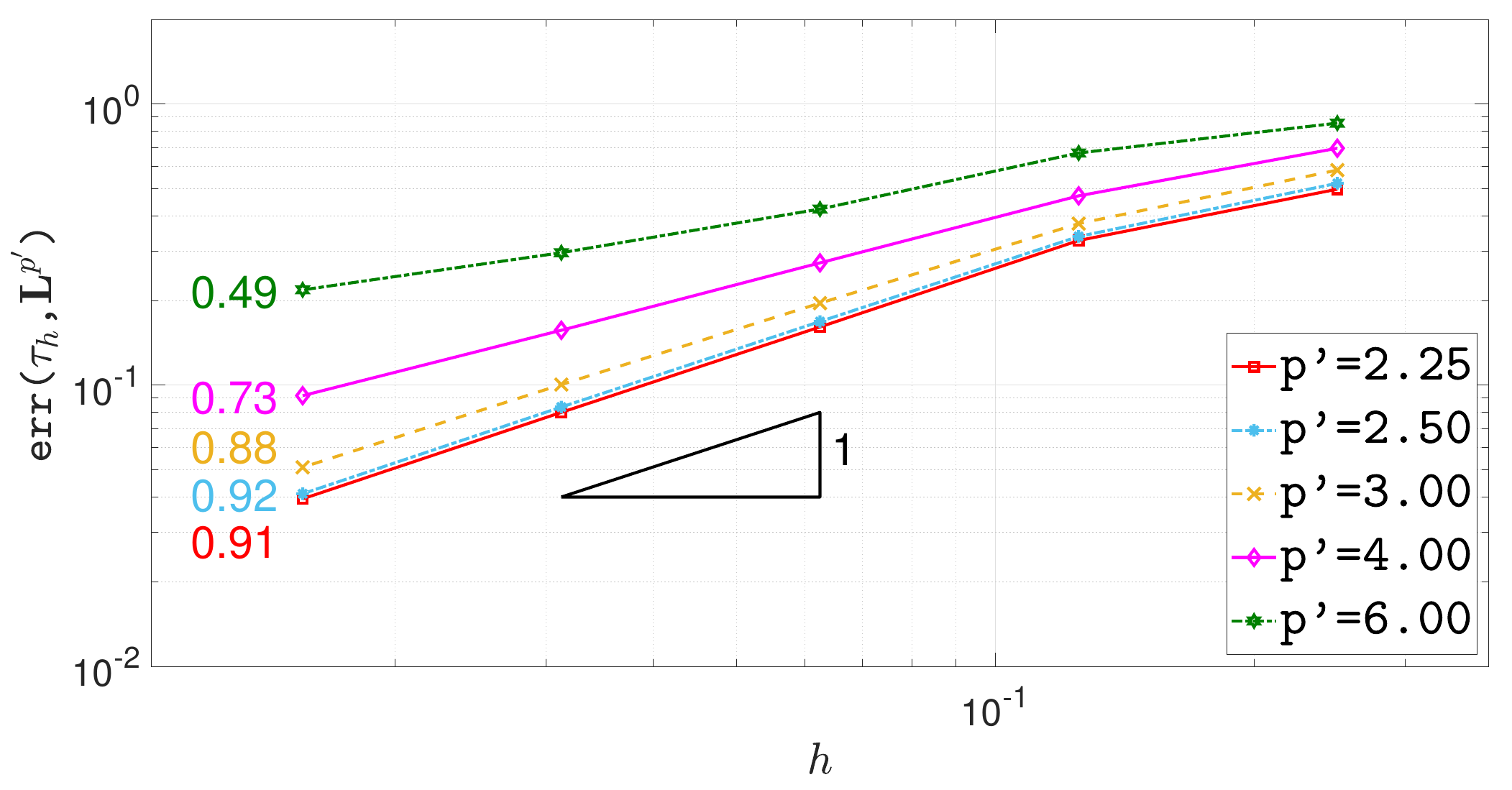}
    \end{subfigure}

    \vspace{0.25cm}    
    
    \begin{subfigure}{0.45\textwidth}
        \centering
        \includegraphics[width=\linewidth]{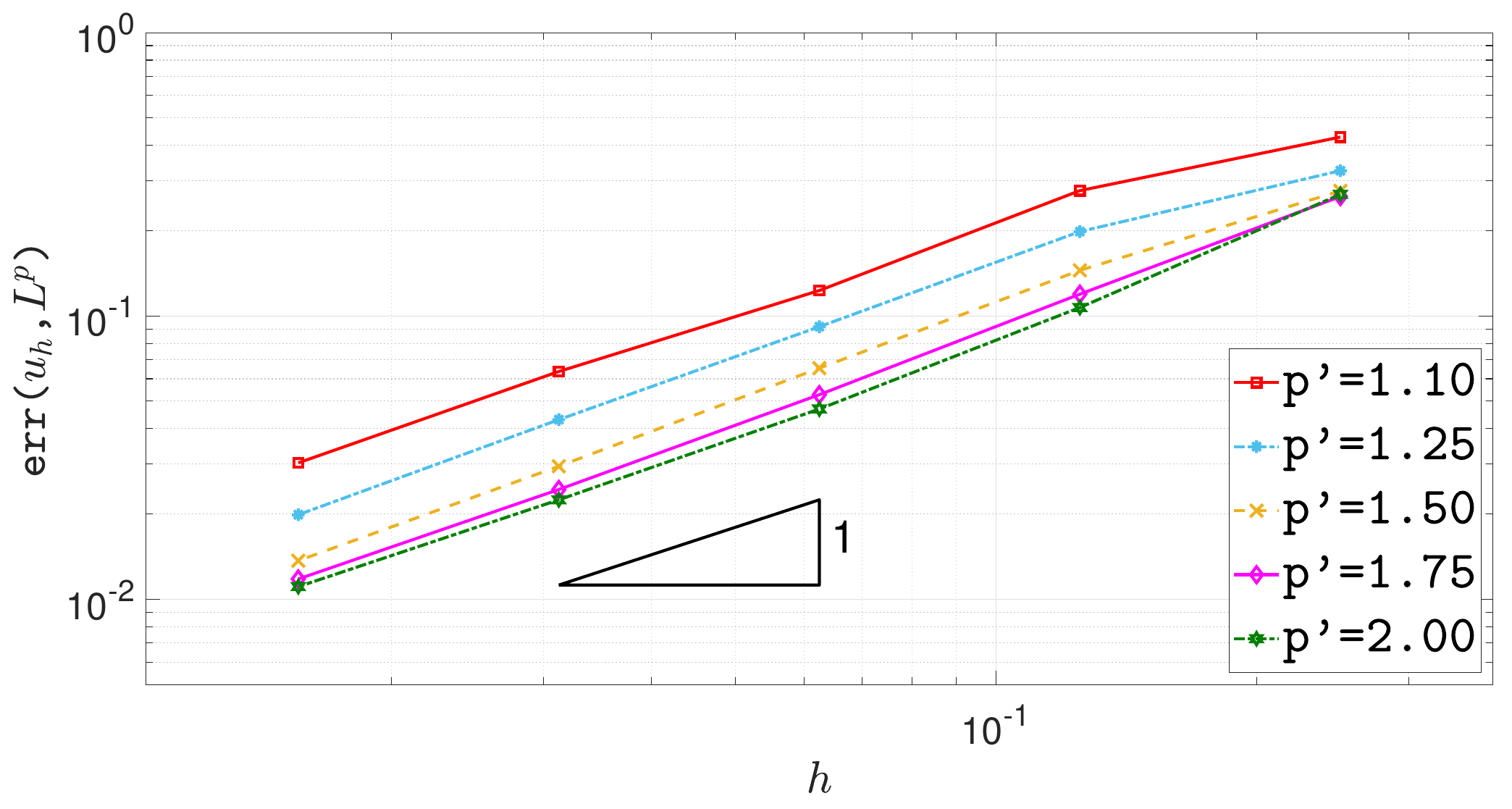}
        
    \end{subfigure}\qquad
    \begin{subfigure}{0.45\textwidth}
        \centering
        \includegraphics[width=\linewidth]{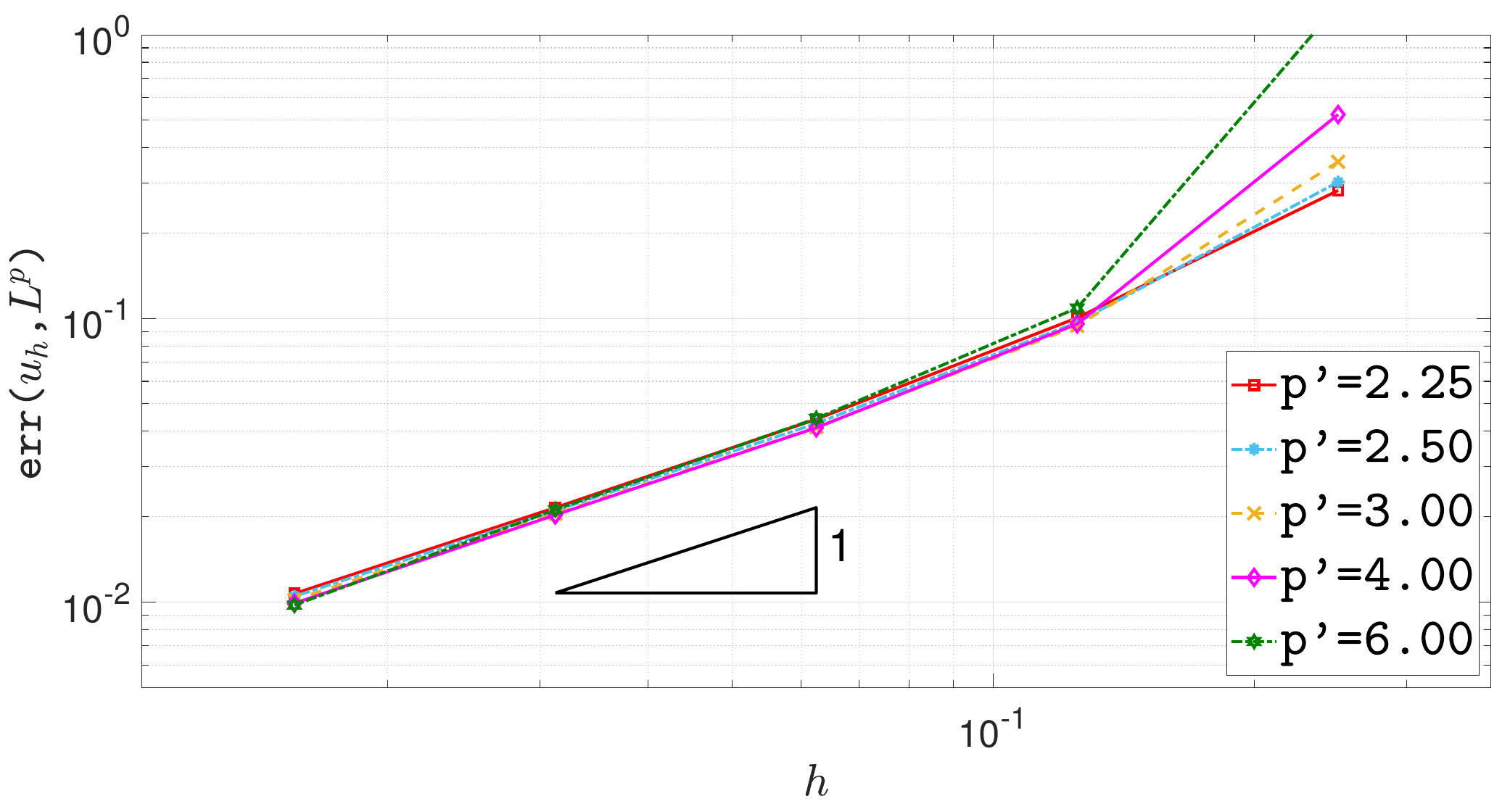}
    \end{subfigure}

    \vspace{0.25cm}

    \begin{subfigure}{0.45\textwidth}
        \centering
        \includegraphics[width=\linewidth]{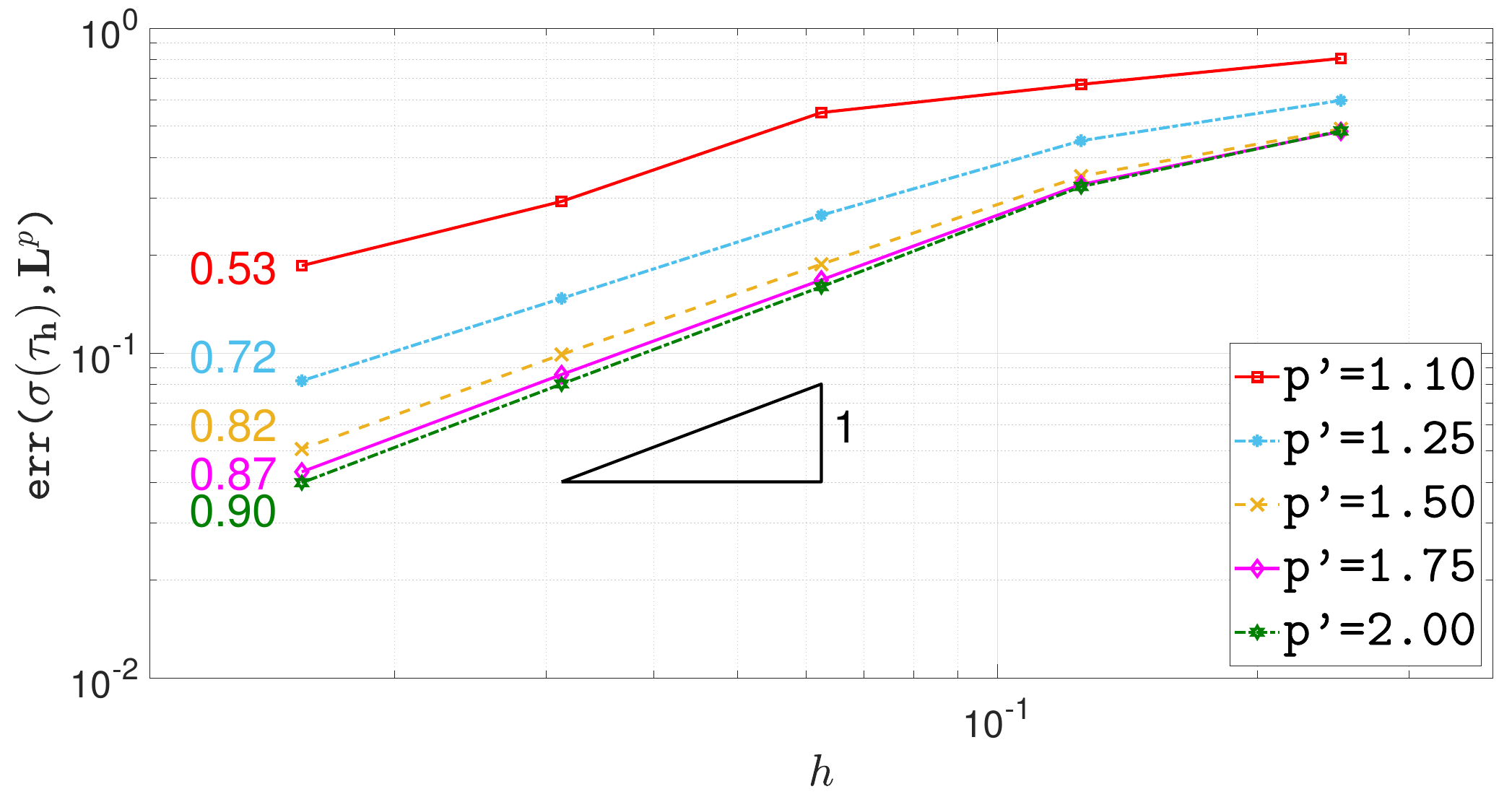}
    \end{subfigure}\qquad
    \begin{subfigure}{0.45\textwidth}
        \centering
        \includegraphics[width=\linewidth]{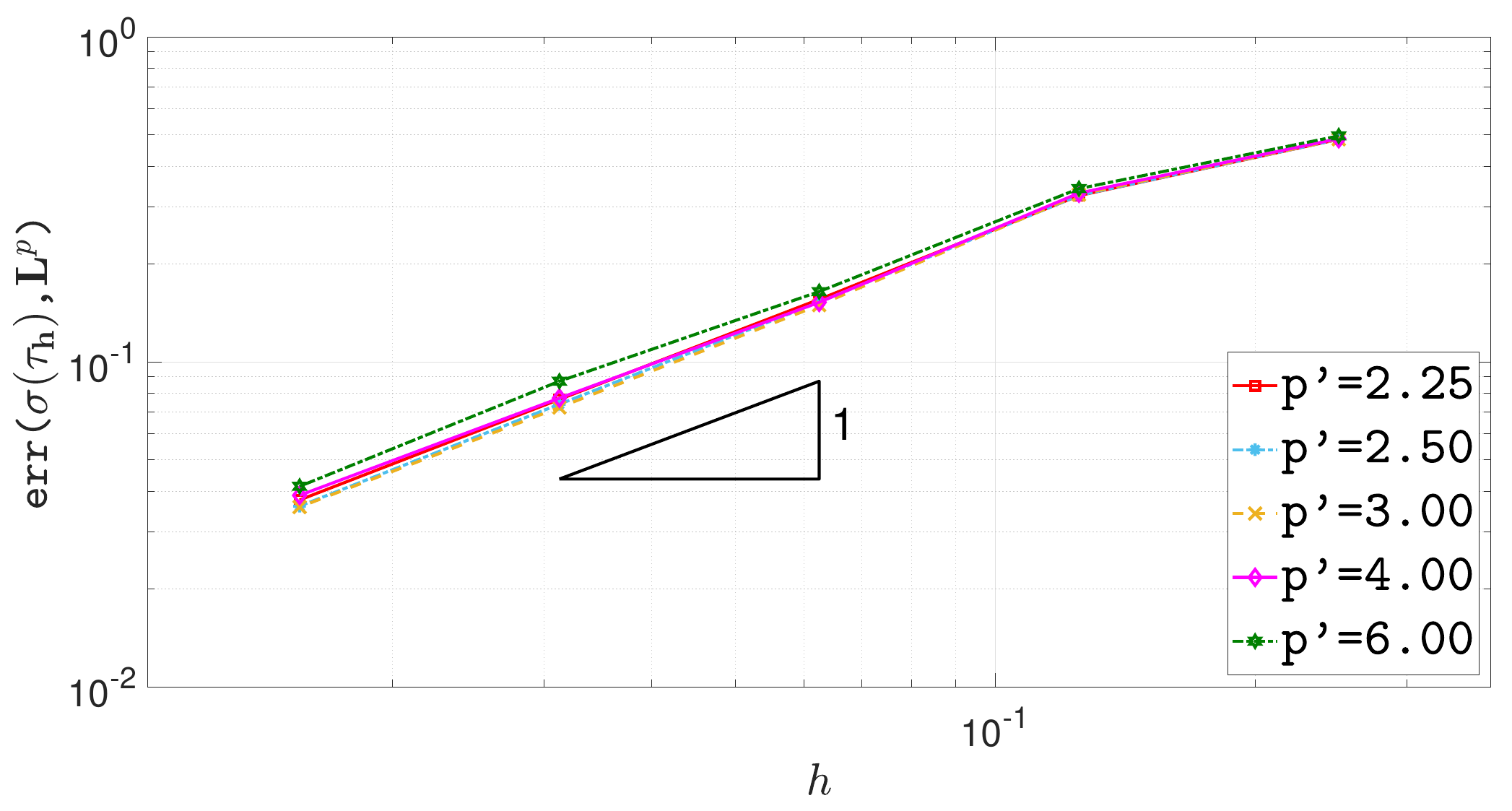}
    \end{subfigure}

   \caption{Test 1. Computed errors defined as in \eqref{eq:err_quant} as a function of the mesh size (loglog scale), for the mesh family \texttt{NON CONVEX}. Left panel: $1 <  \sobIndexConj \leq 2$, right panel: $\sobIndexConj > 2$.} 
    \label{fig:test1-W}
\end{figure}

In order to interpret the results illustrated in Fig.~\ref{fig:test1-Q} Fig.~\ref{fig:test1-V} and Fig.~\ref{fig:test1-W} with respect to the theoretical estimates established in Section~\ref{sec:a-priori}, Table~\ref{tab:test1-errl2} (resp. Table~\ref{tab:test1-errg2}) reports the expected convergence orders corresponding to the different sources of error derived in Corollaries~\ref{cor:total.error} and \ref{cor:total.error.scalar} (resp.  Corollaries \ref{cor:total.error.g2} and~\ref{cor:total.error.scalar.g2}).
We also report the rates for the interpolation errors  
$\texttt{err}(\vectProjF{0}\boldsymbol{\tau}, \boldsymbol{L}^{p'})$ and 
$\texttt{err}(\projL{0} u , L^{p})$.

\begin{table}[!htbp]
\centering
\begin{small}
\begin{tabular}{l|cc|cc|cc}
\toprule
\texttt{p'} & 
$\left(\mathcal{R}_h^{(1)}\right)^{\frac{1}{2}}$ &
$\left(\mathcal{R}_h^{(2)}\right)^{\frac{1}{2}}$ &
$\left(\mathcal{R}_h^{(1)}\right)^{\frac{1}{p}}$ &
$\left(\mathcal{R}_h^{(2)}\right)^{\frac{1}{p}}$ &
$\texttt{err}(\vectProjF{0}\boldsymbol{\tau}, \boldsymbol{L}^{p'})$ &
$\texttt{err}(\projL{0} u , L^{p})$ 
\\
\midrule
\texttt{1.10} &
\texttt{1.000} &
\texttt{0.550} &
\texttt{0.182} &
\texttt{0.100} &
\texttt{1.000} &
\texttt{1.000} 
\\
\texttt{1.25} &
\texttt{1.000} &
\texttt{0.625} &
\texttt{0.400} &
\texttt{0.250} &
\texttt{1.000} &
\texttt{1.000} 
\\
\texttt{1.50} &
\texttt{1.000} &
\texttt{0.750} &
\texttt{0.667} &
\texttt{0.500} &
\texttt{1.000} &
\texttt{1.000} 
\\
\texttt{1.75} &
\texttt{1.000} &
\texttt{0.875} &
\texttt{0.857} &
\texttt{0.750} &
\texttt{1.000} &
\texttt{1.000} 
\\
\texttt{2.00} &
\texttt{1.000} &
\texttt{1.000} &
\texttt{1.000} &
\texttt{1.000} &
\texttt{1.000} &
\texttt{1.000} 
\\
\bottomrule
\end{tabular}
\end{small}
\caption{Test 1. $1< p' \leq 2$. Expected orders of convergence for the terms appearing in the a priori error estimates in Corollaries~\ref{cor:total.error} and \ref{cor:total.error.scalar} and the interpolations errors. The first two residual columns refer to $\vectSol$, while the next two refer to $\scalarSol$.}
\label{tab:test1-errl2}
\end{table}
%
\begin{table}[!htbp]
\centering
\begin{small}
\begin{tabular}{l|cc|cc|cc}
\toprule
\texttt{p'} & 
$\left(\tilde{\mathcal{R}}_h^{(1)}\right)^{\frac{1}{\sobIndexConj}}$ &
$\left(\tilde{\mathcal{R}}_h^{(2)}\right)^{\frac{1}{\sobIndexConj}}$ &
$\left(\tilde{\mathcal{R}}_h^{(1)}\right)^{\frac{1}{\sobIndexConj}}$ &
$\left(\tilde{\mathcal{R}}_h^{(2)}\right)^{\frac{1}{\sobIndexConj}}$ &
$\texttt{err}(\vectProjF{0}\boldsymbol{\tau}, \boldsymbol{L}^{p'})$ &
$\texttt{err}(\projL{0} u , L^{p})$ 
\\
\midrule
\texttt{2.25} &
\texttt{0.800} &
\texttt{0.889} &
\texttt{0.800} &
\texttt{0.889} &
\texttt{1.000} &
\texttt{1.000} 
\\
\texttt{2.50} &
\texttt{0.667} &
\texttt{0.800} &
\texttt{0.667} &
\texttt{0.800} &
\texttt{1.000} &
\texttt{1.000} 
\\
\texttt{3.00} &
\texttt{0.500} &
\texttt{0.667} &
\texttt{0.500} &
\texttt{0.667} &
\texttt{1.000} &
\texttt{1.000} 
\\
\texttt{4.00} &
\texttt{0.333} &
\texttt{0.500} &
\texttt{0.333} &
\texttt{0.500} &
\texttt{1.000} &
\texttt{1.000} 
\\
\texttt{6.00} &
\texttt{0.200} &
\texttt{0.333} &
\texttt{0.200} &
\texttt{0.333} &
\texttt{1.000} &
\texttt{1.000} 
\\
\bottomrule
\end{tabular}
\end{small}
\caption{Test 1. $p' > 2$. Expected orders of convergence for the terms appearing in the a priori error estimates in Corollaries \ref{cor:total.error.g2} and~\ref{cor:total.error.scalar.g2} and the interpolations errors. The first two residual columns refer to $\vectSol$, while the next two refer to $\scalarSol$.}
\label{tab:test1-errg2}
\end{table}

We now discuss the observed convergence behavior of the error quantities introduced in \eqref{eq:err_quant}.
First of all, it can be observed from the plots that
$\texttt{err}(u_h,L^{p})$ is dominated by the interpolation error.

Let us now analyze the flux errors in the discrete norms.
The errors $\texttt{err}(\boldsymbol{\tau}_h,\discFullVectNorm{\cdot})$
exhibit convergence rates that are higher than those predicted by
Corollaries~\ref{cor:total.error} and \ref{cor:total.error.g2},
except for the case $p'=\texttt{6}$, for which the plots suggest that the
asymptotic regime has not yet been reached.
In contrast, the errors
$\texttt{err}(\boldsymbol{\tau}_h,\discDivFreeNorm{\cdot})$
are in closer agreement with the theoretical predictions.

The errors
$\texttt{err}(\boldsymbol{\tau}_h,\boldsymbol{L}^{p'})$
exhibit convergence rates lying between those of
$\texttt{err}(\boldsymbol{\tau}_h,\discDivFreeNorm{\cdot})$
and the interpolation error rate.

The errors
$\texttt{err}(\fluxFunction_{p'}(\boldsymbol{\tau}_h),\boldsymbol{L}^{p})$
exhibit convergence rates lying between those of
$\texttt{err}(\boldsymbol{\tau}_h,\discDivFreeNorm{\cdot})$
and the optimal first-order rate for $1<p'\leq2$,
whereas they converge linearly for $p'>2$.

We finally observe that no significant differences are observed between the results obtained on meshes with convex elements and those obtained on meshes with non convex elements.

Furthermore, we report the number of iterations required by the relaxed Ka\v{c}anov method described in Subsection~\ref{sub:kacanov}. 
As expected, in the regime $1<p'\leq 2$, the number of iterations increases as $p'$ decreases. In contrast, for $p'>2$, the iteration count remains essentially stable with respect to the value of $p'$. 
We also observe that the number of iterations is insensitive to both the mesh size and the mesh geometry.

\begin{table}[!ht]
\centering
\begin{small}
\begin{tabular}{l|r|ccccc|ccccc}
\toprule
&
& \multicolumn{10}{c}{\texttt{$p'$}} 
\\
   \texttt{MESHES}
&  \texttt{1/h}
& {\texttt{1.10}}
& {\texttt{1.25}}
& {\texttt{1.50}}
& {\texttt{1.75}}
& {\texttt{2.00}}
& {\texttt{2.25}}
& {\texttt{2.50}}
& {\texttt{3.00}}
& {\texttt{4.00}}
& {\texttt{6.00}}
\\
\midrule
\multirow{5}{*}{\texttt{QUAD.}} 
& {\texttt{4}}
& {\texttt{190}}
& {\texttt{112}}
& {\texttt{71}}
& {\texttt{50}}
& {\texttt{1}}
& {\texttt{32}}
& {\texttt{32}}
& {\texttt{33}}
& {\texttt{34}}
& {\texttt{33}}
\\
& {\texttt{8}}
& {\texttt{152}}
& {\texttt{120}}
& {\texttt{73}}
& {\texttt{48}}
& {\texttt{1}}
& {\texttt{31}}
& {\texttt{31}}
& {\texttt{31}}
& {\texttt{32}}
& {\texttt{32}}
\\
& {\texttt{16}}
& {\texttt{171}}
& {\texttt{123}}
& {\texttt{73}}
& {\texttt{46}}
& {\texttt{1}}
& {\texttt{30}}
& {\texttt{30}}
& {\texttt{29}}
& {\texttt{30}}
& {\texttt{31}}
\\
& {\texttt{32}}
& {\texttt{181}}
& {\texttt{123}}
& {\texttt{72}}
& {\texttt{44}}
& {\texttt{1}}
& {\texttt{29}}
& {\texttt{29}}
& {\texttt{29}}
& {\texttt{29}}
& {\texttt{30}}
\\
& {\texttt{64}}
& {\texttt{189}}
& {\texttt{125}}
& {\texttt{69}}
& {\texttt{41}}
& {\texttt{1}}
& {\texttt{28}}
& {\texttt{29}}
& {\texttt{29}}
& {\texttt{29}}
& {\texttt{30}}
\\
\midrule
\multirow{5}{*}{\texttt{RANDOM}} 
& {\texttt{4}}
& {\texttt{193}}
& {\texttt{111}}
& {\texttt{67}}
& {\texttt{49}}
& {\texttt{1}}
& {\texttt{33}}
& {\texttt{34}}
& {\texttt{34}}
& {\texttt{35}}
& {\texttt{37}}
\\
& {\texttt{16}}
& {\texttt{132}}
& {\texttt{109}}
& {\texttt{70}}
& {\texttt{47}}
& {\texttt{1}}
& {\texttt{31}}
& {\texttt{31}}
& {\texttt{31}}
& {\texttt{32}}
& {\texttt{34}}
\\
& {\texttt{32}}
& {\texttt{141}}
& {\texttt{106}}
& {\texttt{71}}
& {\texttt{46}}
& {\texttt{1}}
& {\texttt{30}}
& {\texttt{30}}
& {\texttt{30}}
& {\texttt{30}}
& {\texttt{35}}
\\
& {\texttt{64}}
& {\texttt{156}}
& {\texttt{112}}
& {\texttt{70}}
& {\texttt{44}}
& {\texttt{1}}
& {\texttt{29}}
& {\texttt{29}}
& {\texttt{29}}
& {\texttt{29}}
& {\texttt{34}}
\\
& {\texttt{128}}
& {\texttt{172}}
& {\texttt{120}}
& {\texttt{69}}
& {\texttt{41}}
& {\texttt{1}}
& {\texttt{28}}
& {\texttt{29}}
& {\texttt{29}}
& {\texttt{29}}
& {\texttt{33}}
\\
\midrule
\multirow{5}{*}{\texttt{NON CONV.}} 
& {\texttt{4}}
& {\texttt{168}}
& {\texttt{113}}
& {\texttt{72}}
& {\texttt{49}}
& {\texttt{1}}
& {\texttt{31}}
& {\texttt{30}}
& {\texttt{30}}
& {\texttt{32}}
& {\texttt{35}}
\\
& {\texttt{16}}
& {\texttt{195}}
& {\texttt{109}}
& {\texttt{73}}
& {\texttt{49}}
& {\texttt{1}}
& {\texttt{32}}
& {\texttt{31}}
& {\texttt{32}}
& {\texttt{33}}
& {\texttt{39}}
\\
& {\texttt{32}}
& {\texttt{147}}
& {\texttt{121}}
& {\texttt{74}}
& {\texttt{47}}
& {\texttt{1}}
& {\texttt{31}}
& {\texttt{31}}
& {\texttt{30}}
& {\texttt{32}}
& {\texttt{42}}
\\
& {\texttt{64}}
& {\texttt{172}}
& {\texttt{117}}
& {\texttt{72}}
& {\texttt{45}}
& {\texttt{1}}
& {\texttt{30}}
& {\texttt{30}}
& {\texttt{29}}
& {\texttt{30}}
& {\texttt{44}}
\\
& {\texttt{128}}
& {\texttt{197}}
& {\texttt{124}}
& {\texttt{71}}
& {\texttt{42}}
& {\texttt{1}}
& {\texttt{28}}
& {\texttt{29}}
& {\texttt{29}}
& {\texttt{30}}
& {\texttt{47}}
\\
\bottomrule
\end{tabular}
\end{small}
\caption{Test 1. 
Number of iterations  of the relaxed Ka\v{c}anov method for the considered mesh families and the values of $p'$ reported in \eqref{eq:test1-pp}.}
\label{tab:test1-iter}
\end{table}

\subsection{Test 2. Benchmark problem from torsional creep}
\label{sub:test2}

The aim of this test is to assess the performance of the proposed VEM scheme  \eqref{eq:discrete.problem} on the benchmark problem presented in \cite[Subsection 4.4.2]{COCKBURN.SHEN:2016} which models torsional creep. We consider the model problem \eqref{eq:continuous.problem.dual} posed on the domain $\Omega=(0,1)^2$ with piecewise constant loading term 
\[
f(x_1, x_2) = \left \{
\begin{aligned}
&8 &\qquad & \text{if $(x_1, x_2) \in [0.25, 0.75]^2$,}
\\
&0 &\qquad & \text{otherwise,}
\end{aligned}
\right.
\]
and different values of the Sobolev exponent $p' >1$.
It can be shown (see \cite{COCKBURN.SHEN:2016} and the references therein) that, as $p' \to 1$, the scalar solution $\scalarSol$ converges to the distance function from the boundary. Consequently, in the present setting, the solution is expected to develop a pyramid like profile
whose apex lies at $(0.5, 0.5)$.
On the other hand, as $p' \to \infty$, the solution  approaches a limit profile with increasingly steep gradients and, under suitable conditions, may develop discontinuities. 

We assess the performance of the proposed VEM scheme \eqref{eq:discrete.problem} in both regimes.
In order to discretize the problem, we consider a Cartesian tessellation of the domain with $64 \times 64$ elements. We remark that the adopted mesh is aligned with the discontinuity of $f$.

Fig.  \ref{fig:test2} displays the discrete scalar solutions $\discScalarSol$ obtained for different values of $p'$.
As expected, for smaller values of $p'$, the numerical solution exhibits the characteristic pyramid like profile. 
On the other hand, for larger values of $p'$ the discrete solution  develops steep gradients and exhibit discontinuous profile.
We finally note that the obtained plots are in good agreement with those reported in in \cite[Subsection 4.4.2]{COCKBURN.SHEN:2016}.

\begin{figure}[htbp]
    \begin{subfigure}{0.45\textwidth}
        \centering
        \begin{overpic}[width=\linewidth]{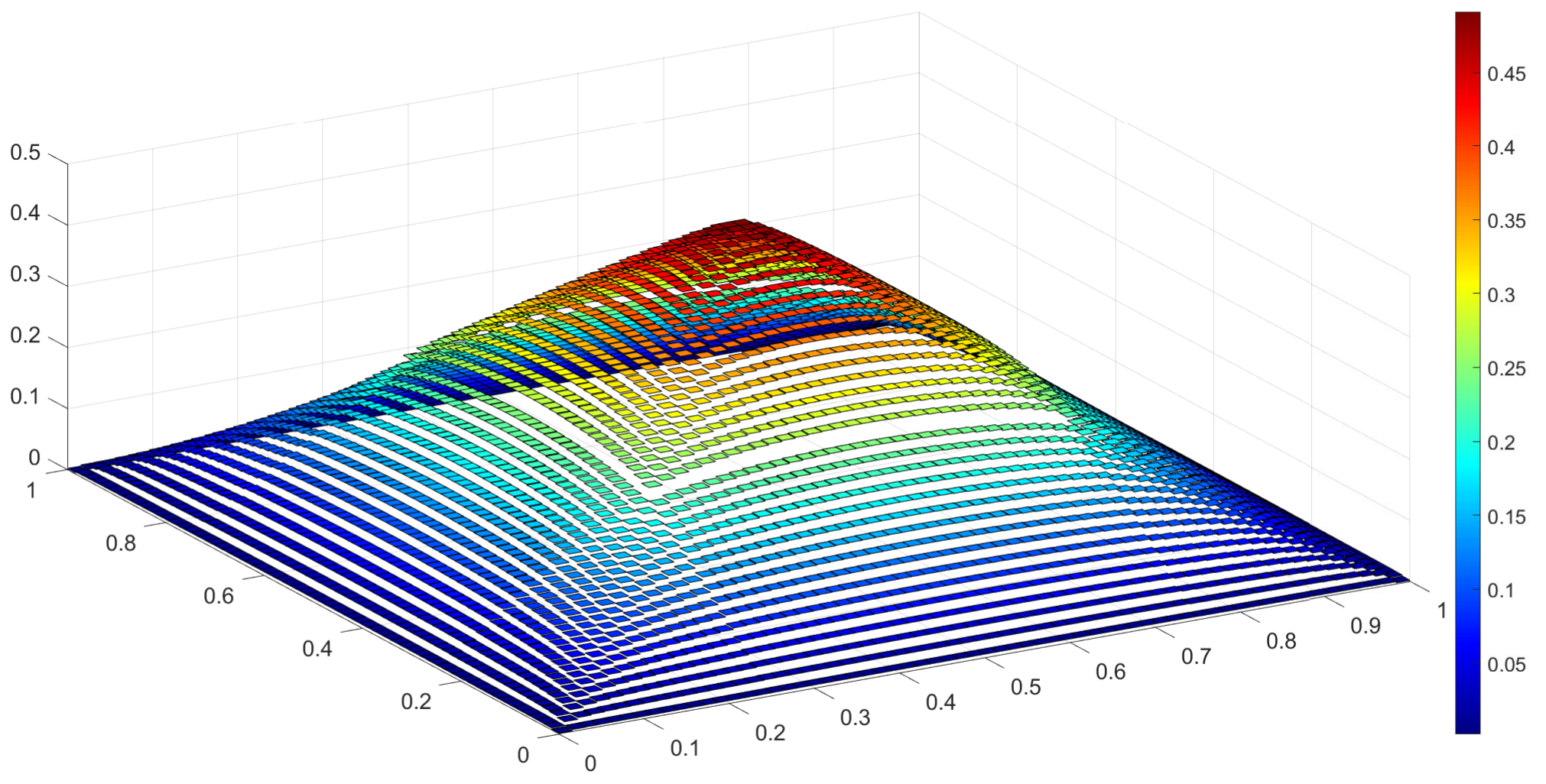}
        \put(45,-10){{{p'=\texttt{1.10}}}}
        \end{overpic}
    \end{subfigure}\qquad
    \begin{subfigure}{0.45\textwidth}
    \begin{overpic}[width=\linewidth]{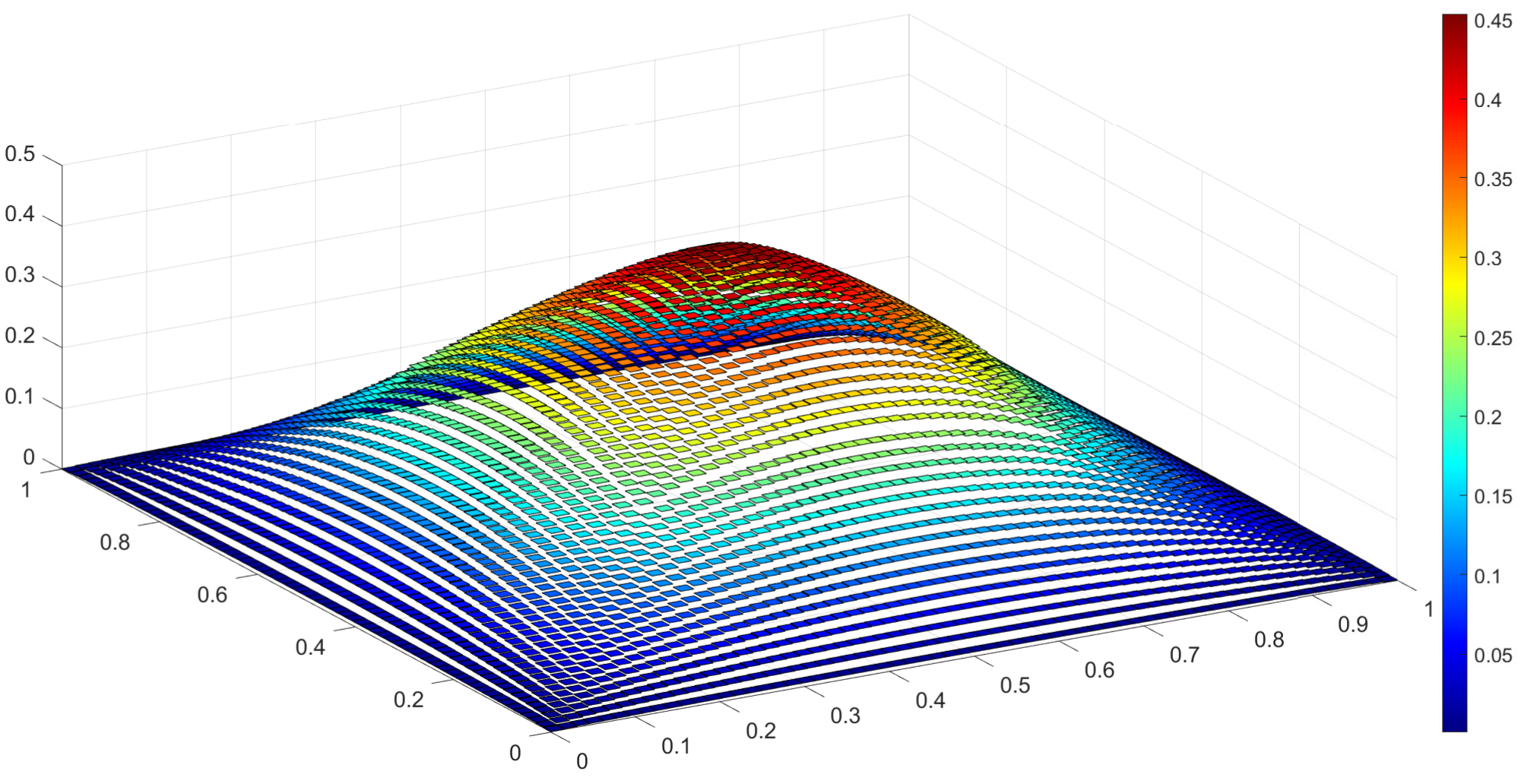}
        \put(45,-10){{{p'=\texttt{4/3}}}}
        \end{overpic}
    \end{subfigure}

    \vspace{1.5cm}
    
    \begin{subfigure}{0.45\textwidth}
        \centering
        \begin{overpic}[width=\linewidth]{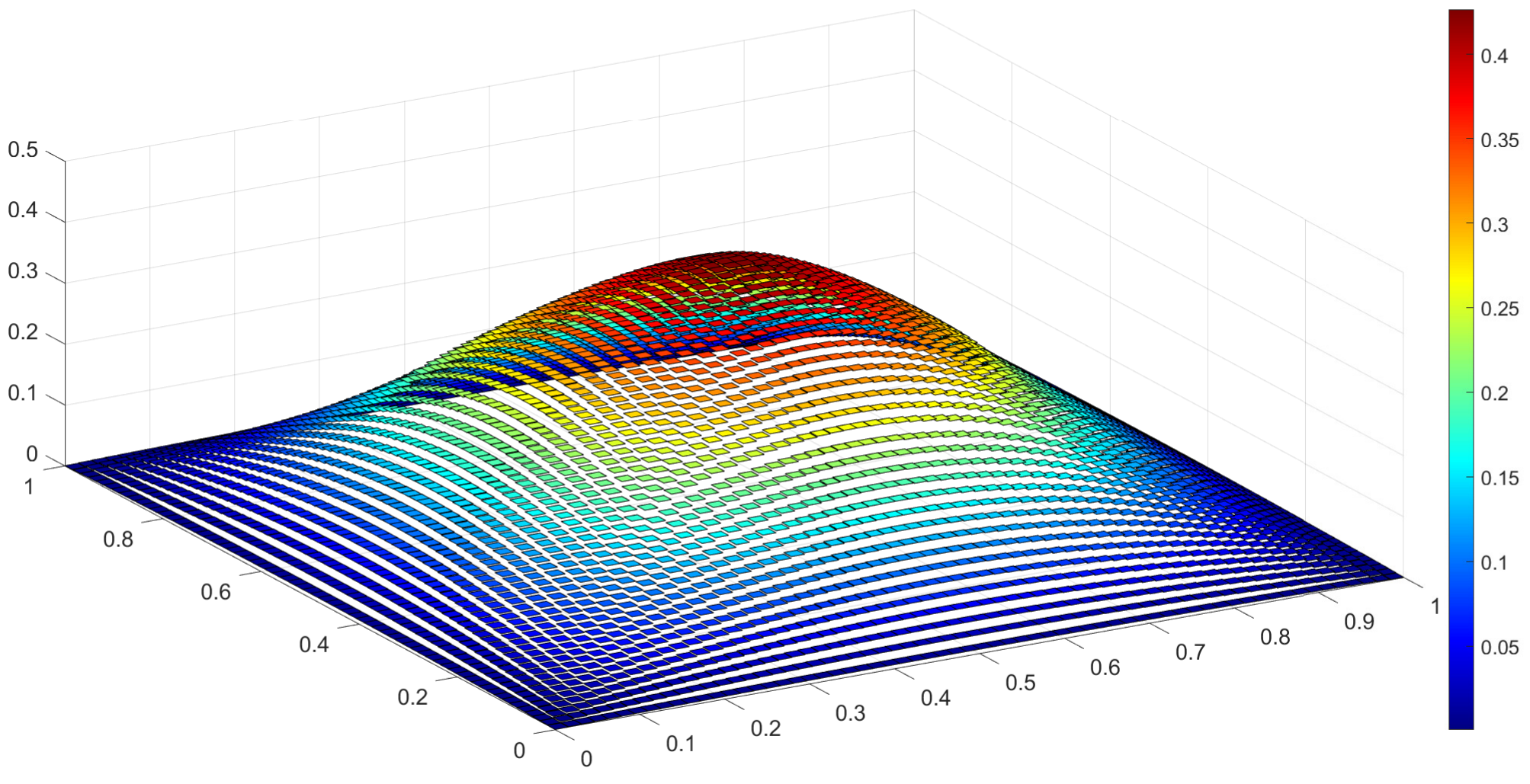}
        \put(45,-10){{{p'=\texttt{1.50}}}}
        \end{overpic}
    \end{subfigure}\qquad
    \begin{subfigure}{0.45\textwidth}
    \begin{overpic}[width=\linewidth]{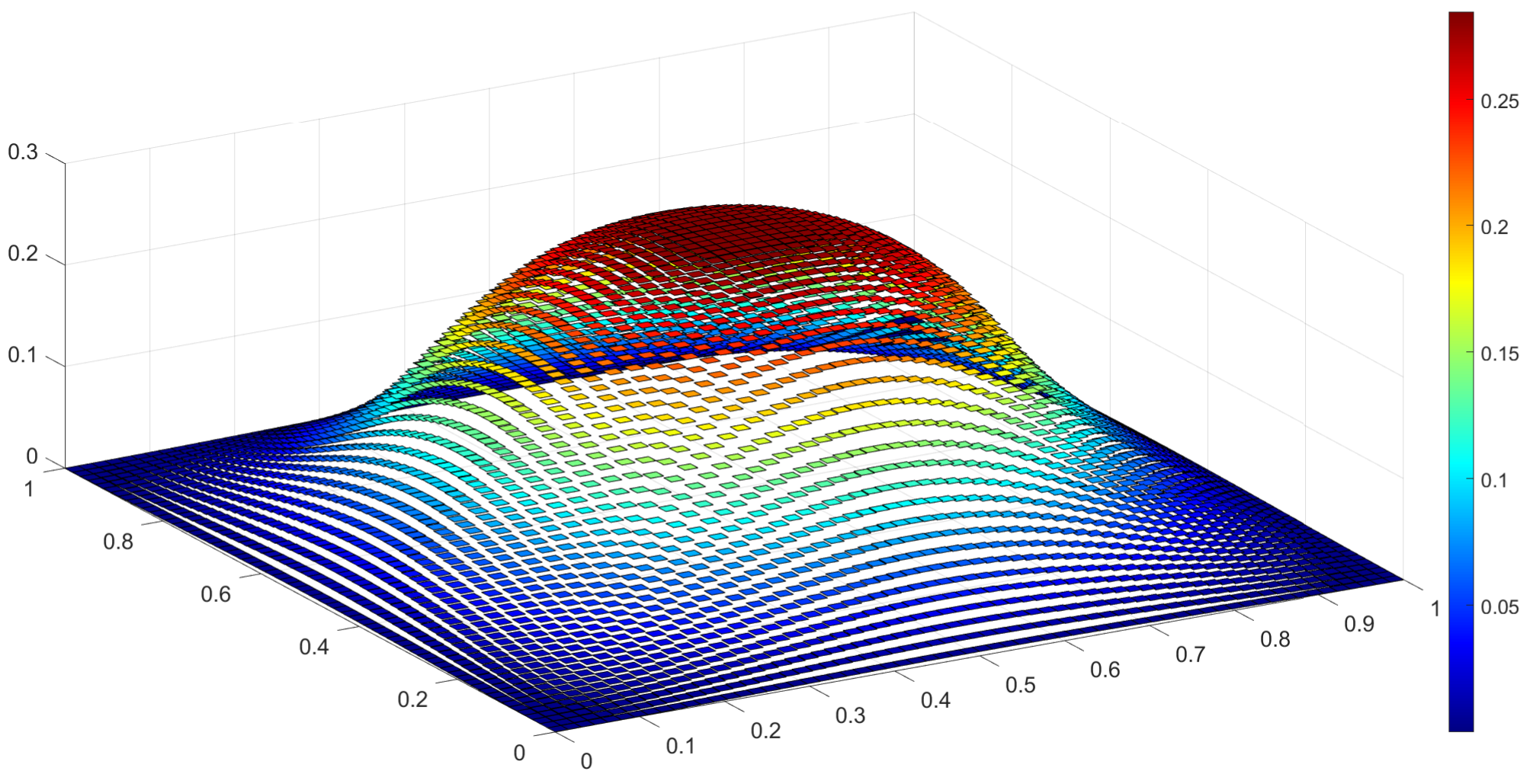}
        \put(45,-10){{{p'=\texttt{3.00}}}}
        \end{overpic}
    \end{subfigure}

    \vspace{1.5cm}
    
    \begin{subfigure}{0.45\textwidth}
        \centering
        \begin{overpic}[width=\linewidth]{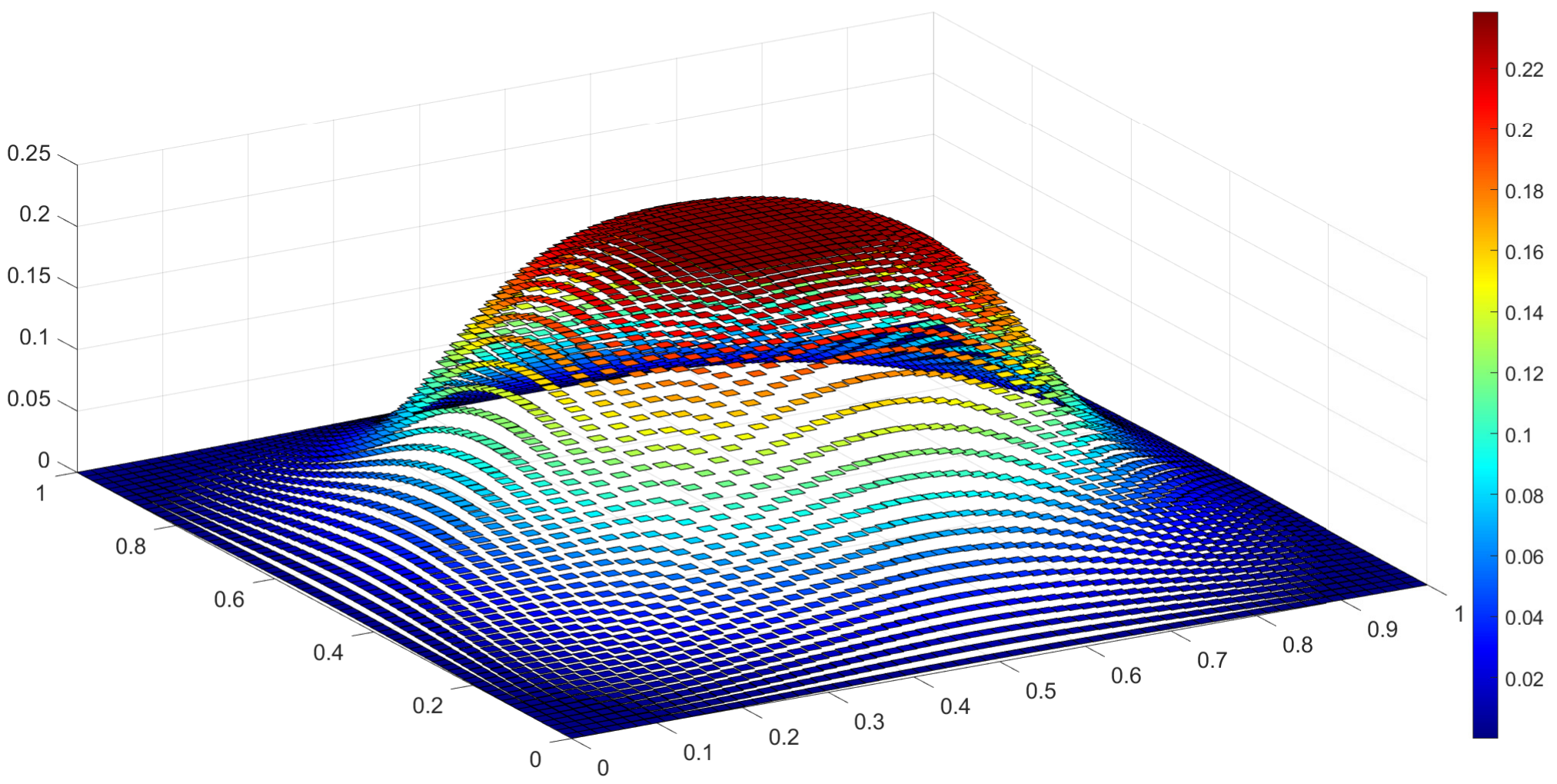}
        \put(45,-10){{{p'=\texttt{4.00}}}}
        \end{overpic}
    \end{subfigure}\qquad
    \begin{subfigure}{0.45\textwidth}
    \begin{overpic}[width=\linewidth]{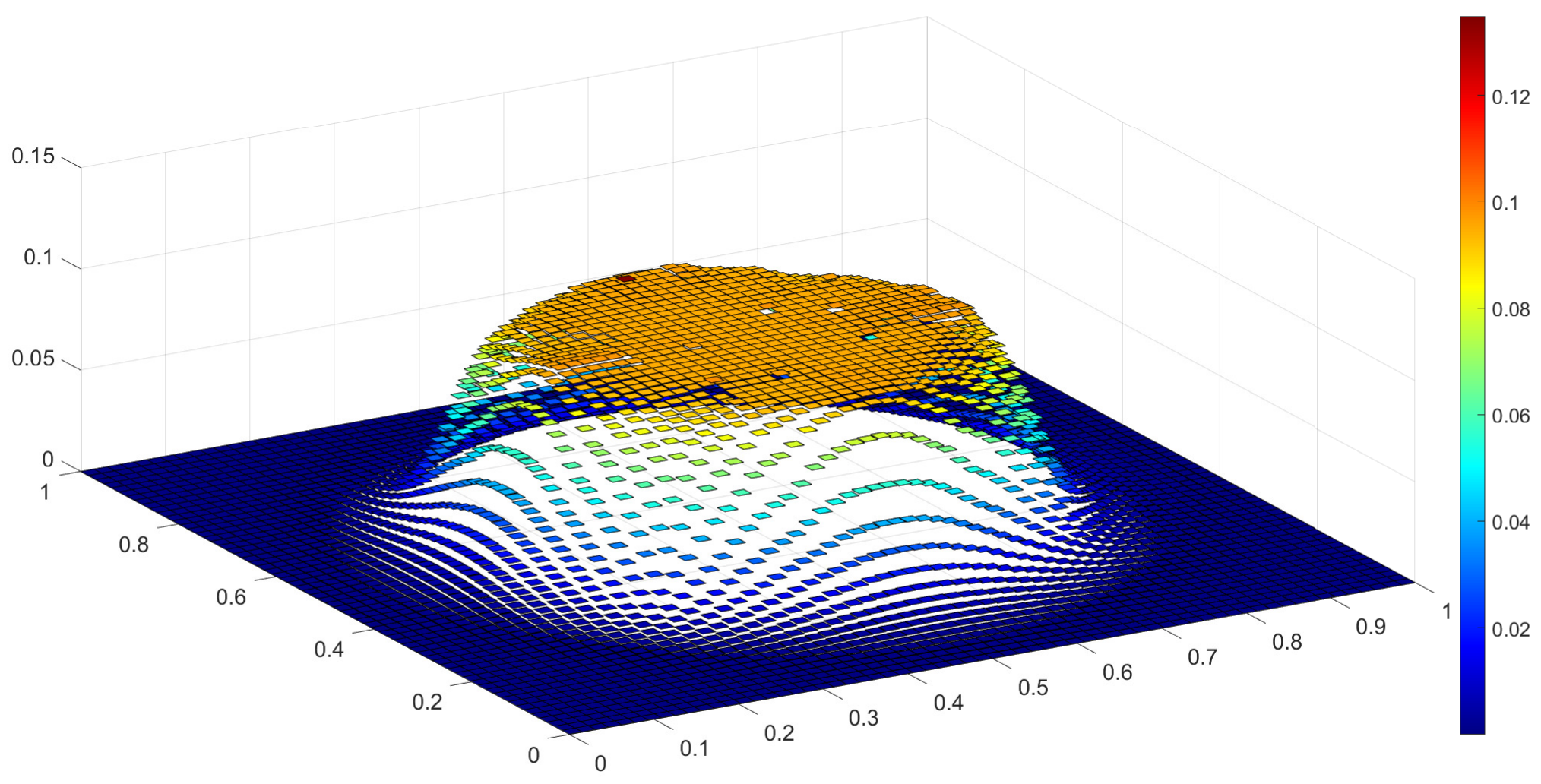}
        \put(45,-10){{{p'=\texttt{11.00}}}}
        \end{overpic}
    \end{subfigure}    

\vspace{1.5cm}

   \caption{Test 2. Discrete pressures $\discScalarSol$ obtained on a Cartesian tessellation of $(0, 1)^2$ with $64 \times 64$ elements. The values of $p'$ are reported in the plot.} 
    \label{fig:test2}
\end{figure}
\section*{Acknowledgements}
KBH has been partially funded by the European Union (ERC Synergy, NEMESIS, project number 101115663). Views and opinions expressed are however those of the authors only and do not necessarily reflect those of the European Union or the European Research Council Executive Agency.

\printbibliography

\appendix
\section{Proofs of technical lemmas}
\subsection{Auxiliary estimate for the normal component}\label{app:proof.r-norm.normal.component}
%
\begin{proof}[Proof of Lemma~\ref{lem:r-norm.normal.component}]
  Fix $\element \in \domain_h$, $\discVectFunOne \in \discVectSpace(\element)$ and $1<r \leq 2$.
  Let $\mathcal{T}_h(\element)$ be the shape-regular triangulation of $\element$ (see Remark \ref{lem.element.triangulation}). Notice that for each edge $\edge \in \edgeElement$ there
  exists a unique $T_{\edge} \in \mathcal{T}_h(\element)$ such that $\edge = \partial T_{\edge} \cap \partial \element$.
  Then, it is sufficient to show that for any $\edge \in \edgeElement$ it holds
  \begin{equation}\label{eq:r-norm.normal.component.local}
    \norm{\lebSpace{r}{\edge}}{\discVectFunOne \cdot \normalEdge}
    \lesssim
    h_{T_{\edge}}^{-\frac{1}{r}} \norm{\lebSpace{r}{T_{\edge}}}{ \discVectFunOne} + h_{T_{\edge}}^{1-\frac{1}{r}} \norm{\lebSpace{r}{T_{\edge}}}{\divergence \discVectFunOne},
  \end{equation}
  and then sum over $\edge \in \edgeElement$ to get the desired result.

  \medskip
  \noindent
  \textbf{Proof of \eqref{eq:r-norm.normal.component.local}}.
  Let $\edge \in \edgeElement$ and let $b_{\edge} \in \polySpace{2}(\edge)$ be the quadratic bubble function over $\edge$ such that $\norm{\lebSpace{\infty}{\edge}}{b_{\edge}} = 1$.
  Applying a $(\frac{2}{r},\frac{2}{2-r})$-H{\"o}lder inequality together with a standard bubble argument leads to
  \begin{equation}\label{eq:r-norm.normal.component.sup.new}
    \norm{\lebSpace{r}{\edge}}{\discVectFunOne \cdot \normalEdge} 
    \leq
    C(\polyDegree,\rho) h_{T_{\edge}}^{\frac{1}{r}-\frac{1}{2}} 
    \sup_{q^{\polyDegree+2} \in \polySpace{\polyDegree+2}(\edge)} \frac{\int_{\edge} (\discVectFunOne \cdot
    \normalEdge) b_{\edge}q^{\polyDegree+2} }{\norm{\lebSpace{2}{\edge}}{q^{\polyDegree+2}} }.
  \end{equation}
  Let $b_{\partial T_{\edge}} \in \polySpace{3}(T_{\edge})$ be the unique cubic function over $T_{\edge}$ that matches $b_{\edge}$ on $\edge$ and vanishes on the other two edges of $T_{\edge}$, and let $\hat{q}^{\polyDegree+2}$ be the extension (constant along lines parallel to the other edges) of $q^{\polyDegree+2}$. 
  Integration by parts followed by a $(r,r')$-H{\"o}lder inequality yields
  \begin{equation}\label{eq:r-norm.normal.component.sup.num.new}
    \begin{aligned}
      \int_{\edge} (\discVectFunOne \cdot \normalEdge) b_{\edge}q^{\polyDegree+2}
      &\leq
      \norm{\lebSpace{r}{T_{\edge}}}{\discVectFunOne}
      \norm{\lebSpace{r'}{T_{\edge}}}{\grad (b_{\partial T_{\edge}} \hat{q}^{\polyDegree+2})}
      +
      \norm{\lebSpace{r}{T_{\edge}}}{\divergence \discVectFunOne}
      \norm{\lebSpace{r'}{T_{\edge}}}{b_{\partial T_{\edge}} \hat{q}^{\polyDegree+2}} \\ 
      & \leq 
      C(r,\polyDegree,\rho) 
      \left( h_{T_{\edge}}^{\frac{2}{r'}-\frac{3}{2}} \norm{\lebSpace{r}{T_{\edge}}}{\discVectFunOne}
      +h_{T_{\edge}}^{\frac{2}{r'}-\frac{1}{2}} \norm{\lebSpace{r}{T_{\edge}}}{\divergence \discVectFunOne} \right) \norm{\lebSpace{2}{\edge}}{q^{\polyDegree+2}}
    \end{aligned}
  \end{equation}
  where the last inequality follows by applying \eqref{eq:grad.poly.inv.est} and \eqref{eq:poly.sob.embedding}.
  The desired result follows by substituting \eqref{eq:r-norm.normal.component.sup.num.new} into \eqref{eq:r-norm.normal.component.sup.new}, and noticing that $\frac{1}{r} - \frac{1}{2} + \frac{2}{r'} - \frac{3}{2} = -\frac{1}{r}$ and $\frac{1}{r} - \frac{1}{2} + \frac{2}{r'} - \frac{1}{2} = 1 - \frac{1}{r}$. 
\end{proof}
%
\subsection{Auxiliary estimate for the stabilization term}\label{app:proof.sE.approx.for.u}
%
\begin{proof}[Proof of Lemma~\ref{lem:sE.approx.for.u}]
  We start by splitting $s_{\element}(\discVectSol^{\perp}, \discVectSol^{\perp})$ as
  \begin{equation}\label{eq:sE.approx.for.u.initial}
    \begin{aligned}
      s_{\element}(\discVectSol^{\perp}, \discVectSol^{\perp})
      =
      \left[s_{\element}(\discVectSol^{\perp}, \discVectSol^{\perp}) 
      - s_{\element}((\vectProjF {\polyDegree} \vectSol)^{\perp}, \discVectSol^{\perp}) \right] 
      + s_{\element}((\vectProjF {\polyDegree} \vectSol)^{\perp}, \discVectSol^{\perp}) \eqqcolon \mathfrak{T}_{1,\element} + \mathfrak{T}_{2,\element}.
    \end{aligned}
  \end{equation}
  We bound each term separately.

  \medskip
  \noindent
  \textbf{First term.}
  Recalling the definition of $s_{\element}$ in \eqref{eq:def.sE}, we apply \eqref{eq:diffusive.flux.properties.original.hc} and \eqref{eq:simple.eu.norm.equivalence} to write
  \begin{align*}
    \mathfrak{T}_{1,\element}
    &\leq h_{\element}^{2}
    C(\sobIndex) \left( \euNorm{\locBoundaryDofVect(\discVectSol^{\perp}) } + \euNorm{\locBoundaryDofVect(\boldsymbol{\xi}_h^{\perp})} \right)^{\sobIndexConj-2} \euNorm{\locBoundaryDofVect(\boldsymbol{\xi}_h^{\perp}) } \euNorm{\locBoundaryDofVect(\discVectSol^{\perp})},
  \end{align*}
  where $\boldsymbol{\xi}_h \coloneqq \discVectSol - \vectSol_{I} $.
  Next, we apply \eqref{eq:young.type.inequality} together with \eqref{eq:simple.eu.norm.equivalence} and \eqref{eq:diffusive.flux.properties.original.mono}, obtaining, for any $\varepsilon > 0$,
  \begin{align*}
    &h_{\element}^2 C(\sobIndex) \left( \euNorm{\locBoundaryDofVect(\discVectSol^{\perp}) } + \euNorm{\locBoundaryDofVect(\boldsymbol{\xi}_h^{\perp})} \right)^{\sobIndexConj-2} \euNorm{\locBoundaryDofVect(\boldsymbol{\xi}_h^{\perp}) } \euNorm{\locBoundaryDofVect(\discVectSol^{\perp})} \\
    &\leq
    2^{\sobIndexConj-2}\varepsilon s_{\element}(\discVectSol^{\perp}, \discVectSol^{\perp}) + C(\sobIndex,\varepsilon) \left( \euNorm{\locBoundaryDofVect(\discVectSol^{\perp}) } + \euNorm{\locBoundaryDofVect(\vectSol_{I}^{\perp})} \right)^{\sobIndexConj-2} \euNorm{\locBoundaryDofVect(\boldsymbol{\xi}_h^{\perp}) }^2 \\
    &\leq
    2^{\sobIndexConj-2}\varepsilon s_{\element}(\discVectSol^{\perp}, \discVectSol^{\perp}) + C(\sobIndex,\varepsilon) 
    \left(\fluxFunction_{N_{\element}} ( \locBoundaryDofVect(\discVectSol^{\perp})  ) - \fluxFunction_{N_{\element}} (\locBoundaryDofVect(\vectSol_{I}^{\perp})) \right) \cdot  \locBoundaryDofVect(\boldsymbol{\xi}_h^{\perp}).
  \end{align*}
  Therefore, choosing $\varepsilon = \frac{2^{2-\sobIndexConj}}{4}$ leads to
  \begin{equation}\label{eq:sE.approx.for.u.term1}
    \begin{aligned}
      \mathfrak{T}_{1,\element}
      &\leq
      \frac{1}{4} s_{\element}(\discVectSol^{\perp}, \discVectSol^{\perp}) + C(\sobIndex) \left[
      s_{\element}(\discVectSol^{\perp} , \boldsymbol{\xi}_h^{\perp} ) - s_{\element}( \vectSol_{I}^{\perp} , \boldsymbol{\xi}_h^{\perp} ) \right].
    \end{aligned}
  \end{equation}

  \medskip
  \noindent
  \textbf{Second term.}
  Applying \eqref{eq:diffusive.flux.properties.original.hc} and a generalized $(\sobIndex,\sobIndexConj)$-Young inequality, we get
  \begin{align*}
    \mathfrak{T}_{2,\element}
    &\leq
    h_{\element}^{2}
    C(\sobIndex)\euNorm{\locBoundaryDofVect((\vectProjF {\polyDegree} \vectSol)^{\perp})}^{\sobIndexConj-1} \euNorm{\locBoundaryDofVect(\discVectSol^{\perp})} \\ 
    &\leq
    \varepsilon 
    s_{\element}(\discVectSol^{\perp}, \discVectSol^{\perp}) 
    +
    C(\sobIndex,\varepsilon) s_{\element}((\vectProjF {\polyDegree} \vectSol)^{\perp}, (\vectProjF {\polyDegree} \vectSol)^{\perp}).
  \end{align*}
  Therefore, choosing $\varepsilon = \frac{1}{4}$, we obtain
  \begin{equation}\label{eq:sE.approx.for.u.term2}
    \begin{aligned}
      \mathfrak{T}_{2,\element}
      &\leq
      \frac{1}{4} s_{\element}(\discVectSol^{\perp}, \discVectSol^{\perp}) + C(\sobIndex)
      s_{\element}((\vectProjF {\polyDegree} \vectSol)^{\perp}, (\vectProjF {\polyDegree} \vectSol)^{\perp}).
    \end{aligned}
  \end{equation}

  \medskip
  \noindent
  \textbf{Conclusion.}
  Combining \eqref{eq:sE.approx.for.u.term1} and \eqref{eq:sE.approx.for.u.term2} with \eqref{eq:sE.approx.for.u.initial}, after basic algebraic manipulations, we arrive at
  \begin{equation}
    \begin{aligned}
      s_{\element}(\discVectSol^{\perp}, \discVectSol^{\perp})
      &\leq
      C(\sobIndex) \Bigg[ 
      s_{\element}((\vectProjF {\polyDegree} \vectSol)^{\perp}, (\vectProjF {\polyDegree} \vectSol)^{\perp}) + 
      \left[ s_{\element}(\discVectSol^{\perp} , \boldsymbol{\xi}_h^{\perp} ) - s_{\element}( \vectSol_{I}^{\perp} , \boldsymbol{\xi}_h^{\perp} ) \right]
      \Bigg].
    \end{aligned}
  \end{equation}
  Observing that the term $s_{\element}((\vectProjF {\polyDegree} \vectSol)^{\perp}, (\vectProjF {\polyDegree} \vectSol)^{\perp}) = h_{\element}^{2}\euNorm{\locBoundaryDofVect((\vectProjF {\polyDegree} \vectSol)^{\perp}) }^{\sobIndexConj}$ is the same term that appears in \eqref{eq:aux.sE.ineq.for.g2.case} when $ 1 < \sobIndexConj \leq 2$, and in \eqref{eq:error.estimate.g2.rhs.T2E.initial} when $\sobIndex > 2$, applying the same argument that leads to \eqref{eq:error.estimate.s2.rhs.T2E} and \eqref{eq:error.estimate.g2.rhs.T2E}, yields the desired result.
\end{proof}
%
\subsection{Auxiliary estimate for the flux function}\label{app:proof.sigma.approx.for.u}
%
\begin{proof}[Proof of Lemma~\ref{lem:sigma.approx.for.u}]
  We start by observing that, since $(\sobIndex-1)(\sobIndexConj -2) <0$ and $(\sobIndex-1) (\sobIndexConj -2)+(\sobIndex-2) = 0$, after applying \eqref{eq:diffusive.flux.properties.original.hc}, we can write
  \begin{equation}\label{eq:sigma.approx.for.u.initial}
    \begin{aligned}
      \norm{\vectLebSpace{\sobIndex}{\element}}{\fluxFunction(\vectSol) -\fluxFunction(\vectProjL{\polyDegree}\discVectSol)}^{\sobIndex}
      &\leq
      C(\sobIndex)
      \int_{\element}
      \left( \euNorm{\vectSol} +\euNorm{\vectProjL{\polyDegree}\discVectSol} \right)^{\sobIndex (\sobIndexConj -2)} \euNorm{\vectSol - \vectProjL{\polyDegree}\discVectSol}^{\sobIndex} \\
      &\leq
      C(\sobIndex)
      \int_{\element}
      \euNorm{\vectSol - \vectProjL{\polyDegree}\discVectSol}^{(\sobIndex-1) (\sobIndexConj -2)+(\sobIndex-2)}
      \left( \euNorm{\vectSol} +\euNorm{\vectProjL{\polyDegree}\discVectSol} \right)^{\sobIndexConj -2} \euNorm{\vectSol - \vectProjL{\polyDegree}\discVectSol}^{2}\\
      &=
      C(\sobIndex)
      \int_{\element}
      \left( \euNorm{\vectSol} +\euNorm{\vectProjL{\polyDegree}\discVectSol} \right)^{\sobIndexConj -2} \euNorm{\vectSol - \vectProjL{\polyDegree}\discVectSol}^{2}.
    \end{aligned}
  \end{equation}
  We apply \eqref{eq:diffusive.flux.properties.original.mono} and split the integral above as
  \begin{equation}\label{eq:sigma.approx.for.u.initial.initial}
    \begin{aligned}
      \int_{\element}
      \left( \euNorm{\vectSol} +\euNorm{\vectProjL{\polyDegree}\discVectSol} \right)^{\sobIndexConj -2} \euNorm{\vectSol - \vectProjL{\polyDegree}\discVectSol}^{2}
      &\leq C(\sobIndex) \Big[ 
      \int_{\element}
      \left( \fluxFunction(\vectSol) - \fluxFunction(\vectProjL{\polyDegree}\vectSol_{I}) \right) \cdot (\vectSol - \vectProjL{\polyDegree}\discVectSol) \\ 
      &\quad + \int_{\element}
      \left( \fluxFunction(\vectProjL{\polyDegree}\vectSol_{I}) - \fluxFunction(\vectProjL{\polyDegree}\discVectSol) \right) \cdot (\vectSol - \vectProjL{\polyDegree}\discVectSol) \Big] \\
      &\eqqcolon C(\sobIndex) \left( \mathfrak{T}_{1,\element} + \mathfrak{T}_{2,\element} \right).
    \end{aligned}
  \end{equation}
  We bound the two terms above separately.

  \medskip
  \noindent
  \textbf{First term.}
  Using \eqref{eq:diffusive.flux.properties.original.hc} and \eqref{eq:young.type.inequality} together with \eqref{eq:simple.eu.norm.equivalence}, we obtain for any $\varepsilon > 0$,
  \begin{equation}\label{eq:sigma.approx.for.u.term.1}
    \begin{aligned}
      \mathfrak{T}_{1,\element}
      &\leq
      C(\sobIndex)
      \int_{\element}
      \left( \euNorm{\vectSol} + \euNorm{\vectSol - \vectProjL{\polyDegree}\vectSol_{I}} \right)^{\sobIndexConj - 2} \euNorm{\vectSol -
      \vectProjL{\polyDegree}\vectSol_{I}} \euNorm{\vectSol - \vectProjL{\polyDegree}\discVectSol} \\ 
      &\leq \varepsilon \int_{\element} \left( \euNorm{\vectSol} + \euNorm{\vectSol - \vectProjL{\polyDegree}\discVectSol} \right)^{\sobIndexConj - 2} \euNorm{\vectSol - \vectProjL{\polyDegree}\discVectSol}^2 \\ 
      &\quad 
      + C(\sobIndex,\varepsilon) \int_{\element} \left( \euNorm{\vectSol} + \euNorm{\vectSol - \vectProjL{\polyDegree}\vectSol_{I}} \right)^{\sobIndexConj - 2} \euNorm{\vectSol -
      \vectProjL{\polyDegree}\vectSol_{I}}^2 \\
      &\leq 2^{2-\sobIndexConj}\varepsilon \int_{\element} \left( \euNorm{\vectSol} + \euNorm{\vectProjL{\polyDegree}\discVectSol} \right)^{\sobIndexConj - 2} \euNorm{\vectSol -
      \vectProjL{\polyDegree}\discVectSol}^2 + C(\sobIndex,\varepsilon) \int_{\element} \euNorm{\vectSol - \vectProjL{\polyDegree}\vectSol_{I}}^{\sobIndexConj} \\ 
      &\leq 2^{2-\sobIndexConj} \varepsilon
      \int_{\element} \left( \euNorm{\vectSol} + \euNorm{\vectProjL{\polyDegree}\discVectSol} \right)^{\sobIndexConj - 2} \euNorm{\vectSol - \vectProjL{\polyDegree}\discVectSol}^2 +
      C(\polyDegree,\sobIndex,\polyDegree_{\vectSol},\rho,\varepsilon) h_{\element}^{\sobIndexConj \polyDegree_{\vectSol}}
      \seminorm{\vectSobSpace{\polyDegree_{\vectSol}}{\sobIndexConj}{\element}}{\vectSol}^{\sobIndexConj},
    \end{aligned}
  \end{equation}
  where the last inequality follows from \eqref{eq:projL.Fortin.approx}.

  \medskip
  \noindent
  \textbf{Second term.}
  Similarly, for the second term, we apply \eqref{eq:diffusive.flux.properties.original.hc}, \eqref{eq:young.type.inequality} and \eqref{eq:diffusive.flux.properties.original.mono} together with \eqref{eq:simple.eu.norm.equivalence} to write
  \begin{equation}\label{eq:sigma.approx.for.u.term.2}
    \begin{aligned}
      \mathfrak{T}_{2,\element}
      &\leq
      C(\sobIndex)
      \int_{\element}
      \left( \euNorm{\vectProjL{\polyDegree}\discVectSol} + \euNorm{\vectProjL{\polyDegree}\boldsymbol{\xi}_h } \right)^{\sobIndexConj - 2}
      \euNorm{\vectProjL{\polyDegree}\boldsymbol{\xi}_h } \euNorm{\vectSol - \vectProjL{\polyDegree}\discVectSol} \\ 
      &\leq 
       \varepsilon \int_{\element} \left( \euNorm{\vectProjL{\polyDegree}\discVectSol} + \euNorm{\vectSol - \vectProjL{\polyDegree}\discVectSol} \right)^{\sobIndexConj - 2}
      \euNorm{\vectSol - \vectProjL{\polyDegree}\discVectSol}^2 
      \\ 
      &\quad 
      + C(\sobIndex,\varepsilon) \int_{\element} \left( \euNorm{\vectProjL{\polyDegree}\discVectSol} +
      \euNorm{\vectProjL{\polyDegree}\boldsymbol{\xi}_h } \right)^{\sobIndexConj - 2} \euNorm{\vectProjL{\polyDegree}\boldsymbol{\xi}_h }^2 \\
      &\leq
        2^{2-\sobIndexConj} \varepsilon \int_{\element} \left( \euNorm{\vectSol} + \euNorm{\vectProjL{\polyDegree}\discVectSol} \right)^{\sobIndexConj - 2} \euNorm{\vectSol - \vectProjL{\polyDegree}\discVectSol}^2 \\ 
        &\quad+ C(\sobIndex,\varepsilon) \left[ a_{\element}(\vectProjL{\polyDegree}
        \discVectSol,\vectProjL{\polyDegree}\boldsymbol{\xi}_h) - a_{\element}(\vectProjL{\polyDegree}\vectSol_{I},\vectProjL{\polyDegree}\boldsymbol{\xi}_h) \right],
    \end{aligned}
  \end{equation}
  where $\boldsymbol{\xi}_h \coloneqq \discVectSol - \vectSol_{I}$. 

  \medskip
  \noindent
  \textbf{Conclusion.}
  Plugging \eqref{eq:sigma.approx.for.u.term.1} and \eqref{eq:sigma.approx.for.u.term.2} into \eqref{eq:sigma.approx.for.u.initial.initial}, choosing $\varepsilon = \frac{2^{\sobIndexConj-2}}{4 C_{1}(\sobIndex)}$ where $C_{1}(\sobIndex)$ is the positive constant in \eqref{eq:sigma.approx.for.u.initial.initial}, after basic algebraic manipulations, we get
  \begin{align*}
    \int_{\element}
    \left( \euNorm{\vectSol} +\euNorm{\vectProjL{\polyDegree}\discVectSol} \right)^{\sobIndexConj -2} \euNorm{\vectSol - \vectProjL{\polyDegree}\discVectSol}^{2}
    &\leq
    C(\polyDegree,\sobIndex,\polyDegree_{\vectSol},\rho) h_{\element}^{\sobIndexConj \polyDegree_{\vectSol}}
    \seminorm{\vectSobSpace{\polyDegree_{\vectSol}}{\sobIndexConj}{\element}}{\vectSol}^{\sobIndexConj} \\ 
    &\quad + C(\sobIndex) \left[ a_{\element}(\vectProjL{\polyDegree}
    \discVectSol,\vectProjL{\polyDegree}\boldsymbol{\xi}_h) - a_{\element}(\vectProjL{\polyDegree}\vectSol_{I},\vectProjL{\polyDegree}\boldsymbol{\xi}_h) \right].
  \end{align*}
  Combining this last estimate with \eqref{eq:sigma.approx.for.u.initial} leads to the desired result.
\end{proof}
\end{document}